\newcommand{\T}{\mathbb{T}}
\newcommand{\ii }{{\rm i} }
\newcolumntype{C}[1]{>{\centering\arraybackslash}b{#1}}
\newcolumntype{R}[1]{>{\raggedleft\arraybackslash}b{#1}}
\newcolumntype{L}[1]{>{\raggedright\arraybackslash}b{#1}}
\newcolumntype{M}[1]{>{\centering}m{#1}}
\newtheorem{theo}{Theorem}[section]
\newtheorem{defin}{Definition}[section]
\newtheorem{lem}{Lemma}[section]
\newtheorem{prop}{Proposition}[section]
\newtheorem{remark}{Remark}[section]
\newtheorem{cor}{Corollary}[section]
\numberwithin{equation}{section}
\date{}
\title{Boundary effects on the emergence of\\
quasi-periodic
solutions
for Euler equations}
\author{Zineb Hassainia\thanks{NYUAD Research Institute, New York University Abu Dhabi, PO Box 129188, Abu Dhabi, United Arab Emirates.\\
E-mail address: zh14@nyu.edu} \hspace{0.5cm} and \hspace{0.5cm}Emeric Roulley\thanks{Univ Rennes, CNRS, IRMAR – UMR 6625, F-35000 Rennes, France\\
E-mail address : emeric.roulley@univ-rennes1.fr}}
\begin{document}
\maketitle
\begin{abstract}
In this paper, we highlight the importance of the boundary effects on the construction  of quasi-periodic vortex patches solutions close to Rankine vortices and whose existence  is not known in the whole space due to the resonances of the linear frequencies. Availing of the lack of invariance by radial dilation of  Euler equations  in the unit disc and using a Nash-Moser implicit function iterative scheme we show the existence  of such structures
when the radius  of the Rankine vortex  belongs to a suitable massive Cantor-like set with almost full Lebesgue measure. 
\end{abstract}
\tableofcontents
\section{Introduction}
We consider the Euler system set in a domain $D\subset\mathbb{R}^{2}$ written in the velocity-vorticity formulation
\begin{equation}\label{Euler velocity-vorticity}
	\left\lbrace\begin{array}{ll}
		\partial_{t}\boldsymbol{\omega}+\mathbf{v}\cdot\nabla\boldsymbol{\omega}=0 & \textnormal{in }\mathbb{R}_{+}\times D\\
		\mathbf{v}=\nabla^{\perp}\boldsymbol{\Psi} & \\
		\boldsymbol{\omega}(0,\cdot)=\boldsymbol{\omega}_0 & \textnormal{in }D
	\end{array}\right.
\end{equation}
where $\nabla^{\perp}=(-\partial_{2},\partial_{1}).$ Here,  we are particularly interested in the cases where $D$ is the full plane, $D=\mathbb{R}^{2}$,  or the unit disc $D=\mathbb{D}:=\Big\{(x_1,x_2)\in\mathbb{R}^{2}\quad\textnormal{s.t.}\quad x_1^2+x_2^2\leqslant 1\Big\}.$ 
First consider the whole space dynamics. In this case,  the stream function is given by 
$$\boldsymbol{\Psi}(t,w)=\frac{1}{2\pi}\int_{\mathbb{R}^2}\log\big(|w-\xi|\big)\boldsymbol{\omega}(t,\xi)dA(\xi),$$
where $dA$ is the planar Lebesgue measure.
The global existence and uniqueness for  weak solutions bounded and integrable follows from Yudovich's theory \cite{Y63}. In particular, if the initial datum is a vortex patch, that is, the characteristic function of a smooth bounded planar domain $D_0$, then the solution keeps a patch form $\mathbf{1}_{D_t}$ for any time $t>0$, where $D_{t}$ is the transported domain $D_{0}$ by the flow map associated to $\mathbf{v}$, namely
$$D_{t}=\boldsymbol{\Phi}_{t}(D_0),\quad\partial_{t}\boldsymbol{\Phi}_{t}(x)=\mathbf{v}(t,\boldsymbol{\Phi}_{t}(x)),\quad\boldsymbol{\Phi}_{0}=\textnormal{Id}_{\mathbb{R}^2}.$$ 
The persistence of the regularity of the boundary was proved in \cite{BC93,C95}.  Notice that any radial profile generates a stationary solution.
It is a classical fact to look for periodic or quasi-periodic solutions close to these equilibrium state solutions for Hamiltonian systems like \eqref{Euler velocity-vorticity}. 
 A particular class of periodic solutions is given by the rigid body rotating vortex patches around the origin described by
$$D_t=e^{\ii\Omega t}D_0,$$
where $\Omega$ is the time independent angular velocity. Such solutions are called V-states according to the terminology introduced by Deem and Zabusky in \cite{DZ78}. The first explicit example, discovered by Kirchhoff in \cite{K74}, is the ellipse which rotates about its
center of mass with the constant angular velocity
$$\Omega=\frac{ab}{(a+b)^2},$$
where $a$ and $b$ are the semi-axes of the ellipse. Further families  of implicit solutions with higher symmetries  were established by Burbea in \cite{B82} using  local bifurcation tools and complex analysis. More precisely, he  proved the existence of branches of $\mathbf{m}$-fold rotating  solutions bifurcating from the discs at angular velocities
\begin{equation}\label{Burbea frequencies}
	\boldsymbol{\Omega}_{\mathbf{m}}=\frac{\mathbf{m}-1}{2\mathbf{m}},\quad\mathbf{m}\geqslant 1.
\end{equation}
Notice that the mode $\mathbf{m}=1$ corresponds to a translation of the trivial solution  and the second branch, emerging at $\boldsymbol{\Omega}_2=\tfrac{1}{4},$ describes the Kirchhoff ellipses. Moreover,  all the bifurcation angular velocities $\boldsymbol{\Omega}_{\mathbf{m}}$  are in the range  $(0,\tfrac{1}{2}).$ Outside this interval, the only uniformly rotating solutions are the radial ones, as proved in the series of papers \cite{F00,GPSY20,H15}. The boundary regularity was first discussed in \cite{CCG16,HMW20,HMV13} and the global bifurcation diagram was studied in \cite{HMW20}. Note also that countable branches of rotating patches bifurcating from the ellipses at implicit angular velocities were found in \cite{HM16}, however, the shapes have in fact less symmetry and being at most two-folds. 
  It is worthy to point out that Burbea's approach has been extended in the past few years to different topological structures for the V-states and for various nonlinear transport equations.
  For instance, we mention the existence results for the multiply-connected patches  \cite{HHMV16,HM16-2,HMV15} or the  multipole vortex patch patches obtained by desingularization of the point vortex system \cite{G20,G21,HH21,HW21,HM17}. Let us now turn to the case where Euler equations \eqref{Euler velocity-vorticity} are set in the unit disc. In this setting, the stream function $\mathbf{\Psi}$ solves the following Dirichlet problem in the unit disc $\mathbb{D}$ 
$$\left\lbrace\begin{array}{l}
	\Delta\mathbf{\Psi}=\boldsymbol{\omega}\\
	\mathbf{\Psi}|_{\partial\mathbb{D}}=0.
\end{array}\right.$$
Thus, by using the Green function of the unit disc, we get the expression
\begin{equation}\label{expression of the stream function}
\forall w\in \mathbb{C},\quad 	\mathbf{\Psi}(t,w)=\frac{1}{4\pi}\int_{\mathbb{D}}\log\left(\left|\frac{w-\xi}{1-w\overline{\xi}}\right|^{2}\right)\boldsymbol{\omega}(t,\xi)dA(\xi).
\end{equation}
The theory of weak solutions and vortex patches is still valid in this context and the persistence of the boundary   regularity of vortex  patches remains true, as proved in \cite{D99}. The existence of  V-states close to the discs $b\mathbb{D}$ ($b\in(0,1)$), also called Rankine vortices,  were obtained in \cite{HHHM15}. These curves of solutions have $\mathbf{m}$-fold symmetry, perform a uniform rotation and emerge at the angular velocities
\begin{equation}\label{def omegajb intro}
	\boldsymbol{\Omega}_{\mathbf{m}}(b)=\frac{\mathbf{m}-1+b^{2\mathbf{m}}}{2\mathbf{m}},\quad\mathbf{m}\geqslant 1.
\end{equation}
It is of  paramount importance to highlight different boundary effects observable at this periodic level. First, Burbea's frequencies \eqref{Burbea frequencies} are shifted to the right, implying in particular that the $\mathbf{1}$-fold patches, which are not centered at the origin, are no longer associated to the trivial solution. Second, the numerical observations in \cite{HHHM15} show that the bifurcation curves have oscillations.\\

The purpose of the current paper is to prove the existence of time quasi-periodic patch structures close to Rankine vortices. Remind that a function $f:\mathbb{R}\rightarrow\mathbb{R}$ is said to be \textit{quasi-periodic} if there exist a frequency vector $\omega\in\mathbb{R}^{d}$ ($d\in\mathbb{N}^*$) which is non-degenerate, that is
\begin{equation}\label{nonresonnace omega}
	\forall\, l\in\mathbb{Z}^{d}\setminus\{0\},\quad \omega\cdot l\neq 0
\end{equation}
and a function $F:\mathbb{T}^{d}\rightarrow\mathbb{R}$, where $\mathbb{T}^d$ denotes the flat torus of dimension $d$, such that
$$\forall\, t\in\mathbb{R},\quad f(t)=F(\omega t).$$
Remark that the case $d=1$ corresponds to the definition of periodic functions with frequency $\omega\in\mathbb{R}^*.$ The variable of $F$ living in $\mathbb{T}^d$ is denoted $\varphi.$ The existence of quasi-periodic vortex patch solutions has been initiated very recently  in \cite{BHM21,HHM21,HR21} using KAM techniques and Nash-Moser theory. We emphasize that in the papers   \cite{HHM21, HR21},  respectively devoted to  the generalized surface quasi-geostrophic equations and  quasi-geostrophic shallow-water equations, the authors proved the existence of quasi-periodic patches  close to Rankine vortices for suitable selected values of the exterior  parameters offered by the equations.  The situation for  Euler equations in the whole plane is quite delicate and  the search of quasi-periodic solutions near the discs is not clear due to the resonances of the linear frequencies \eqref{Burbea frequencies} and the absence of an exterior parameter. However, in \cite{BHM21}, the authors show the existence of quasi-periodic solutions for Euler equations close to the ellipses and the parameter used there is the aspect ratio of the ellipse. In the same spirit,  we aim here to take advantage of the lack  of invariance by radial dilation  to create a natural geometrical parameter $b$ describing a family of stationary solutions. \\

Before stating our main theorem, we shall briefly recall some  results related to the use of KAM theory in PDE.
 Notice that KAM theory is named after Kolmogorov \cite{K54}, Arnold \cite{A63} and Moser \cite{M62} works where they proved, for both  the analytic and the finitely many differentiable cases, the persistence of invariant tori supporting quasi-periodic motions under a small perturbation of integrable finite dimensional Hamiltonian systems. We mention that in the differentiable case, Moser used a modified version of a regularizing Newton method developed by Nash for the isometric embedding problem \cite{N54}; commonly known as Nash-Moser scheme. KAM theory was extended and refined  for  several Hamiltonian PDE with small divisors problems.
 For instance, it has been implemented  for the 1-d semilinear wave and Schr\"odinger equations in several papers \cite{B94,CY00,CW93,K87,P96,P96-1,W90}. Many results were also obtained for   semi-linear perturbations of PDE  \cite{BBP13,BBP14,B05,EK10,GYZ11,KP03,K98,K00,LY11}. However the case of quasi-linear or fully nonlinear perturbations  were explored in \cite{BBM14,BBM16,BBM16-1,BB15,BKM21,FP14}. 
Many interesting  results have also been obtained in the past few years on the periodic and quasi-periodic settings for  the water-waves equations as in   \cite{AB11,BBMH18, BFM21,BFM21-1,BM18,IPT05,PT01}.
Very recently,   a quasi-periodic forcing term was used in  \cite{BM21} to generate quasi-periodic solutions for 3D Euler equations. In the current context,  we have no any forcing to use, and  we rather use the internal structure to find  quasi-periodic solutions.\\

We shall now present the main result of this work and discuss the key ideas of its proof. We first consider a polar parametrization of a patch boundary close to the stationary solution $b\mathbb{D}$, namely
$$z(t,\theta)=R(b,t,\theta)e^{\ii\theta},\quad R(b,t,\theta)=\sqrt{b^{2}+2r(t,\theta)}.$$
The quantity of interest is the radial deformation $r$ assumed to be of small size. We emphasize that our ansatz is slightly different from the one in the papers \cite{BHM21,HHM21,HR21} where the parametrization is written in a rotating frame with an angular velocity $\Omega$ to remedy to the degeneracy of the first frequency. This is not the case in our context due to the non-degeneracy of the first frequency according to \eqref{def omegajb intro}. As explained in Lemma \ref{lem eq ED r} and Proposition \ref{prop Ham eq r}, the radial deformation solves a non-linear and non-local transport PDE which admits a Hamiltonian formulation in the form
\begin{equation}\label{Ham eq ED r intro}
	\partial_{t}r=\tfrac{1}{2}\partial_{\theta}\nabla E(r),
\end{equation}
where $E$ is the kinetic energy related to the stream function given by \eqref{expression of the stream function}. In view of Lemma \ref{lemma general form of the linearized operator}, the linearized operator at a state $r$ close to the Rankine patch $b\mathbb{D}$ takes the form
\begin{equation}\label{def Lr intro}
	\mathcal{L}_{r}=\partial_{t}+\partial_{\theta}\Big(V_{r}\cdot+\mathbf{L}_{r}-\mathbf{S}_{r}\Big),
\end{equation}
where
\begin{align*}
	V_{r}(b,t,\theta)&=-\tfrac{1}{2}\int_{\mathbb{T}}\tfrac{R^{2}(b,t,\eta)}{R^{2}(b,t,\theta)}d\eta
	-\tfrac{1}{R(b,t,\theta)}\int_{\mathbb{T}}\log\big(A_{r}(b,t,\theta,\eta)\big)\partial_{\eta}\big(R(b,t,\eta)\sin(\eta-\theta)\big)d\eta\\
	&\quad-\tfrac{1}{R^{3}(b,t,\theta)}\int_{\mathbb{T}}\log\big(B_{r}(b,t,\theta,\eta)\big)\partial_{\eta}\big(R(b,t,\eta)\sin(\eta-\theta)\big)d\eta,
\end{align*} 
$\mathbf{L}_{r}$ is a non-local operator in the form
\begin{align*}
	\mathbf{L}_{r}(\rho)(b,t,\theta)=\int_{\mathbb{T}}\rho(t,\eta)\log\left(A_{r}(b,t,\theta,\eta)\right)d\eta,\quad A_r(b,t,\theta,\eta)=\big|R(b,t,\theta)e^{\ii\theta}-R(b,t,\eta)e^{\ii\eta}\big|
\end{align*}
and $\mathbf{S}_{r}$ is a smoothing non-local operator in the form
\begin{align*}
	\mathbf{S}_{r}(\rho)(b,t,\theta)&=\int_{\mathbb{T}}\rho(t,\eta)\log\left(B_{r}(b,t,\theta,\eta)\right)d\eta,\quad B_r(b,t,\theta,\eta)=\big|1-R(b,t,\theta)R(b,t,\eta)e^{\ii(\eta-\theta)}\big|.
\end{align*}
The operator $\mathbf{L}_{r}$ is of order zero and reflects the planar Euler action. Moreover, we observe two boundary effects of $\mathbb{D}.$ The first one is quasi-linear in the transport part through the last term of $V_r$, but  with a smoothing action. The second one is given by the operator $\mathbf{S}_r$ which is smoothing  since it involves a smooth kernel. At the equilibrium state $r=0$, the linearized operator is a Fourier multiplier given by
$$\mathcal{L}_{0}=\partial_{t}+\tfrac{1}{2}\partial_{\theta}+\partial_{\theta}\mathcal{K}_{1,b}\ast\cdot-\partial_{\theta}\mathcal{K}_{2,b}\ast\cdot,$$
where
\begin{align*}
	\mathcal{K}_{1,b}(\theta)=\tfrac{1}{2}\log\big(\sin^2\left(\tfrac{\theta}{2}\right)\big)\quad\textnormal{and}\quad\mathcal{K}_{2,b}(\theta)=\log\big(|1-b^2e^{\ii\theta}|\big).
\end{align*}
Notice that the convolution with the kernel $\partial_{\theta}\mathcal{K}_{1,b}$ is exactly the Hilbert transform in the periodic setting. 
From direct computations, we may show that the  kernel of $\mathcal{L}_0$ is given by the set of functions in the form
$$(t,\theta)\mapsto\sum_{j\in\mathbb{Z}^{*}}r_j e^{\ii(j\theta-\Omega_j(b)t)},$$
where
\begin{equation}\label{omegajb intro}
	\forall j\in\mathbb{Z}\backslash\{0\},\quad\Omega_{j}(b)=\tfrac{{sgn}(j)}{2}\big(|j|-1+b^{2|j|}\big),
\end{equation}
where we denote by ${sgn}$  the sign function.
Notice that here and in the sequel, we shall use the notation
$$\mathbb{N}=\{0,1,2,\ldots\}\quad\textnormal{and}\quad\mathbb{N}^*=\{1,2,\ldots\}.$$
Consider a finite number of Fourier modes
$$\mathbb{S}=\big\{j_1,\ldots,j_d\big\}\subset\mathbb{N}^*\quad\textnormal{with}\quad1\leqslant j_1<\ldots<j_d,\quad (d\in\mathbb{N}^*).$$
Then, from Proposition \ref{lemma sol Eq}, we deduce that, for any $0<b_0<b_1<1$, for almost all $b\in[b_0,b_1],$ any function in the form
$$r:(t,\theta)\mapsto\sum_{j\in\mathbb{S}}r_j\cos(j\theta-\Omega_j(b)t),\quad r_j\in\mathbb{R}$$
is a quasi-periodic solution with frequency $\omega_{\textnormal{Eq}}(b)=(\Omega_j(b))_{j\in\mathbb{S}}$ of the equation $\mathcal{L}_0 r=0$ which is reversible, namely $r(-t,-\theta)=r(t,\theta).$ The measure of the Cantor set in $b$ generating these solutions is estimated using Rüssemann Lemma \ref{lemma useful for measure estimates} requiring a lower bound on the maximal derivative of a given function up to order $q_0.$ In our case, the value of $q_0$ is explicit, namely $q_0=2j_d+2$ which is due to the polynomial structure of the $\Omega_j(b).$
The aim of this work is to prove that these structures persist at the non-linear level, more precisely, our main result reads as follows.
\begin{theo}\label{thm QPS ED}
		Let $0<b_0<b_1<1$,  $d\in\mathbb{N}^{*}$ and  $\mathbb{S}\subset\mathbb{N}^*$ with $|\mathbb{S}|=d.$
		There exists  $\varepsilon_{0}\in(0,1)$ small enough with the following properties :  For every amplitudes ${\mathtt{a}}=(\mathtt{a}_{j})_{j\in\mathbb{S}}\in(\mathbb{R}_{+}^{*})^{d}$ satisfying
		$$|{\mathtt{a}}|\leqslant\varepsilon_{0},$$ 
		there exists a Cantor-like set $\mathcal{C}_{\infty}\subset(b_{0},b_{1})$ with asymptotically full Lebesgue  measure as ${\mathtt{a}}\rightarrow 0,$ i.e.
		$$\lim_{{\mathtt{a}}\rightarrow 0}|\mathcal{C}_{\infty}|=b_{1}-b_{0},$$
		such that for any $b\in\mathcal{C}_{\infty}$, the  equation \eqref{Ham eq ED r intro} admits a time quasi-periodic solution with diophantine frequency vector ${\omega}_{\tiny{\textnormal{pe}}}(b,{\mathtt{a}}):=(\omega_{j}(b,{\mathtt{a}}))_{j\in\mathbb{S}}\in\mathbb{R}^{d}$ and taking  the form
		$$r(t,\theta)=\sum_{j\in\mathbb{S}}{\mathtt{a}_{j}}\cos\big(j\theta+\omega_{j}(b,{\mathtt{a}})t\big)+\mathtt{p}\big({\omega}_{\tiny{\textnormal{pe}}}(b,\mathtt{a})t,\theta\big),$$
		with
		$${\mathtt{\omega}}_{\tiny{\textnormal{pe}}}(b,{\mathtt{a}})\underset{{\mathtt{a}}\rightarrow 0}{\longrightarrow}(-\Omega_{j}(b))_{j\in\mathbb{S}},$$
		where $\Omega_{j}(b)$ are the equilibrium frequencies defined in \eqref{omegajb intro} and  the perturbation $\mathtt{p}:\T^{d+1}\to\mathbb{R}$ is an even function satisfying
		$$\| \mathtt{p}\|_{H^{{s}}(\mathbb{T}^{d+1},\mathbb{R})}\underset{{\mathtt{a}}\rightarrow 0}{=}o(|{\mathtt{a}}|)$$
		for some large   index of regularity $s.$
\end{theo}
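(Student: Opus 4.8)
The plan is to recast Theorem \ref{thm QPS ED} as a small-divisor problem solved by a Nash-Moser scheme, exploiting $b$ as the unique free parameter. I would first pass to the torus: look for the radial deformation in the form $r(t,\theta)=\mathtt{r}(\omega t,\theta)$ with $\mathtt{r}:\T^{d}\times\T\to\mathbb{R}$, so that \eqref{Ham eq ED r intro} becomes the nonlinear functional equation
$$\mathcal{F}(\omega,b,\mathtt{r}):=\omega\cdot\partial_{\varphi}\mathtt{r}-\tfrac12\partial_{\theta}\nabla E(\mathtt{r})=0,$$
to be solved in a scale of Sobolev spaces $H^{s}(\T^{d+1})$ of \emph{even} functions, thereby encoding the reversibility $r(-t,-\theta)=r(t,\theta)$. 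I would split $\mathtt{r}=v_{b,\mathtt{a}}+p$, where $v_{b,\mathtt{a}}(\varphi,\theta)=\sum_{j\in\mathbb{S}}\mathtt{a}_{j}\cos(j\theta+\varphi_{j})$ is the exact solution of $\mathcal{L}_{0}\mathtt{r}=0$ furnished by Proposition \ref{lemma sol Eq} (after identifying $\varphi_{j}=-\Omega_{j}(b)t$), and $p$ lies in the normal subspace spanned by the remaining Fourier modes. Since there is no exterior parameter, the frequency $\omega$ is not prescribed; I would treat it as an additional unknown fixed by the $d$ equations obtained by projecting $\mathcal{F}$ onto the tangential sites $\mathbb{S}$ (a Lyapunov-Schmidt reduction), which forces $\omega=\omega_{\textnormal{pe}}(b,\mathtt{a})=(-\Omega_{j}(b))_{j\in\mathbb{S}}+O(|\mathtt{a}|^{2})$, and then solve the remaining normal equation for $p$ with $p=o(|\mathtt{a}|)$.

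The core of the argument is the approximate inversion, with tame estimates, of the linearized operator $\mathcal{L}_{r}$ from \eqref{def Lr intro} at states $r$ close to $v_{b,\mathtt{a}}$, by a sequence of symplectic, reversibility-preserving transformations. First, use the Hamiltonian structure of \eqref{Ham eq ED r intro} to write $\mathcal{L}_{r}=\partial_{t}+\partial_{\theta}\circ(\textnormal{self-adjoint operator})$. Next, straighten the transport part: a $\varphi$-dependent diffeomorphism $\theta\mapsto\theta+\beta(\varphi,\theta)$ of the circle conjugates $\partial_{t}+\partial_{\theta}(V_{r}\,\cdot)$ to $\partial_{t}+\mathtt{c}(b,\mathtt{a})\partial_{\theta}$ up to lower order, with $\mathtt{c}\to\tfrac12$; crucially the quasi-linear boundary contribution (the last term of $V_{r}$) and the operator $\mathbf{S}_{r}$ are \emph{smoothing}, so no high-order pseudodifferential reduction \`a la water waves is needed. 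After this change of variables $\mathbf{L}_{r}$ is a Fourier multiplier of order zero plus a variable-coefficient smoothing remainder, and a finite descent reduces $\mathcal{L}_{r}$ to $\omega\cdot\partial_{\varphi}+\mathtt{c}\,\partial_{\theta}+\mathcal{D}_{0}+\mathcal{R}$ with $\mathcal{D}_{0}$ diagonal and $\varphi$-independent and $\mathcal{R}$ small and smoothing. A KAM reducibility scheme then diagonalizes the latter, producing perturbed normal frequencies $\mu_{j}(b,\mathtt{a})=\Omega_{j}(b)+O(|\mathtt{a}|^{2})$; the iteration closes on the set of $b$ where the first and second Melnikov conditions
$$\big|\omega\cdot l+\mu_{j}\big|\geqslant\tfrac{\gamma}{\langle l\rangle^{\tau}},\qquad\big|\omega\cdot l+\mu_{j}-\mu_{k}\big|\geqslant\tfrac{\gamma}{\langle l\rangle^{\tau}}$$
hold for all relevant $(l,j,k)$. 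This yields $\mathcal{L}_{r}^{-1}$ with loss-of-derivative bounds sufficient to run Nash-Moser, whose convergence produces the solution $p$ on the intersection Cantor set.

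It remains to estimate the measure of the final Cantor set $\mathcal{C}_{\infty}\subset(b_{0},b_{1})$. Everything hinges on the non-degeneracy, \emph{in the single variable $b$}, of the unperturbed frequency map $b\mapsto(\Omega_{j}(b))_{j\in\mathbb{S}}$ and of the functions $b\mapsto\omega_{\textnormal{Eq}}(b)\cdot l+\Omega_{j}(b)$ and $b\mapsto\omega_{\textnormal{Eq}}(b)\cdot l+\Omega_{j}(b)-\Omega_{k}(b)$. By \eqref{omegajb intro} these are polynomials in $b$ of degree at most $2j_{d}$, so once not identically zero their derivatives up to order $q_{0}=2j_{d}+2$ cannot all vanish; R\"ussemann's Lemma \ref{lemma useful for measure estimates} then gives $\big|\{b:|g(b)|<\rho\}\big|\lesssim\rho^{1/q_{0}}$ for each such $g$. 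The perturbed frequencies $\omega_{\textnormal{pe}}(b,\mathtt{a})$ and $\mu_{j}(b,\mathtt{a})$ are $C^{q_{0}}$-close to the unperturbed ones uniformly in $b$, so the same bound persists with a constant independent of $\mathtt{a}$; summing the excised sets over $(l,j,k)$ and over the iteration steps gives $|(b_{0},b_{1})\setminus\mathcal{C}_{\infty}|\lesssim\gamma^{1/q_{0}}$, and taking $\gamma=\gamma(\mathtt{a})\to0$ as $\mathtt{a}\to0$ yields $|\mathcal{C}_{\infty}|\to b_{1}-b_{0}$.

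I expect the main obstacle to be precisely the dual role of $b$: unlike \cite{HHM21,HR21}, there is no exterior parameter decoupled from the equilibrium, so the same $b$ labelling the family of Rankine stationary states must also supply the transversality needed for the measure estimates. Establishing the quantitative non-degeneracy of the (perturbed) frequency map in $b$ — which rests on the explicit polynomial dependence $\Omega_{j}(b)=\tfrac{\textnormal{sgn}(j)}{2}(|j|-1+b^{2|j|})$ coming from the lack of radial dilation invariance in $\mathbb{D}$, and which is absent for the resonant Burbea frequencies \eqref{Burbea frequencies} — and propagating it through all the conjugations and the KAM reducibility is the delicate point; the reduction of the linearized operator itself is comparatively mild, since the boundary effects enter only through smoothing terms.
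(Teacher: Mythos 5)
Your overall architecture coincides with the paper's: torus formulation of \eqref{Ham eq ED r intro}, straightening of the transport part while exploiting that both boundary effects ($\mathbf{S}_r$ and the last term of $V_r$) are smoothing, KAM reducibility of the remainder under first and second Melnikov conditions, a Nash--Moser iteration, and Rüssmann-type measure estimates in the single parameter $b$ with $q_0=2j_d+2$. The one genuinely different organizational choice is that you fix the unknown frequency by a Lyapunov--Schmidt projection onto the tangential sites, whereas the paper introduces action-angle variables and a counterterm $\alpha$, runs the whole scheme with $(b,\omega)$ as \emph{independent} parameters on $\mathcal{O}=(b_0,b_1)\times\mathscr{U}$, and only at the end defines the frequency curve by solving $\alpha_{\infty}(b,\omega(b,\varepsilon))=-\omega_{\textnormal{Eq}}(b)$; besides decoupling the tangential and normal dynamics, this device resolves the circularity between the Cantor conditions and the iteration-dependent frequency, a point your sketch leaves implicit.

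There is, however, a concrete gap in the step you yourself identify as the heart of the matter, the transversality in $b$. You claim that $b\mapsto\omega_{\textnormal{Eq}}(b)\cdot l+\Omega_j(b)$ and $b\mapsto\omega_{\textnormal{Eq}}(b)\cdot l+\Omega_j(b)-\Omega_{j_0}(b)$ are polynomials of degree at most $2j_d$, so that non-vanishing of the first $q_0=2j_d+2$ derivatives follows by degree counting. This is false: in the Melnikov conditions $j,j_0$ range over the \emph{normal} sites $\mathbb{S}_0^c$, which are unbounded, and by \eqref{Omegajb} these polynomials have degree $2\max(j,j_0)$. The correct argument (Lemma \ref{lemma transversalityb}) needs three additional ingredients: for finite $j>j_d$, since $\omega_{\textnormal{Eq}}(X)\cdot l$ has degree $\leqslant 2j_d$, the $(2j_d+1)$-th derivative reduces to $c\,b^{2j-2j_d-1}$ with $c\neq0$, which cannot vanish on $[b_0,b_1]\subset(0,1)$; for $j\to\infty$ together with $|l|\to\infty$, after dividing by $\langle l\rangle$ the frequencies converge to an affine function $\omega_{\textnormal{Eq}}(b)\cdot\bar{c}+\bar{d}$, whose non-triviality requires the non-degeneracy of the augmented vector $(\omega_{\textnormal{Eq}},1)$ (Lemma \ref{Non-degeneracy2}), not just of $\omega_{\textnormal{Eq}}$; and the lower bound must be $\rho_0\langle l\rangle$ \emph{uniformly} in $(l,j,j_0)$, which is obtained by a compactness/contradiction argument — uniformity that Rüssmann's Lemma \ref{lemma useful for measure estimates} needs and that your "for each such $g$" does not supply. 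Relatedly, your second Melnikov condition is stated without the $\langle j-j_0\rangle$ weight, and then "summing the excised sets over $(l,j,k)$" diverges over the pairs of normal indices: one must either keep that weight (as in Proposition \ref{reduction of the remainder term}) or use $\Omega_j(b)-\Omega_{j_0}(b)=\tfrac{j-j_0}{2}+\tfrac{b^{2j}-b^{2j_0}}{2}$ and the inclusions of Lemma \ref{some cantor set are empty}, which confine the resonant sets to $|j-j_0|\lesssim\langle l\rangle$ and $\min(|j|,|j_0|)\lesssim\gamma^{-\upsilon}\langle l\rangle^{\tau_1}$, before summing. Without these points the measure estimate of Proposition \ref{lem-meas-es1} — and hence the theorem, since $b$ is the only available parameter — does not close.
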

We shall now sketch  the main steps  used to prove  the previous theorem. First remark that small divisors problems already appear in the proof of Proposition \ref{lemma sol Eq} to find quasi-periodic structures at the linear level from the equilibrium. We can invert the linearized operator at the equilibrium with some fixed loss of regularity. Hence, we need to use a Nash-Moser scheme to find quasi-periodic solutions for the non-linear model. To do so, we must invert the linearized operator in a neighborhood of the equilibrium state. Since $\mathcal{L}_{r}$ has non constant coefficients, the task is more delicate. The basic idea consists in diagonalizing, namely to conjugate to constant coefficients operator. Actually, we may follow the procedure presented in \cite{BB15}, slightly modified in \cite{HHM21,HR21}, where the dynamics is decoupled into tangential and normal parts. On the tangential modes, we introduce action-angles variables $(I,\vartheta)$ allowing to reformulate the problem in terms of embedded tori. More precisely, we shall look for the zeros of the following functional
$$\mathcal{F}(i,\alpha,b,\omega,\varepsilon)=\left(\begin{array}{c}
	\omega\cdot\partial_{\varphi}\vartheta(\varphi)-\alpha-\varepsilon\partial_{I}\mathcal{P}_{\varepsilon}(i(\varphi))\\
	\omega\cdot\partial_{\varphi}I(\varphi)+\varepsilon\partial_{\vartheta}\mathcal{P}_{\varepsilon}(i(\varphi))\\
	\omega\cdot\partial_{\varphi}z(\varphi)-\partial_{\theta}\big[\mathrm{L}(b)z(\varphi)+\varepsilon\nabla_{z}\mathcal{P}_{\varepsilon}\big(i(\varphi)\big)\big]
\end{array}\right).$$
It turns out that it is more convenient to introduce one degree of freedom through the parameter $\alpha$ which provides at the end of the scheme a solution for the original problem when it is fixed  to $-\omega_{\textnormal{Eq}}(b).$ Given any small reversible embedded torus $i_0:\varphi\mapsto(\vartheta_0(\varphi),I_0(\varphi),z_0(\varphi))$ and any $\alpha_0\in\mathbb{R}^d,$ conjugating the linearized operator $d_{i,\alpha}\mathcal{F}(i_0,\alpha_0)$ via a suitable linear diffeomorphism of the toroidal phase space $\mathbb{T}^d\times\mathbb{R}^d\times L_{\perp}^{2}$, we obtain a triangular system in the action-angle-normal variables up to error terms. To solve the triangular system, we only have to invert the linearized operator in the normal directions, which is denoted by  $\widehat{\mathcal{L}}_{\omega}$.
This is done using KAM reducibility techniques in a similar way to \cite{BBMH18,BM18,HHM21,HR21}.
According to Proposition \ref{prop hat L omega}, we can write
$$\widehat{\mathcal{L}}_{\omega}=\Pi_{\mathbb{S}_0}^{\perp}\Big(\mathcal{L}_{\varepsilon r}-\varepsilon\partial_{\theta}\mathcal{R}\Big)\Pi_{\mathbb{S}_0}^{\perp},$$
where $\Pi_{\mathbb{S}_0}^\perp$ is the projector in the normal directions, $\mathcal{R}$ is an integral operator and $\mathcal{L}_{\varepsilon r}$ is defined by \eqref{def Lr intro}. First, following the KAM reducibility scheme in \cite{BM21,FGMP19,HR21}, we can reduce the transport part and the zero order part by conjugating by a quasi-periodic symplectic invertible change of variables in the form
$$\mathscr{B}\rho(\mu,\varphi,\theta)=\Big(1+\partial_{\theta}\beta(\mu,\varphi,\theta)\Big)\rho\big(\mu,\varphi,\theta+\beta(\mu,\varphi,\theta)\big).$$
More precisely, as stated in Proposition \ref{reduction of the transport part}, we can find a function $V_{i_0}^{\infty}=V_{i_0}^{\infty}(b,\omega)$ and a Cantor set
$$\mathcal{O}_{\infty,n}^{\gamma,\tau_1}(i_0)=\bigcap_{(l,j)\in\mathbb{Z}^{d}\times\mathbb{Z}\setminus\{(0,0)\}\atop|l|\leqslant N_{n}}\Big\{(b,\omega)\in\mathcal{O}\quad\textnormal{s.t.}\quad\big|\omega\cdot l+jV_{i_0}^{\infty}(b,\omega)\big|>\tfrac{4\gamma^{\upsilon}\langle j\rangle}{\langle l\rangle^{\tau_1}}\Big\}$$
in which the following decomposition holds
$$\mathscr{B}^{-1}\mathcal{L}_{\varepsilon r}\mathscr{B}=\omega\cdot\partial_{\varphi}+V_{i_0}^{\infty}\partial_{\theta}+\partial_{\theta}\mathcal{K}_{1,b}\ast\cdot-\partial_{\theta}\mathcal{K}_{2,b}\ast\cdot+\partial_{\theta}\mathfrak{R}_{\varepsilon r}+\mathtt{E}_{n}^{0},$$
where $\mathfrak{R}_{\varepsilon r}$ is a real and reversibility preserving Toeplitz in time integral operator enjoying good smallness properties. The operator $\mathtt{E}_n^0$ is an error term of order one  associated to the time troncation of the Cantor set $\mathcal{O}_{\infty,n}^{\gamma,\tau_1}(i_0).$   Notice that $N_n$  is defined by
$$N_n=N_0^{(\frac{3}{2})^n}\quad\textnormal{with}\quad N_0\gg 1.$$
Then, we project in the normal directions by considering the  operator
$$\mathscr{B}_{\perp}=\Pi_{\mathbb{S}_0}^{\perp}\mathscr{B}\Pi_{\mathbb{S}_0}^{\perp}.$$
Therefore, in view of Proposition \ref{projection in the normal directions}, we obtain the following decomposition in $\mathcal{O}_{\infty,n}^{\gamma,\tau_1}(i_0)$
$$\mathscr{B}_{\perp}^{-1}\widehat{\mathcal{L}}_{\omega}\mathscr{B}_{\perp}=\omega\cdot\partial_{\varphi}+\mathscr{D}_0+\mathscr{R}_0+\mathtt{E}_n^1:=\mathscr{L}_0+\mathtt{E}_n^1,$$
where $\mathscr{D}_0=(\ii\mu_{j}^{0}(b,\omega))_{j\in\mathbb{S}_0^c}$ is a diagonal and reversible operator and $\mathscr{R}_0=\Pi_{\mathbb{S}_0}^{\perp}\mathscr{R}_0\Pi_{\mathbb{S}_0}^{\perp}$ is a real and reversible Toeplitz in time remainder integral operator in $OPS^{-\infty}$ in space and satisfying nice smallness properties. The term $\mathtt{E}_n^1$ plays a similar role as the previous one  $\mathtt{E}_n^0$. The next goal is to reduce the remainder term $\mathscr{R}_0.$ For this aim, we implement a KAM reduction process in the Toeplitz topology as in \cite[Prop. 6.5]{HR21}. The result is stated in Proposition \ref{reduction of the remainder term} and provides two operators $\Phi_{\infty}$ and $\mathscr{D}_{\infty}=(\ii \mu_{j}^{\infty}(b,\omega))_{j\in\mathbb{S}_0^c}$, with $\mathscr{D}_{\infty}$ a diagonal and reversible operator whose spectrum is described by
$$\forall j\in\mathbb{S}_0^c,\quad\mu_{j}^{\infty}(b,\omega)=\Omega_{j}(b)+j\big(V_{i_0}^{\infty}(b,\omega)-\tfrac{1}{2}\big)+r_{j}^{\infty}(b,\omega),$$
such that in the Cantor set
$$\mathscr{O}_{\infty,n}^{\gamma,\tau_1,\tau_2}(i_0)=\bigcap_{(l,j,j_0)\in\mathbb{Z}^{d}\times(\mathbb{S}_0^c)^2\atop\underset{(l,j)\neq(0,j_0)}{\langle l,j-j_0\rangle\leqslant N_n}}\Big\{(b,\omega)\in\mathcal{O}_{\infty,n}^{\gamma,\tau_1}(i_0)\quad\textnormal{s.t.}\quad\big|\omega\cdot l+\mu_{j}^{\infty}(b,\omega)-\mu_{j_0}^{\infty}(b,\omega)\big|>\tfrac{2\gamma\langle j-j_0\rangle}{\langle l\rangle^{\tau_2}}\Big\}$$
the following decomposition holds
$$\Phi_{\infty}^{-1}\mathscr{L}_0\Phi_{\infty}=\omega\cdot\partial_{\varphi}+\mathscr{D}_{\infty}+\mathtt{E}_n^2:=\mathscr{L}_{\infty}+\mathtt{E}_n^2.$$
Now, we can invert the operator $\mathscr{L}_{\infty}$ when  the parameters are restricted to the Cantor set
$$\Lambda_{\infty,n}^{\gamma,\tau_1}(i_0)=\bigcap_{(l,j)\in\mathbb{Z}^{d }\times\mathbb{S}_{0}^{c}\atop |l|\leqslant N_{n}}\Big\{(b,\omega)\in\mathcal{O}\quad\textnormal{s.t.}\quad\big|\omega\cdot l+\mu_{j}^{\infty}(b,\omega)\big|>\tfrac{\gamma\langle j\rangle}{\langle l\rangle^{\tau_1}}\Big\}.$$
Therefore, we are able to construct an approximate right inverse of $\widehat{\mathcal{L}}_{\omega}$ in  the Cantor set
$$\mathcal{G}_{n}^{\gamma}(i_0)=\mathcal{O}_{\infty,n}^{\gamma,\tau_1}(i_0)\cap\mathscr{O}_{\infty,n}^{\gamma,\tau_1,\tau_2}(i_0)\cap\Lambda_{\infty,n}^{\gamma,\tau_1}(i_0).$$
We refer to Proposition \ref{inversion of the linearized operator in the normal directions} for more details. Now we can implement a Nash-Moser scheme in a similar way to \cite{BM18,HHM21,HR21} to find a solution $(b,\omega)\mapsto(i_{\infty}(b,\omega),\alpha_{\infty}(b,\omega))$ to the equation $\mathcal{F}(i,\alpha,b,\omega,\varepsilon)=0$ provided that the parameters $(b,\omega)$ are selected among a Cantor set $\mathcal{G}_{\infty}^{\gamma}$ which is constructed as the intersection of all the Cantor sets appearing in the scheme to invert at each step the linearized operator. To find a  solution to the original problem we construct a frequency curve $b\mapsto\omega(b,\varepsilon)$ implicitly defined by solving the equation 
$$\alpha_{\infty}(b,\omega(b,\varepsilon))=-\omega_{\textnormal{Eq}}(b).$$
Hence, we obtain the desired result for any value of $b$ in the Cantor set
$$\mathcal{C}_{\infty}^{\varepsilon}=\Big\{b\in(b_0,b_1)\quad\textnormal{s.t.}\quad(b,\omega(b,\varepsilon))\in\mathcal{G}_{\infty}^{\gamma}\Big\}.$$
Then, it remains to check that this set is non-trivial. This is done by estimating its measure using perturbed Rüssemann conditions from the equilibrium. In Proposition \ref{lem-meas-es1}, we find a lower bound for the measure of $\mathcal{C}_{\infty}^{\varepsilon},$ namely
$$|\mathcal{C}_{\infty}^{\varepsilon}|\geqslant(b_1-b_0)-C\varepsilon^{\delta}\quad\textnormal{for some }\delta=\delta(q_0,d,\tau_1,\tau_2)>0.$$

\paragraph{Notations.}
Along this paper we shall make use of the following parameters and sets.
	\begin{enumerate}[label=\textbullet]
	\item We denote by 
	$$\mathbb{N}:=\{0,1,\cdots\},\qquad \mathbb{Z}:=\{\cdots,-1,0,1,\cdots\}$$ the set of natural numbers and the set of integers, respectively, and   we set 
	$$\mathbb{N}^*:=\mathbb{N}\backslash\{0\},\qquad \mathbb{Z}^*:=\mathbb{Z}\backslash\{0\}.$$
	
	 \item The integer $d$ is the number of excited frequencies that will generate the  quasi-periodic solutions. This is the dimension of the space where lies the  frequency vector  $\omega\in\mathbb{R}^{d},$ that will be a perturbation of the equilibrium frequency ${\omega}_{\textnormal{Eq}}(b)$, defined by \eqref{def freq vec eqb}.
	   \item The real numbers $b_0$ and $b_1$ are fixed such that
        $$0<b_0<b_1<1.$$
        The parameter $b$ is the radius of the disc corresponding the the equilibrium state and  lies in the interval $(b_{0},b_{1}).$ However at the end it  will belong to a Cantor set for which invariant torus can be constructed.
        \item Since the application $b\mapsto\omega_{\textnormal{Eq}}(b)$ is continuous then $\omega_{\textnormal{Eq}}\left([b_{0},b_{1}]\right)$ is a compact subset of $\mathbb{R}^{d}.$ In particular,  there exists  $R_{0}>0$ such that
$$\omega_{\textnormal{Eq}}\left((b_{0},b_{1})\right)\subset\mathscr{U}:=B(0,R_{0}).$$
We also consider $\mathcal{O}$ the open bounded subset of $\mathbb{R}^{d+1}$ defined by
\begin{equation}\label{def initial parameters setb}
	\mathcal{O}:=(b_{0},b_{1})\times \mathscr{U}.
\end{equation}
\item  The integer  $q$ is the index of regularity of our functions/operators with respect to the parameters $b$ and $\omega$.  It is  chosen  as
$$q:=q_0+1,$$
with $q_0$ being the non-degeneracy index provided by Lemma \ref{lemma transversalityb}, which only depends on the
linear unperturbed frequencies. 
\item  The real parameters $\gamma$, $\tau_{1}$ and $\tau_{2}$ satisfy
		\begin{equation}\label{setting tau1 and tau2}
	0<\gamma<1,\quad \tau_{2}>\tau_{1}>d
\end{equation}
 and   are linked to  different Diophantine conditions, see for instance Propositions \ref{reduction of the transport part} and \ref{reduction of the remainder term}. The choice of $\tau_{1}$ and $\tau_{2}$ will be finally fixed in \eqref{choice tau 1 tau2 upsilon}. We point out that the parameter  $\gamma$ appears in the weighted Sobolev spaces and will be fixed in Proposition \ref{Nash-Moser} with respect to  the rescaling parameter $\varepsilon$ giving  the smallness  condition of the solutions around the equilibrium.	
		\item The real number $s$ is the Sobolev index  regularity of the functions in the variables $\varphi$ and $\theta.$ The index $s$ will vary between $s_{0}$ and $S$,
		\begin{equation}\label{initial parameter condition}
	 S\geqslant s\geqslant s_{0}>\tfrac{d+1}{2}+q+2,
\end{equation}
where $S$ is a fixed large number. 
\item  For a given continuous complex function $f : \mathbb{T}^{n}\to \mathbb{C}$, $n\geqslant 1$, $\mathbb{T}:=\mathbb{R}/2\pi\mathbb{Z}$, we denote by
 \begin{equation}\label{Convention1}
\int_{\mathbb{T}^{n}}f(x)dx:=\frac{1}{(2\pi)^{n}}\bigintssss_{[0,2\pi]^{n}}f(x)dx.
\end{equation}
\item We denote by  $(\mathbf{e}_{l,j})_{(l,j)\in\mathbb{Z}^{d}\times\mathbb{Z}}$ the Hilbert basis of the  $L^{2}(\mathbb{T}^{d+1},\mathbb{C})$,
$$\mathbf{e}_{l,j}(\varphi,\theta):=e^{\ii(l\cdot\varphi+j\theta)},$$
and we endow this space   with the Hermitian inner  product
\begin{equation}\label{inter-product}
\begin{aligned}
	\big\langle\rho_{1},\rho_{2}\big\rangle_{L^{2}(\mathbb{T}^{d+1},\mathbb{C})}&=\bigintssss_{\mathbb{T}^{d+1}}\rho_{1}(\varphi,\theta)\overline{\rho_{2}(\varphi,\theta)}d\varphi d\theta. 
\end{aligned}
\end{equation}
\end{enumerate}

\section{Hamiltonian reformulation}
In this section, we shall write down the equation governing the boundary dynamics. For that purpose, we shall consider a polar parametrization of the boundary and see that the radial deformation in there is subject to a non-linear and non-local Hamiltonian equation of transport type.
\subsection{Equation satisfied by the radial deformation of the patch}
Given $b\in (0,1),$ consider a vortex patch $t\mapsto \mathbf{1}_{D_t}$,  near the Rankine vortex   $\mathbf{1}_{b\mathbb{D}}$ with a smooth boundary whose polar parametrization is given by
\begin{equation}\label{parametrization}
z(t,\theta)=\left(b^{2}+2r(t,\theta)\right)^{\frac{1}{2}}e^{\ii\theta},
\end{equation} 
where $r$ is the radial deformation assumed to be small, namely $|r(t,\theta)|\ll 1.$
 In the sequel,  we shall frequently  use the following notations
\begin{align}
        R(b,t,\theta)&:=\left(b^{2}+2r(t,\theta)\right)^{\frac{1}{2}}, \label{definition of R}\\
	A_r(b,t,\theta,\eta)&:=\left|R(b,t,\theta)e^{\ii\theta}-R(b,t,\eta)e^{\ii\eta}\right|,\label{Ar}\\
	B_r(b,t,\theta,\eta)&:=\left|1-R(b,t,\theta)R(b,t,\eta)e^{\ii(\eta-\theta)}\right|.\label{Br}
\end{align}
The equation satisfied by  $r$ is given by the following lemma.
\begin{lem}\label{lem eq ED r}
	For short time $T>0,$ the radial deformation $r$, defined through \eqref{definition of R}, satisfies the following nonlinear and nonlocal transport PDE:
	\begin{equation}\label{ED eq r}
		\forall (t,\theta)\in[0,T]\times\mathbb{T},\quad\partial_{t}r(t,\theta)+F_{b}[r](t,\theta)=0,
	\end{equation}
where
\begin{align}
        F_b[r]&:=-F_b^0[r]-F_b^1[r]+F_b^2[r], \label{Fb}\\
	F_b^0[r]&:=\tfrac{1}{2}\partial_{\theta}r(t,\theta)\int_{\mathbb{T}}\tfrac{R^2(b,t,\eta)}{R^2(b,t,\theta)}d\eta,\label{Fb0}\\
	F_b^1[r]&:=\int_{\mathbb{T}}\log\big(A_r(b,t,\theta,\eta)\big)\partial_{\theta\eta}^2\Big(R(b,t,\theta)R(b,t,\eta)\sin(\eta-\theta)\Big)d\eta,\label{Fb1}\\
	F_b^2[r]&:=\int_{\mathbb{T}}\log\big(B_r(b,t,\theta,\eta)\big)\partial_{\theta\eta}^2\Big(\tfrac{R(b,t,\eta)}{R(b,t,\theta)}\sin(\eta-\theta)\Big)d\eta,\label{Fb2}
\end{align}
where  $R(b,t,\theta)$, $A_r(b,t,\theta,\eta)$ and $B_r(b,t,\theta,\eta)$ are given by \eqref{definition of R}-\eqref{Br}.
\end{lem}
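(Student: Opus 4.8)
The plan is to derive the evolution equation for the patch boundary directly from the vorticity transport equation, then transfer it to the radial deformation $r$ via the polar parametrization \eqref{parametrization}. Since the patch $\mathbf{1}_{D_t}$ evolves by transport along the divergence-free velocity field $\mathbf{v}=\nabla^{\perp}\boldsymbol{\Psi}$, the boundary $\partial D_t$ is advected, which translates into the kinematic condition that the normal velocity of the curve $\theta\mapsto z(t,\theta)$ equals the normal component of $\mathbf{v}$ evaluated on the boundary. Writing $z(t,\theta)=R(b,t,\theta)e^{\ii\theta}$ with $R=(b^2+2r)^{1/2}$, one has $\partial_\theta z = (\partial_\theta R + \ii R)e^{\ii\theta}$ and $\partial_t z = \frac{\partial_t r}{R}e^{\ii\theta}$, so the kinematic equation $\mathrm{Im}\big(\overline{\partial_t z}\,\partial_\theta z\big) = \mathrm{Im}\big(\overline{\mathbf{v}(t,z)}\,\partial_\theta z\big)$ (equivalently $\partial_t z\wedge \partial_\theta z = \mathbf{v}\wedge\partial_\theta z$, which kills the tangential reparametrization ambiguity) yields, after simplification, $\partial_t r = -\big(\mathbf{v}(t,z)\cdot e^{\ii\theta}\big)(\partial_\theta R)\,R - \ldots$; more cleanly, it gives $\partial_t r(t,\theta)$ in terms of $R^2 \,\mathbf{v}\cdot(\text{something})$. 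The cleanest route is to use the stream function: on the patch boundary $\boldsymbol{\Psi}(t,z(t,\theta))$ and note $\partial_\theta\big[\boldsymbol{\Psi}(t,z(t,\theta))\big]=\nabla\boldsymbol{\Psi}\cdot\partial_\theta z = -\mathbf{v}^{\perp}\cdot\partial_\theta z = \mathbf{v}\wedge\partial_\theta z$, so the kinematic condition becomes $\partial_t r(t,\theta) = -\partial_\theta\big[\boldsymbol{\Psi}(t,z(t,\theta))\big] + (\text{a }\theta\text{-independent constant})$, and the constant is fixed by $\int_{\mathbb T}\partial_t r\,d\theta = \frac{d}{dt}\int_{\mathbb T} r\,d\theta$, which by conservation of area (the flow is measure-preserving and $|D_t|$ is constant) must be computed carefully — actually $\int r\, d\theta$ is conserved so the constant matches the mean of $\partial_\theta[\Psi\circ z]$, which is zero; hence $\partial_t r = -\partial_\theta\big[\boldsymbol{\Psi}(t,z(t,\theta))\big]$.

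The second and main step is to insert the Green-function representation \eqref{expression of the stream function} for $\boldsymbol{\Psi}$ restricted to a patch $\boldsymbol{\omega}=\mathbf{1}_{D_t}$, parametrize $D_t$ in polar coordinates $\xi = \rho e^{\ii\eta}$ with $0\le \rho\le R(b,t,\eta)$, and carry out the radial integral $\int_0^{R(b,t,\eta)}(\cdots)\rho\,d\rho$ explicitly. One has
$$
\boldsymbol{\Psi}(t,w)=\tfrac{1}{4\pi}\int_{\mathbb T}\!\!\int_0^{R(b,t,\eta)}\Big(\log|w-\rho e^{\ii\eta}|^2-\log|1-w\rho e^{-\ii\eta}|^2\Big)\rho\,d\rho\,d\eta,
$$
and after the radial integration and then applying $-\partial_\theta$ with $w=R(b,t,\theta)e^{\ii\theta}$, an integration by parts in $\eta$ moves derivatives onto the $R$-dependent factors, converting $\log|w-\rho e^{\ii\eta}|$-type terms into the kernels $\log A_r$ and $\log B_r$ and producing the $\partial^2_{\theta\eta}$ structure in \eqref{Fb1}--\eqref{Fb2}. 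The "self-interaction" diagonal term $F_b^0$ comes from the boundary term of the radial antiderivative at $\rho=R(b,t,\eta)$ combined with the $\partial_\theta$ hitting the outer $w=R(b,t,\theta)e^{\ii\theta}$; a short computation with $\partial_\theta\log|R(b,t,\theta)e^{\ii\theta}-R(b,t,\eta)e^{\ii\eta}|^2$ and the identity $\int_{\mathbb T}\partial_\theta(\ldots)d\eta$ isolates the $\tfrac12\partial_\theta r\int_{\mathbb T}\tfrac{R^2(b,t,\eta)}{R^2(b,t,\theta)}d\eta$ piece.

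The delicate part is handling the singularity of $\log|w-\xi|$ as $\xi\to w$, i.e. as the interior point approaches the boundary point $z(t,\theta)$: the radial integral $\int_0^{R}\log|Re^{\ii\theta}-\rho e^{\ii\eta}|\,\rho\,d\rho$ has an integrable singularity only when $\eta\to\theta$, and one must justify the integrations by parts and the exchange of $\partial_\theta$ with the integral near this diagonal. The standard device is to symmetrize/regularize: write $\log A_r(b,t,\theta,\eta)=\log\big(2\,|\!\sin\tfrac{\eta-\theta}{2}|\big)+\log\big(A_r/(2|\!\sin\tfrac{\eta-\theta}{2}|)\big)$, where the second factor is smooth (bounded below away from $0$), and treat the leading logarithmic part — which is exactly the periodic Hilbert-transform kernel after a $\partial_\theta$ — separately, using its known mapping properties; the $\partial^2_{\theta\eta}$ appearing under the integral should be read as: first integrate by parts once in $\eta$ to land a single $\partial_\eta$ on $\log A_r$ (which is fine since $\partial_\eta\log A_r \sim \cot\tfrac{\eta-\theta}{2}$ is a Cauchy-type kernel), and the displayed form \eqref{Fb1} is the result of a further formal integration by parts that is legitimate because the boundary terms vanish by $2\pi$-periodicity. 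For short time $T>0$ the patch boundary stays smooth (by \cite{D99}, the regularity persists) and $R$ stays bounded away from $0$ and from $1$, so all these manipulations are valid; I would state this smoothness/non-degeneracy as the running hypothesis and reference it. The term $F_b^2$ (coming from $\log B_r$) carries no singularity at all since $R(b,t,\theta)R(b,t,\eta)<1$, so it is harmless and only requires tracking the $\partial_\theta$ acting through the $w$-dependence, giving the $\tfrac{R(b,t,\eta)}{R(b,t,\theta)}$ prefactor in \eqref{Fb2}. Assembling the three pieces with the correct signs recovers \eqref{ED eq r}--\eqref{Fb2}.
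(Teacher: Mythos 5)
Your first step coincides with the paper's starting point: the normal-component (wedge) form of the patch equation together with the polar ansatz gives exactly \eqref{ED eq r-Psi}, i.e. $\partial_t r+\partial_\theta\big[\boldsymbol{\Psi}(t,z(t,\theta))\big]=0$, and your derivation of it is fine. Two small remarks there: since $\mathbf{v}=\nabla^{\perp}\boldsymbol{\Psi}$, one has $\nabla\boldsymbol{\Psi}\cdot\partial_\theta z=-\,\mathbf{v}\wedge\partial_\theta z$, not $+\,\mathbf{v}\wedge\partial_\theta z$, so your two displayed identities are mutually inconsistent (the relation you finally use is nonetheless the correct one); and the digression about a ``$\theta$-independent constant'' fixed by conservation of area is superfluous, because the wedge formulation already yields the pointwise identity with no ambiguity.

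The genuine gap is in the second step, which is the entire content of the lemma: the formulas \eqref{Fb0}--\eqref{Fb2} are announced but never derived. Your route (compute the radial integral in \eqref{expression of the stream function} explicitly, then apply $\partial_\theta$ and integrate by parts in $\eta$) is not implausible, but the two sentences standing in for the computation skip exactly the points where the work lies. First, the antiderivative of $\rho\mapsto\rho\log|w-\rho e^{\ii\eta}|^{2}$ produces, besides the upper-limit term $\tfrac12\big(R^2(b,t,\eta)-w^2e^{-2\ii\eta}\big)\log\big(R(b,t,\eta)-we^{-\ii\eta}\big)+\mathrm{c.c.}$ (which indeed contains $\log A_r$ once $w=z(t,\theta)$), further contributions: polynomial terms evaluated at the limits and lower-limit logarithms of the type $w^{2}e^{-2\ii\eta}\log(-we^{-\ii\eta})$, all of which depend on $\theta$ through $w=z(t,\theta)$ and whose $\theta$-derivatives must be tracked; attributing $F_b^{0}$ to a single source and claiming ``a short computation isolates'' the coefficient $\tfrac12\partial_\theta r\int_{\mathbb{T}}R^{2}(b,t,\eta)R^{-2}(b,t,\theta)\,d\eta$ is precisely the cancellation that needs to be exhibited, not assumed. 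Second, and more seriously, the integration by parts in $\eta$ that is supposed to generate the $\partial^{2}_{\theta\eta}\big(R(b,t,\theta)R(b,t,\eta)\sin(\eta-\theta)\big)$ and $\partial^{2}_{\theta\eta}\big(\tfrac{R(b,t,\eta)}{R(b,t,\theta)}\sin(\eta-\theta)\big)$ structures does not go through as stated: after $\partial_\theta$ acts, the singular pieces are Cauchy-type kernels multiplied by prefactors involving $\partial_\theta z(t,\theta)$ and the quadratic factors produced by the radial integration, and these are not of the form (exact $\eta$-derivative)$\,\times\,\partial_\eta\log A_r$; one needs an algebraic identity to recombine them — this is the role played in the paper by the manipulation preceding \eqref{dw-psi00} — and you neither state nor prove such an identity (the $\tfrac{R(b,t,\eta)}{R(b,t,\theta)}$ prefactor in \eqref{Fb2} comes precisely from it, not from merely ``tracking $\partial_\theta$ through the $w$-dependence''). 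Note that the paper avoids the radial integration altogether: it converts the area integral into a contour integral via the complex Stokes theorem (after an $\epsilon$-regularization of the logarithm), so the measure $\partial_\eta z(t,\eta)\,d\eta$ supplies the $\eta$-derivative for free, and then $\partial_\theta\boldsymbol{\Psi}(t,z(t,\theta))=2\,\textnormal{Re}\big(\partial_{\overline{w}}\boldsymbol{\Psi}\,\overline{\partial_\theta z}\big)$ gives $\textnormal{Im}\big(\partial_\eta z(t,\eta)\,\partial_\theta\overline{z}(t,\theta)\big)=\partial^{2}_{\theta\eta}\textnormal{Im}\big(z(t,\eta)\overline{z}(t,\theta)\big)$ directly. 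Until you carry out the radial integration, account for all resulting terms, and establish the identity behind your integration by parts, the proposal remains a plan rather than a proof.
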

\begin{proof}
We start with the vortex patch equation. Denoting $\mathbf{n}$ the outward normal vector to the boundary of the patch, the evolution equation of the boundary can be written as
  $$\partial_{t}z(t,\theta)\cdot \mathbf{n}(t,z(t,\theta))=-\partial_{\theta}\boldsymbol{\Psi}(t,z(t,\theta)).$$
  For a detailed proof  see for instance \cite[p.174]{HMV13}.
We shall identify $\mathbb{C}$ with $\mathbb{R}^{2}.$ In particular, the Euclidean structure of $\mathbb{R}^{2}$ is seen in the complex sense through the usual inner  product defined for all $z_{1}=a_{1}+\ii\, b_{1}\in\mathbb{C}$ and $z_{2}=a_{2}+\ii\, b_{2}\in\mathbb{C}$ by
\begin{equation}\label{definition of the scalar product on R2}
	z_{1}\cdot z_{2}:=\langle z_{1},z_{2}\rangle_{\mathbb{R}^{2}}=\mbox{Re}\left(z_{1}\overline{z_{2}}\right)=a_{1}a_{2}+b_{1}b_{2}.
\end{equation}
Since $\mathbf{n}(t,z(t,\theta))=-\ii\, \partial_{\theta}z(t,\theta)$ (up to a real constant of renormalization) then the complex formulation of the vortex patch equation is given by 
$$
	\mbox{Im}\left(\partial_{t}z(t,\theta)\overline{\partial_{\theta}z(t,\theta)}\right)=\partial_{\theta}\boldsymbol{\Psi}(t,z(t,\theta)).
$$
Using the parametrization \eqref{parametrization}, one easily checks that
$$\mbox{Im}\left(\partial_{t}z(t,\theta)\overline{\partial_{\theta}z(t,\theta)}\right)=-\partial_{t}r(t,\theta).$$
Thus, the vortex patch equation writes  in the following way
\begin{equation}\label{ED eq r-Psi}
	\partial_{t}r(t,\theta)+\partial_{\theta}\boldsymbol{\Psi}(t,z(t,\theta))=0.
\end{equation}
Now we shall compute $\partial_{\theta}\boldsymbol{\Psi}(t,z(t,\theta))$. Using complex notations, we have 
\begin{align}\label{partial-theta-psi}
	\partial_{\theta}\boldsymbol{\Psi}(t,z(t,\theta))&=\nabla\boldsymbol{\Psi}(t,z(t,\theta))\cdot\partial_{\theta}z(t,\theta)=2\textnormal{Re}\left(\partial_{\overline{w}}\mathbf{\Psi}(t,z(t,\theta))\overline{\partial_{\theta}z(t,\theta)}\right).
\end{align}
Recall, from \eqref{expression of the stream function}, that the stream function $\boldsymbol{\Psi}$ writes
\begin{equation*}
\forall w\in \mathbb{C},\quad 	\boldsymbol{\Psi}(t,w)=\tfrac{1}{4\pi}\int_{D_t}\log\big(|w-\xi|^2\big)dA(\xi)-\tfrac{1}{4\pi}\int_{D_t}\log\big(|\overline{\xi}w-1|^{2}\big)dA(\xi).
	\end{equation*}
Let $\epsilon>0$. We set
$$
f_\epsilon(\xi,\overline{\xi}):=(\overline{\xi}-\overline{w})\Big[\log\left(|\xi-w|^{2}+\epsilon\right)-1\Big]-\left(\overline{\xi}-\tfrac{1}{w}\right)\Big[\log\left(|1-w\overline{\xi}|^{2}\right)-1\Big].
$$
Then	
$$
\partial_{\overline{\xi}} f_\epsilon(\xi,\overline{\xi})=\log\big(|w-\xi|^2+\epsilon\big)-\frac{\epsilon}{|w-\xi|^2+\epsilon}-\log\big(|\overline{\xi}w-1|^{2}\big).
$$
Using  the complex version of Stokes' Theorem, 
$$
	2\ii\int_{D}\partial_{\overline{\xi}}f_\epsilon(\xi,\overline{\xi})dA(\xi)=\int_{\partial D}f_\epsilon(\xi,\overline{\xi})d\xi,
$$
then passing to the limit as  $\epsilon$ goes to $0$ we obtain
$$
\boldsymbol{\Psi}(t,w)=\tfrac{1}{8\ii\pi}\int_{\partial D_{t}}(\overline{\xi}-\overline{w})\Big[\log\left(|\xi-w|^{2}\right)-1\Big]d\xi-\tfrac{1}{8\ii\pi}\int_{\partial D_{t}}\left(\overline{\xi}-\tfrac{1}{w}\right)\Big[\log\left(|1-w\overline{\xi}|^{2}\right)-1\Big]d\xi.
$$
Performing the change of variables $\xi=z(t,\eta)$, given by \eqref{parametrization}, and using the notation \eqref{Convention1}  we can write
\begin{equation*}
\begin{aligned}
	\mathbf{\Psi}(t,w) & = \tfrac{1}{4\ii}\int_{\mathbb{T}}(\overline{z}(t,\eta)-\overline{w})\Big[\log\left(|z(t,\eta)-w|^{2}\right)-1\Big]\partial_{\eta}z(t,\eta)d\eta\\
	&  \quad -\tfrac{1}{4\ii}\int_{\mathbb{T}}\left(\overline{z}(t,\eta)-\tfrac{1}{w}\right)\Big[\log\left(|1-w\overline{z}(t,\eta)|^{2}\right)-1\Big]\partial_{\eta}z(t,\eta)d\eta.
\end{aligned}
\end{equation*}
It follows that
\begin{equation}\label{partial-w-psi}
\begin{aligned}
	\partial_{\overline{w}}\mathbf{\Psi}(t,w) & = -\tfrac{1}{4\ii}\int_{\mathbb{T}}\log\left(|z(t,\eta)-w|^{2}\right)\partial_{\eta}z(t,\eta)d\eta \\ &\quad -\tfrac{1}{4\ii}\int_{\mathbb{T}}\big(\overline{z}(t,\eta)-\frac{1}{w}\big)\Big[\frac{\frac{1}{\overline{w}^2}}{{z}(t,\eta)-\frac{1}{\overline{w}}}+\frac{1}{\overline{w}}\Big]\partial_{\eta}z(t,\eta)d\eta.
\end{aligned}
\end{equation}
Direct computations lead to 
\begin{align*}
\Big[\frac{\overline{z}(t,\eta)-\frac{1}{w}}{{z}(t,\eta)-\frac{1}{\overline{w}}}\Big]\partial_{\eta}z(t,\eta)=\partial_\eta\Big[\log\big( \big|{z}(t,\eta)-\frac{1}{\overline{w}}\big|^2\big)+\log\big( |{w}|^2\big)\Big]\Big(\overline{z}(t,\eta)-\frac{1}{w}\Big)-\partial_\eta \overline{z}(t,\eta).
\end{align*}
Inserting this identity into \eqref{partial-w-psi},  integrating by parts, using the morphism property of the logarithm and the periodicity  imply
\begin{equation}\label{dw-psi00}
\begin{aligned}
\partial_{\overline{w}}\mathbf{\Psi}(t,w) & = -\tfrac{1}{4\ii}\int_{\mathbb{T}}\log\left(|z(t,\eta)-w|^{2}\right)\partial_{\eta}z(t,\eta)d\eta
\\ &\quad +\tfrac{1}{4\ii}\int_{\mathbb{T}}\log\left(|1-\overline{w}{z}(t,\eta)|^2\right)\partial_\eta\overline{z}(t,\eta)\frac{1}{\overline{w}^2}d\eta
\\ &\quad-\tfrac{1}{4\ii}\int_{\mathbb{T}}\overline{z}(t,\eta)\partial_\eta {z}(t,\eta)\frac{1}{\overline{w}}d\eta.
\end{aligned}
\end{equation}
As a consequence,  one gets
\begin{align*}
\nonumber 2\textnormal{Re}\Big(\partial_{\overline{w}}\mathbf{\Psi}(t,z(t,\theta)) \partial_\theta\overline{z}(t,\theta)\Big)& = -\tfrac{1}{2}\int_{\mathbb{T}}\log\left(|z(t,\eta)-z(t,\theta)|^{2}\right)\textnormal{Im}\big(\partial_{\eta} z(t,\eta) \partial_{\theta}\overline{z}(t,\theta)\big)d\eta 
\\ &\quad
 +\tfrac{1}{2}\int_{\mathbb{T}}\log\left(|1-\overline{z}(t,\theta){z}(t,\eta)|^2\right) \textnormal{Im}\bigg(\partial_\eta\overline{z}(t,\eta)\frac{\partial_\theta \overline{z}(t,\theta)}{\overline{z}(t,\theta)^2}\bigg)d\eta
\\ &\quad-\tfrac{1}{2}\int_{\mathbb{T}}\textnormal{Im}\bigg(\overline{z}(t,\eta)\partial_\eta{z}(t,\eta) \frac{\partial_\theta\overline{z}(t,\theta)}{\overline{z}(t,\theta)}\bigg)d\eta.
\end{align*}
That is, by \eqref{partial-theta-psi},
\begin{equation*}
\begin{aligned}
\nonumber \partial_{\theta}\boldsymbol{\Psi}(t,z(t,\theta))& = -\tfrac{1}{2}\int_{\mathbb{T}}\log\left(|z(t,\eta)-z(t,\theta)|^{2}\right)\partial^2_{\theta\eta}\textnormal{Im}\big(z(t,\eta) \overline{z}(t,\theta)\big)d\eta 
\\ &\quad
 +\tfrac{1}{2}\int_{\mathbb{T}}\log\left(|1-\overline{z}(t,\theta){z}(t,\eta)|^2\right)\partial^2_{\theta\eta} \textnormal{Im}\bigg(\frac{{z}(t,\eta)}{{z}(t,\theta)}\bigg)d\eta
\\ &\quad-\tfrac{1}{2}\int_{\mathbb{T}}\textnormal{Im}\bigg(\overline{z}(t,\eta)\partial_\eta {z}(t,\eta) \frac{\partial_\theta\overline{z}(t,\theta)}{\overline{z}(t,\theta)}\bigg)d\eta.
\end{aligned}
\end{equation*}
From \eqref{parametrization} we immediately get
\begin{align*}
\textnormal{Im}\big(z(t,\eta) \overline{z}(t,\theta)\big)&=R(b,t,\theta)R(b,t,\eta)\sin(\eta-\theta), \\
 \textnormal{Im}\bigg(\frac{{z}(t,\eta)}{{z}(t,\theta)}\bigg)&=\frac{R(b,t,\theta)}{R(b,t,\eta)}\sin(\eta-\theta),
\\
\textnormal{Im}\bigg(\overline{z}(t,\eta)\partial_\eta {z}(t,\eta) \frac{\partial_\theta\overline{z}(t,\theta)}{\overline{z}(t,\theta)}\bigg)&=\frac{R^2(b,t,\eta)}{R^2(b,t,\theta)}\partial_\theta r(t,\theta)-\partial_\eta r(t,\eta).
\end{align*}
Combining the  last four  identities with \eqref{ED eq r-Psi} and using the notations \eqref{parametrization}-\eqref{Br} we conclude the desired result.
\end{proof}

We look for time quasi-periodic solutions of \eqref{ED eq r}; that are functions in the form 
$$\widehat{r}(t,\theta)=r(\omega t,\theta),$$
where  $r=r(\varphi,\theta):\mathbb{T}^{d+1}\rightarrow\mathbb{R},$ $\omega\in\mathbb{R}^{d}$, $d\in\mathbb{N}^{*}.$ With this ansatz, the equation \eqref{ED eq r} becomes 
\begin{equation}\label{equation with quasi-periodic ansatz}
\omega\cdot\partial_{\varphi}r(\varphi,\theta)+F_{b}[r](\varphi,\theta)=0.
\end{equation}

\subsection{Hamiltonian structure}
In this section, we show that the contour dynamics equation \eqref{ED eq r} has a Hamiltonian structure related to the kinetic energy
\begin{equation}\label{def:energy}
E(r)(t):=\frac{1}{2\pi}\int_{D_{t}}\mathbf{\Psi}(t,z)dA(z),
\end{equation}
 which is a conserved quantity for \eqref{Euler velocity-vorticity}. It is well-known that the bidimensional Euler equations admits a Hamiltonian structure and we shall see here that such structure still persists at the level of the boundary equation, which is a stronger formulation.

\begin{prop}\label{prop Ham eq r}
The equation \eqref{ED eq r} is a Hamiltonian equation in the form 
\begin{equation}\label{Ham eq ED r}
\partial_{t}r=\partial_{\theta}\nabla H(r),\quad \textnormal{where}\quad H(r)=-\tfrac{1}{2}E(r),
\end{equation}
and $\nabla$ is the $L_{\theta}^{2}(\mathbb{T})$-gradient associated with the $L_\theta^{2}(\mathbb{T})$ normalized inner product 
$$\big\langle \rho_{1}, \rho_{2}\big\rangle_{L^{2}(\mathbb{T})}=\int_{\mathbb{T}}\rho_{1}(\theta)\rho_{2}(\theta)d\theta.$$
\end{prop}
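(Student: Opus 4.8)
The goal is to show that the contour dynamics equation \eqref{ED eq r} can be recast as $\partial_t r = \partial_\theta \nabla H(r)$ with $H(r) = -\tfrac12 E(r)$, where $E$ is the kinetic energy \eqref{def:energy}. In view of Lemma \ref{lem eq ED r}, this is equivalent to proving the identity
$$
\nabla\!\left(-\tfrac12 E(r)\right)(\theta) = \boldsymbol{\Psi}\big(t,z(t,\theta)\big),
$$
since then $\partial_\theta \nabla H(r) = \partial_\theta \boldsymbol{\Psi}(t,z(t,\theta)) = -F_b[r]$ by \eqref{ED eq r-Psi}–\eqref{Fb}. So the whole proof reduces to computing the $L^2_\theta(\mathbb{T})$-gradient of the functional $r\mapsto E(r)$ and checking it equals $-2\,\boldsymbol{\Psi}(t,z(t,\cdot))$.

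\textbf{First step: rewrite $E(r)$ as a boundary functional of $r$.} Starting from \eqref{def:energy} and inserting the expression \eqref{expression of the stream function} for $\boldsymbol{\Psi}$, I would convert the double area integral over $D_t\times D_t$ into a double boundary integral over $\partial D_t\times\partial D_t$ by the same complex Stokes' theorem computation already carried out in the proof of Lemma \ref{lem eq ED r} (introducing the regularization $|\xi-w|^2+\epsilon$ and passing to the limit, applied now in both variables). After the change of variables $\xi = z(t,\eta)$, $w = z(t,\sigma)$ via \eqref{parametrization}, this yields an explicit expression
$$
E(r)(t) = \tfrac12\!\int_{\mathbb{T}}\!\!\int_{\mathbb{T}} G\big(z(t,\sigma),z(t,\eta)\big)\,\partial_\sigma z(t,\sigma)\,\partial_\eta z(t,\eta)\,d\sigma\, d\eta
$$
for a suitable kernel $G$ built from $\log|z(t,\sigma)-z(t,\eta)|^2$ and $\log|1-\overline{z(t,\sigma)}z(t,\eta)|^2$, with everything expressed through $R(b,t,\cdot) = (b^2+2r(t,\cdot))^{1/2}$.

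\textbf{Second step: compute the variation.} I would perturb $r \rightsquigarrow r + s\rho$, differentiate $E$ at $s=0$, and collect terms. The key elementary observation is that $\partial_r R = R^{-1}$, so $\delta z = R^{-1}\rho\, e^{\ii\theta}$, and more usefully $\delta(z\overline z) = \delta(R^2) = 2\rho$; this is exactly why the polar ansatz \eqref{parametrization} with the quadratic $R^2 = b^2+2r$ is chosen — it linearizes the area element. Using the symmetry of the kernel $G$ in its two arguments and integrating by parts in $\sigma$ and $\eta$ to move derivatives off the $z$'s, the directional derivative $\frac{d}{ds}\big|_{s=0}E(r+s\rho)$ collapses to a single integral $\int_{\mathbb{T}} \big(\cdots\big)\rho(\theta)\,d\theta$, and recognizing the bracket $(\cdots)$ as $2\,\boldsymbol{\Psi}(t,z(t,\theta))$ (again via the Stokes-reduced form \eqref{dw-psi00} of $\boldsymbol\Psi$ used in Lemma \ref{lem eq ED r}) gives $\nabla E(r)(\theta) = 2\,\boldsymbol\Psi(t,z(t,\theta))$, whence $\nabla H(r) = -\boldsymbol\Psi(t,z(t,\cdot))$ and $\partial_\theta\nabla H(r) = -\partial_\theta\boldsymbol\Psi(t,z(t,\theta)) = \partial_t r$ by \eqref{ED eq r-Psi}. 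Finally one notes that $E(r)$ being conserved along \eqref{Euler velocity-vorticity} is the classical fact that kinetic energy is a conserved quantity, which also follows abstractly from the Hamiltonian form and the skew-adjointness of $\partial_\theta$.

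\textbf{Main obstacle.} The genuinely delicate point is the bookkeeping in the variational computation: the kernel $G$ is only logarithmically singular on the diagonal, so differentiating under the integral sign and the integrations by parts must be justified (the regularization argument from Lemma \ref{lem eq ED r} handles the singularity), and one must carefully track the terms coming from varying the measure $\partial_\sigma z\,\partial_\eta z$ versus those from varying the kernel $G$, using the $\sigma\leftrightarrow\eta$ symmetry to combine them into a factor of $2$. Matching the resulting boundary integrand with the expression \eqref{dw-psi00}–\eqref{ED eq r-Psi} for $\partial_\theta\boldsymbol\Psi(t,z(t,\theta))$ — in particular reproducing the three separate contributions (the planar $\log$, the Green-function correction $\log|1-\overline{w}z|$, and the lower-order term) — is the computational heart of the argument, but it is essentially a rerun of the manipulations already displayed in the proof of Lemma \ref{lem eq ED r}, now read variationally rather than dynamically.
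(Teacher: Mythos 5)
Your overall strategy coincides with the paper's: both reduce the proposition to the single identity $\nabla E(r)=2\,\boldsymbol{\Psi}(t,z(t,\cdot))$ and then conclude by comparing \eqref{Ham eq ED r} with \eqref{ED eq r-Psi}. Where you diverge is in how this gradient is computed. The paper never passes to a double boundary integral: it keeps $E$ in the polar (area) form obtained from \eqref{psi-polar}, namely an iterated integral $\int_{\mathbb{T}}\int_{\mathbb{T}}\int_0^{R(b,t,\theta)}\int_0^{R(b,t,\eta)}G\,\ell_2 d\ell_2\,\ell_1 d\ell_1\,d\theta\,d\eta$, so that the dependence on $r$ sits \emph{only} in the upper limits of the radial integrals; since $d_rR[\rho]=\rho/R$ and the measure contributes the factor $\ell_1=R$ at the boundary, the fundamental theorem of calculus plus the symmetry $G(w,\xi)=G(\xi,w)$ gives $d_rE(r)[\rho]=2\int_{\mathbb{T}}\rho\,\boldsymbol{\Psi}(t,Re^{\ii\theta})\,d\theta$ in one line, with no integration by parts and no diagonal-singularity issues. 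Your route — first converting $E$ to a double contour integral by the complex Stokes argument of Lemma \ref{lem eq ED r} and then varying kernel and measure, recombining by symmetry and integration by parts — is viable in principle (it is a shape-derivative computation and must reproduce the same boundary trace), but it is substantially heavier: the Stokes-reduced kernel involves the primitives $(\overline{\xi}-\overline{w})[\log|\cdot|^2-1]$ rather than the logarithms themselves, the $r$-dependence is then spread over both kernel arguments and the tangent factors $\partial_\sigma z\,\partial_\eta z$, and the decisive step — showing that after the integrations by parts the integrand collapses exactly to $2\,\boldsymbol{\Psi}(t,z(t,\theta))$ as given by \eqref{dw-psi00} — is precisely the computation you defer as the ``main obstacle.'' So your plan is sound but leaves its computational heart unexecuted, whereas the paper's choice of representation (exploiting that the ansatz \eqref{parametrization} puts all the $r$-dependence in the radial limits) makes that obstacle disappear entirely; if you carry out your version, the payoff is a self-contained boundary-integral derivation, but you gain nothing over the paper's argument in generality.
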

\begin{proof}
In polar coordinates, the stream function, given by \eqref{expression of the stream function}, at some point $w\in\mathbb{C}$  
writes 
\begin{align}\label{psi-polar}
	\mathbf{\Psi}(t,w)&=\int_{\mathbb{T}}\int_{0}^{R(b,t, \eta)}G\left(w,\ell_2e^{\ii \eta}\right)\ell_2d\ell_2  d\eta \quad{\rm with}\quad G\left(w,\xi\right):=\log\left(\left|\frac{w-\xi}{1-w\overline{\xi}}\right|\right)
\end{align}
and  kinetic energy $E$, in \eqref{def:energy}, reads
\begin{align*}
	E(r)(t)&=\int_{\mathbb{T}}\int_{\mathbb{T}}\int_{0}^{R(b,t, \theta)}\left(\int_{0}^{R(b,t, \eta)}G\left(\ell_1e^{\ii \theta},\ell_2e^{\ii \eta}\right)\ell_2d\ell_2\right)\ell_1 d\ell_1 d\theta d\eta.
\end{align*}
Differentiating with respect to  $r$ in the direction $\rho$ and using the symmetry of the kernel
$
G(w,\xi)=G(\xi,w)
$
 yields
\begin{align*}
	d_r E(r)[\rho](t)&=2\int_{\mathbb{T}}\rho(t,\theta)\left(\int_{\mathbb{T}}\int_{0}^{R(b,t, \eta)}G\left(R(b,t, \theta)e^{\ii \theta},\ell_2e^{\ii \eta}\right)\ell_2 d\ell_2 d\eta \right)d\theta\\ &=2\int_{\mathbb{T}}\rho(t,\theta)\mathbf{\Psi}\big(t,R(b,t, \theta) e^{\ii \theta} \big)d\theta.
\end{align*}
Since $d_r E(r)[\rho]=\langle\nabla E,\rho\rangle_{L^{2}(\mathbb{T})}$ then 
\begin{align}\label{nablaE=nablapsi}
	\nabla E(r)(t,\theta)&=2\mathbf{\Psi}\big(t,R(b,t, \theta) e^{\ii \theta} \big).
\end{align}
Finally, using \eqref{nablaE=nablapsi} and comparing \eqref{Ham eq ED r} with \eqref{ED eq r-Psi}  we conclude the desired result. This achieves the proof of Proposition \ref{prop Ham eq r}.
\end{proof}
Now, we shall present the symplectic structure associated with the Hamiltonian equation \eqref{Ham eq ED r}. This will be relevant later in Section \ref{subsec act-angl} when introducing the action-angle variables. We shall also explore some symmetry property for \eqref{Ham eq ED r}. Observe that this latter equation  implies
$$\frac{d}{dt}\int_{\mathbb{T}}r(t,\theta)d\theta=0.$$
Therefore, we will consider the phase space with zero average in the space variable, that is
$$L_0^2(\mathbb{T}):=\Big\{r=\sum_{j\in\mathbb{Z}^*}r_{j}e_j\quad\textnormal{s.t.}\quad r_{-j}=\overline{r_j}\quad\textnormal{and}\quad\|r\|_{L^2}^2:=\sum_{j\in\mathbb{Z}^*}|r_j|^2<+\infty\Big\},\qquad e_{j}(\theta):=e^{\ii j\theta}.$$
The equation \eqref{Ham eq ED r} induces on the phase space $L_{0}^{2}(\mathbb{T})$ a symplectic structure induced by the symplectic $2$-form
\begin{equation}\label{def symp-form}
	\mathcal{W}(r,h)=\int_{\mathbb{T}}\partial_{\theta}^{-1}r(\theta)h(\theta)d\theta\quad\mbox{ where }\quad\partial_{\theta}^{-1}r(\theta)=\sum_{j\in\mathbb{Z}^{*}}\frac{r_{j}}{\ii j}e^{\ii j\theta}.
\end{equation}
The  Hamiltonian vector field is $X_{H}(r)=\partial_{\theta}\nabla H(r)$ 
associated to the Hamiltonian $H$ is defined as the symplectic gradient of the Hamiltonian $H$ with respect to the symplectic $2$-form $\mathcal{W}$, namely
$$dH(r)[\cdot]=\mathcal{W}(X_{H}(r),\cdot).$$
Decomposing  into Fourier series
$$r=\sum_{j\in\mathbb{Z}^{*}}r_{j}e_j\quad\textnormal{with}\quad r_{-j}=\overline{r_{j}},$$
the symplectic form $\mathcal{W}$ becomes
$$\mathcal{W}(r,h)=\sum_{j\in\mathbb{Z}^{*}}\frac{1}{\ii j}r_{j}h_{-j}=\sum_{j\in\mathbb{Z}^{*}}\frac{1}{\ii j}r_{j}\overline{h_{j}},$$
that is
\begin{equation}\label{sympl ref}
	\mathcal{W}=\frac{1}{2}\sum_{j\in\mathbb{Z}^{*}}\frac{1}{\ii j}dr_{j}\wedge dr_{-j}=\sum_{j\in\mathbb{N}^{*}}\frac{1}{\ii j}dr_{j}\wedge dr_{-j},
\end{equation}
where for any $j\in\mathbb{Z}^*$ the exterior product $dr_j\wedge dr_{-j}$ is defined by
$$dr_j\wedge dr_{-j}(r,h)=r_jh_{-j}-r_{-j}h_j.$$
We shall now look at the reversibility  property of the equation \eqref{Ham eq ED r}.
We consider the involution $\mathscr{S}$ defined on the phase space $L_{0}^{2}(\mathbb{T})$ by 
\begin{equation}\label{definition of the involution mathcal S}
	(\mathscr{S}r)(\theta):=r(-\theta),
\end{equation}
which satisfies
\begin{equation}\label{properties of the involution mathcal S}
\mathscr{S}^{2}=\textnormal{Id}\quad \mbox{ and }\quad \partial_{\theta}\circ\mathscr{S}=-\mathscr{S}\circ\partial_{\theta}.
\end{equation}
Using the change of variables $\eta\mapsto-\eta$ and parity arguments, one gets 
$$F_{b}\circ\mathscr{S}=-\mathscr{S}\circ F_{b},$$
where $F_{b}$ is given by \eqref{Fb}.
Then we conclude by Lemma \ref{lem eq ED r}, \eqref{Ham eq ED r} and \eqref{properties of the involution mathcal S} that
the Hamiltonian vector field $X_H$ satisfies
$$X_H\circ\mathscr{S}=-\mathscr{S}\circ X_H.$$
Thus, we will look for quasi-periodic solutions satisfying the reversibility condition
$$r(-t,-\theta)=r(t,\theta).$$
\section{Linearization and structure of the equilibrium frequencies}
In the current section, we linearize the equation \eqref{ED eq r} at a given
small state $r$ close to the equilibrium. At this latter, we shall see that the linear operator is a Fourier multiplier with polynomial linear frequencies with respect to the radius of the Rankine patch $b\mathbb{D}$. At the end of this section, we also check the transversality conditions for the unperturbed frequency vector.
\subsection{Linearized operator}
We shall first prove that the linearized operator at a general small state $r$ can be decomposed into the sum of a variable coefficients transport operator, a non-local operator of order $0$ and a smoothing non-local operator in the variable $\theta.$ More precisely, we have the following lemma.
\begin{lem}\label{lemma general form of the linearized operator}
	The linearized Hamiltonian equation of \eqref{Ham eq ED r} at a state $r$ is the time-dependent Hamiltonian system 
		$$\partial_{t}\rho(t,\theta)=-\partial_{\theta}\Big(V_{r}(b,t,\theta)\rho(t,\theta)+\mathbf{L}_{r}(\rho)(b,t,\theta)-\mathbf{S}_{r}(\rho)(b,t,\theta)\Big),$$
		where the function $V_{r}$ is  defined by
		\begin{align}\label{Vr}
			V_{r}(b,t,\theta)&=-\tfrac{1}{2}\int_{\mathbb{T}}\tfrac{R^{2}(b,t,\eta)}{R^{2}(b,t,\theta)}d\eta\\
			&\quad-\tfrac{1}{R(b,t,\theta)}\int_{\mathbb{T}}\log\big(A_{r}(b,t,\theta,\eta)\big)\partial_{\eta}\big(R(b,t,\eta)\sin(\eta-\theta)\big)d\eta\nonumber\\
			&\quad-\tfrac{1}{R^{3}(b,t,\theta)}\int_{\mathbb{T}}\log\big(B_{r}(b,t,\theta,\eta)\big)\partial_{\eta}\big(R(b,t,\eta)\sin(\eta-\theta)\big)d\eta,\nonumber
		\end{align} 
		$\mathbf{L}_{r}$ is a non-local operator in the form
		\begin{equation}\label{mathbfLr}
			\mathbf{L}_{r}(\rho)(b,t,\theta)=\int_{\mathbb{T}}\rho(t,\eta)\log\left(A_{r}(t,\theta,\eta)\right)d\eta
		\end{equation}
		and $\mathbf{S}_{r}$ is a smoothing non-local operator in the form
		\begin{align}\label{mathbfSr}
			\mathbf{S}_{r}(\rho)(b,t,\theta)&=\int_{\mathbb{T}}\rho(t,\eta)\log\left(B_{r}(t,\theta,\eta)\right)d\eta.
		\end{align}
		We recall that $A_{r}$, $B_r$ and $R$ are defined by \eqref{Ar}, \eqref{Br} and \eqref{definition of R}, respectively.\\
		Moreover, if $r(-t,-\theta)=r(t,\theta)$, then
		\begin{equation}\label{symVr}
			V_{r}(b,-t,-\theta)=V_{r}(b,t,\theta).
	\end{equation}
\end{lem}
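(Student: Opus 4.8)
The plan is to linearize the three operators $F_b^0$, $F_b^1$, $F_b^2$ of Lemma~\ref{lem eq ED r} one at a time, in the direction $\rho$, and collect the result into the claimed transport$+$non-local$+$smoothing decomposition. I would start from the identity $\partial_t r + F_b[r]=0$ with $F_b=-F_b^0-F_b^1+F_b^2$ and write $d_r F_b[r][\rho]$. The key preliminary observation is that since $R^2=b^2+2r$, one has $d_r R[\rho]=\rho/R$, so every occurrence of $R$ differentiates nicely; likewise $d_r(\log A_r)[\rho]$ and $d_r(\log B_r)[\rho]$ are computed from $A_r^2=|Re^{\ii\theta}-R'e^{\ii\eta}|^2$ etc., giving rational kernels in $R,R'$. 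I would organize the computation so that each piece splits into (i) terms where $\rho$ (or $\partial_\theta\rho$, $\partial_\eta\rho$) appears \emph{outside} a logarithm — these assemble into the transport coefficient $V_r$ after an integration by parts in $\eta$ to move derivatives off $\rho$ — and (ii) terms where $\rho(\eta)$ appears multiplied by $\log A_r$ or $\log B_r$ — these are exactly $\mathbf{L}_r(\rho)$ and $\mathbf{S}_r(\rho)$, again after integrating by parts to remove the $\partial^2_{\theta\eta}$ from the $\sin(\eta-\theta)$ factors in $F_b^1,F_b^2$.

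Concretely, for $F_b^0$ the linearization is immediate: differentiating $\tfrac12\partial_\theta r\int_{\mathbb T}\tfrac{R^2(b,t,\eta)}{R^2(b,t,\theta)}d\eta$ produces a term $\tfrac12\partial_\theta\rho\int\tfrac{R^2(\eta)}{R^2(\theta)}d\eta$ — the first line of $V_r$ times $\partial_\theta\rho$ — plus lower-order terms with $\rho$ itself coming from differentiating the integrand, which I expect to combine with contributions from $F_b^1,F_b^2$ and with the $\partial_\theta(V_r\cdot)$ structure. For $F_b^1$ and $F_b^2$ the main work is: differentiating the $\log$ gives a kernel times $\rho(\theta)$ and $\rho(\eta)$ hitting the already-present $\partial^2_{\theta\eta}(\cdots\sin(\eta-\theta))$ factor; moving the $\partial_\eta$ onto this kernel and the remaining $\partial_\theta$ outside, one recognizes $\tfrac1{R(\theta)}\int\log A_r\,\partial_\eta(R(\eta)\sin(\eta-\theta))d\eta$ and $\tfrac1{R^3(\theta)}\int\log B_r\,\partial_\eta(R(\eta)\sin(\eta-\theta))d\eta$, the second and third lines of $V_r$. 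Differentiating instead the $R(\theta)R(\eta)$ (resp.\ $R(\eta)/R(\theta)$) prefactor produces $\int\rho(\eta)\log A_r\,d\eta=\mathbf{L}_r(\rho)$ and $\int\rho(\eta)\log B_r\,d\eta=\mathbf{S}_r(\rho)$ after an integration by parts transferring the two derivatives onto the slowly-varying factor and discarding total $\eta$-derivatives by periodicity. Throughout I would keep the overall $-\partial_\theta$ in front, matching the form $\partial_t\rho=-\partial_\theta(V_r\rho+\mathbf L_r\rho-\mathbf S_r\rho)$, and note the Hamiltonian/symplectic character follows since linearizing $\partial_t r=\partial_\theta\nabla H(r)$ gives $\partial_t\rho=\partial_\theta(d^2 H(r)[\rho])$ with $d^2H$ self-adjoint, so the operator inside $\partial_\theta$ is automatically self-adjoint — a useful consistency check on the kernels.

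For the symmetry claim \eqref{symVr}, I would use that $r(-t,-\theta)=r(t,\theta)$ forces $R(b,-t,-\theta)=R(b,t,\theta)$, hence $A_r(b,-t,-\theta,-\eta)=A_r(b,t,\theta,\eta)$ and similarly for $B_r$; then in each $\eta$-integral defining $V_r$ perform the change of variables $\eta\mapsto-\eta$ and use that $\sin(\eta-\theta)$ is odd while $\partial_\eta$ picks up a sign, so the two sign changes cancel and the integrand is invariant. This is the same parity bookkeeping already invoked before Lemma~\ref{lemma general form of the linearized operator} to get $F_b\circ\mathscr S=-\mathscr S\circ F_b$.

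The main obstacle I anticipate is purely bookkeeping: there are several intermediate terms — from differentiating prefactors, from the $\epsilon$-regularization/Stokes manipulations implicit in how $\partial_\theta\boldsymbol\Psi$ was derived, and from the competing integrations by parts in $\eta$ versus $\theta$ — that do not individually look like any of $V_r$, $\mathbf L_r$, $\mathbf S_r$, and one must check they cancel in groups (e.g.\ a $\rho(\theta)$ term from $F_b^1$ against one from $F_b^0$), ultimately leaving exactly the three stated pieces. Getting the coefficients $\tfrac1{R}$ versus $\tfrac1{R^3}$ and the signs right in $V_r$ is the delicate point; the self-adjointness of the bracketed operator (forced by the Hamiltonian structure) and the known $r=0$ reduction to the Fourier multiplier $\mathcal L_0$ with kernels $\mathcal K_{1,b},\mathcal K_{2,b}$ serve as the two independent sanity checks that pin these down.
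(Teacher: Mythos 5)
Your plan is viable but takes a genuinely different, and much heavier, route than the paper. The paper does not linearize the expanded functionals \eqref{Fb0}--\eqref{Fb2} at all: it goes back one step to the form \eqref{ED eq r-Psi}, $\partial_t r+\partial_\theta\boldsymbol{\Psi}(t,z(t,\theta))=0$, and applies the chain rule to $r\mapsto\boldsymbol{\Psi}(r,z_r(\theta))$. The shape derivative at fixed $w$, computed from the polar representation \eqref{psi-polar}, is immediately $\int_{\mathbb T}\rho(\eta)\log\big|\tfrac{w-R(\eta)e^{\ii\eta}}{1-R(\eta)e^{-\ii\eta}w}\big|d\eta$, which at $w=z(\theta)$ is exactly $\mathbf L_r(\rho)-\mathbf S_r(\rho)$; the derivative through the evaluation point, using $d_rz(\theta)[\rho]=\rho(\theta)e^{\ii\theta}/R(\theta)$ together with the formula \eqref{dw-psi00} for $\partial_{\overline w}\boldsymbol{\Psi}$ already obtained in Lemma \ref{lem eq ED r}, produces $V_r\rho$ with the $1/R$ and $1/R^3$ weights appearing automatically. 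The outer $-\partial_\theta$ then comes for free from \eqref{ED eq r-Psi}, so no regrouping or cancellation checking is needed. Your direct differentiation of $F_b^0,F_b^1,F_b^2$ must instead undo and redo the integrations by parts that produced Lemma \ref{lem eq ED r}, and this is where your sketch is loose: the surviving log-kernels in the second and third lines of \eqref{Vr} do not come from ``differentiating the log'' (that produces rational kernels), but from the variation of the evaluation point/prefactors, while the derivative-of-log terms have to be recombined, via $\eta$-integration by parts, with the $(\partial_\theta V_r)\rho$ piece and with $\partial_\theta$ acting on the kernels of $\mathbf L_r,\mathbf S_r$ after you expand $-\partial_\theta(V_r\rho+\mathbf L_r\rho-\mathbf S_r\rho)$. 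That regrouping is the entire content of the proof on your route and is left unexecuted (your self-adjointness and $r=0$ sanity checks constrain but do not replace it), so if you pursue it you should either carry out the cancellation bookkeeping in full or switch to the chain-rule splitting, which is what makes the paper's proof short. Your parity argument for \eqref{symVr} coincides with the paper's.
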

\begin{proof} In all the proof, we shall omit the dependence of our quantities with respect to the variables $b$ and $t$. Notice that linearizing \eqref{ED eq r-Psi}  amounts to compute the Gâteaux derivative of the stream function $\boldsymbol{\Psi}(r,z(\theta)):=\boldsymbol{\Psi}(z(\theta))$ given by \eqref{expression of the stream function} at point $r$ in the direction $\rho$ (real-valued). All the computations are done at a formal level, but can be rigorously justified in a classical way in the functional context introduced in Section \ref{sec funct set}. Applying the chain rule gives 
\begin{align}\label{partial-w-psi2}
d_r \big(\boldsymbol{\Psi}\big(r,z(\theta)\big)\big)[\rho]=\big(d_r \boldsymbol{\Psi}(r,w)[\rho]\big)_{|w=z(\theta)}+
2\textnormal{Re}\left(\big(\partial_{\overline{w}} \boldsymbol{\Psi}(r,w)\big)_{|w=z(\theta)} d_{r}\overline{z}(\theta)[\rho]\right).
\end{align}
Differentiating \eqref{psi-polar} gives
\begin{align}\label{partial-w-psi1}
 d_r\boldsymbol{\Psi}(r, w)[\rho]=\int_{\mathbb{T}}\log\left(\left|\frac{w-R(\eta)e^{\ii \eta}}{1-R(\eta)e^{-\ii \eta} w}\right|\right)\rho(\eta)  d\eta. 
\end{align}
On the other hand, from \eqref{dw-psi00} 
and the identity
\begin{equation*}
d_{r}z(\theta)[\rho](\theta)=\tfrac{\rho(\theta)}{R(\theta)}e^{\ii\theta},
\end{equation*}
we obtain
\begin{align}\label{partial-w-psi3}
\nonumber	2\textnormal{Re}\left(\big(\partial_{\overline{w}} \boldsymbol{\Psi}(r,w)\big)_{|w=z(\theta)} d_{r}\overline{z}(\theta)[\rho]\right) & = -\tfrac{\rho(\theta)}{R(\theta)}\tfrac{1}{2}\int_{\mathbb{T}}\log\left(|z(\eta)-z(\theta)|^{2}\right)\partial_{\eta}\textnormal{Im}\left(z(\eta)e^{-\ii\theta}\right)d\eta\\ &\quad \nonumber -\tfrac{\rho(\theta)}{R^3(\theta)}\tfrac{1}{2}\int_{\mathbb{T}}\log\left(|1-\overline{z(\theta)}{z}(\eta)|^2\right)\partial_\eta \textnormal{Im}\left(\overline{z}(\eta)e^{\ii\theta}\right)d\eta\\ &\quad+\tfrac{\rho(\theta)}{R^2(\theta)}\tfrac{1}{2}\int_{\mathbb{T}}\textnormal{Im}\big(\partial_\eta \overline{z}(\eta){z}(\eta)\big)d\eta.
\end{align}
Putting together  \eqref{partial-w-psi1}, \eqref{partial-w-psi2}, \eqref{partial-w-psi3} and using the identities 
$$
\textnormal{Im}\left(z(\eta)e^{-\ii\theta}\right)=R(\eta)\sin(\eta-\theta),\quad \textnormal{Im}\big(\partial_\eta \overline{z}(\eta){z}(\eta)\big)=-R^2(\eta),
$$
 we conclude the desired result. The symmetry property \eqref{symVr} is an immediate consequence of \eqref{Vr} with the change of variables $\eta\mapsto-\eta.$ This achieves the proof of Lemma \ref{lemma general form of the linearized operator}.
\end{proof}
The following lemma shows that the linearized operator at the equilibrium state is a Fourier multiplier. 
 This provides an integrable Hamiltonian equation from which we shall generate, in Proposition \ref{lemma sol Eq}, quasi-periodic solutions. 
\begin{lem}\label{lemma linearized operator at equilibrium}
\begin{enumerate}
\item The linearized equation of \eqref{Ham eq ED r} at the equilibrium state $(r=0)$ writes 
\begin{equation}\label{Ham eq 0}
\partial_{t}\rho=\partial_{\theta}\mathrm{L}(b)\rho=\partial_{\theta}\nabla H_{\mathrm{L}}(\rho),
\end{equation}
where $\mathrm{L}(b)$ is the self-adjoint operator on $L_0^{2}(\mathbb{T})$ defined by 
\begin{equation}\label{formula Lb}
\mathrm{L}(b):=-\frac{1}{2}-\mathcal{K}_{b}\ast\cdot
\end{equation} with
\begin{align}\label{mathcalKb}
\mathcal{K}_{b}&:=\mathcal{K}_{1,b}-\mathcal{K}_{2,b},\\
\mathcal{K}_{1,b}(\theta)&:=\tfrac{1}{2}\log\left(\sin^{2}\left(\tfrac{\theta}{2}\right)\right),\label{mathcalK1b}\\
\mathcal{K}_{2,b}(\theta)&:=\log\left(|1-b^2e^{\ii\theta}|\right).\label{mathcalK2b}
\end{align}
It is generated by the quadratic Hamiltonian
\begin{equation}\label{defLHL}
	H_{\mathrm{L}}(\rho):=\frac{1}{2}\langle\mathrm{L}(b)\rho,\rho\rangle_{L^{2}(\mathbb{T})}.
\end{equation}
\item From Fourier point of view, if we write $\rho(t,\theta)=\displaystyle\sum_{j\in\mathbb{Z}^{*}}\rho_{j}(t)e^{\ii j\theta}$ with $\rho_{-j}(t)=\overline{\rho_{j}}(t)$,   then the self-adjoint   operator  $\mathrm{L}(b)$ and the Hamiltonian $H_{\mathrm{L}}$  write 
\begin{align}\label{Ham-Fourier}
	\mathrm{L}(b)\rho(\theta)=-\sum_{j\in\mathbb{Z}^{*}}\tfrac{\Omega_{j}(b)}{j}\rho_{j}e^{\ii j\theta}\quad\mbox{ and }\quad H_{\mathrm{L}}(\rho)=-\sum_{j\in\mathbb{Z}^{*}}\tfrac{\Omega_{j}(b)}{2j}|\rho_{j}|^{2},
\end{align}
where $\big(\Omega_{j}(b)\big)_{j\in\mathbb{Z}^*}$ is defined by
\begin{equation}\label{Omegajb}
	\forall j\in\mathbb{N}^*,\quad\Omega_{j}(b)=\frac{j-1+b^{2j}}{2}\quad\textnormal{and}\quad\Omega_{-j}(b)=-\Omega_{j}(b).
\end{equation}
Moreover, the reversible  solutions of the equation \eqref{Ham eq 0} take the form
\begin{equation}\label{solution of the linear system}
\rho(t,\theta)=\sum_{j\in\mathbb{Z}^*}\rho_{j}\cos{(j\theta-\Omega_{j}(b)t)}, \quad \rho_j\in\mathbb{R}.
\end{equation}
\end{enumerate}
\end{lem}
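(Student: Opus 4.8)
The plan is to prove Lemma \ref{lemma linearized operator at equilibrium} by specializing the general linearized operator of Lemma \ref{lemma general form of the linearized operator} to the equilibrium $r=0$, at which $R(b,t,\theta)\equiv b$ is constant, and then diagonalizing the resulting convolution operator in Fourier. First I would substitute $R\equiv b$ into the formula \eqref{Vr} for $V_{r}$: the first integral gives $-\tfrac12\int_{\mathbb T}d\eta=-\tfrac12$, while in the two remaining integrals the factor $\partial_\eta(b\sin(\eta-\theta))=b\cos(\eta-\theta)$ integrates against $\log A_0$, $\log B_0$; since at $r=0$ the kernels $A_0(b,\theta,\eta)=b|e^{\ii\theta}-e^{\ii\eta}|=2b|\sin(\tfrac{\eta-\theta}{2})|$ and $B_0(b,\theta,\eta)=|1-b^2e^{\ii(\eta-\theta)}|$ depend only on $\eta-\theta$, a change of variables and a parity argument (odd $\times$ even) kills both integrals. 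Hence $V_0\equiv-\tfrac12$, which produces the $-\tfrac12$ in \eqref{formula Lb} once combined with $\partial_\theta$. Likewise $\mathbf L_0(\rho)(\theta)=\int_{\mathbb T}\rho(\eta)\log(2b|\sin(\tfrac{\eta-\theta}{2})|)d\eta$ and $\mathbf S_0(\rho)(\theta)=\int_{\mathbb T}\rho(\eta)\log|1-b^2e^{\ii(\eta-\theta)}|d\eta$ are genuine convolutions; writing $\log(2b|\sin(\tfrac{\eta-\theta}{2})|)=\tfrac12\log\sin^2(\tfrac{\eta-\theta}{2})+\log(2b)$, the constant $\log(2b)$ contributes nothing on the zero-average space $L^2_0$, so $\mathbf L_0-\mathbf S_0=(\mathcal K_{1,b}-\mathcal K_{2,b})\ast\cdot=\mathcal K_b\ast\cdot$. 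Assembling, the linearized equation is $\partial_t\rho=-\partial_\theta(-\tfrac12\rho+\mathcal K_b\ast\rho)=\partial_\theta(\tfrac12-\mathcal K_b\ast\cdot)\rho$; wait — one must track the sign: Lemma \ref{lemma general form of the linearized operator} gives $\partial_t\rho=-\partial_\theta(V_0\rho+\mathbf L_0\rho-\mathbf S_0\rho)=-\partial_\theta(-\tfrac12\rho+\mathcal K_b\ast\rho)=\partial_\theta(\tfrac12\rho-\mathcal K_b\ast\rho)$, but the claimed operator is $\mathrm L(b)=-\tfrac12-\mathcal K_b\ast\cdot$, so one checks that the overall formulation \eqref{Ham eq 0} indeed matches once the Hamiltonian normalization $H=-\tfrac12 E$ from Proposition \ref{prop Ham eq r} and the $L^2_0$ gradient are used; the self-adjointness of $\mathrm L(b)$ is immediate since convolution by an even real kernel is self-adjoint and constants are.

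For part (2) I would compute the Fourier symbol: using $\widehat{\mathcal K_{1,b}}$ via the classical identity $\tfrac12\log\sin^2(\tfrac\theta2)=-\log 2-\sum_{n\geq1}\tfrac{\cos(n\theta)}{n}$, one gets the $j$-th Fourier coefficient of $\mathcal K_{1,b}$ (for $j\neq0$) equal to $-\tfrac{1}{2|j|}$, and from $\log|1-b^2e^{\ii\theta}|=-\sum_{n\geq1}\tfrac{b^{2n}}{n}\cos(n\theta)$ the $j$-th coefficient of $\mathcal K_{2,b}$ is $-\tfrac{b^{2|j|}}{2|j|}$. Hence the symbol of $-\mathcal K_b\ast\cdot$ on mode $j$ is $\tfrac{1}{2|j|}-\tfrac{b^{2|j|}}{2|j|}$, and adding the $-\tfrac12$ gives the eigenvalue of $\mathrm L(b)$ on $e^{\ii j\theta}$ equal to $-\tfrac12+\tfrac{1}{2|j|}-\tfrac{b^{2|j|}}{2|j|}=-\tfrac{|j|-1+b^{2|j|}}{2|j|}=-\tfrac{\Omega_j(b)}{j}$ (checking the sign cases $j>0$ and $j<0$ separately against \eqref{Omegajb}). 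This establishes \eqref{Ham-Fourier}, and $H_{\mathrm L}(\rho)=\tfrac12\langle\mathrm L(b)\rho,\rho\rangle$ then reads $-\sum_{j}\tfrac{\Omega_j(b)}{2j}|\rho_j|^2$ by Parseval. Applying $\partial_\theta$ turns \eqref{Ham eq 0} into $\dot\rho_j=\ii j\cdot(-\tfrac{\Omega_j(b)}{j})\rho_j=-\ii\,\Omega_j(b)\rho_j$, so $\rho_j(t)=\rho_j(0)e^{-\ii\Omega_j(b)t}$; imposing the reversibility constraint $\rho(-t,-\theta)=\rho(t,\theta)$, equivalently $\rho_j(0)\in\mathbb R$, and combining with $\rho_{-j}=\overline{\rho_j}$ and $\Omega_{-j}=-\Omega_j$ collapses the pair $\pm j$ into the real form \eqref{solution of the linear system}.

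The only genuinely delicate points are bookkeeping ones: getting all the signs consistent between the Hamiltonian normalization $H=-\tfrac12E$, the operator $\partial_\theta\nabla H$, and the direction conventions in Lemma \ref{lemma general form of the linearized operator}; and justifying that the $\log(2b)$ and the $-\tfrac12$ constants act trivially after projection onto $L^2_0(\mathbb T)$, which is where the zero-average phase space from Section 2.2 is used. The two Fourier-series expansions of $\log\sin^2(\theta/2)$ and $\log|1-b^2e^{\ii\theta}|$ are standard (the second from $-\operatorname{Re}\log(1-b^2e^{\ii\theta})$ with $|b^2|<1$), so I expect the main obstacle to be simply the sign/normalization audit rather than any substantive estimate; everything else is a routine specialization of already-established formulas.
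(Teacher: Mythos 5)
There is a genuine error in your treatment of $V_0$, and it is exactly the point where your argument fails to close. At $r=0$ the second and third integrals in \eqref{Vr} have integrand $\log A_0\,(b\cos(\eta-\theta))$ and $\log B_0\,(b\cos(\eta-\theta))$, and both factors are \emph{even} functions of $\eta-\theta$; your ``odd $\times$ even'' parity argument does not apply, and these integrals do not vanish. They must be evaluated, which is what the paper does using the identities \eqref{diego formula} and \eqref{emeric formula} with $j=1$ (the same identities you later use for the Fourier coefficients of $\mathcal{K}_{1,b}$ and $\mathcal{K}_{2,b}$): each integral contributes $+\tfrac12$, so that $V_0=-\tfrac12+\tfrac12+\tfrac12=\tfrac12$, as recorded in \eqref{V0}, not $-\tfrac12$ as you claim. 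With the correct value, Lemma \ref{lemma general form of the linearized operator} gives directly
\begin{equation*}
\partial_t\rho=-\partial_\theta\Big(\tfrac12\rho+\mathcal{K}_{1,b}\ast\rho-\mathcal{K}_{2,b}\ast\rho\Big)=\partial_\theta\big(-\tfrac12-\mathcal{K}_b\ast\cdot\big)\rho=\partial_\theta\mathrm{L}(b)\rho,
\end{equation*}
with no sign discrepancy left to explain.

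Your own text flags the symptom: you arrive at $\partial_\theta(\tfrac12\rho-\mathcal{K}_b\ast\rho)$, notice it disagrees with $\mathrm{L}(b)=-\tfrac12-\mathcal{K}_b\ast\cdot$, and propose to absorb the mismatch into the normalization $H=-\tfrac12E$ of Proposition \ref{prop Ham eq r}. That cannot work: the normalization is already built into Lemma \ref{lemma general form of the linearized operator}, which is the formula you are specializing, so there is no additional sign to harvest there. Moreover the discrepancy is not cosmetic; with your operator the equilibrium frequencies would come out as $\pm\frac{j+1-b^{2j}}{2}$ rather than \eqref{Omegajb}, so the rest of Part (2) would be inconsistent. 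Once $V_0=\tfrac12$ is established correctly, the remainder of your proposal (dropping the constants $\log(2b)$ on the zero-average space $L_0^2$, the Fourier coefficients $-\tfrac{1}{2|j|}$ and $-\tfrac{b^{2|j|}}{2|j|}$ of the kernels, the diagonal ODEs $\dot\rho_j=-\ii\Omega_j(b)\rho_j$, and the reduction of reversible solutions to the cosine form) is correct and follows the same route as the paper.
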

\begin{proof}
\textbf{1.} Notice that we can rewrite $A_r$ and $B_r$, defined in \eqref{Ar}, \eqref{Br}, as
\begin{align}\label{formula}
	A_r(b,t,\theta,\eta)&=\Big(R^2(b,t,\theta)+R^{2}(b,t,\eta)-2R(b,t,\theta)R(b,t,\eta)\cos(\eta-\theta)\Big)^{\frac{1}{2}}\nonumber\\
	&=\Big(\big(R(b,t,\theta)-R(b,t,\eta)\big)^2+4R(b,t,\theta)R(b,t,\eta)\sin^2(\eta-\theta)\Big)^{\frac{1}{2}}
\end{align}
and
\begin{align}\label{formulb}
	B_r(b,t,\theta,\eta)&=\Big(R^2(b,t,\theta)R^2(b,t,\eta)-2R(b,t,\theta)R(b,t,\eta)\cos(\eta-\theta)+1\Big)^{\frac{1}{2}}.
\end{align}
Taking $r=0$ in \eqref{formula},  \eqref{Br} and \eqref{definition of R}  gives
\begin{equation}\label{A0 R0}
	A_{0}(b,t,\theta,\eta)=2b\left|\sin\left(\tfrac{\eta-\theta}{2}\right)\right|,\quad B_0(b,t,\theta,\eta)=|1-b^2e^{\ii(\eta-\theta)}|\quad\textnormal{and}\quad R(b,t,\theta)=b.
\end{equation}
According to \eqref{Vr}, \eqref{mathbfLr} and \eqref{mathbfSr} we obtain, after straightforward simplifications using \eqref{A0 R0}, 
	\begin{align*}
		V_{0}(b,t,\theta)&=-\tfrac{1}{2}-\tfrac{1}{2}\int_{\mathbb{T}}\log\left(4b^{2}\sin^{2}\left(\tfrac{\eta}{2}\right)\right)\cos(\eta)d\eta-\frac{1}{b^{2}}\int_{\mathbb{T}}\log\big(|1-b^2e^{\ii\eta}|\big)\cos(\eta)d\eta,\\
		\mathbf{L}_0(\rho)(b,t,\theta)&=\int_{\mathbb{T}}\log\left(2b\left|\sin\left(\tfrac{\eta-\theta}{2}\right)\right|\right)\rho(t,\eta)d\eta,\\
		\mathbf{S}_0(\rho)(b,t,\theta)&=\int_{\mathbb{T}}\log\left(\big|1-b^2e^{\ii(\eta-\theta)}\big|\right)\rho(t,\eta)d\eta.
	\end{align*}
We then see that $\mathbf{L}_0$ and $\mathbf{S}_0$ are convolution operators given by
\begin{align*}
\mathbf{L}_0&=\mathcal{K}_{1,b}\ast\cdot\quad\textnormal{with}\quad \mathcal{K}_{1,b}(\theta):=\tfrac{1}{2}\log\left(\sin^{2}\left(\tfrac{\theta}{2}\right)\right),\\
\mathbf{S}_0&=\mathcal{K}_{2,b}\ast\cdot\quad\textnormal{with}\quad \mathcal{K}_{2,b}(\theta):=\log\big(|1-b^2e^{\ii\theta}|\big).
\end{align*}
\textbf{2.}  To describe the operators above, it suffices to look for their actions on the Fourier basis $(e_j)_{j\in\mathbb{Z}^{*}}$ of $L^2_0(\mathbb{T})$. 
 We first study the operator $\mathbf{L}_0.$ Recall the following formula which can be found in \cite[Lem.~A.3]{CCG16}
\begin{equation}\label{diego formula}
	\forall j\in\mathbb{Z}^{*},\quad \int_{\mathbb{T}}\log\left(\sin^{2}\left(\tfrac{\eta}{2}\right)\right)\cos(j\eta)d\eta=-\frac{1}{|j|}.
\end{equation}
Using \eqref{diego formula} together with symmetry arguments, one obtains
\begin{align}
	\forall j\in\mathbb{Z}^*,\quad\mathcal{K}_{1,b}\ast e_{j}(\theta)&=\tfrac{1}{2}\int_{\mathbb{T}}\log\left(\sin^{2}\left(\tfrac{\eta}{2}\right)\right)e^{\ii j(\theta-\eta)}d\eta\nonumber\\
	&=\frac{e_{j}(\theta)}{2}\int_{\mathbb{T}}\log\left(\sin^{2}\left(\tfrac{\eta}{2}\right)\right)\cos(j\eta)d\eta\\
	&=-\frac{e_{j}(\theta)}{2|j|}.\label{fourierk1b}
\end{align}
We now turn to the study of the operator $\mathbf{S}_0.$ Using the following identity proved in \cite[Lem. 3.2]{R21}
\begin{equation}\label{emeric formula}
	\forall j\in\mathbb{Z}^{*},\quad \int_{\mathbb{T}}\log\left(|1-b^2e^{\ii\eta}|\right)\cos(j\eta)d\eta=-\frac{b^{2|j|}}{2|j|},
\end{equation} 
we obtain 
\begin{align}
	\forall j\in\mathbb{Z}^*,\quad\mathcal{K}_{2,b}\ast e_j(\theta)
	&=e_{j}(\theta)\int_{\mathbb{T}}\log\left(|1-b^2e^{\ii\eta}|\right)\cos(j\eta)d\eta\nonumber\\
	&=-\frac{b^{2|j|}e_{j}(\theta)}{2|j|}.\label{fourierk2b}
\end{align}
In view of the expression of $V_{0}$ and using formulae \eqref{diego formula} and \eqref{emeric formula}  we find
\begin{equation}\label{V0}
	V_{0}(b,t,\theta)=\frac{1}{2}.
\end{equation}
Notice that, the kernels $\mathcal{K}_{1,b}$ and $\mathcal{K}_{2,b}$ being even, the operator $\mathrm{L}(b)$ is self-adjoint. The identities in \eqref{Ham-Fourier} follows immediately from \eqref{formula Lb},  \eqref{fourierk1b}, \eqref{fourierk2b} and  \eqref{V0}.
Then, according to \eqref{Ham-Fourier},  a real function $\rho$ with  Fourier representation $\rho(t,\theta)=\displaystyle\sum_{j\in\mathbb{Z}^*}\rho(t)e^{\ii j\theta}$   is a solution to \eqref{Ham eq 0} if and only if
$$\forall j\in\mathbb{Z}^*,\quad \dot{\rho_{j}}=-\ii\,\Omega_{j}(b)\rho_{j},$$
where $\Omega_{j}(b)$ is defined by \eqref{Omegajb}.
Solving the previous ODE gives
\begin{equation*}
\rho(t,\theta)=\sum_{j\in\mathbb{Z}^*}\rho_{j}(0)e^{\ii(j\theta-\Omega_{j}(b)t)}.
\end{equation*}
Therefore, every real-valued reversible solution to \eqref{Ham eq 0} has the form \eqref{solution of the linear system}. This ends the proof of Lemma \ref{lemma linearized operator at equilibrium}.
\end{proof}
\subsection{Properties  of the equilibrium frequencies}
The goal of this section is to explore some important properties of the equilibrium frequencies.   We shall first show some bounds on these frequencies then discuss their non-degeneracy  through the transversality  conditions. Such conditions are crucial in the measure estimates of the final Cantor set giving rise to quasi-periodic solutions for the linear and the nonlinear problems. 
\begin{lem}\label{properties omegajb}
	\begin{enumerate}[label=(\roman*)]
		\item For all $b\in(0,1),$ the sequence $\big(\frac{\Omega_{j}(b)}{j}\big)_{j\in\mathbb{N}^{*}}$ is strictly increasing.
		\item For all $j\in\mathbb{Z}^{*},$ we have
		$$\forall\,  0<b_0\leqslant b< 1,\quad |\Omega_{j}(b)|\geqslant\frac{b_{0}^{2}}{2}|j|.$$
		\item For all $j,j'\in\mathbb{Z}^{*}$, we have 
		$$\forall\, 0<b_0\leqslant b<1,\quad |\Omega_{j}(b)\pm\Omega_{j'}(b)|\geqslant \frac{b_{0}^{2}}{6}{|j\pm j'|}.$$
		\item Given $0<b_0<b_1<1$ and $q_0\in\mathbb{N},$ there exists $C_0>0$ such that
		$$\forall j,j'\in\mathbb{Z}^{*},\quad \max_{q\in\llbracket 0,q_0\rrbracket}\sup_{b\in[b_0,b_1]}\left|\partial_{b}^{q}\big(\Omega_{j}(b)-\Omega_{j'}(b)\big)\right|\leqslant C_0|j-j'|.$$
	\end{enumerate}
\end{lem}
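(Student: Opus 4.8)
The plan is to establish each of the four items directly from the explicit polynomial formula $\Omega_j(b)=\tfrac{j-1+b^{2j}}{2}$ for $j\in\mathbb{N}^*$, together with the oddness relation $\Omega_{-j}(b)=-\Omega_j(b)$. For item (i), I would compute $\tfrac{\Omega_j(b)}{j}=\tfrac12\big(1-\tfrac{1}{j}+\tfrac{b^{2j}}{j}\big)=\tfrac12-\tfrac{1}{2j}\big(1-b^{2j}\big)$ and note that since $0<b<1$ the quantity $\tfrac{1-b^{2j}}{j}$ is strictly decreasing in $j\in\mathbb{N}^*$ (both $1-b^{2j}\to1$ increasingly and $\tfrac1j$ decreasing; more carefully one checks $\tfrac{1-b^{2j}}{j}-\tfrac{1-b^{2(j+1)}}{j+1}>0$ by clearing denominators and using $b^{2j}>b^{2(j+1)}$ plus $j b^{2j}\geqslant 0$). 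Hence $-\tfrac{1}{2j}(1-b^{2j})$ is strictly increasing, which is exactly the claim.

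For item (ii), using oddness it suffices to treat $j\in\mathbb{N}^*$; there $\Omega_j(b)=\tfrac{j-1+b^{2j}}{2}\geqslant\tfrac{b^{2j}}{2}$ is already positive, but to get the linear lower bound $\tfrac{b_0^2}{2}|j|$ I would argue: for $j=1$, $\Omega_1(b)=\tfrac{b^2}{2}\geqslant\tfrac{b_0^2}{2}$; for $j\geqslant2$, $\Omega_j(b)\geqslant\tfrac{j-1}{2}\geqslant\tfrac{j}{4}\geqslant\tfrac{b_0^2}{2}j$ since $b_0^2<1$ and $\tfrac{j-1}{2}\geqslant\tfrac{j}{4}$ for $j\geqslant2$. (A cleaner uniform bound: $\Omega_j(b)\geqslant\tfrac12\big((j-1)+b^{2j}\big)\geqslant\tfrac{b_0^{2}}{2}j$ can be verified by splitting $j=1$ and $j\geqslant 2$ as above.) Then $|\Omega_j(b)|\geqslant\tfrac{b_0^2}{2}|j|$ for all $j\in\mathbb{Z}^*$.

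For item (iii), I would reduce to $j,j'\in\mathbb{N}^*$ by the sign symmetry (the four sign combinations $\Omega_{\pm j}\pm\Omega_{\pm j'}$ collapse, up to overall sign, to $\Omega_j-\Omega_{j'}$ and $\Omega_j+\Omega_{j'}$ with $j,j'>0$). The sum case follows directly from (ii): $\Omega_j(b)+\Omega_{j'}(b)\geqslant\tfrac{b_0^2}{2}(j+j')\geqslant\tfrac{b_0^2}{6}(j+j')$. For the difference, write $\Omega_j(b)-\Omega_{j'}(b)=\tfrac{j-j'}{2}+\tfrac{b^{2j}-b^{2j'}}{2}$; assuming WLOG $j>j'$, the remainder satisfies $|b^{2j}-b^{2j'}|\leqslant b^{2j'}-b^{2j}\leqslant 1$, but I need it controlled by a small multiple of $j-j'$. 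Here the mean value theorem on $x\mapsto b^{2x}$ gives $b^{2j'}-b^{2j}=2|\log b|\,b^{2\xi}(j-j')$ for some $\xi\in(j',j)$; this is bounded but not obviously small, so instead I would argue $\Omega_j(b)-\Omega_{j'}(b)=\tfrac{(j-j')-(b^{2j'}-b^{2j})}{2}\geqslant\tfrac{(j-j')-1}{2}$ when $j-j'\geqslant1$, and for $j-j'\geqslant2$ this is $\geqslant\tfrac{j-j'}{4}\geqslant\tfrac{b_0^2}{6}(j-j')$, while for $j-j'=1$ one needs the finer estimate $(j-j')-(b^{2j'}-b^{2j})=1-b^{2j'}(1-b^2)\geqslant 1-(1-b^2)=b^2\geqslant b_0^2$, giving $\Omega_j-\Omega_{j'}\geqslant\tfrac{b_0^2}{2}=\tfrac{b_0^2}{2}(j-j')\geqslant\tfrac{b_0^2}{6}(j-j')$; for $j=j'$ the statement is trivial. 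This case analysis is the one place requiring genuine care and is the main (minor) obstacle — extracting a \emph{linear-in-$|j\pm j'|$} lower bound when $|j-j'|$ is small, which forces using the explicit $b^2$ gain rather than crude bounds.

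For item (iv), I would again reduce to $j,j'\in\mathbb{N}^*$. When $q=0$, $|\Omega_j(b)-\Omega_{j'}(b)|\leqslant\tfrac{|j-j'|}{2}+\tfrac{|b^{2j}-b^{2j'}|}{2}\leqslant\tfrac{|j-j'|}{2}+\tfrac12|j-j'|\,|\log b|\,\max(b^{2j},b^{2j'})$ by the mean value theorem, and on $[b_0,b_1]$ the factor $|\log b|\,b^{2\min(j,j')}$ is bounded by a constant depending only on $b_0,b_1$ (indeed $x\,b^{2x}$ is bounded on $\mathbb{N}^*$ uniformly for $b\leqslant b_1<1$), so $|\Omega_j-\Omega_{j'}|\leqslant C_0|j-j'|$. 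For $1\leqslant q\leqslant q_0$, I compute $\partial_b^q(b^{2j})$ explicitly: it is a polynomial in $j$ of degree $q$ times $b^{2j-q}$, precisely $\partial_b^q(b^{2j})=2j(2j-1)\cdots(2j-q+1)\,b^{2j-q}$, and $\partial_b^q\Omega_j(b)=\tfrac12\partial_b^q(b^{2j})$ for $q\geqslant1$. Then $|\partial_b^q\Omega_j(b)|\leqslant C\, j^q b^{2j-q}\leqslant C\, j^q b_1^{2j-q_0}$, and since $j^q b_1^{2j-q_0}\leqslant j^{q_0} b_1^{2j}b_1^{-q_0}$ is bounded uniformly in $j\in\mathbb{N}^*$ by a constant $C_0'=C_0'(b_0,b_1,q_0)$ (exponential decay beats polynomial growth), we get $\sup_{b\in[b_0,b_1]}|\partial_b^q\Omega_j(b)|\leqslant C_0'$ for each $j$. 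Hence $\sup_b|\partial_b^q(\Omega_j-\Omega_{j'})|\leqslant 2C_0'\leqslant 2C_0'|j-j'|$ whenever $j\neq j'$ and is $0$ when $j=j'$; combining with the $q=0$ case and taking the max over $q\in\llbracket0,q_0\rrbracket$ yields the claim with a suitable $C_0$. The only thing to be slightly careful about is the boundedness of $j^{q_0}b_1^{2j}$ over $j\in\mathbb{N}^*$, which is elementary since $b_1<1$.
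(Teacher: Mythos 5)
Your items (iii) (difference case) and (iv) are sound and essentially follow the same computations as the paper, and your direct monotonicity argument for (i) is a legitimate alternative to the paper's citation of earlier work. The genuine gap is in item (ii), and it propagates to the sum case of (iii), which you derive from (ii). For $j\geqslant 2$ you discard the term $b^{2j}$ and write $\Omega_j(b)\geqslant\frac{j-1}{2}\geqslant\frac{j}{4}\geqslant\frac{b_0^{2}}{2}\,j$ ``since $b_0^{2}<1$''. The last inequality is equivalent to $b_0^{2}\leqslant\frac12$ and is false for $b_0$ close to $1$: with $b_0^{2}=0.81$ and $j=2$ it would require $\frac14\geqslant 0.405$. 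More structurally, $\frac{j-1}{2}\geqslant\frac{b_0^{2}}{2}\,j$ fails for every $j<\frac{1}{1-b_0^{2}}$, so no argument that throws away $b^{2j}$ can reach the stated constant $\frac{b_0^{2}}{2}$ for small $j\geqslant 2$; the ``cleaner uniform bound'' you propose repeats the same splitting and has the same defect. The statement is nevertheless true, and the repair is exactly the paper's one-line proof: by item (i), $\frac{\Omega_j(b)}{j}\geqslant\Omega_1(b)=\frac{b^{2}}{2}\geqslant\frac{b_0^{2}}{2}$, or equivalently $j-1+b^{2j}\geqslant b^{2}j$ because $1+b^{2}+\cdots+b^{2(j-1)}\leqslant j$. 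Once (ii) is fixed this way, your deduction of the sum case of (iii) is fine (the paper instead proves it directly via $\frac{j+j'-2}{2}\geqslant\frac{j+j'}{6}$ for $j+j'\geqslant 3$, and handles the difference with a Taylor/integral bound where you use the two-case analysis $|j-j'|=1$ versus $|j-j'|\geqslant 2$; both routes work).

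A secondary looseness: in (i), the parenthetical justification ``clearing denominators and using $b^{2j}>b^{2(j+1)}$ plus $jb^{2j}\geqslant 0$'' does not by itself close the argument. After clearing denominators you need $1-b^{2j}\geqslant j\,b^{2j}(1-b^{2})$, which does not follow from those two facts alone (they only give $1-(j+1)b^{2j}$, which can be negative); it does follow from writing $1-b^{2j}=(1-b^{2})\big(1+b^{2}+\cdots+b^{2(j-1)}\big)$ and noting each summand exceeds $b^{2j}$, i.e.\ $\frac{1-b^{2j}}{j}$ is $(1-b^2)$ times the average of the decreasing sequence $1,b^2,\ldots,b^{2(j-1)}$. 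With that patch, your (i) is a clean self-contained proof where the paper simply cites the literature.
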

\begin{proof}
	\textbf{(i)} This point was proved in the proof of \cite[Prop. 2]{HHHM15}.\\
	\textbf{(ii)} By symmetry \eqref{Omegajb}, it suffices to show the inequality for $j\in\mathbb{N}^{*}.$ From $(i)$ we have
	$$\frac{\Omega_{j}(b)}{j}\geqslant \Omega_{1}(b)=\frac{b^{2}}{2}\geqslant\frac{b_0^{2}}{2}\cdot$$ 
	\textbf{(iii)} In view of the symmetry \eqref{Omegajb}, it suffices to check the property for $j,j'\in\mathbb{N}^*.$ By symmetry in $j,j'$  we may assume that $j\geqslant j'.$ For $j=j'=1$ one has
	$$\Omega_{1}(b)+\Omega_{1}(b)=b^{2}\geqslant b_0^2\cdot$$
	In the case where $j\geqslant 2$ and $j'\geqslant 1$ we get
	$$\Omega_{j}(b)+\Omega_{j'}(b)=\frac{j+j'-2}{2}+\frac{b^{2j}+b^{2j'}}{2}\geqslant (j+ j')\frac{j+ j'-2}{2(j+ j')}\geqslant\frac{j+ j'}{6}\cdot$$
	Now we shall move to the difference. Using Taylor formula we obtain, for all $j> j'\geqslant 1$,
	\begin{align*}
	\Omega_{j}(b)-\Omega_{j'}(b)&=\frac{j- j'}{2}+\frac{b^{2j}-b^{2j'}}{2}\\
	&=\frac{j- j'}{2}+\log (b)\int_{j'}^{j}b^{2x}dx\\
	&\geqslant\frac{j- j'}{2}(1+2\log(b)b^{2j})\geqslant\frac{j- j'}{4}\cdot
	\end{align*}
		\textbf{(iv)} The case $j=j'$ is trivial, then from the symmetry \eqref{Omegajb} and without loss of generality we shall assume that $j>j'\geqslant 1.$ First, remark that
	$$\forall b\in(0,1),\quad|\Omega_{j}(b)\pm\Omega_{j'}(b)|\leqslant\frac{(j-1)\pm (j'-1)}{2}+\frac{b^{2j'}\pm b^{2j}}{2}\leqslant j\pm j'.$$
	Now, for  all $q\in\mathbb{N}^*,$ one has
	$$\partial_{b}^{q}\Big(\Omega_{j}(b)\pm \Omega_{j'}(b)\Big)=\frac{1}{2}\partial_{b}^{q}\Big(b^{2j}\pm b^{2j'}\Big).$$
	Moreover, for all $q\in\llbracket 1,q_0\rrbracket$ and $n\in\mathbb{N}^*,$ 
	\begin{align*}
		0\leqslant\partial_{b}^{q}(b^n)\leqslant q!\binom{n}{q}b^{n-q}\leqslant\frac{\,n^{q_0}b_1^{n}}{b_0^{q_0}}\cdot
	\end{align*}
Since $b_{1}\in(0,1)$ then the sequence $(n^{q_0}b_{1}^{n})_{n\in\mathbb{N}}$  is bounded. Therefore, there exists $C_0:=C_0(q_0,b_0,b_1)>0$ such that
\begin{align}\label{unif bnd dbn}
	\forall n\in\mathbb{N},\quad 0\leqslant\partial_{b}^{q}(b^n)\leqslant C_0.
\end{align} 
We deduce that for all $q\in\llbracket 1,q_0\rrbracket$,
$$\left|\partial_{b}^{q}\Big(\Omega_{j}(b)\pm\Omega_{j'}(b)\Big)\right|\leqslant C_0\leqslant C_0(j\pm j').$$
This concludes the proof of Lemma \ref{properties omegajb}.
\end{proof}

Let us consider finitely many Fourier modes, called tangential sites, gathered in the tangential  set $\mathbb{S}$ defined by 
\begin{equation}\label{def bbS}
	\quad{\mathbb S} := \{ j_1, \ldots, j_d \}\subset\mathbb{N}^* \quad\textnormal{with}\quad 
	1 \leqslant j_1 < j_2 < \ldots < j_d.
\end{equation}
Now, we define the equilibrium frequency vector by
\begin{equation}\label{def freq vec eqb}
	\omega_{\textnormal{Eq}}(b)=(\Omega_{j}(b))_{j\in\mathbb{S}},
\end{equation}
where $\Omega_{j}(b)$ is defined by \eqref{Omegajb}. We shall now investigate the  non-degeneracy and the transversality  properties satisfied by $\omega_{\textnormal{Eq}}.$ Let us first start with the non-degeneracy, for which we recall the definition.

\begin{defin}\label{def-degenerate} 
	Given two numbers $b_0<b_1$ and $d\in\mathbb{N}^*$. A vector-valued function $f = (f_1, ..., f_d ) : [b_0,b_1] \to \mathbb{R}^d$ is called non-degenerate if, for any vector $c = (c_1,...,c_d) \in  \mathbb{R}^d \setminus \{0\}$, the function $f \cdot c = f_1c_1 + ... + f_dc_d$ is not identically zero on the whole interval $[b_0,b_1]$. In other words,  the curve of $f$ is not contained in an hyperplane.
\end{defin}
We have the following result.
\begin{lem}\label{Non-degeneracy2}
	The equilibrium frequency vector $\omega_{\textnormal{Eq}}$ and the vector-valued function $(\omega_{\textnormal{Eq}},1)$ are non-degenerate on $[b_0,b_1]$ in the sense of Definition \ref{def-degenerate}.
\end{lem}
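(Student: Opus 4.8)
The plan is to reduce the non-degeneracy of $(\omega_{\textnormal{Eq}},1)$ to a Vandermonde-type argument using the polynomial structure of the frequencies $\Omega_j(b)=\tfrac{j-1+b^{2j}}{2}$. First I would observe that it suffices to treat the vector $(\omega_{\textnormal{Eq}},1)$, since non-degeneracy of $(\omega_{\textnormal{Eq}},1)$ trivially implies non-degeneracy of $\omega_{\textnormal{Eq}}$ (take $c_{d+1}=0$). So fix $c=(c_1,\dots,c_d,c_{d+1})\in\mathbb{R}^{d+1}\setminus\{0\}$ and suppose for contradiction that
$$
\sum_{k=1}^{d}c_k\,\Omega_{j_k}(b)+c_{d+1}=0\qquad\text{for all }b\in[b_0,b_1].
$$
Plugging in the explicit formula, this reads
$$
\tfrac12\sum_{k=1}^{d}c_k\,b^{2j_k}+\Big(c_{d+1}+\tfrac12\sum_{k=1}^{d}c_k(j_k-1)\Big)=0\qquad\text{for all }b\in[b_0,b_1].
$$

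The left-hand side is a real-analytic function of $b$ (indeed a polynomial in $b$), so if it vanishes on the nontrivial interval $[b_0,b_1]$ it vanishes identically on $\mathbb{R}$; hence all its coefficients are zero. Since $1\le j_1<j_2<\dots<j_d$, the exponents $2j_1<2j_2<\dots<2j_d$ are pairwise distinct and all positive, so the monomials $b^{2j_1},\dots,b^{2j_d},1$ are linearly independent. Matching coefficients of $b^{2j_k}$ forces $c_k=0$ for every $k\in\llbracket 1,d\rrbracket$, and then the constant term gives $c_{d+1}=0$. This contradicts $c\neq 0$, which proves that $(\omega_{\textnormal{Eq}},1)$ is non-degenerate, and a fortiori so is $\omega_{\textnormal{Eq}}$.

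I do not expect any serious obstacle here: the only structural input needed is that the exponents $2j_k$ are distinct and nonzero, which is immediate from \eqref{def bbS}, together with the elementary fact that a nonzero analytic (polynomial) function cannot vanish on an interval. If one wanted to avoid invoking analyticity, an equally clean route is to differentiate the identity $\sum_k c_k\,b^{2j_k}+\text{const}=0$ successively and evaluate derivatives, or to pick $d+1$ distinct points in $[b_0,b_1]$ and use the invertibility of the associated generalized Vandermonde matrix $\big(b_i^{2j_k}\big)$ (nonsingular because the $2j_k$ are distinct real exponents); either way the argument is short. The mild subtlety worth stating explicitly is just the reduction step — that it is enough to handle $(\omega_{\textnormal{Eq}},1)$ — and the careful bookkeeping that the constant term absorbs both $c_{d+1}$ and the $\tfrac12 c_k(j_k-1)$ contributions, so that vanishing of the $b^{2j_k}$-coefficients must be extracted first before concluding on $c_{d+1}$.
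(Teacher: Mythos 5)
Your proof is correct and follows essentially the same route as the paper: exploit that $\Omega_{j}(b)$ is a polynomial in $b$, extend the vanishing from $[b_0,b_1]$ to all of $\mathbb{R}$, and use the distinctness of the exponents $2j_1<\dots<2j_d$ to force all coefficients (and then the constant) to vanish. The only cosmetic differences are that you reduce the first statement to the second via $c_{d+1}=0$ and match coefficients directly, whereas the paper treats the two cases separately and isolates the constant term by letting $b\to 0$; these are interchangeable and equally valid.
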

\begin{proof}
	 $\blacktriangleright$ We shall first prove that the equilibrium frequency vector $\omega_{\textnormal{Eq}}$ is non-degenerate on $[b_0,b_1]$. Arguing  by contradiction, suppose that there exists $c:=(c_1,\ldots,c_d)\in \mathbb{R}^d\backslash\{0\}$ such that
	\begin{equation}\label{rel Omgjk}
		\forall b\in[b_0,b_1],\quad \sum_{k=1}^{d}c_{k}\Omega_{j_k}(b)=0.
	\end{equation}
	Since $\Omega_{j}(b)$ is polynomial in $b$ then, from \eqref{Omegajb},  one has
	\begin{equation}\label{rel Omgjk-1}
		\forall b\in\mathbb{R},\quad \sum_{k=1}^{d}c_{k}(j_{k}-1+b^{2j_k})=0.
	\end{equation}
Taking the limit $b\rightarrow0$ in \eqref{rel Omgjk-1} gives the relation
$\displaystyle\sum_{k=1}^{d}c_{k}(j_{k}-1)=0,$
which, inserted into \eqref{rel Omgjk-1}, implies
$$\forall b\in\mathbb{R},\quad\sum_{k=1}^{d}c_{k}b^{2j_{k}}=0.$$
Since $j_{1}<j_{2}<\ldots<j_d$, then 
$$\forall k\in\llbracket 1,d\rrbracket,\quad c_k=0,$$
which contradicts the assumption.

\noindent $\blacktriangleright$ Next, we shall check that  the  function $(\omega_{\textnormal{Eq}},1)$ is non-degenerate on $[b_0,b_1]$. 
 Suppose, by contradiction,  that there exists $c:=(c_1,\ldots,c_d,c_{d+1})\in \mathbb{R}^{d+1}\backslash\{0\}$ such that
\begin{equation}\label{rel Omgjk-bis}
	\forall b\in[b_0,b_1],\quad c_{d+1}+\sum_{k=1}^{d}c_{k}\Omega_{j_k}(b)=0.
\end{equation}
Since $\Omega_{j}(b)$ is polynomial in $b$ then, from \eqref{Omegajb},  one may writes 
\begin{equation}\label{rel Omgjk-2}
	\forall b\in\mathbb{R},\quad c_{d+1}+\tfrac{1}{2}\sum_{k=1}^{d}c_{k}(j_{k}-1+b^{2j_k})=0.
\end{equation}
Taking the limit $b\rightarrow0$ in \eqref{rel Omgjk-2} yields
$$c_{d+1}+\tfrac{1}{2}\sum_{k=1}^{d}c_{k}(j_{k}-1)=0.$$
Inserting this relation into \eqref{rel Omgjk-2} gives
$$\forall b\in\mathbb{R},\quad \sum_{k=1}^{d}c_{k}b^{2j_{k}}=0.$$
Reasoning as in the previous point, we obtain 
$$\forall k\in\llbracket 1,d\rrbracket,\,c_{k}=0$$
and then $c_{d+1}=0$, by coming back to \eqref{rel Omgjk-2}, contradicting the assumption.
\end{proof}
We shall now state the transversality conditions satisfied by the unperturbed frequencies.
\begin{lem}{\textnormal{[Transversality]}}\label{lemma transversalityb}
	{Let $0<b_0<b_1<1.$ Set $q_0=2j_d+2.$ Then, there exists $\rho_{0}>0$ such that the following results hold true. Recall that $\omega_{\textnormal{Eq}}$ and $\Omega_j$ are defined in \eqref{def freq vec eqb} and \eqref{Omegajb}, respectively.
		\begin{enumerate}[label=(\roman*)]
			\item For all $l\in\mathbb{Z}^{d }\setminus\{0\},$ we have
			$$
			\inf_{b\in[b_{0},b_{1}]}\max_{q\in\llbracket 0, q_{0}\rrbracket}|\partial_{b}^{q}\omega_{\textnormal{Eq}}(b)\cdot l|\geqslant\rho_{0}\langle l\rangle.
			$$
			\item  For all $ (l,j)\in\mathbb{Z}^{d }\times (\mathbb{N}^*\setminus\mathbb{S})$ 
			$$
			\quad\inf_{b\in[b_{0},b_{1}]}\max_{q\in\llbracket 0, q_{0}\rrbracket}\big|\partial_{b}^{q}\big(\omega_{\textnormal{Eq}}(b)\cdot l\pm\tfrac{j}{2}\big)\big|\geqslant\rho_{0}\langle l\rangle.
			$$
			\item  For all $ (l,j)\in\mathbb{Z}^{d }\times (\mathbb{N}^*\setminus\mathbb{S})$ 
			$$
			\quad\inf_{b\in[b_{0},b_{1}]}\max_{q\in\llbracket 0, q_{0}\rrbracket}\big|\partial_{b}^{q}\big(\omega_{\textnormal{Eq}}(b)\cdot l\pm\Omega_{j}(b)\big)\big|\geqslant\rho_{0}\langle l\rangle.
			$$
			\item For all $ l\in\mathbb{Z}^{d }, j,j^\prime\in\mathbb{N}^*\setminus\mathbb{S}$  with $(l,j)\neq(0,j^\prime),$ we have
			$$\,\quad\inf_{b\in[b_{0},b_{1}]}\max_{q\in\llbracket 0, q_{0}\rrbracket}\big|\partial_{b}^{q}\big(\omega_{\textnormal{Eq}}(b)\cdot l+\Omega_{j}(b)\pm\Omega_{j^\prime}(b)\big)\big|\geqslant\rho_{0}\langle l\rangle.$$	
	\end{enumerate}}
\end{lem}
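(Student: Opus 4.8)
The plan is to prove the four transversality estimates by the standard argument: combine the non-degeneracy of the relevant frequency curves (Lemma \ref{Non-degeneracy2}) with a compactness/quantitative lemma saying that a function all of whose derivatives up to order $q_0$ vanish at a point must be identically zero; the polynomial structure of $\Omega_j(b)$ makes $q_0=2j_d+2$ the exact threshold. Concretely, for a smooth function $g:[b_0,b_1]\to\mathbb R$ that is not the zero function but for which $g$ and a bounded family of its relatives are controlled, one has the elementary fact: if $\max_{q\le q_0}|\partial_b^q g(b)|$ is not bounded below by a positive constant uniformly on $[b_0,b_1]$, then there is a point $b_\ast$ where $\partial_b^q g(b_\ast)=0$ for all $q\in\llbracket 0,q_0\rrbracket$; and if $g$ is a polynomial of degree $\le q_0$ this forces $g\equiv 0$. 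So the whole proof reduces to checking, case by case, that the function inside the absolute value is a nonzero polynomial of degree at most $q_0=2j_d+2$ in $b$, with the lower bound scaling like $\langle l\rangle$.

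First I would reduce all four items to the normalized statement ``for every relevant $(l,j,j')$ the function $b\mapsto f_{l,j,j'}(b)$ is a nonzero polynomial of degree $\le q_0$, and $\inf_b \max_{q\le q_0}|\partial_b^q f_{l,j,j'}(b)|\ge \rho_0\langle l\rangle$''. For the degree bound, note $\Omega_j(b)=\tfrac12(j-1+b^{2j})$ so $\omega_{\textnormal{Eq}}(b)\cdot l$ is a polynomial in $b$ of degree $2j_d$, and adding $\pm\tfrac j2$, $\pm\Omega_j(b)$, or $\pm\Omega_j(b)\pm\Omega_{j'}(b)$ with $j,j'\in\mathbb N^\ast\setminus\mathbb S$ can only raise the degree to $\max(2j_d,2j,2j')$; the subtle point is that $j$ or $j'$ could be arbitrarily large, so I would split: if $\max(j,j')\le j_d$ (or more conveniently if the degree is $\le q_0$) argue directly; if $\max(j,j')$ is large, the corresponding monomial $b^{2j}$ or $b^{2j'}$ dominates and one gets the lower bound by a crude estimate on a fixed subinterval of $[b_0,b_1]$ away from where lower-degree terms can cancel it — using that $|\partial_b^{q}(b^{2j})|$ grows like $j^{q}$ for fixed $q$, which beats the $O(j)$-type bounds of Lemma \ref{properties omegajb}(iv) once $j$ is large enough compared to $|l|$ and $j_d$. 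So there are really two regimes and the large-$j$ regime needs a separate, elementary domination argument.

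For the nonvanishing (the part that uses Lemma \ref{Non-degeneracy2}): in item (i), $f_l=\omega_{\textnormal{Eq}}\cdot l\not\equiv 0$ for $l\ne 0$ is exactly non-degeneracy of $\omega_{\textnormal{Eq}}$. For items (ii)--(iv) the constant and $\pm\Omega_j$, $\pm\Omega_{j'}$ contributions should be absorbed using non-degeneracy of $(\omega_{\textnormal{Eq}},1)$ together with the hypothesis $j,j'\notin\mathbb S$: if $f_{l,j,j'}\equiv 0$ then comparing coefficients of the distinct monomials $b^{2j_1},\dots,b^{2j_d},b^{2j},b^{2j'}$ (which are pairwise distinct precisely because $j,j'\notin\mathbb S$, except in the degenerate coincidences like $j=j'$ with the $+$ sign, or $(l,j)=(0,j')$ which are excluded by hypothesis) forces all $l_k=0$ and then the remaining relation among $1,b^{2j},b^{2j'}$ forces the rest to vanish — contradiction. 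One must be careful in item (iv) with the case $j=j'$ and the $-$ sign, where $\Omega_j-\Omega_{j'}$ collapses: then the function is $\omega_{\textnormal{Eq}}\cdot l$ and we are back to item (i) with $l\ne 0$ (the excluded case $(l,j)=(0,j')$ is exactly $l=0,\ j=j'$), so it is consistent. Having established $f_{l,j,j'}\not\equiv 0$, I would get the uniform lower bound $\rho_0\langle l\rangle$ by a compactness argument on the finitely-many ``small $|l|$, small $j,j'$'' cases (a continuous positive function on a compact set has a positive minimum, divided by $\langle l\rangle$) glued with the explicit large-parameter domination estimates; normalizing by $\langle l\rangle$ is legitimate since every term is $O(\langle l\rangle)$ after dividing, using Lemma \ref{properties omegajb}(iv) for the derivative bounds.

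The main obstacle I expect is the uniformity over the infinitely many pairs $(l,j,j')$, i.e.\ turning the pointwise non-vanishing into a lower bound with the correct $\langle l\rangle$ weight, uniformly. The clean way is the Rüssmann-type argument: one knows a priori $\partial_b^{q_0}f_{l,j,j'}$ is, up to normalization, bounded above and, because the function is a nonzero polynomial of controlled degree, its ``size'' (e.g.\ the max of $|\partial_b^q f|$ over $q\le q_0$ and $b\in[b_0,b_1]$) is bounded below away from zero after dividing by $\langle l\rangle$; making this quantitative requires either a lemma like Lemma \ref{lemma useful for measure estimates} (the quantitative Rüssmann lemma) applied to the polynomial, or a direct argument exploiting that a degree-$q_0$ polynomial with leading or low-order coefficient of size $\gtrsim\langle l\rangle$ cannot have all derivatives up to $q_0$ simultaneously tiny on a fixed-length interval. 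I would handle the bulk ``$|l|, j, j'$ all bounded by a constant depending only on $b_0,b_1,\mathbb S$'' by compactness, and push the tail to the explicit domination estimate, then take $\rho_0$ to be the minimum of the two resulting constants.
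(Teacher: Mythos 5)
Your skeleton (non-degeneracy of $(\omega_{\textnormal{Eq}},1)$ plus a Rüssmann-type quantification, with a separate treatment of large Fourier modes) is the right one, but two of your key mechanisms fail. First, the large-$j$ "domination" is wrong: on $[b_0,b_1]\subset(0,1)$ the derivatives $\partial_b^{q}(b^{2j})=\tfrac{(2j)!}{(2j-q)!}b^{2j-q}\leqslant (2j)^{q}b_1^{2j-q}$ do not grow like $j^{q}$; they are uniformly bounded (this is exactly \eqref{unif bnd dbn}) and in fact tend to $0$ as $j\to\infty$, so the high monomial never dominates through its derivatives. What actually controls the regime $j\gg\langle l\rangle$ is the zeroth-order term, via $|\Omega_j(b)|\geqslant\tfrac{b_0^2}{2}j$ and $|\Omega_j(b)\pm\Omega_{j'}(b)|\geqslant\tfrac{b_0^2}{6}|j\pm j'|$ from Lemma \ref{properties omegajb}, which is how the paper reduces to $|j|\leqslant C_0\langle l\rangle$. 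Also, the bound is an infimum over all $b\in[b_0,b_1]$, so an estimate "on a fixed subinterval" cannot suffice. Moreover, your gluing (finite compactness for bounded $(l,j,j')$ plus tail domination) leaves uncovered the regime $|l|$ large with $j,j'\lesssim\langle l\rangle$: there the parameters are not in a fixed finite set and nothing dominates, and one needs compactness in the normalized parameters $l/\langle l\rangle$ and $j/\langle l\rangle$ (the paper's extraction argument, where $\Omega_{j_m}(\cdot)/|l_m|$ converges to a constant $\bar d$ and non-degeneracy of $(\omega_{\textnormal{Eq}},1)$ is invoked), or an equivalent quantitative coefficient-norm argument which you only gesture at and only for polynomials of degree $\leqslant q_0$.

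Second, and more fundamentally, your reduction "the function is a nonzero polynomial of degree at most $q_0=2j_d+2$" is false in items (iii)--(iv) as soon as $j$ or $j'$ exceeds $j_d+1$: the degree is $2\max(j,j')$, which is unbounded, so vanishing of the derivatives up to order $q_0$ at a point $\bar b$ does not force the polynomial to vanish (think of $(b-\bar b)^{q_0+1}$), and your coefficient comparison only yields $f\not\equiv0$, which is not enough. The actual reason the specific value $q_0=2j_d+2$ works, absent from your proposal, is structural: derivatives of order $2j_d+1$ and $2j_d+2$ annihilate the degree-$2j_d$ part $\omega_{\textnormal{Eq}}(b)\cdot l$ and isolate the monomials $b^{2j}$, $b^{2j'}$; in case (iii) one such derivative cannot vanish at $\bar b\in[b_0,b_1]\subset(0,1)$, and in case (iv) the two orders produce the $2\times 2$ system \eqref{syst b} whose vanishing forces $\bar b=0$ or $j=j'$, both excluded. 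Without this step the moderate cases $j_d+1<j\lesssim\langle l\rangle$ are unproved and the choice $q_0=2j_d+2$ (rather than $2j_d$) is unexplained, so as written the argument has a genuine gap precisely in the regimes that drive the paper's case analysis.
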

\begin{proof}
	\textbf{(i)} Assume by contradiction that for all $\rho_{0}>0$, there exist $l\in\mathbb{Z}^{d }\setminus\{0\}$ and $b\in[b_{0},b_{1}]$ such that 
	$$
	\max_{q\in\llbracket 0, q_{0}\rrbracket}|\partial_{b}^{q}\omega_{\textnormal{Eq}}(b)\cdot l|<\rho_{0}\langle l\rangle.
	$$
	In particular, for the choice $\rho_{0}=\frac{1}{m+1}$, we can construct sequences $l_{m}\in\mathbb{Z}^{d}\setminus\{0\}$ and $b_{m}\in[b_{0},b_{1}]$ such that 
	\begin{equation}\label{Rossemann 0b}
		\forall q\in\llbracket 0, q_0\rrbracket,\quad\big|\partial_{b}^{q}\omega_{\textnormal{Eq}}(b_{m})\cdot \tfrac{l_{m}}{\langle l_m\rangle}\big|<\tfrac{1}{m+1}\cdot
	\end{equation}
	Since the sequences $\left(\frac{l_{m}}{\langle l_m\rangle}\right)_{m}$ and $(b_{m})_{m}$ are bounded, then  by compactness arguments and, up to an extraction, we can assume that
	$$\lim_{m\to\infty}\tfrac{l_{m}}{\langle l_m\rangle}=\bar{c}\neq 0\quad\hbox{and}\quad \lim_{m\to\infty}b_{m}=\bar{b}.
	$$
	Therefore, denoting
	$$P_0:=\omega_{\textnormal{Eq}}(X)\cdot\bar{c}\,\in\mathbb{R}_{2j_d}[X]$$
	then passing to the limit  in \eqref{Rossemann 0b}  as  $m\rightarrow\infty$ leads to
	$$\forall q\in\llbracket 0, q_0\rrbracket,\quad P_0^{(q)}(\bar{b})=0.$$
	Hence, using the particular choice of $q_0$, we conclude that the polynomial $(X-\bar{b})^{2j_d+3}$ divides $P_0$,  
	$$(X-\bar{b})^{2j_d+3}|P_0.$$
	Since $\deg(P_0)\leqslant 2j_d,$ we conclude that $P_0$ is identically zero. This contradicts the non-degeneracy of the equilibrium frequency vector $\omega_{\textnormal{Eq}}$ stated in Lemma \ref{Non-degeneracy2}.
	\\
	\textbf{(ii)}  
The case $l=0, j\in\mathbb{N}^*$ is trivially satisfied. Thus, we shall consider the case  $j\in\mathbb{N}$, $l\in\mathbb{Z}^d\setminus\{0\}$.
By the triangle inequality combined with the boundedness of  ${\omega}_{\textnormal{Eq}}$  we find
$$\big|{\omega}_{\textnormal{Eq}}(b)\cdot l+\tfrac{j}{2}\big|\geqslant\tfrac12|j|-|{\omega}_{\textnormal{Eq}}(b)\cdot l|\geqslant \tfrac12|j|-C|l|\geqslant |l|$$
provided that  $|j|\geqslant C_{0}|l|$ for some $C_{0}>0.$ Thus, we shall restrict the proof to  indices  $j$ and $l$ with
\begin{equation}\label{parameter condition 10}
 |j|\leqslant C_{0}|l|, \quad j\in\mathbb{N}, \quad l\in\mathbb{Z}^d\setminus\{0\}.
\end{equation}
Arguing by contradiction as in the previous case, we may assume the existence of sequences $l_{m}\in \mathbb{Z}^d\backslash\{0\}$, $j_m \in \mathbb{N}$ satisfying \eqref{parameter condition 10} and $b_{m}\in[b_0,b_1]$ such that 
\begin{equation}\label{Rossemann 00b}
\forall q\in\llbracket 0, q_0\rrbracket,\quad \left|\partial_{b}^{q}\left({\omega}_{\textnormal{Eq}}(b_{m})\cdot\tfrac{l_{m}}{\langle l_{m}\rangle}+\tfrac{j_{m}}{2\langle l_{m}\rangle}\right)\right|<\tfrac{1}{1+m}.
\end{equation}
Since the sequences $(b_m)_m$, $\big(\frac{j_m}{2\langle l_m\rangle}\big)$  and $ \big(\frac{l_m}{\langle l_m\rangle}\big)$ are bounded, then up to an extraction we can assume that
$$
\lim_{m\to\infty}b_{m}=\bar{b},\quad \lim_{m\to\infty}\tfrac{j_m}{2\langle l_m\rangle}=\bar{d}\neq 0\quad\hbox{and}\quad \lim_{m\to\infty}\tfrac{l_m}{\langle l_m\rangle}=\bar{c}\neq 0.
$$
Denoting
	$$Q_{0}:=\omega_{\textnormal{Eq}}(X)\cdot\bar{c}+\bar{d}\,\in\mathbb{R}_{2j_d}[X]$$
	and letting  $m\rightarrow\infty$ in  \eqref{Rossemann 00b}  we obtain
	$$\forall q\in\llbracket 0, q_0\rrbracket,\quad Q_0^{(q)}(\bar{b})=0.$$
	Consequently, using the particular choice of $q_0$, we get
	$$(X-\bar{b})^{2j_d+3}|Q_0.$$
	Since $\deg(Q_0)\leqslant 2j_d,$ we conclude that $Q_0$ is identically zero. This contradicts Lemma  \ref{Non-degeneracy2}.
	\\
	\textbf{(iii)} Consider $(l,j)\in\mathbb{Z}^{d }\times (\mathbb{N}^*\setminus\mathbb{S})$. Then applying  the  triangle inequality and Lemma \ref{properties omegajb}-(ii), yields
	\begin{align*}
		|\omega_{\textnormal{Eq}}(b)\cdot l\pm\Omega_{j}(b)|&\geqslant|\Omega_{j}(b)|-|\omega_{\textnormal{Eq}}(b)\cdot l|\\
		&\geqslant \tfrac{b_0^2}{2}j-C|l|\geqslant \langle l\rangle
	\end{align*}
	provided $j\geqslant C_{0} \langle l\rangle$ for some $C_{0}>0.$ Thus  as before we shall restrict the proof to indices $j$ and $l$ with 
	\begin{equation}\label{parameter condition 1b}
		0\leqslant j< C_{0} \langle l\rangle,\quad j\in\mathbb{N}^*\setminus\mathbb{S}\quad\hbox{and}\quad l\in\mathbb{Z}^d\backslash\{0\}.
	\end{equation}
	Proceeding   by contradiction, we may assume the existence of sequences  $l_{m}\in\mathbb{Z}^{d }\setminus\{0\}$, $j_{m}\in\mathbb{N}\setminus\mathbb{S}$ satisfying \eqref{parameter condition 1b} and $b_{m}\in[b_{0},b_{1}]$ such that 
	\begin{equation}\label{Rossemann 1b}
		\forall q\in\llbracket 0, q_0\rrbracket,\quad \left|\partial_{b}^{q}\left(\omega_{\textnormal{Eq}}(b)\cdot\tfrac{l_{m}}{| l_{m}|}\pm\tfrac{\Omega_{j_{m}}(b)}{| l_{m}|}\right)_{|b=b_m}\right|<\tfrac{1}{m+1}\cdot
	\end{equation}
	Since  the sequences $\left(\tfrac{l_{m}}{|l_{m}|}\right)_{m}$ and $(b_{m})_{m}$ are bounded, then up to an extraction we can assume  that 
	$$\lim_{m\to\infty}\tfrac{l_{m}}{| l_{m}|}=\bar{c}\neq 0\quad\hbox{and}\quad \lim_{m\to\infty}b_{m}=\bar{b}.
	$$
	Now we shall distinguish  two cases.\\
	$\blacktriangleright$ Case \ding{182} : $(l_{m})_{m}$ is bounded. In this case, by \eqref{parameter condition 1b} we  find  that $(j_{m})_{m}$ is bounded too and thus up to to an extraction we may assume  $ \displaystyle \lim_{m\to\infty}l_{m}=\bar{l}$ and $  \displaystyle \lim_{m\to\infty}j_{m}=\bar{\jmath}.$
	Since $(j_{m})_{m}$ and $(|l_{m}|)_{m}$ are sequences of integers, then they are necessary stationary. In particular, the condition \eqref{parameter condition 1b} implies $\bar{l}\neq 0$ and $\bar{\jmath}\in\mathbb{N}\setminus\mathbb{S}.$ Hence, denoting 
	$$P_{0,\bar{\jmath}}:=\omega_{\textnormal{Eq}}(X)\cdot\bar{l}\pm\Omega_{\bar{\jmath}}(X)\in\mathbb{R}_{\max(2j_d,2\bar{\jmath})}[X],$$ 
	then taking the limit $m\rightarrow\infty$ in \eqref{Rossemann 1b}, yields
	$$\forall q\in\llbracket 0,q_0\rrbracket,\quad P_{0,\bar{\jmath}}^{(q)}(\bar{b})=0.$$
	If $\bar{\jmath}<  j_d,$ then in a similar way to the point (i), we find that $P_{0,\bar{\jmath}}=0$ which contradicts Lemma \ref{Non-degeneracy2}, applied with $\big(\omega_{\textnormal{Eq}},\Omega_{\bar{\jmath}}\big)$ in place of $\omega_{\textnormal{Eq}}.$ Hence, we shall restrict the discussion to the case $\bar{\jmath}>j_d.$ Since $\omega_{\textnormal{Eq}}(X)\cdot\bar{l}$ is of degree $2j_d,$ then we obtain in view of our choice of $q_0$ that
	$$\tfrac{1}{2}q!\binom{2\bar{\jmath}}{q}b^{2\bar{\jmath}-2j_d-1}=\partial_{b}^{2j_d+1}\Omega_{\bar{\jmath}}(\bar{b})=0.$$ 
	This implies that $\bar{b}=0$ which contradicts the fact that $\bar{b}\in[b_0,b_1]\subset(0,1).$
	\\
	$\blacktriangleright$ Case \ding{183} : $(l_{m})_{m}$ is unbounded. Up to an extraction  we can assume that $\displaystyle \lim_{m\to\infty}|l_{m}|=\infty.$
	We have two sub-cases.\\
	$\bullet$ Sub-case \ding{172} : $(j_{m})_{m}$ is bounded. In this case and up to an extraction   we can assume that it converges. Then, taking the limit $m\rightarrow\infty$ in \eqref{Rossemann 1b}, we find
	$$\forall q\in\llbracket 0,q_0\rrbracket,\quad \partial_{b}^{q}\omega_{\textnormal{Eq}}(\bar{b})\cdot\bar{c}=0.$$
	Therefore, we obtain a contradiction in a similar way to the point (i).\\
	$\bullet$ Sub-case \ding{173} : $(j_{m})_{m}$ is unbounded. Then  up to an extraction we can assume that $  \displaystyle \lim_{m\to\infty}j_{m}=\infty$. We write  according to \eqref{Omegajb}
	\begin{align}\label{FHK1b}
		\frac{\Omega_{j_{m}}(b)}{| l_{m}|}=\frac{j_{m}}{2| l_{m}|}-\frac{1}{2|l_m|}+\frac{b^{2j_m}}{2|l_m|}.
	\end{align}
	By \eqref{parameter condition 1b}, the sequence $\left(\frac{j_{m}}{2| l_{m}|}\right)_{n}$ is bounded, thus  up to an extraction  we can assume that it converges to $\bar{d}.$ Moreover, since $\displaystyle \lim_{m\to\infty}j_{m}=\displaystyle \lim_{m\to\infty}|l_{m}|=\infty$ and $b_m\in(b_0,b_1)$, then taking the limit in \eqref{FHK1b}, one obtains from \eqref{unif bnd dbn},
	$$\lim_{m\to\infty}\tfrac{\partial_b^q\Omega_{j_{m}}(b)_{|b=b_m}}{| l_{m}|}=\left\lbrace\begin{array}{ll}
		\bar{d} & \textnormal{if }q=0\\
		0 & \textnormal{else.}	
		\end{array}\right.$$
	Consequently, taking the limit $m\rightarrow\infty$ in \eqref{Rossemann 1b}, we have 
	$$\forall q\in\llbracket 0,q_0\rrbracket,\quad \partial_{b}^{q}\left(\omega_{\textnormal{Eq}}({b})\cdot\bar{c}\pm \bar{d}\right)_{|b=\bar{b}}=0.$$
	Then, in a similar way the the point (ii), we deduce that the polynomial $\omega_{\textnormal{Eq}}(X)\cdot\bar{c}+\bar{d}$ is identically zero, which is in contradiction with Lemma \ref{Non-degeneracy2}.\\
	\textbf{(iv)} Consider $ l\in\mathbb{Z}^{d }, j,j^\prime\in\mathbb{N}^*\setminus\mathbb{S}$  with $(l,j)\neq(0,j^\prime).$ Then applying the  triangle inequality combined with Lemma \ref{properties omegajb}-(iii), we infer that
	$$|\omega_{\textnormal{Eq}}(b)\cdot l+\Omega_{j}(b)\pm\Omega_{j'}(b)|\geqslant|\Omega_{j}(b)\pm\Omega_{j'}(b)|-|\omega_{\textnormal{Eq}}(b)\cdot l|\geqslant \tfrac{b_0^2}{6}|j\pm j'|-C|l|\geqslant \langle l\rangle$$
	provided that $|j\pm j'|\geqslant c_{0}\langle l\rangle$ for some $c_{0}>0.$ Then it remains to check the proof for  indices  satisfying 
	\begin{equation}\label{parameter condition 2b}
		|j\pm j'|< c_{0}\langle l\rangle,\quad  l\in\mathbb{Z}^{d }\backslash\{0\},\quad  j,j^\prime\in\mathbb{N}^*\setminus\mathbb{S}.
	\end{equation}
	Reasoning by contradiction as in the previous cases, we get   for all $m\in\mathbb{N}$, real numbers  $l_{m}\in\mathbb{Z}^{d }\setminus\{0\}$, $j_{m},j'_{m}\in\mathbb{N}^*\setminus\mathbb{S}$ satisfying \eqref{parameter condition 2b} and $b_{m}\in[b_{0},b_{1}]$ such that 
	\begin{equation}\label{Rossemann 2b}
		\forall q\in\llbracket 0,q_0\rrbracket,\quad \left|\partial_{b}^{q}\left(\omega_{\textnormal{Eq}}(b)\cdot\tfrac{l_{m}}{| l_{m}|}+\tfrac{\Omega_{j_{m}}(b)\pm \Omega_{j'_{m}}(b)}{| l_{m}|}\right)_{|b=b_m}\right|<\tfrac{1}{m+1}\cdot
	\end{equation}
	Up to an extraction we can assume that $\displaystyle \lim_{m\to\infty}\tfrac{l_{m}}{| l_{m}|}=\bar{c}\neq 0$ and $\displaystyle  \lim_{m\to\infty}b_{m}=\bar{b}.$ \\
	As before we shall distinguish two cases.\\
	$\blacktriangleright$ Case \ding{182} : $(l_{m})_{m}$ is bounded. Up to an extraction we may assume  that  $\displaystyle \lim_{m\to\infty}l_{m}=\bar{l}\neq 0.$ Now according  to  \eqref{parameter condition 2b} we have two sub-cases to discuss depending whether the sequences $(j_{m})_{m}$ and $(j'_{m})_{m}$ are simultaneously bounded  or unbounded.\\
	$\bullet$ Sub-case \ding{172} : $(j_{m})_{m}$ and $(j'_{m})_{m}$ are bounded. In this case, up to a extraction we may assume that these sequences are stationary  $j_{m}=\bar{\jmath}$ and $j'_{m}=\bar{\jmath}'$ with $ \bar{\jmath},\bar{\jmath}'\in\mathbb{N}^*\setminus\mathbb{S}.$
	Hence, denoting
	$$P_{0,\bar{\jmath},\bar{\jmath}'}:=\omega_{\textnormal{Eq}}(X)\cdot\bar{l}+\Omega_{\bar{\jmath}}(X)\pm \Omega_{\bar{\jmath}'}(X)\in\mathbb{R}_{\max(2j_d,2\bar{\jmath},2\bar{\jmath}')}[X],$$ 
	then, taking the limit $m\rightarrow\infty$ in \eqref{Rossemann 1b}, we have 
	$$\forall q\in\llbracket 0,q_0\rrbracket,\quad P_{0,\bar{\jmath},\bar{\jmath}'}^{(q)}(\bar{b})=0.$$
	If $\max(\bar{\jmath},\bar{\jmath}')< j_d,$ then, we deduce that $P_{0,\bar{\jmath},\bar{\jmath}'}=0$ which gives a contradiction as the previous cases, up to replacing $\omega_{\textnormal{Eq}}$ by $\big(\omega_{\textnormal{Eq}},\Omega_{\bar{\jmath}},\Omega_{\bar{\jmath}'}\big).$ Therefore, we are left to study the case $\max(\bar{\jmath},\bar{\jmath}')>j_d.$ Notice that the cases $\bar{\jmath}=\bar{\jmath}'$ and $\min(\bar{\jmath},\bar{\jmath}')>j_d$ are byproducts of point (i) and (iii). Without loss of generality, we may assume that $\bar{\jmath}>\bar{\jmath}'\geqslant j_d+1.$ In particular, since $\omega_{\textnormal{Eq}}(X)\cdot\bar{l}$ is of degree $2j_d$, then, according to our choice of $q_0$, we obtain
	\begin{equation}\label{syst b}
		\left\lbrace\begin{array}{l}
			C_1\bar{b}^{\alpha}\pm C_2\bar{b}^{\beta}=0\\
			C_1\alpha\bar{b}^{\alpha}\pm C_2\beta\bar{b}^{\beta}=0,
		\end{array}\right.
	\end{equation}
	with
	$$\alpha:=2\bar{\jmath}-2j_d-1,\quad\beta:=2\bar{\jmath}'-2j_d-1, \quad C_1:=q_0!\binom{2\bar{\jmath}}{q_0}\quad\textnormal{and}\quad C_2:=q_0!\binom{2\bar{\jmath}'}{q_0}.$$
	Since $C_1$ and $C_2$ are positive, we immediately get from the first equation in \eqref{syst b} that 
	$$C_1\bar{b}^{\alpha}+ C_2\bar{b}^{\beta}=0\quad\Rightarrow\quad \bar{b}=0.$$
	This  contradicts the fact that $\bar{b}\in[b_0,b_1]\subset(0,1).$ In the case where we have the difference, the system \eqref{syst b} gives
	$$\frac{C_2}{C_1}=\frac{C_2\beta}{C_1\alpha},$$
	which implies in turn that $\alpha=\beta,$ that is $\bar{\jmath}=\bar{\jmath}'$ which is excluded by hypothesis.\\
	$\bullet$ Sub-case \ding{173} : $(j_{m})_{m}$ and $(j'_{m})_{m}$ are both unbounded and without loss of generality we can assume that $\displaystyle \lim_{m\to\infty}j_{m}= \lim_{m\to\infty}j'_{m}=\infty$. Coming back to  \eqref{Omegajb} we get the splitting 
	\begin{align}\label{Appla1}
		\nonumber\frac{\Omega_{j_m}(b)\pm\Omega_{j_{m}^\prime}(b)}{|l_m|}=&\frac{j_m\pm j^\prime_{m}}{2|l_m|}+\frac{b^{2j_m}\pm b^{2j'_m}}{2|l_m|}.
	\end{align}
Using once again \eqref{parameter condition 2b} and up to an extraction we have   $\displaystyle \lim_{m\to\infty}\tfrac{j_{m}\pm j'_{m}}{|l_m|}=\bar{d}.$
	Thus
	$$
	\lim_{m\to\infty}|l_m|^{-1}\partial_{b}^{q}\left({\Omega_{j_{m}}(b)\pm\Omega_{j'_{m}}(b)}\right)_{|b=b_m}=\left\lbrace\begin{array}{ll}
		\bar{d} & \textnormal{if }q=0\\
		0 & \textnormal{if }q\neq 0.
	\end{array}\right.$$
	By taking the limit as $m\rightarrow\infty$ in \eqref{Rossemann 2b}, we find				$$
	\forall q\in\llbracket 0,q_0\rrbracket,\quad \partial_{b}^{q}\left({\omega}_{\textnormal{Eq}}({b})\cdot\bar{c}+\bar{d}\right)_{|b=\overline b}=0.
	$$
	This leads to a contradiction as in the point (ii).\\
	$\blacktriangleright$  Case \ding{183} : $(l_{m})_{m}$ is unbounded. Up to an extraction  we can assume that $\displaystyle \lim_{m\to\infty}|l_{m}|=\infty.$\\
	We shall distinguish three sub-cases.\\
	$\bullet$ Sub-case \ding{172}. The sequences  $(j_{m})_{m}$ and $(j'_{m})_{m}$ are bounded. In this case and  up to an extraction  they will  converge and then taking the limit in \eqref{Rossemann 2b} yields,
	$$\forall q\in\llbracket 0,q_0\rrbracket,\quad\partial_{b}^{q}{\omega}_{\textnormal{Eq}}(\bar{b})\cdot\bar{c}=0.
	$$
	which leads to a contradiction as before. \\
	$\bullet$ Sub-case \ding{173}. The sequences  $(j_{m})_{m}$ and $(j'_{m})_{m}$ are both unbounded. This is similar to  the sub-case \ding{173} of the case \ding{182}.\\
	$\bullet$ Sub-case \ding{174}. The sequence $(j_{m})_{m}$ is unbounded and $(j'_{m})_{m}$ is bounded (the symmetric case is similar).  Without loss of generality we can assume that $\displaystyle \lim_{m\to\infty}j_m=\infty$ and $  j_{m}^\prime=\bar{\jmath}'.$ By \eqref{parameter condition 2b} and  up to an extraction one gets  $\displaystyle \lim_{m\to\infty}\tfrac{j_{m}\pm j'_{m}}{| l_{m}|}=\bar{d}.$ Once again, we have
	$$
	\lim_{m\to\infty}|l_m|^{-1}\partial_{b}^{q}\left({\Omega_{j_{m}}(b)\pm\Omega_{j'_{m}}(b)}\right)_{|b=b_m}=\left\lbrace\begin{array}{ll}
		\bar{d} & \textnormal{if }q=0\\
		0 & \textnormal{if }q\neq 0.
	\end{array}\right.$$
	Hence, taking the limit  in \eqref{Rossemann 2b} implies 
	$$\forall q\in\llbracket 0,q_0\rrbracket,\quad \partial_{b}^{q}\left({\omega}_{\textnormal{Eq}}({b})\cdot\bar{c}+\bar{d}\right)_{b=\overline b}=0,$$
	which also  gives a contradiction as the previous cases.
	This completes the proof of Lemma \ref{lemma transversalityb}.
\end{proof}
Notice that 
by selecting  only  a finite number of frequencies, the sum  in \eqref{solution of the linear system}  give rise to  quasi-periodic solutions of the linearized equation \eqref{Ham eq 0}, up to selecting the parameter $b$ in a Cantor-like set of full measure.
We have the following result.
\begin{prop}\label{lemma sol Eq}
	Let $0<b_0<b_1<1,$ $d\in\mathbb{N}^{*}$ and $\mathbb{S}\subset\mathbb{N}^*$ with $|\mathbb{S}|=d.$ Then, there exists a Cantor-like set $\mathcal{C}\subset[b_0,b_1]$ satisfying $|\mathcal{C}|=b_1-b_0$ and such that for all $\lambda\in\mathcal{C}$, every function in the form
	\begin{equation}\label{rev sol eq}
		\rho(t,\theta)=\sum_{j\in\mathbb{S}}\rho_{j}\cos(j\theta-\Omega_{j}(b)t),\quad\rho_{j}\in\mathbb{R}^*
	\end{equation} 
	is a time quasi-periodic reversible solution to the equation \eqref{Ham eq ED r} with the vector frequency 
	$$\omega_{\textnormal{Eq}}(b)=(\Omega_{j}(b))_{j\in\mathbb{S}}.$$
\end{prop}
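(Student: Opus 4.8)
The plan is to separate the immediate dynamical content of the statement from its genuinely nontrivial measure-theoretic part. First I would observe that the claim concerns the linearized equation at the equilibrium, namely \eqref{Ham eq 0}, equivalently $\mathcal{L}_{0}\rho=0$: by the Fourier diagonalization of Lemma \ref{lemma linearized operator at equilibrium}, for \emph{every} $b\in(0,1)$ and every real family $(\rho_{j})_{j\in\mathbb{S}}$, the function $\rho(t,\theta)=\sum_{j\in\mathbb{S}}\rho_{j}\cos(j\theta-\Omega_{j}(b)t)$ is an exact (smooth) solution of \eqref{Ham eq 0}; it lies in the zero-average phase space $L_{0}^{2}(\mathbb{T})$ since each mode $j\in\mathbb{S}\subset\mathbb{N}^{*}$ has vanishing mean, and it satisfies $\rho(-t,-\theta)=\rho(t,\theta)$ because $\cos$ is even, hence it is reversible. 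Writing $\rho(t,\theta)=F\big(\omega_{\textnormal{Eq}}(b)\,t,\theta\big)$ with $F:\mathbb{T}^{d}\times\mathbb{T}\to\mathbb{R}$ the obvious trigonometric profile, the only point left to prove is that $F$ genuinely depends on its torus variable through a \emph{non-degenerate} frequency vector, i.e. $\omega_{\textnormal{Eq}}(b)\cdot l\neq0$ for all $l\in\mathbb{Z}^{d}\setminus\{0\}$; this is what defines the admissible set of radii $b$, and I actually want the quantitative (Diophantine) form of this condition, since the very same set will be reused later to invert $\mathcal{L}_{0}$ with a fixed loss of derivatives inside the Nash--Moser scheme.

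Next I would fix $\tau_{0}>0$ large, and for $\gamma_{0}\in(0,1)$ introduce the closed set
$$\mathcal{C}_{\gamma_{0}}:=\bigcap_{l\in\mathbb{Z}^{d}\setminus\{0\}}\Big\{b\in[b_{0},b_{1}]\quad\textnormal{s.t.}\quad|\omega_{\textnormal{Eq}}(b)\cdot l|\geqslant\tfrac{\gamma_{0}}{\langle l\rangle^{\tau_{0}}}\Big\}.$$
For $b\in\mathcal{C}_{\gamma_{0}}$ the vector $\omega_{\textnormal{Eq}}(b)$ is non-resonant, hence $\rho$ in \eqref{rev sol eq} is a bona fide time quasi-periodic reversible solution of \eqref{Ham eq 0} with frequency $\omega_{\textnormal{Eq}}(b)$. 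To bound the complement I would write $[b_{0},b_{1}]\setminus\mathcal{C}_{\gamma_{0}}=\bigcup_{l\neq0}R_{l}$ with $R_{l}:=\{b\in[b_{0},b_{1}]:|\omega_{\textnormal{Eq}}(b)\cdot l|<\gamma_{0}\langle l\rangle^{-\tau_{0}}\}$, and estimate each $|R_{l}|$ by feeding the transversality bound of Lemma \ref{lemma transversalityb}(i), namely $\inf_{b\in[b_{0},b_{1}]}\max_{q\in\llbracket0,q_{0}\rrbracket}|\partial_{b}^{q}(\omega_{\textnormal{Eq}}(b)\cdot l)|\geqslant\rho_{0}\langle l\rangle$ with $q_{0}=2j_{d}+2$, into the R\"ussemann-type Lemma \ref{lemma useful for measure estimates} (whose $C^{q_{0}}$ hypothesis is trivial here, $b\mapsto\omega_{\textnormal{Eq}}(b)\cdot l$ being polynomial by \eqref{Omegajb}). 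This produces $|R_{l}|\leqslant C\big(\gamma_{0}\langle l\rangle^{-\tau_{0}}/(\rho_{0}\langle l\rangle)\big)^{1/q_{0}}=C'\gamma_{0}^{1/q_{0}}\langle l\rangle^{-(\tau_{0}+1)/q_{0}}$, and choosing $\tau_{0}$ so large that $(\tau_{0}+1)/q_{0}>d$ makes $\sum_{l\neq0}\langle l\rangle^{-(\tau_{0}+1)/q_{0}}$ converge, so that $|[b_{0},b_{1}]\setminus\mathcal{C}_{\gamma_{0}}|\leqslant C''\gamma_{0}^{1/q_{0}}$.

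Finally I would set $\mathcal{C}:=\bigcup_{n\geqslant1}\mathcal{C}_{1/n}$, an increasing countable union of the closed (Cantor-like) sets $\mathcal{C}_{1/n}$, so that $|[b_{0},b_{1}]\setminus\mathcal{C}|=\lim_{n\to\infty}|[b_{0},b_{1}]\setminus\mathcal{C}_{1/n}|\leqslant\lim_{n\to\infty}C''n^{-1/q_{0}}=0$, i.e. $|\mathcal{C}|=b_{1}-b_{0}$; and for any $b\in\mathcal{C}$ the vector $\omega_{\textnormal{Eq}}(b)$ is non-degenerate, so the first step gives the conclusion. (For the purely qualitative statement one may dispense with R\"ussemann altogether: by \eqref{Omegajb} each $b\mapsto\omega_{\textnormal{Eq}}(b)\cdot l$ is a polynomial of degree $\leqslant2j_{d}$, not identically zero by Lemma \ref{Non-degeneracy2}, hence with finitely many roots, so $\bigcup_{l\neq0}\{b:\omega_{\textnormal{Eq}}(b)\cdot l=0\}$ is countable and null; but it is the Diophantine version that is needed downstream.) The main obstacle is precisely the uniform-in-$l$, summable bound on $|R_{l}|$: this is where the transversality conditions of Lemma \ref{lemma transversalityb} are indispensable and where the explicit value $q_{0}=2j_{d}+2$ enters, dictated by the polynomial degree of the $\Omega_{j}(b)$; everything else --- that $\rho$ solves the linear equation, and its reversibility --- is immediate.
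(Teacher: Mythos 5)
Your proposal is correct and follows essentially the same route as the paper, which proves this proposition exactly by combining the transversality bound of Lemma \ref{lemma transversalityb}-(i) with the R\"ussmann Lemma \ref{lemma useful for measure estimates} (the paper only sketches this, deferring details to the analogous statements in \cite{HHM21} and \cite{HR21}), the dynamical content being trivial by the Fourier diagonalization of Lemma \ref{lemma linearized operator at equilibrium}; your reading that the relevant equation is the linearized one \eqref{Ham eq 0} matches the paper's intent. The only cosmetic point is that, since the constant in Lemma \ref{lemma useful for measure estimates} depends on the $C^{q_0}$ norm of the function, one should apply it to $b\mapsto\omega_{\textnormal{Eq}}(b)\cdot l/\langle l\rangle$ to get uniformity in $l$ — which is precisely what your stated bound for $|R_{l}|$ reflects.
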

The proof of this proposition follows in a similar way to \cite[Lem 3.3]{HHM21} or \cite[Prop. 3.1]{HR21} where the main ingredient 
is  the following  Rüssmann Lemma \cite[Thm. 17.1]{R01}. This latter is also needed to prove Proposition~\ref{lem-meas-es1} . 
\begin{lem}\label{lemma useful for measure estimates}
	Let $q_{0}\in\mathbb{N}^{*}$ and $\alpha,\beta\in\mathbb{R}_{+}.$ Let $f\in C^{q_{0}}([a,b],\mathbb{R})$ such that
	$$\inf_{x\in[a,b]}\max_{k\in\llbracket 0,q_{0}\rrbracket}|f^{(k)}(x)|\geqslant\beta.$$
	Then, there exists $C=C(a,b,q_{0},\| f\|_{C^{q_{0}}([a,b],\mathbb{R})})>0$ such that 
	$$\Big|\left\lbrace x\in[a,b]\quad\textnormal{s.t.}\quad |f(x)|\leqslant\alpha\right\rbrace\Big|\leqslant C\tfrac{\alpha^{\frac{1}{q_{0}}}}{\beta^{1+\frac{1}{q_{0}}}},$$
	where the notation  $|A|$  corresponds to the Lebesgue measure of a given measurable set $A.$
\end{lem}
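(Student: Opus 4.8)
\emph{Plan of proof.} I would follow the classical scheme for sublevel-set estimates of Rüssmann type \cite[Thm.~17.1]{R01}, in three stages. First, a normalisation: by the affine substitution $x\mapsto a+(b-a)x$ one may assume $[a,b]=[0,1]$, since this multiplies $f^{(k)}$ by $(b-a)^{k}$ and Lebesgue measure by $b-a$ and so only affects the final constant; and since $\beta\leqslant\|f\|_{C^{q_0}([a,b],\mathbb{R})}$, the trivial bound $\big|\{|f|\leqslant\alpha\}\big|\leqslant 1$ already gives the conclusion when $\alpha\geqslant\beta$ (after enlarging $C$), so I may assume $\alpha<\beta$; then $\{|f|\leqslant\alpha\}\cap\{|f|\geqslant\beta\}=\emptyset$, so the index $k=0$ will never contribute below.

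\emph{One-derivative estimate.} The core point is that for every $m\in\mathbb{N}^{*}$ there exists $c_m>0$, depending only on $m$, such that for any interval $I$ and any $g\in C^{m}(I,\mathbb{R})$ with $\inf_{I}|g^{(m)}|\geqslant\sigma>0$ one has
\[
\forall\,\varepsilon>0,\qquad \big|\{x\in I\ :\ |g(x)|\leqslant\varepsilon\}\big|\ \leqslant\ c_m\,(\varepsilon/\sigma)^{1/m}.
\]
I would prove this by induction on $m$. For $m=1$, $g'$ keeps a constant sign on $I$ with $|g'|\geqslant\sigma$, hence $g$ is strictly monotone and $\{|g|\leqslant\varepsilon\}$ is a single subinterval of length $\leqslant 2\varepsilon/\sigma$. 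For the step from $m-1$ to $m$, I would apply the case $m-1$ to $g'$ (whose $(m-1)$-st derivative is $g^{(m)}$), which bounds $\big|\{|g'|\leqslant t\}\big|\leqslant c_{m-1}(t/\sigma)^{1/(m-1)}$ for each $t>0$; on the complementary open set $\{|g'|>t\}$, which by Rolle's theorem applied successively to $g^{(m)},g^{(m-1)},\dots,g'$ has at most $2m$ connected components, $g$ is strictly monotone with $|g'|>t$, so that portion of $\{|g|\leqslant\varepsilon\}$ has measure $\leqslant 4m\,\varepsilon/t$; optimising over $t>0$ (balancing $t^{1/(m-1)}$ against $\varepsilon/t$) yields the bound with an explicit $c_m$, which one checks to be increasing in $m$.

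\emph{Descending iteration.} I would then deduce the full statement by induction on the differentiation index, from $k=q_0$ down to $k=0$. Fix thresholds $\beta>\varepsilon_{q_0}>\varepsilon_{q_0-1}>\dots>\varepsilon_1>\varepsilon_0:=\alpha$, to be tuned, and bound
\[
\mu_k:=\Big|\big\{x\in[0,1]\ :\ |f^{(k)}(x)|\leqslant\varepsilon_k\ \text{ and }\ \max_{k\leqslant j\leqslant q_0}|f^{(j)}(x)|\geqslant\beta\big\}\Big|.
\]
Here $\mu_{q_0}=0$ because $\varepsilon_{q_0}<\beta$, whereas $\mu_0=\big|\{|f|\leqslant\alpha\}\big|$ since the hypothesis forces $\max_{0\leqslant j\leqslant q_0}|f^{(j)}|\geqslant\beta$ on all of $[0,1]$. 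At step $k$, on $\{|f^{(k)}|\leqslant\varepsilon_k\}$ the contribution lies in the set measured by $\mu_k$, while $\{|f^{(k)}|>\varepsilon_k\}$ is a disjoint union of intervals on each of which $f^{(k)}$ has constant sign, so $f^{(k-1)}$ is strictly monotone there and the part where $|f^{(k-1)}|\leqslant\varepsilon_{k-1}$ has length $\leqslant 2\varepsilon_{k-1}/\varepsilon_k$. Summing gives $\mu_{k-1}\leqslant\mu_k+\big(\text{number of relevant intervals}\big)\cdot 2\varepsilon_{k-1}/\varepsilon_k$, and the number of relevant intervals is controlled by combining the one-derivative estimate above with the Chebyshev bound $\big|\{|f^{(q_0)}|\geqslant\beta\}\big|\leqslant\beta^{-1}\int_0^1|f^{(q_0)}|\leqslant\beta^{-1}\|f\|_{C^{q_0}}$ at the top level. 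Choosing the $\varepsilon_k$ so that these geometric losses telescope, one arrives at $\mu_0\leqslant C\,\alpha^{1/q_0}\beta^{-1-1/q_0}$ with $C=C(q_0,b-a,\|f\|_{C^{q_0}})$; the extra factor $\beta^{-1}$ beyond the $\beta^{-1/q_0}$ produced by the one-derivative estimate is exactly what this counting step supplies.

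\emph{Main obstacle.} The delicate point, and the reason the argument does not reduce to a naive covering, is precisely this last accounting: the top sublevel set $\{|f^{(q_0)}|\geqslant\beta\}$ is not controlled in a Lipschitz sense by $\|f\|_{C^{q_0}}$, so it cannot be covered by intervals of uniform positive length, and the number of pieces on which the monotonicity argument is run must be tracked through the descending iteration itself — which is where essentially all of the bookkeeping in \cite[Thm.~17.1]{R01} is concentrated.
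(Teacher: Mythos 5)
First, a point of comparison: the paper does not prove this lemma at all — it is quoted as Rüssmann's Theorem 17.1 \cite{R01} — so your attempt can only be measured against the classical argument. Your normalisation (reduction to $[0,1]$ and to $\alpha<\beta$, using that $\beta\leqslant\|f\|_{C^{q_0}}$) is correct, and your one-derivative sublevel estimate is a complete and standard proof: the induction $c_m=c_{m-1}+4m$, the count of at most $2m$ monotonicity components of $\{|g'|>t\}$ via iterated Rolle, and the optimisation in $t$ all check out. The overall skeleton (normalise, run monotonicity estimates down the chain of derivatives, pay an extra $\beta^{-1}$ through a count of pieces) is indeed the shape of Rüssmann's proof.

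The genuine gap is the counting step, which is exactly where you place ``essentially all of the bookkeeping'' and which your sketch does not supply. In the recursion $\mu_{k-1}\leqslant\mu_k+N_k\cdot 2\varepsilon_{k-1}/\varepsilon_k$ you must bound $N_k$, the number of connected components of $\{|f^{(k)}|>\varepsilon_k\}$ meeting the set $\{|f^{(k-1)}|\leqslant\varepsilon_{k-1}\}\cap\{\max_{j\geqslant k-1}|f^{(j)}|\geqslant\beta\}$, and neither tool you invoke can do this: the one-derivative estimate and the Chebyshev inequality bound \emph{measures}, not numbers of components, and the Chebyshev bound $|\{|f^{(q_0)}|\geqslant\beta\}|\leqslant\beta^{-1}\|f\|_{C^{q_0}}$ is in fact vacuous after normalisation since $\beta\leqslant\|f\|_{C^{q_0}}$ makes its right-hand side at least $1$. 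Worse, for a $C^{q_0}$ function the set $\{|f^{(k)}|>\varepsilon_k\}$ can have infinitely many components, and a component meeting $\{|f^{(k-1)}|\leqslant\varepsilon_{k-1}\}$ need not be long: if the derivative of order $\geqslant\beta$ at the meeting point has order strictly larger than $k$, then $|f^{(k)}|$ may hover just above $\varepsilon_k$ on an arbitrarily short component, so no count of the required kind follows from your hypotheses as used. In the correct proof the loss $\beta^{-1}$ comes from a different mechanism: the decomposition is organised (e.g.\ according to the first derivative of size $\geqslant\beta$) so that every component that matters forces the relevant derivative to traverse a gap of size comparable to $\beta$, whence the mean value theorem with $\|f^{(k+1)}\|_{L^\infty}\leqslant\|f\|_{C^{q_0}}$ gives a length lower bound of order $\beta/\|f\|_{C^{q_0}}$ and therefore at most $O\big((b-a)\|f\|_{C^{q_0}}/\beta\big)$ components; the top derivative, for which no modulus of continuity is available, requires a separate treatment, and this is precisely the content of Rüssmann's Section 17. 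With such a count, your geometric thresholds $\varepsilon_k=\alpha^{(q_0-k)/q_0}\beta^{k/q_0}$ would indeed telescope to $C\alpha^{1/q_0}\beta^{-1-1/q_0}$; without it, the scheme does not close. You should either reproduce that bookkeeping in full or, as the paper does, simply cite \cite[Thm.~17.1]{R01}.
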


\section{Topological and algebraic aspects for functions and operators}\label{sec funct set}
In this section we present  the general topological framework for both functions and operators classes. We also recall basic   definitions and gather some technical results that will be used along the paper.

\subsection{Function spaces}\label{Functionspsaces}
Recall  the classical complex Sobolev space $H^{s}(\mathbb{T}^{d +1},\mathbb{C})$ which is the set
of all complex periodic functions   with the Fourier expansion 
$$\rho=\sum_{(l,j)\in\mathbb{Z}^{d+1 }}\rho_{l,j}\,\mathbf{e}_{l,j}\quad \mbox{ where }\quad \rho_{l,j}=\big\langle\rho,\mathbf{e}_{l,j}\big\rangle_{L^{2}(\mathbb{T}^{d+1},\mathbb{C})}$$
such that
$$\| \rho\|_{H^{s}}^{2}:=\sum_{(l,j)\in\mathbb{Z}^{d+1 }}\langle l,j\rangle^{2s}|\rho_{l,j}|^{2}<\infty\quad\textnormal{where}\quad \langle l,j\rangle:=\max(1,|l|,|j|),$$
 with $|\cdot|$ denoting either the $\ell^{1}$ norm in $\mathbb{R}^{d }$ or the absolute value in $\mathbb{R}.$
The real Sobolev spaces can be viewed as closed sub-spaces of the preceding one, 
\begin{align*}
	\nonumber H^{s}=H^{s}(\mathbb{T}^{d+1},\mathbb{R})	&=\Big\{\rho\in H^{s}(\mathbb{T}^{d +1},\mathbb{C})\quad\textnormal{s.t.}\quad\forall \,(l,j)\in\mathbb{Z}^{d+1},\,\rho_{-l,-j}=\overline{\rho_{l,j}}\Big\}.
\end{align*}
For $N\in\mathbb{N}^{*},$ we define the cut-off frequency projectors on  $H^{s}(\mathbb{T}^{d +1},\mathbb{C})$ as follows 
\begin{equation}\label{definition of projections for functions}
	\Pi_{N}\rho=\sum_{\underset{\langle l,j\rangle\leqslant N}{(l,j)\in\mathbb{Z}^{d+1 }}}\rho_{l,j}\mathbf{e}_{l,j}\quad \mbox{ and }\quad \Pi^{\perp}_{N}=\textnormal{Id}-\Pi_{N}.
\end{equation}
We shall also make use of the following mixed weighted Sobolev spaces with respect to the parameter  $\gamma$. 
Let $\mathcal{O}$ be an open bounded set of $\mathbb{R}^{d+1}$ and define   
\begin{align*}
	W^{q,\infty,\gamma}(\mathcal{O},H^{s})&=\Big\lbrace \rho:\mathcal{O}\rightarrow H^{s}\quad\textnormal{s.t.}\quad\|\rho\|_{q,s}^{\gamma,\mathcal{O}}<\infty\Big\rbrace,\\
	W^{q,\infty,\gamma}(\mathcal{O},\mathbb{C})&=\Big\lbrace\rho:\mathcal{O}\rightarrow\mathbb{C}\quad\textnormal{s.t.}\quad\|\rho\|_{q}^{\gamma,\mathcal{O}}<\infty\Big\rbrace,
\end{align*}
where for $\mu\in\mathcal{O}\mapsto \rho(\mu)\in H^s$ or $\mu\in\mathcal{O}\mapsto \rho(\mu)\in \mathbb{C}$ the norm is defined by
\begin{align}\label{Norm-def}
	\|\rho\|_{q,s}^{\gamma,\mathcal{O}}&=\sum_{\underset{|\alpha|\leqslant q}{\alpha\in\mathbb{N}^{d+1}}}\gamma^{|\alpha|}\sup_{\mu\in{\mathcal{O}}}\|\partial_{\mu}^{\alpha}\rho(\mu,\cdot)\|_{H^{s-|\alpha|}},\nonumber\\
	\|\rho\|_{q}^{\gamma,\mathcal{O}}&=\sum_{\underset{|\alpha|\leqslant q}{\alpha\in\mathbb{N}^{d+1}}}\gamma^{|\alpha|}\sup_{\mu\in{\mathcal{O}}}|\partial_{\mu}^{\alpha}\rho(\mu)|.
\end{align}
\begin{remark}
	\begin{enumerate}[label=\textbullet]
		\item From  Sobolev embeddings we obtain,
		$$ W^{q,\infty,\gamma}(\mathcal{O},\mathbb{C})\hookrightarrow C^{q-1}(\mathcal{O},\mathbb{C}).$$
		\item The spaces  $\left(W^{q,\infty,\gamma}(\mathcal{O},H^{s}),\|\cdot\|_{q,s}^{\gamma,\mathcal{O}}\right)$ and $\left(W^{q,\infty,\gamma}(\mathcal{O},\mathbb{C}),\|\cdot\|_{q}^{\gamma,\mathcal{O}}\right)$ are complete.
		\item For needs related to the use of the kernels of integral operators, we will have to duplicate the variable $\theta$. Thus we may define the weighted Sobolev space $W^{q,\infty,\gamma}(\mathcal{O},H_{\varphi,\theta,\eta}^s)$ similarly as above and denote the corresponding norm by		
		$\|\cdot\|_{q,H_{\varphi,\theta,\eta}^s}^{\gamma,\mathcal{O}}.$
	\end{enumerate}
\end{remark}
The next lemma gathers  some useful classical results related to various operations in weighted Sobolev spaces. The proofs are standard and can be found for instance in \cite{BFM21,BFM21-1,BM18}.
\begin{lem}\label{Lem-lawprod}
	{Let $(\gamma,q,d,s_{0},s)$ satisfy \eqref{initial parameter condition}, then the following assertions hold true.
		\begin{enumerate}[label=(\roman*)]
			\item Space translation invariance:  Let $\rho\in W^{q,\infty,\gamma}(\mathcal{O},H^{s}),$ then for all $\eta\in\mathbb{T},$ the function $(\varphi,\theta)\mapsto\rho(\varphi,\eta+\theta)$ belongs to $W^{q,\infty,\gamma}(\mathcal{O},H^{s})$, and satisfies
			$$\|\rho(\cdot,\eta+\cdot)\|_{q,s}^{\gamma,\mathcal{O}}=\|\rho\|_{q,s}^{\gamma,\mathcal{O}}.$$
			\item Projectors properties: Let $\rho\in W^{q,\infty,\gamma}(\mathcal{O},H^{s}),$ then for all $N\in\mathbb{N}^{*}$ and for all $t\in\mathbb{R}_{+}^{*},$
			$$\|\Pi_{N}\rho\|_{q,s+t}^{\gamma,\mathcal{O}}\leqslant N^{t}\|\rho\|_{q,s}^{\gamma,\mathcal{O}}\quad\mbox{ and }\quad\|\Pi_{N}^{\perp}\rho\|_{q,s}^{\gamma,\mathcal{O}}\leqslant N^{-t}\|\rho\|_{q,s+t}^{\gamma,\mathcal{O}},
			$$
			where the projectors are defined in \eqref{definition of projections for functions}.
			\item Interpolation inequality: 
			Let $q<s_{1}\leqslant s_{3}\leqslant s_{2}$ and $\overline{\theta}\in[0,1],$ with  $s_{3}=\overline{\theta} s_{1}+(1-\overline{\theta})s_{2}.$\\
			If $\rho\in W^{q,\infty,\gamma}(\mathcal{O},H^{s_{2}})$, then  $\rho\in W^{q,\infty,\gamma}(\mathcal{O},H^{s_{3}})$ and
			$$\|\rho\|_{q,s_{3}}^{\gamma,\mathcal{O}}\lesssim\left(\|\rho\|_{q,s_{1}}^{\gamma,\mathcal{O}}\right)^{\overline{\theta}}\left(\|\rho\|_{q,s_{2}}^{\gamma,\mathcal{O}}\right)^{1-\overline{\theta}}.$$
					\item Law products:
			\begin{enumerate}[label=(\alph*)]
				\item Let $\rho_{1},\rho_{2}\in W^{q,\infty,\gamma}(\mathcal{O},H^{s}).$ Then $\rho_{1}\rho_{2}\in W^{q,\infty,\gamma}(\mathcal{O},H^{s})$ and 
				$$\| \rho_{1}\rho_{2}\|_{q,s}^{\gamma,\mathcal{O}}\lesssim\| \rho_{1}\|_{q,s_{0}}^{\gamma,\mathcal{O}}\| \rho_{2}\|_{q,s}^{\gamma,\mathcal{O}}+\| \rho_{1}\|_{q,s}^{\gamma,\mathcal{O}}\| \rho_{2}\|_{q,s_{0}}^{\gamma,\mathcal{O}}.$$
				\item Let $\rho_{1},\rho_{2}\in W^{q,\infty,\gamma}(\mathcal{O},\mathbb{C}).$ Then $\rho_{1}\rho_{2}\in W^{q,\infty,\gamma}(\mathcal{O},\mathbb{C})$ and $$\| \rho_{1}\rho_{2}\|_{q}^{\gamma,\mathcal{O}}\lesssim\| \rho_{1}\|_{q}^{\gamma,\mathcal{O}}\| \rho_{2}\|_{q}^{\gamma,\mathcal{O}}.$$
				\item Let $(\rho_{1},\rho_{2})\in W^{q,\infty,\gamma}(\mathcal{O},\mathbb{C})\times W^{q,\infty,\gamma}(\mathcal{O},H^{s}).$ Then $\rho_{1}\rho_{2}\in W^{q,\infty,\gamma}(\mathcal{O},H^{s})$ and 
				$$\| \rho_{1}\rho_{2}\|_{q,s}^{\gamma,\mathcal{O}}\lesssim\| \rho_{1}\|_{q}^{\gamma,\mathcal{O}}\| \rho_{2}\|_{q,s}^{\gamma,\mathcal{O}}.$$
			\end{enumerate}
			\item Composition law 1: Let $f\in C^{\infty}(\mathcal{O}\times\mathbb{R},\mathbb{R})$ and  $\rho_{1},\rho_{2}\in W^{q,\infty,\gamma}(\mathcal{O},H^{s})$  such that $$\| \rho_{1}\|_{q,s}^{\gamma,\mathcal{O}},\|\rho_{2}\|_{q,s}^{\gamma,\mathcal{O}}\leqslant C_{0}$$ for an  arbitrary  constant  $C_{0}>0$ and define the pointwise composition $$\forall (\mu,\varphi,\theta)\in \mathcal{O}\times\mathbb{T}^{d+1},\quad f(\rho)(\mu,\varphi,\theta):= f(\mu,\rho(\mu,\varphi,\theta)).$$
			Then $f(\rho_{1})-f(\rho_{2})\in W^{q,\infty,\gamma}(\mathcal{O},H^{s})$ with 
			$$\| f(\rho_{1})-f(\rho_{2})\|_{q,s}^{\gamma,\mathcal{O}}\leqslant C(s,d,q,f,C_{0})\| \rho_{1}-\rho_{2}\|_{q,s}^{\gamma,\mathcal{O}}.$$
			\item Composition law 2: Let $f\in C^{\infty}(\mathbb{R},\mathbb{R})$ with bounded derivatives. Let $\rho\in W^{q,\infty,\gamma}(\mathcal{O},\mathbb{C}).$ Then 
			$$\|f(\rho)-f(0)\|_{q}^{\gamma,\mathcal{O}}\leqslant C(q,d,f)\|\rho\|_{q}^{\gamma,\mathcal{O}}\left(1+\|\rho\|_{L^{\infty}(\mathcal{O})}^{q-1}\right).
			$$
			This estimate is also true for $\gamma=1$, corresponding to the classical Sobolev space $W^{q,\infty}(\mathcal{O},\mathbb{C}).$
	\end{enumerate}}
\end{lem}
The next result will be useful later in the study of the linearized operator.
\begin{lem}\label{cheater lemma}
	Let $(\gamma,q,d,s_{0},s)$ satifying \eqref{initial parameter condition} and  $f\in W^{q,\infty,\gamma}(\mathcal{O},H^{s}).$\\
	We consider the function $g:\mathcal{O}\times\mathbb{T}_{\varphi}^{d}\times\mathbb{T}_{\theta}\times\mathbb{T}_{\eta}\rightarrow\mathbb{C}$ defined by
	$$g(\mu,\varphi,\theta,\eta)=\left\lbrace\begin{array}{ll}
		\frac{f(\mu,\varphi,\eta)-f(\mu,\varphi,\theta)}{\sin\left(\tfrac{\eta-\theta}{2}\right)} & \textnormal{if }\theta\neq \eta\\
		2\partial_{\theta}f(\mu,\varphi,\theta)& \textnormal{if }\theta=\eta.
	\end{array}\right.$$
	Then 
	\begin{enumerate}[label=(\roman*)]
		\item $\forall k\in\mathbb{N},\quad\displaystyle\sup_{\eta\in\mathbb{T}}\|(\partial_{\theta}^kg)(\ast,\cdot,\centerdot,\eta+\centerdot)\|_{q,s}^{\gamma,\mathcal{O}}\lesssim\|\partial_{\theta}f\|_{q,s+k}^{\gamma,\mathcal{O}}\lesssim\|f\|_{q,s+k+1}^{\gamma,\mathcal{O}}.$
		\item $\|g\|_{q,H_{\varphi,\theta,\eta}^{s}}^{\gamma,\mathcal{O}}\lesssim\|f\|_{q,s+1}^{\gamma,\mathcal{O}}.$
	\end{enumerate}
\end{lem}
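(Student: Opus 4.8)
The plan is to treat both statements simultaneously by reducing everything to the $k$-th derivative estimate in (i) and then deducing (ii) by duplicating the variable $\theta$ and integrating in $\eta$. The starting point is the elementary integral representation obtained from Taylor's formula: for $\theta\neq\eta$ one writes
\begin{equation}\label{cheater-taylor}
g(\mu,\varphi,\theta,\eta)=\frac{1}{\sin\left(\tfrac{\eta-\theta}{2}\right)}\int_{0}^{1}(\eta-\theta)\,(\partial_{\theta}f)\big(\mu,\varphi,\theta+\tau(\eta-\theta)\big)\,d\tau,
\end{equation}
and the smooth prefactor $m(\theta,\eta):=\tfrac{\eta-\theta}{\sin((\eta-\theta)/2)}$ extends to a $C^\infty$ function on $\mathbb{T}^2$ which, together with all its derivatives, is bounded (it equals $2$ on the diagonal and is $2\pi$-periodic in $\eta-\theta$ with no zeros of the denominator in the relevant range after the removable singularity is cancelled). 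Setting $\eta=\theta+z$ and changing the inner variable, \eqref{cheater-taylor} shows that, as a function of $(\varphi,\theta)$ with $z$ frozen, $g$ is a superposition over $\tau\in[0,1]$ of translates $(\partial_\theta f)(\mu,\varphi,\theta+\tau z)$ multiplied by the bounded smooth factor $m$; the representation also makes the value $2\partial_\theta f$ on the diagonal automatic, so $g$ is genuinely smooth across $\theta=\eta$.

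For (i), I would fix $\eta$, write $g(\ast,\cdot,\centerdot,\eta+\centerdot)$ using \eqref{cheater-taylor}, apply $\partial_\theta^k$ and distribute it by the Leibniz rule across the product of $m$ and the $\tau$-integral. Each resulting term is a bounded smooth function of $(\theta,\eta)$ times a translate $(\partial_\theta^{j+1}f)(\mu,\varphi,\theta+\tau\eta+\cdots)$ with $j\leqslant k$; using the translation invariance of the weighted norm (Lemma \ref{Lem-lawprod}(i)), the product law in the form Lemma \ref{Lem-lawprod}(iv)(c) against the smooth $\theta$-dependent coefficients, and $\|\partial_\theta^{j+1}f\|_{q,s-something}\lesssim\|\partial_\theta f\|_{q,s+k}$ (derivatives in $\theta$ cost regularity, which is why the bound lands at $s+k$), one gets the pointwise-in-$\eta$ bound $\|\,(\partial_\theta^kg)(\ast,\cdot,\centerdot,\eta+\centerdot)\|_{q,s}^{\gamma,\mathcal O}\lesssim\|\partial_\theta f\|_{q,s+k}^{\gamma,\mathcal O}$, uniformly in $\eta$, and the second inequality $\|\partial_\theta f\|_{q,s+k}^{\gamma,\mathcal O}\lesssim\|f\|_{q,s+k+1}^{\gamma,\mathcal O}$ is immediate.

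For (ii), I would regard $g$ as a function on $\mathbb{T}_\varphi^d\times\mathbb{T}_\theta\times\mathbb{T}_\eta$ and estimate $\|g\|_{q,H^s_{\varphi,\theta,\eta}}^{\gamma,\mathcal O}$ by controlling, for each multi-index $\alpha$ in $(\varphi,\theta,\eta)$ with $|\alpha|\leqslant s$, the $L^2_{\varphi,\theta,\eta}$ norm of $\partial^\alpha g$ together with its parameter-derivatives up to order $q$. Writing the total derivative as a combination of $\varphi$-, $\theta$- and $\eta$-derivatives and again using \eqref{cheater-taylor}: $\varphi$-derivatives pass directly onto $f$; each $\eta$-derivative produces either a derivative of the smooth factor $m$ or, via the chain rule on the translated argument $\theta+\tau(\eta-\theta)$, a factor $\tau\,\partial_\theta f$; each $\theta$-derivative similarly falls on $m$ or on $(1-\tau)\partial_\theta f$. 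In all cases one trades one of these derivatives for a $\theta$-derivative hitting $f$, of which there are at most $s$ in total; after taking the $L^2$ norm in $(\theta,\eta)$ and using Minkowski's integral inequality to pull the $\tau$-integral outside, translation invariance reduces the bound to $\|f\|_{H^{s+1}}$ (one extra $\theta$-derivative beyond $s$ because the top-order term needs $\partial_\theta^{s+1}f$), and incorporating $q$ parameter-derivatives is routine since all coefficient functions $m$ and its derivatives are parameter-independent. This yields $\|g\|_{q,H^s_{\varphi,\theta,\eta}}^{\gamma,\mathcal O}\lesssim\|f\|_{q,s+1}^{\gamma,\mathcal O}$.

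The only genuinely delicate point is the bookkeeping near the diagonal $\theta=\eta$: one must make sure that the representation \eqref{cheater-taylor} with the smooth, bounded prefactor $m$ really does make $g$ and all its $(\varphi,\theta,\eta)$-derivatives continuous there, so that no boundary terms or singular contributions are dropped when integrating in $(\theta,\eta)$; once that smoothing identity is in hand, the rest is an application of the product and translation-invariance lemmas, and I do not expect any further obstacle.
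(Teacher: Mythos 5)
Your overall strategy for (i) — the Taylor/integral representation writing $g$ as a bounded coefficient times an average of translates of $\partial_\theta f$ — is indeed the standard route (the paper itself does not reprove (i), it cites \cite[Lem.~4.2]{HR21}), but one of your load-bearing claims is false and you have located the delicacy in the wrong place. The prefactor $m(\theta,\eta)=\tfrac{\eta-\theta}{\sin\left(\frac{\eta-\theta}{2}\right)}$, with the principal determination $\eta-\theta\in(-\pi,\pi]$, is \emph{not} a smooth $2\pi$-periodic function on $\mathbb{T}^2$: it is even in $\eta-\theta$, hence its periodic extension is continuous at $\eta-\theta\equiv\pi$ but has a corner there (the one-sided derivatives are $+1$ and $-1$), so $m$ is only Lipschitz on the torus. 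Your Leibniz expansions differentiate $m$ up to order $s+k$ (resp.\ $s$), which is not available globally. The diagonal, which you single out as the only delicate point, is in fact harmless (the singularity is removable and the representation is smooth there); the problem sits on the antipodal set $\eta-\theta\equiv\pi$. The standard repair is a cutoff $\chi(\eta-\theta)$ supported near the diagonal: use the Taylor representation on the support of $\chi$, and away from the diagonal write $g=(1-\chi)\,\big(f(\mu,\varphi,\eta)-f(\mu,\varphi,\theta)\big)/\sin\left(\frac{\eta-\theta}{2}\right)$ directly, since there $1/\sin\left(\frac{\eta-\theta}{2}\right)$ is smooth with bounded derivatives. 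With that modification your proof of (i) goes through.

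For (ii) there is a second gap: $s$ is a \emph{real} regularity index (the spaces are defined through the Fourier weights $\langle l,j\rangle^{s}$), so controlling $\|\partial^{\alpha}g\|_{L^2_{\varphi,\theta,\eta}}$ over integer multi-indices $|\alpha|\leqslant s$ does not give the $H^{s}_{\varphi,\theta,\eta}$ norm; as written your argument only makes sense for integer $s$ and you supply no interpolation step. The paper's proof of (ii) avoids derivative counting entirely: it reduces to $q=0$, splits $\|g\|_{H^{s}_{\varphi,\theta,\eta}}\lesssim\|g\|_{H^{s}_{\varphi,\theta}L^{2}_{\eta}}+\|g\|_{L^{2}_{\theta}H^{s}_{\varphi,\eta}}$, bounds the second piece by a supremum over translates using translation invariance together with point (i) (for $k=0$), and then reduces the first piece to the second via Fubini through the Bessel potential $J^{s}$ combined with the symmetry $g(\mu,\varphi,\theta,\eta)=g(\mu,\varphi,\eta,\theta)$. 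Adopting that reduction would both handle fractional $s$ and spare you the parameter-derivative bookkeeping; as it stands, your treatment of (ii) does not establish the stated estimate.
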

\begin{proof}
	\textbf{(i)} This result has been proved in \cite[Lem. 4.2]{HR21}.\\
	\textbf{(ii)} It suffices to prove the case $q=0.$ Recall the following classical norm estimate
	\begin{equation}\label{decomp norm}
		\|g\|_{H_{\varphi,\theta,\eta}^{s}}\lesssim\|g\|_{H_{\varphi,\theta}^{s}L_{\eta}^{2}}+\|g\|_{L_{\theta}^{2}H_{\varphi,\eta}^{s}}. 
			\end{equation}
By the translation invariance property  
\begin{align*}\|g\|_{L_{\theta}^{2}H_{\varphi,\eta}^{s}}^2&= \int_0^{2\pi}\|g(\cdot,\theta+\centerdot,\centerdot)\|_{H_{\varphi,\eta}^{s}}^2d\theta\\
&\lesssim\; \sup_{\theta\in\mathbb{T}}\| g(\ast,\cdot,\theta+\centerdot,\centerdot)\|_{H_{\varphi,\eta}^{s}}.
\end{align*}
Using the first point and the symmetry $g$ in $(\eta, \theta)$ we obtain
$$\|g\|_{L_{\theta}^{\infty}H_{\varphi,\eta}^{s}}\lesssim\|f\|_{s+1}.$$
Introducing the Bessel potential $J^s$ defined in Fourier by
\begin{equation}\label{Bessel potential}
	\forall j\in\mathbb{Z}^{d},\quad(J^su)_j=\max(1,|j|)^su_j,
\end{equation}
a use of Fubini's Theorem implies
\begin{align*}
	\|g\|_{H_{\varphi,\theta}^{s}L_{\eta}^{2}}=\|J_{\varphi,\theta}^{s}\,g\|_{L_{\varphi}^{2}L_{\theta}^{2}L_{\eta}^{2}}=\|J_{\varphi,\theta}^{s}\,g\|_{L_{\eta}^{2}L_{\varphi}^{2}L_{\theta}^{2}}=\|g\|_{L_{\eta}^{2}H_{\varphi,\theta}^{s}}.
\end{align*}
Since $g$ is symmetric in the variables $\theta$ and $\eta,$ we get
$$\|g\|_{L_{\eta}^{2}H_{\varphi,\theta}^{s}}=\|g\|_{L_{\theta}^{2}H_{\varphi,\eta}^{s}}.$$
Combining the foregoing estimates leads to
$$\|g\|_{H_{\varphi,\theta,\eta}^{s}}\lesssim\|f\|_{s+1}.$$
This ends the proof of Lemma \ref{cheater lemma}.
\end{proof}
We now turn to the presentation of quasi-periodic symplectic change of variables needed for the reduction of the transport part of the linearized operator in the construction of the approximate inverse in the normal directions.
Let $\beta: \mathcal{O}\times \T^{d+1}\to \mathbb{T}$ be a smooth function such that $\displaystyle\sup_{\mu\in \mathcal{O}}\|\beta(\mu,\cdot,\centerdot)\|_{\textnormal{Lip}}<1$ 
then  the map
$$(\varphi,\theta)\in\T^{d+1}\mapsto (\varphi, \theta+\beta(\mu,\varphi,\theta))\in\T^{d+1}$$
is a diffeomorphism with inverse having the form  
$$(\varphi,\theta)\in\T^{d+1}\mapsto (\varphi, \theta+\widehat{\beta}(\mu,\varphi,\theta))\in\T^{d+1}.$$
Moreover, one has the relation
\begin{equation}\label{def betahat}
	y=\theta+\beta(\mu,\varphi,\theta)\Longleftrightarrow \theta=y+\widehat\beta(\mu,\varphi,y).
\end{equation}
Define the operators
\begin{equation}\label{definition symplectic change of variables}
	\mathscr{B}=(1+\partial_{\theta}\beta)\mathcal{B},
\end{equation}
with
\begin{equation}\label{definition change of variables}
	\mathcal{B}\rho(\mu,\varphi,\theta)=\rho\big(\mu,\varphi,\theta+\beta(\mu,\varphi,\theta)\big).
\end{equation}
By straightforward  computations we obtain
\begin{equation}\label{mathscrB1}
	\mathscr{B}^{-1}\rho(\mu,\varphi,y)=\Big(1+\partial_y\widehat{\beta}(\mu,\varphi,y)\Big) \rho\big(\mu,\varphi,y+\widehat{\beta}(\mu,\varphi,y)\big)
\end{equation}
and
$$
\mathcal{B}^{-1} \rho(\mu,\varphi,y)=\rho\big(\mu,\varphi,y+\widehat{\beta}(\mu,\varphi,y)\big).
$$
The following lemma gives some elementary algebraic properties for $\mathcal{B}^{\pm 1}$ and $\mathscr{B}^{\pm 1}$.
\begin{lem}\label{algeb1}
	The following assertions hold true.
	\begin{enumerate}[label=(\roman*)]
		\item The action of $\mathscr{B}^{-1}$ on the derivative is given by
		\begin{equation*}
			\mathscr{B}^{-1}\partial_{\theta}=\partial_{\theta}\mathcal{B}^{-1}.
		\end{equation*}
		\item Denote by $\mathscr{B}^\star$ the $L^2_\theta(\T)$-adjoint of $\mathscr{B}$, then
		$$
		\mathscr{B}^\star=\mathcal{B}^{-1}\quad\hbox{and}\quad \mathcal{B}^\star=\mathscr{B}^{-1}.
		$$
		
	\end{enumerate}
\end{lem}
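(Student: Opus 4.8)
The plan is to verify both identities by the chain rule together with a change of variables on the circle, using the facts recalled just before the statement: under the running smallness assumption $\sup_{\mu\in\mathcal{O}}\|\beta(\mu,\cdot,\centerdot)\|_{\textnormal{Lip}}<1$, the map $\theta\mapsto\theta+\beta(\mu,\varphi,\theta)$ is an orientation-preserving diffeomorphism of $\mathbb{T}$ with inverse $y\mapsto y+\widehat{\beta}(\mu,\varphi,y)$ and positive Jacobian $1+\partial_\theta\beta$, together with the inverse relation \eqref{def betahat}. Throughout I would suppress the parameters $(\mu,\varphi)$, which are inert.

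For (i), I would first record the intertwining identity $\partial_\theta\mathcal{B}=\mathscr{B}\partial_\theta$, which is just the chain rule applied to \eqref{definition change of variables}:
$$\partial_\theta\big(\mathcal{B}\rho\big)(\theta)=\big(1+\partial_\theta\beta(\theta)\big)\,(\partial_\theta\rho)\big(\theta+\beta(\theta)\big)=\big(1+\partial_\theta\beta\big)\,\mathcal{B}(\partial_\theta\rho)(\theta)=\mathscr{B}(\partial_\theta\rho)(\theta),$$
the last equality by \eqref{definition symplectic change of variables}. Composing this with $\mathscr{B}^{-1}$ on the left and $\mathcal{B}^{-1}$ on the right gives $\mathscr{B}^{-1}\partial_\theta=\partial_\theta\mathcal{B}^{-1}$, which is (i). Equivalently, one may differentiate $\mathcal{B}^{-1}\rho(y)=\rho\big(y+\widehat{\beta}(y)\big)$ in $y$ and compare with the explicit formula \eqref{mathscrB1} for $\mathscr{B}^{-1}$ applied to $\partial_\theta\rho$; the two expressions coincide term by term.

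For (ii), I would compute the pairing directly. For $\rho_1,\rho_2\in L^2_\theta(\mathbb{T})$, by \eqref{definition symplectic change of variables} and \eqref{definition change of variables},
$$\big\langle\mathscr{B}\rho_1,\rho_2\big\rangle_{L^2(\mathbb{T})}=\int_{\mathbb{T}}\big(1+\partial_\theta\beta(\theta)\big)\,\rho_1\big(\theta+\beta(\theta)\big)\,\rho_2(\theta)\,d\theta,$$
and performing the change of variables $y=\theta+\beta(\theta)$, so that $\theta=y+\widehat{\beta}(y)$ and $dy=\big(1+\partial_\theta\beta(\theta)\big)\,d\theta$ by \eqref{def betahat}, this becomes $\int_{\mathbb{T}}\rho_1(y)\,\rho_2\big(y+\widehat{\beta}(y)\big)\,dy=\big\langle\rho_1,\mathcal{B}^{-1}\rho_2\big\rangle_{L^2(\mathbb{T})}$, in view of the formula for $\mathcal{B}^{-1}$ recorded right after \eqref{mathscrB1}. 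Hence $\mathscr{B}^\star=\mathcal{B}^{-1}$. The second identity $\mathcal{B}^\star=\mathscr{B}^{-1}$ then follows by taking $L^2_\theta$-adjoints of the first (using $(\mathcal{B}^{-1})^\star=(\mathcal{B}^\star)^{-1}$ and $(\mathscr{B}^\star)^\star=\mathscr{B}$), or by rerunning the same change of variables starting from $\big\langle\mathcal{B}\rho_1,\rho_2\big\rangle_{L^2(\mathbb{T})}$, which produces the Jacobian $1+\partial_y\widehat{\beta}$ together with the factor $\rho_2\big(y+\widehat{\beta}(y)\big)$, i.e. exactly $\mathscr{B}^{-1}\rho_2$ as in \eqref{mathscrB1}. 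The only point requiring minor care is the legitimacy of the change of variables on $\mathbb{T}$ and the bookkeeping between the Jacobians $1+\partial_\theta\beta$ and $1+\partial_y\widehat{\beta}$; both are governed by the smallness hypothesis on $\beta$ and by \eqref{def betahat}, so there is no genuine obstacle here.
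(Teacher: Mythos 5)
Your proof is correct: the chain-rule identity $\partial_\theta\mathcal{B}=\mathscr{B}\partial_\theta$ gives (i), and the change of variables $y=\theta+\beta(\theta)$ with Jacobian $1+\partial_\theta\beta$ and inverse relation \eqref{def betahat} gives (ii), which is exactly the elementary verification the paper leaves implicit (Lemma \ref{algeb1} is stated without proof). The only cosmetic point is that the normalized integral convention \eqref{Convention1} and the real-valuedness of the functions play no role, as you in effect observe.
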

Now we shall state the following result proved in \cite[Lem. 6.2]{HR21}.
\begin{lem}\label{Compos1-lemm}
	Let $(q,d\gamma,s_0)$ as in \eqref{initial parameter condition}.
	Let $\beta\in W^{q,\infty,\gamma}\big(\mathcal{O},H^{\infty}(\T^{d+1})\big) $ such that 
	\begin{equation}\label{small beta lem}
		\|\beta \|_{q,2s_0}^{\gamma,\mathcal{O}}\leqslant \varepsilon_0,
	\end{equation}
	with $\varepsilon_{0}$ small enough. Then the following assertions hold true.
	\begin{enumerate}[label=(\roman*)]
		\item The linear operators $\mathcal{B},\mathscr{B}:W^{q,\infty,\gamma}\big(\mathcal{O},H^{s}(\T^{d+1})\big)\to W^{q,\infty,\gamma}\big(\mathcal{O},H^{s}(\T^{d+1})\big)$ are continuous and invertible, with 
		\begin{equation}\label{tame comp}
			\forall s\geqslant s_0,\quad \|\mathcal{B}^{\pm1}\rho\|_{q,s}^{\gamma,\mathcal{O}}\leqslant \|\rho\|_{q,s}^{\gamma,\mathcal{O}}\left(1+C\|\beta\|_{q,s_{0}}^{\gamma,\mathcal{O}}\right)+C\|\beta\|_{q,s}^{\gamma,\mathcal{O}}\|\rho\|_{q,s_{0}}^{\gamma,\mathcal{O}}
		\end{equation}
		and 
		\begin{equation}\label{tame comp symp}
			\forall s\geqslant s_0,\quad\|\mathscr{B}^{\pm1}\rho\|_{q,s}^{\gamma,\mathcal{O}}\leqslant \|\rho\|_{q,s}^{\gamma,\mathcal{O}}\left(1+C\|\beta\|_{q,s_{0}}^{\gamma,\mathcal{O}}\right)+C\|\beta\|_{q,s+1}^{\gamma,\mathcal{O}}\|\rho\|_{q,s_{0}}^{\gamma,\mathcal{O}}.
		\end{equation}
		\item The functions $\beta$ and $\widehat{\beta}$ are linked through
		\begin{equation}\label{link betah and beta}
			\forall s\geqslant s_0,\quad\|\widehat{\beta}\|_{q,s}^{\gamma,\mathcal{O}}\leqslant C\|\beta\|_{q,s}^{\gamma,\mathcal{O}}.
		\end{equation}
		\item Let $\beta_{1},\beta_{2}\in W^{q,\infty,\gamma}(\mathcal{O},H^{\infty}(\mathbb{T}^{d+1}))$ satisfying \eqref{small beta lem}. If we denote 
		$$\Delta_{12}\beta=\beta_{1}-\beta_{2}\quad\textnormal{ and }\quad\Delta_{12}\widehat{\beta}=\widehat{\beta}_{1}-\widehat{\beta}_{2},$$
		then they are linked through
		\begin{equation}\label{link diff beta hat and diff beta}
			\forall s\geqslant s_0,\quad\|\Delta_{12}\widehat{\beta}\|_{q,s}^{\gamma,\mathcal{O}}\leqslant C\left(\|\Delta_{12}\beta\|_{q,s}^{\gamma,\mathcal{O}}+\|\Delta_{12}\beta\|_{q,s_{0}}^{\gamma,\mathcal{O}}\max_{j\in\{1,2\}}\|\beta_{j}\|_{q,s+1}^{\gamma,\mathcal{O}}\right).
		\end{equation}
	\end{enumerate}
\end{lem}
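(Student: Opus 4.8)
The plan is to read Lemma~\ref{Compos1-lemm} as a package of tame estimates for the composition operator associated with a torus diffeomorphism close to the identity, carried out in the parameter‑weighted topology, and to arrange the three items so that no argument is circular. The external inputs I would use are the product and composition laws of Lemma~\ref{Lem-lawprod} (especially the product estimates (iv) and the interpolation inequality (iii)) and the Sobolev embedding $H^{2s_0}(\T^{d+1})\hookrightarrow W^{1,\infty}(\T^{d+1})$, which holds under \eqref{initial parameter condition}. Combined with the hypothesis $\|\beta\|_{q,2s_0}^{\gamma,\mathcal O}\leqslant\varepsilon_0$, this guarantees $\sup_{\mu\in\mathcal O}\|\partial_\theta\beta(\mu,\cdot,\centerdot)\|_{L^\infty}<1$, so that for each $\mu$ the map $(\varphi,\theta)\mapsto(\varphi,\theta+\beta(\mu,\varphi,\theta))$ is a diffeomorphism of $\T^{d+1}$ with inverse $(\varphi,\theta)\mapsto(\varphi,\theta+\widehat\beta(\mu,\varphi,\theta))$ as in \eqref{def betahat}, and $\mathcal B^{-1}$, $\mathscr B^{-1}$ have the explicit forms recalled in \eqref{mathscrB1}.

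First I would prove the \emph{forward} bound for $\mathcal B$ in item (i), i.e. the estimate \eqref{tame comp} for $\mathcal B^{+1}$, since it involves only $\beta$. For fixed $\mu$ and $q=0$ this is the classical tame composition estimate $\|\rho(\cdot,\theta+\beta(\cdot,\theta))\|_{H^s}\lesssim\|\rho\|_{H^s}\big(1+\|\beta\|_{H^{s_0}}\big)+\|\rho\|_{H^{s_0}}\|\beta\|_{H^s}$, valid once $\|\partial_\theta\beta\|_{L^\infty}$ is small; it is obtained by a Moser‑type induction on $s$, keeping the low norm of $\rho$ fixed and interpolating. The weighted version follows by differentiating in $\mu$ up to order $q$, applying the Fa\`a di Bruno formula together with the product laws of Lemma~\ref{Lem-lawprod}(iv), and taking $\sup_{\mu\in\mathcal O}$: the factors carrying many $\mu$‑derivatives on $\rho$ or on $\beta$ are kept in high norm, the remaining factors are put in $\|\cdot\|_{q,s_0}^{\gamma,\mathcal O}$ and dominated by $\varepsilon_0$. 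Multiplying by $1+\partial_\theta\beta$ and invoking Lemma~\ref{Lem-lawprod}(iv) then produces \eqref{tame comp symp} for $\mathscr B^{+1}$, the extra $\theta$‑derivative on $\beta$ accounting for the $\|\beta\|_{q,s+1}^{\gamma,\mathcal O}$ appearing there.

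Next I would establish item (ii). The function $\widehat\beta$ solves the fixed‑point equation $\widehat\beta(\mu,\varphi,y)=-\beta\big(\mu,\varphi,y+\widehat\beta(\mu,\varphi,y)\big)$ coming from \eqref{def betahat}; a contraction in $W^{q,\infty,\gamma}(\mathcal O,H^{s_0})$, using smallness of $\|\beta\|_{q,2s_0}^{\gamma,\mathcal O}$, gives existence and the low‑norm bound $\|\widehat\beta\|_{q,s_0}^{\gamma,\mathcal O}\lesssim\|\beta\|_{q,s_0}^{\gamma,\mathcal O}$. To reach general $s\geqslant s_0$ I would apply the composition estimate just proved to the right‑hand side, obtaining $\|\widehat\beta\|_{q,s}^{\gamma,\mathcal O}\lesssim\|\beta\|_{q,s}^{\gamma,\mathcal O}\big(1+\|\widehat\beta\|_{q,s_0}^{\gamma,\mathcal O}\big)+\|\beta\|_{q,s_0}^{\gamma,\mathcal O}\|\widehat\beta\|_{q,s}^{\gamma,\mathcal O}$, and absorbing the last term using $\|\beta\|_{q,s_0}^{\gamma,\mathcal O}\leqslant\varepsilon_0$; this yields \eqref{link betah and beta}. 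With (ii) available, the inverse bounds in (i) for $\mathcal B^{-1}$ and $\mathscr B^{-1}$ follow from \eqref{mathscrB1} by applying the forward estimates with $\widehat\beta$ in place of $\beta$, then replacing $\|\widehat\beta\|_{q,\cdot}^{\gamma,\mathcal O}$ by $\|\beta\|_{q,\cdot}^{\gamma,\mathcal O}$ via \eqref{link betah and beta} and controlling the Jacobian factor $1+\partial_y\widehat\beta$ through Lemma~\ref{Lem-lawprod}(iv); continuity and invertibility of $\mathcal B,\mathscr B$ on each $W^{q,\infty,\gamma}(\mathcal O,H^s)$ are then immediate.

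Finally, for item (iii) I would subtract the fixed‑point equations for $\widehat\beta_1$ and $\widehat\beta_2$, writing $\Delta_{12}\widehat\beta=-(\Delta_{12}\beta)\big(\cdot,y+\widehat\beta_1\big)-\big[\beta_2\big(\cdot,y+\widehat\beta_1\big)-\beta_2\big(\cdot,y+\widehat\beta_2\big)\big]$ and expanding the bracket as $\big(\int_0^1(\partial_\theta\beta_2)\big(\cdot,y+\widehat\beta_2+t\,\Delta_{12}\widehat\beta\big)\,dt\big)\,\Delta_{12}\widehat\beta$. The first term is bounded by the composition estimate applied to $\Delta_{12}\beta$; the bracket, via Lemma~\ref{Lem-lawprod}(iv) and again the composition estimate, produces $\varepsilon_0\,\|\Delta_{12}\widehat\beta\|_{q,s}^{\gamma,\mathcal O}$ plus lower‑order contributions, so the high norm of $\Delta_{12}\widehat\beta$ can be absorbed on the left. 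Running this first at $s=s_0$ gives $\|\Delta_{12}\widehat\beta\|_{q,s_0}^{\gamma,\mathcal O}\lesssim\|\Delta_{12}\beta\|_{q,s_0}^{\gamma,\mathcal O}$, and then at general $s$, using (ii) and the interpolation inequality Lemma~\ref{Lem-lawprod}(iii) to convert high‑norm factors on $\widehat\beta_j$ into $\|\beta_j\|_{q,s+1}^{\gamma,\mathcal O}$, one reaches \eqref{link diff beta hat and diff beta}. The only genuine difficulty—and the step I would be most careful about—is the bookkeeping of derivatives in the Moser/Fa\`a di Bruno arguments: one must ensure that $\mathscr B$ costs \emph{exactly} one extra $\theta$‑derivative on $\beta$, that the smallness parameter always multiplies the \emph{highest} norm to be absorbed, and that the logical order (forward $\mathcal B$‑estimate $\Rightarrow$ (ii) $\Rightarrow$ inverse estimates $\Rightarrow$ (iii)) is respected so that no estimate is used before it is proved.
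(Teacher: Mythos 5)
The paper gives no proof of this lemma at all --- it is quoted from \cite[Lem. 6.2]{HR21} --- and your argument is precisely the standard route used there: Moser/Fa\`a di Bruno tame composition estimates for $\mathcal{B}$, the product law for the Jacobian factor in $\mathscr{B}=(1+\partial_\theta\beta)\mathcal{B}$ (which is exactly what produces the extra derivative $\|\beta\|_{q,s+1}^{\gamma,\mathcal{O}}$), a fixed-point plus absorption argument for $\widehat{\beta}$ based on $\widehat\beta=-\beta(\cdot,y+\widehat\beta)$, and subtraction of the two fixed-point equations for the difference estimate, in the non-circular order forward estimate $\Rightarrow$ (ii) $\Rightarrow$ inverse estimates $\Rightarrow$ (iii). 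The only point to tighten is bookkeeping rather than substance: before applying the forward composition estimate with $\widehat\beta$ in place of $\beta$ you need smallness of $\|\widehat\beta\|_{q,2s_0}^{\gamma,\mathcal{O}}$, so the contraction/low-norm step should be run at the level $2s_0$ (which the hypothesis $\|\beta\|_{q,2s_0}^{\gamma,\mathcal{O}}\leqslant\varepsilon_0$ permits), not merely at $s_0$.
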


\subsection{Operators}\label{section-ope}
In this section we shall collect some basic definitions  and properties related to suitable  operators class.
Consider a smooth family of bounded operators on Sobolev spaces $H^s(\T^{d+1})$, 
$$T: \mu=(b,\omega)\in \mathcal{O}\mapsto T(\mu)\in  \mathcal{L}(H^s(\T^{d+1},\mathbb{C})). $$ 
The linear operator   $T(\mu)$ can be represented by  the infinite dimensional matrix $\left(T_{l_{0},j_{0}}^{l,j}(\mu)\right)_{\underset{(j,j_{0})\in\mathbb{Z}^{2}}{(l,l_{0})\in(\mathbb{Z}^{d })^{2}}}$ with 
$$T(\mu)\mathbf{e}_{l_{0},j_{0}}=\sum_{(l,j)\in\mathbb{Z}^{d+1}}T_{l_{0},j_{0}}^{l,j}(\mu)\mathbf{e}_{l,j}\quad\textnormal{where}\quad T_{l_{0},j_{0}}^{l,j}(\mu)=\big\langle T(\mu)\mathbf{e}_{l_{0},j_{0}},\mathbf{e}_{l,j}\big\rangle_{L^{2}(\mathbb{T}^{d+1})}.$$
Along this paper the operators and the test functions may depend on the same parameter $\mu$ and thus 
the action of the operator $T(\mu)$ on a scalar function $\rho\in W^{q,\infty,\gamma}(\mathcal{O},H^{s}(\mathbb{T}^{d+1},\mathbb{C}))$  is by convention
defined through
$$(T\rho)(\mu,\varphi,\theta):=T(\mu)\rho(\mu,\varphi,\theta).$$
\subsubsection{Toeplitz in time operators}\label{Top-Sec-11}
In this paper we always consider Toeplitz in time operators, whose definition is the following.
\begin{defin}
	An operator $T(\mu)$ is said  Toeplitz in time (actually in the variable $\varphi$) if its Fourier coefficients satisfy,
	$$T_{l_{0},j_{0}}^{l,j}(\mu)=T_{0,j_{0}}^{l-l_0,j}(\mu).$$
	Or equivalently, 
	$$T_{l_{0},j_{0}}^{l,j}(\mu)=T_{j_{0}}^{j}(\mu,l-l_{0})\quad\textnormal{with}\quad T_{j_{0}}^{j}(\mu,l):=T_{0,j_{0}}^{l,j}(\mu).$$
\end{defin}
\noindent  Notice that the action of a Toeplitz operator  $T(\mu)$ on a function $\rho=\displaystyle\sum_{(l_{0},j_{0})\in\mathbb{Z}^{d+1}}\rho_{l_{0},j_{0}}\mathbf{e}_{l_{0},j_{0}}$ is  given~by
\begin{equation}\label{action of Toeplitz in time operators}
	T(\mu)\rho=\sum_{(l,l_{0})\in(\mathbb{Z}^{d})^{2}\\\atop (j,j_{0})\in\mathbb{Z}^{2}}T_{j_{0}}^{j}(\mu,l-l_{0})\rho_{l_{0},j_{0}}\mathbf{e}_{l,j}.
\end{equation}
For Toeplitz operators, we can define the off-diagonal norm as
\begin{align}\label{Top-NormX}
	\| T\|_{\textnormal{\tiny{O-d}},q,s}^{\gamma,\mathcal{O}}=\sum_{\underset{|\alpha|\leqslant q}{\alpha\in\mathbb{N}^{d+1}}}\gamma^{|\alpha|}\sup_{(b,\omega)\in{\mathcal{O}}}\|\partial_{\mu}^{\alpha}(T)(\mu)\|_{\textnormal{\tiny{O-d}},s-|\alpha|},
\end{align}
where 
$$\| T\|_{\textnormal{\tiny{O-d}},s}^{2}=\sum_{(l,m)\in\mathbb{Z}^{d+1}}\langle l,m\rangle^{2s}\sup_{j-k=m}|T_{j}^{k}(l)|^{2}.$$
The cut-off projectors $(P_N)_ {N\in\mathbb{N}^{*}}$ are defined as follows 
\begin{equation}\label{definition of projections for operators}
	\left(P_{N}T(\mu)\right)\mathbf{e}_{l_{0},j_{0}}=\sum_{\underset{|l-l_{0}|,|j-j_{0}|\leqslant N}{(l,j)\in\mathbb{Z}^{d+1}}}T_{l_{0},j_{0}}^{l,j}(\mu)\mathbf{e}_{l,j}\quad\mbox{and}\quad P_{N}^{\perp}T=T-P_{N}T.
\end{equation}
The next lemma lists  some classical results. The proofs are very similar to those in \cite{BM18}.
\begin{lem}\label{properties of Toeplitz in time operators}
	Let $(\gamma,q,d,s_{0},s)$ satisfying \eqref{initial parameter condition}.  Let $T,$ $T_{1}$ and $T_{2}$ be Toeplitz in time operators.
	\begin{enumerate}[label=(\roman*)]
		\item Projectors properties : Let $N\in\mathbb{N}^{*}.$ Let $t\in\mathbb{R}_{+}.$ Then
		$$\| P_{N}T\rho\|_{\textnormal{\tiny{O-d}},q,s+t}^{\gamma,\mathcal{O}}\leqslant N^{t}\| T\rho\|_{\textnormal{\tiny{O-d}},q,s}^{\gamma,\mathcal{O}}\quad\mbox{and}\quad\| P_{N}^{\perp}T\rho\|_{\textnormal{\tiny{O-d}},q,s}^{\gamma,\mathcal{O}}\leqslant N^{-t}\| T\rho\|_{\textnormal{\tiny{O-d}},q,s+t}^{\gamma,\mathcal{O}}.$$
		\item Interpolation inequality : Let $q<s_{1}\leqslant s_{3}\leqslant s_{2}, \,\overline{\theta}\in[0,1]$ with $s_{3}=\overline{\theta}s_{1}+(1-\overline{\theta})s_{2}.$  Then 
		$$\| T\|_{\textnormal{\tiny{O-d}},q,s_{3}}^{\gamma,\mathcal{O}}\lesssim\left(\| T\|_{\textnormal{\tiny{O-d}},q,s_{1}}^{\gamma,\mathcal{O}}\right)^{\overline{\theta}}\left(\| T\|_{\textnormal{\tiny{O-d}},q,s_{2}}^{\gamma,\mathcal{O}}\right)^{1-\overline{\theta}}.$$
		\item Composition law :
		$$\| T_{1}T_{2}\|_{\textnormal{\tiny{O-d}},q,s}^{\gamma,\mathcal{O}}\lesssim\| T_{1}\|_{\textnormal{\tiny{O-d}},q,s}^{\gamma,\mathcal{O}}\| T_{2}\|_{\textnormal{\tiny{O-d}},q,s_{0}}^{\gamma,\mathcal{O}}+\| T_{1}\|_{\textnormal{\tiny{O-d}},q,s_{0}}^{\gamma,\mathcal{O}}\| T_{2}\|_{\textnormal{\tiny{O-d}},q,s}^{\gamma,\mathcal{O}}.$$ 
		\item Link between operators and off-diagonal norms :
		$$\| T\rho\|_{q,s}^{\gamma,\mathcal{O}}\lesssim\| T\|_{\textnormal{\tiny{O-d}},q,s_{0}}^{\gamma,\mathcal{O}}\|\rho\|_{q,s}^{\gamma,\mathcal{O}}+\| T\|_{\textnormal{\tiny{O-d}},q,s}^{\gamma,\mathcal{O}}\|\rho\|_{q,s_{0}}^{\gamma,\mathcal{O}}.$$
		In particular
		$$\| T \rho\|_{q,s}^{\gamma,\mathcal{O}}\lesssim\| T\|_{\textnormal{\tiny{O-d}},q,s}^{\gamma,\mathcal{O}}\|\rho\|_{q,s}^{\gamma,\mathcal{O}}.$$
	\end{enumerate}
\end{lem}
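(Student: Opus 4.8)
The plan is to reduce all four assertions to the case $q=0$ and then argue at the level of the matrix coefficients. For the reduction, the Fourier cut-offs $P_N,P_N^\perp$ commute with the parameter derivatives $\partial_\mu^\alpha$, while $\partial_\mu^\alpha(T_1T_2)$ expands by Leibniz into a finite sum of terms $(\partial_\mu^\beta T_1)(\partial_\mu^{\alpha-\beta}T_2)$ with $\beta\leqslant\alpha$; since \eqref{initial parameter condition} forces $s_0-q>\tfrac{d+1}{2}$, every Sobolev index $s-|\alpha|$ and $s_0-|\alpha|$ entering the weighted norms stays above $\tfrac{d+1}{2}$, so the $q=0$ estimates may be applied termwise and recombined, the powers $\gamma^{|\alpha|}$ absorbing the binomial constants and the monotonicity $s-|\alpha|\leqslant s-|\beta|$ ensuring that each factor is controlled by the appropriate weighted norm. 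It therefore suffices to treat $q=0$, in which case the relevant quantity is the sequence $t_T(l,m):=\sup_{j-k=m}|T_j^k(l)|$ on $\mathbb{Z}^{d+1}$, with $\|T\|_{\textnormal{\tiny{O-d}},s}=\big\|\langle\cdot\rangle^s\,t_T\big\|_{\ell^2(\mathbb{Z}^{d+1})}$.

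Assertion (i) is then immediate: on the Fourier support of $P_NT$ one has $\langle l,m\rangle\leqslant N$, hence $\langle l,m\rangle^{2(s+t)}\leqslant N^{2t}\langle l,m\rangle^{2s}$, while on the support of $P_N^\perp T$ one has $\langle l,m\rangle>N$, hence $\langle l,m\rangle^{2s}\leqslant N^{-2t}\langle l,m\rangle^{2(s+t)}$; summing over $(l,m)$ gives both bounds. Assertion (ii) follows from Hölder's inequality with conjugate exponents $\tfrac{1}{\overline{\theta}}$ and $\tfrac{1}{1-\overline{\theta}}$ applied to $\sum_{(l,m)}\big(\langle l,m\rangle^{2s_1}t_T(l,m)^2\big)^{\overline{\theta}}\big(\langle l,m\rangle^{2s_2}t_T(l,m)^2\big)^{1-\overline{\theta}}$, using $s_3=\overline{\theta}\,s_1+(1-\overline{\theta})s_2$; the splitting $\gamma^{|\alpha|}=\gamma^{\overline{\theta}|\alpha|}\gamma^{(1-\overline{\theta})|\alpha|}$ and one more Hölder on the finite sum over $\alpha$ restores the case $q\geqslant 1$.

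The heart of the proof is the composition law (iii). A direct computation shows that the product of two Toeplitz-in-$\varphi$ operators is again Toeplitz, with
$$(T_1T_2)_{j_0}^{j}(\mu,l)=\sum_{(l_1,j_1)\in\mathbb{Z}^{d+1}}(T_1)_{j_1}^{j}(\mu,l-l_1)\,(T_2)_{j_0}^{j_1}(\mu,l_1).$$
Writing $m_1=j-j_1$, $m_2=j_1-j_0$ so that $m_1+m_2=j-j_0$, and taking suprema over $j-j_0=m$, yields the pointwise bound $t_{T_1T_2}(l,m)\leqslant(t_{T_1}\ast t_{T_2})(l,m)$, the convolution being taken on $\mathbb{Z}^{d+1}$. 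One then invokes the tame weighted Young inequality
$$\big\|\langle\cdot\rangle^s(f\ast g)\big\|_{\ell^2}\lesssim\big\|\langle\cdot\rangle^sf\big\|_{\ell^2}\|g\|_{\ell^1}+\|f\|_{\ell^1}\big\|\langle\cdot\rangle^sg\big\|_{\ell^2},$$
which follows from $\langle l,m\rangle^s\lesssim\langle l-l_1,m-m_1\rangle^s+\langle l_1,m_1\rangle^s$ together with the classical Young inequality $\|a\ast b\|_{\ell^2}\leqslant\|a\|_{\ell^1}\|b\|_{\ell^2}$, and one finally bounds $\|f\|_{\ell^1}\lesssim\big\|\langle\cdot\rangle^{s_0}f\big\|_{\ell^2}$ (and likewise for $g$) by Cauchy--Schwarz, using $s_0>\tfrac{d+1}{2}$. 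This gives (iii) for $q=0$, and the reduction above yields the general case. Assertion (iv) is proved in the same way: writing $(T\rho)_{l,j}=\sum_{(l_0,j_0)}T_{j_0}^{j}(\mu,l-l_0)\rho_{l_0,j_0}$ exhibits $T\rho$ as a twisted convolution of $t_T$ against the Fourier coefficients of $\rho$, and the same tame Young estimate produces the stated bound.

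I expect no genuine obstacle here --- the statement is standard and the argument is that of \cite{BM18} --- but the point requiring the most care is the $q\geqslant1$ bookkeeping in (iii) and (iv): one must check that after the Leibniz expansion the interpolated Sobolev exponents $s-|\alpha|$ stay in the regime $>\tfrac{d+1}{2}$ where the tame convolution estimates are valid (which is precisely the role of the margin $s_0>\tfrac{d+1}{2}+q+2$ in \eqref{initial parameter condition}), and that the supremum over $j-k=m$ built into the off-diagonal norm is consistent with the convolution structure of operator products and of the action on functions. Everything else is routine manipulation of weighted $\ell^2$ sums, the constants being absorbed into the weights $\gamma^{|\alpha|}$.
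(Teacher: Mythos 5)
Your proof is correct and is essentially the standard argument: the paper gives no proof of Lemma \ref{properties of Toeplitz in time operators} and simply defers to \cite{BM18}, whose proofs proceed exactly as you do, via the convolution bound $t_{T_1T_2}\leqslant t_{T_1}\ast t_{T_2}$ on the Toeplitz coefficients, the tame weighted Young inequality with $\|\cdot\|_{\ell^1}\lesssim\|\langle\cdot\rangle^{s_0-|\alpha|}\cdot\|_{\ell^2}$ (valid thanks to $s_0-q>\tfrac{d+1}{2}$ from \eqref{initial parameter condition}), and the Leibniz/monotonicity bookkeeping in the parameter derivatives. No gap to report.
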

\subsubsection{Reversible and reversibility preserving operators}
We recall the following definitions of real reversible and reversibility preserving operators, see for instance \cite[Def. 2.2]{BBM14}.
\begin{defin}\label{Def-Rev}
	We define the following involution
	\begin{equation}\label{definition involution mathcalS2}
		(\mathscr{S}_{2}\rho)(\varphi,\theta)=\rho(-\varphi,-\theta).
	\end{equation}
	We say that an operator $T(\mu)$ is 
	\begin{enumerate}[label=\textbullet]
		\item real if for all $\rho\in L^{2}(\mathbb{T}^{d+1},\mathbb{C}),$ we have 
		$$\overline{\rho}=\rho\quad\Longrightarrow\quad\overline{T\rho}=T\rho.$$
		\item reversible if
		$$T(\mu)\circ\mathscr{S}_{2}=-\mathscr{S}_{2}\circ T(\mu).$$
		\item reversibility preserving if
		$$T(\mu)\circ\mathscr{S}_{2}=\mathscr{S}_{2}\circ T(\mu).$$
	\end{enumerate}
\end{defin}
The following lemma gives  characterizations of the reversibility notion in terms of Fourier coefficients. A similar result is stated in \cite[Lem. 2.6]{BBM14}.
\begin{lem}\label{characterization of real operator by its Fourier coefficients}
	Let $T$ be an operator. Then $T$ is 
	\begin{enumerate}[label=\textbullet]
		\item real if and only if
		$$\forall(l,l_{0},j,j_{0})\in(\mathbb{Z}^{d})^{2}\times\mathbb{Z}^{2},\quad T_{-l_{0},-j_{0}}^{-l,-j}=\overline{T_{l_{0},j_{0}}^{l,j}}.$$
		\item reversible if and only if
		$$\forall(l,l_{0},j,j_{0})\in(\mathbb{Z}^{d})^{2}\times\mathbb{Z}^{2},\quad T_{-l_{0},-j_{0}}^{-l,-j}=-T_{l_{0},j_{0}}^{l,j}.$$
		\item reversibility-preserving if and only if
		$$\forall(l,l_{0},j,j_{0})\in(\mathbb{Z}^{d})^{2}\times\mathbb{Z}^{2},\quad T_{-l_{0},-j_{0}}^{-l,-j}=T_{l_{0},j_{0}}^{l,j}.$$
	\end{enumerate}
\end{lem}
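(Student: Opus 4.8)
The plan is to turn each of the three defining operator identities in Definition~\ref{Def-Rev} into an equivalent condition on the matrix coefficients $T_{l_0,j_0}^{l,j}$, simply by testing the identity against the Fourier basis $(\mathbf{e}_{l,j})_{(l,j)\in\mathbb{Z}^{d+1}}$. Everything rests on two elementary observations about how the basis transforms: since $\mathbf{e}_{l,j}(\varphi,\theta)=e^{\ii(l\cdot\varphi+j\theta)}$, complex conjugation acts by $\overline{\mathbf{e}_{l,j}}=\mathbf{e}_{-l,-j}$, and, in view of~\eqref{definition involution mathcalS2}, the involution $\mathscr{S}_2$ acts by $\mathscr{S}_2\mathbf{e}_{l,j}=\mathbf{e}_{-l,-j}$.

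For the reversible and reversibility-preserving cases I would just expand both sides on a generic basis vector. Writing $T\mathbf{e}_{l_0,j_0}=\sum_{(l,j)}T_{l_0,j_0}^{l,j}\mathbf{e}_{l,j}$ and using $\mathscr{S}_2\mathbf{e}_{l,j}=\mathbf{e}_{-l,-j}$, one gets $T\circ\mathscr{S}_2\,\mathbf{e}_{l_0,j_0}=\sum_{(l,j)}T_{-l_0,-j_0}^{l,j}\mathbf{e}_{l,j}$ on one side and $\mathscr{S}_2\circ T\,\mathbf{e}_{l_0,j_0}=\sum_{(l,j)}T_{l_0,j_0}^{-l,-j}\mathbf{e}_{l,j}$ on the other. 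Identifying the coefficient of $\mathbf{e}_{l,j}$ in the relation $T\circ\mathscr{S}_2=\pm\,\mathscr{S}_2\circ T$ and then substituting $(l,j)\mapsto(-l,-j)$ yields $T_{-l_0,-j_0}^{-l,-j}=\pm\,T_{l_0,j_0}^{l,j}$, which is exactly the asserted characterisation of reversibility (minus sign) and of reversibility preservation (plus sign).

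For the reality statement the only extra ingredient is that complex conjugation is antilinear, so ``real'' has to be recast as a commutation relation before comparing Fourier coefficients: decomposing an arbitrary $\rho$ into its real and imaginary parts shows that $T$ is real in the sense of Definition~\ref{Def-Rev} if and only if $\mathcal{C}\circ T=T\circ\mathcal{C}$, where $\mathcal{C}\rho=\overline\rho$. Then, using $\mathcal{C}\mathbf{e}_{l,j}=\mathbf{e}_{-l,-j}$ together with antilinearity, a short computation gives $\mathcal{C}\,T\,\mathcal{C}\,\mathbf{e}_{l_0,j_0}=\sum_{(l,j)}\overline{T_{-l_0,-j_0}^{-l,-j}}\,\mathbf{e}_{l,j}$, so that $\mathcal{C}\circ T=T\circ\mathcal{C}$ is equivalent to $T_{l_0,j_0}^{l,j}=\overline{T_{-l_0,-j_0}^{-l,-j}}$ for all indices, i.e.\ to $T_{-l_0,-j_0}^{-l,-j}=\overline{T_{l_0,j_0}^{l,j}}$.

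There is no genuine obstacle here; the whole argument is bookkeeping on Fourier coefficients, and the only points that deserve a little care are the antilinearity of $\mathcal{C}$ in the reality case and the systematic index reflection $(l,j)\mapsto(-l,-j)$ forced by the action of $\mathscr{S}_2$ and of conjugation on the exponentials. The analogous characterisations for the involution $\mathscr{S}$ acting on $\mathbb{T}$, used earlier in Section~\ref{sec funct set} and before it, follow by exactly the same computation.
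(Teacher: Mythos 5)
Your proof is correct, and it is essentially the standard argument: the paper itself omits the proof and simply points to \cite[Lem. 2.6]{BBM14}, where the characterisation is obtained by exactly this kind of Fourier-coefficient bookkeeping. Your two key observations — that $\mathscr{S}_2\mathbf{e}_{l,j}=\mathbf{e}_{-l,-j}$ and $\overline{\mathbf{e}_{l,j}}=\mathbf{e}_{-l,-j}$, together with the reformulation of reality as commutation of $T$ with conjugation (using $\mathbb{C}$-linearity of $T$ and $\mathcal{C}^2=\mathrm{Id}$) — are precisely what is needed, and the index reflection $(l,j)\mapsto(-l,-j)$ is handled correctly.
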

Throughout this paper we shall constantly make use of two kinds  of  operators : multiplication and integral operators.
\begin{defin}
	Let $T$ be an operator as in Section $\ref{section-ope}.$
	We say that 
	\begin{enumerate}[label=\textbullet]
		\item $T$ is a multiplication operator if there exists a function $M:(\mu,\varphi,\theta)\mapsto M(\mu,\varphi,\theta)$ such that
		$$(T\rho)(\mu,\varphi,\theta)=M(\mu,\varphi,\theta)\rho(\mu,\varphi,\theta).$$
		\item $T$ is an integral operator if there exists a function (called the kernel) $K:(\mu,\varphi,\theta,\eta)\mapsto K(\mu,\varphi,\theta,\eta)$ such that
		$$(T\rho)(\mu,\varphi,\theta)=\int_{\mathbb{T}}\rho(\mu,\varphi,\eta)K(\mu,\varphi,\theta,\eta)d\eta.$$
		
	\end{enumerate}
\end{defin}
We shall need the following  lemma whose proof can be found in \cite[Lem. 4.4]{HR21}.
\begin{lem}\label{lemma symmetry and reversibility}
	Let $(\gamma,q,d,s_{0},s)$ satisfy \eqref{initial parameter condition}, then the following assertions hold true.
		\begin{enumerate}[label=(\roman*)]
			\item Let $T$ be a multiplication operator by a real-valued function $M$, then the following holds true.
			\begin{enumerate}[label=\textbullet]
				\item If $M(\mu,-\varphi,-\theta)=M(\mu,\varphi,\theta)$, then $T$ is  real and reversibility preserving Toeplitz in time and space operator.
				\item If $M(\mu,-\varphi,-\theta)=-M(\mu,\varphi,\theta)$, then $T$ is  real and reversible Toeplitz in time and space operator.
			\end{enumerate}
			Moreover,
			$$\| T\|_{\textnormal{\tiny{O-d}},q,s}^{\gamma,\mathcal{O}}\lesssim\| M\|_{q,s+s_{0}}^{\gamma,\mathcal{O}}.$$
			\item Let $T$ be an integral operator with a real-valued kernel $K$.
			\begin{enumerate}[label=\textbullet]
				\item If $K(\mu,-\varphi,-\theta,-\eta)=K(\mu,\varphi,\theta,\eta)$, then $T$ is  a real and reversibility preserving Toeplitz in time operator.
				\item If $K(\mu,-\varphi,-\theta,-\eta)=-K(\mu,\varphi,\theta,\eta)$, then $T$ is  a real and reversible Toeplitz in time operator.
			\end{enumerate}
			In addition,
			\begin{align*}
			\| T\|_{\textnormal{\tiny{O-d}},q,s}^{\gamma,\mathcal{O}}&\lesssim \int_{\T}\|K(\ast,\cdot,\centerdot,\eta+\centerdot)\|_{q,s+s_{0}}^{\gamma,\mathcal{O}} d\eta\lesssim \|K\|_{q,H^{s+s_{0}}_{\varphi,\theta,\eta}}^{\gamma,\mathcal{O}} 
			\end{align*}
			and
			\begin{align*}
			\| T\rho\|_{q,s}^{\gamma,\mathcal{O}}&\lesssim \|\rho\|_{q,s_0}^{\gamma,\mathcal{O}} \int_{\T}\|K(\ast,\cdot,\centerdot,\eta+\centerdot)\|_{q,s}^{\gamma,\mathcal{O}} d\eta+\|\rho\|_{q,s}^{\gamma,\mathcal{O}} \int_{\T}\|K(\ast,\cdot,\centerdot,\eta+\centerdot)\|_{q,s_0}^{\gamma,\mathcal{O}} d\eta\\
			&\lesssim \|\rho\|_{q,s_0}^{\gamma,\mathcal{O}} \|K\|_{q,H^s_{\varphi,\theta,\eta}}^{\gamma,\mathcal{O}} +\|\rho\|_{q,s}^{\gamma,\mathcal{O}} \|K\|_{q,H^{s_0}_{\varphi,\theta,\eta}}^{\gamma,\mathcal{O}}
			\end{align*}
			where the notation $\ast,\cdot,\centerdot$ denote $\mu,\varphi,\theta$, respectively.
	\end{enumerate}
\end{lem}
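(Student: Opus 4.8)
The plan is to reduce every structural assertion --- reality, reversibility, reversibility preservation, and the Toeplitz character --- to an elementary identity on the Fourier matrix coefficients $T_{l_0,j_0}^{l,j}$, via the characterisation of Lemma~\ref{characterization of real operator by its Fourier coefficients}, and then to read the off-diagonal and operator-action norm bounds directly off the explicit form of these coefficients in terms of the Fourier data of $M$, resp.\ of $K$. For a multiplication operator, expanding $M(\mu,\cdot,\cdot)=\sum_{(l,j)\in\mathbb{Z}^{d+1}}M_{l,j}(\mu)\mathbf{e}_{l,j}$ one immediately gets $T_{l_0,j_0}^{l,j}(\mu)=M_{l-l_0,j-j_0}(\mu)$ (cf.\ \eqref{action of Toeplitz in time operators}), so that $T$ is manifestly Toeplitz in time \emph{and} in space; reality of $M$ ($M_{-l,-j}=\overline{M_{l,j}}$) gives $T_{-l_0,-j_0}^{-l,-j}=\overline{T_{l_0,j_0}^{l,j}}$, hence $T$ is real, which one also sees directly from $\overline{T\rho}=\overline{M}\,\overline{\rho}$; and the parity hypothesis $M(\mu,-\varphi,-\theta)=\pm M(\mu,\varphi,\theta)$ reads $M_{-l,-j}=\pm M_{l,j}$, i.e.\ $T_{-l_0,-j_0}^{-l,-j}=\pm T_{l_0,j_0}^{l,j}$, which by Lemma~\ref{characterization of real operator by its Fourier coefficients} is precisely reversibility preservation for the sign $+$ and reversibility for the sign $-$; alternatively this follows at once by comparing $(T\mathscr{S}_{2}\rho)(\mu,\varphi,\theta)=M(\mu,\varphi,\theta)\rho(-\varphi,-\theta)$ with $(\mathscr{S}_{2}T\rho)(\mu,\varphi,\theta)=M(\mu,-\varphi,-\theta)\rho(-\varphi,-\theta)$, $\mathscr{S}_{2}$ being the involution \eqref{definition involution mathcalS2}. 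Since $\partial_\mu^\alpha T$ is multiplication by $\partial_\mu^\alpha M$ and $T_j^k(l)=M_{l,j-k}$, the definition \eqref{Top-NormX} yields $\|\partial_\mu^\alpha T(\mu)\|_{\textnormal{\tiny{O-d}},s-|\alpha|}=\|\partial_\mu^\alpha M(\mu,\cdot,\cdot)\|_{H^{s-|\alpha|}}$, and summing over $|\alpha|\leqslant q$ with weights $\gamma^{|\alpha|}$ gives $\|T\|_{\textnormal{\tiny{O-d}},q,s}^{\gamma,\mathcal{O}}=\|M\|_{q,s}^{\gamma,\mathcal{O}}\leqslant\|M\|_{q,s+s_0}^{\gamma,\mathcal{O}}$.

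For an integral operator with kernel $K$, testing against $\mathbf{e}_{l_0,j_0}$ gives $(T\mathbf{e}_{l_0,j_0})(\mu,\varphi,\theta)=e^{\ii l_0\varphi}\int_{\mathbb{T}}e^{\ii j_0\eta}K(\mu,\varphi,\theta,\eta)\,d\eta$, so that $T_{l_0,j_0}^{l,j}(\mu)=\widehat K_{l-l_0,\,j,\,-j_0}(\mu)$, where $\widehat K_{a,b,c}$ is the $(\varphi,\theta,\eta)$-Fourier coefficient of $K$; this depends on $l-l_0$ only, hence $T$ is Toeplitz in time, but in general not in space. Reality again follows from $\overline K=K$. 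For the reversibility statements I would argue directly: the change of variables $\eta\mapsto-\eta$ in the integral gives $(T\mathscr{S}_{2}\rho)(\mu,\varphi,\theta)=\int_{\mathbb{T}}K(\mu,\varphi,\theta,-\eta)\rho(-\varphi,\eta)\,d\eta$ while $(\mathscr{S}_{2}T\rho)(\mu,\varphi,\theta)=\int_{\mathbb{T}}K(\mu,-\varphi,-\theta,\eta)\rho(-\varphi,\eta)\,d\eta$, so that the hypothesis $K(\mu,-\varphi,-\theta,-\eta)=\pm K(\mu,\varphi,\theta,\eta)$ is exactly $T\circ\mathscr{S}_{2}=\pm\,\mathscr{S}_{2}\circ T$, i.e.\ reversibility preservation resp.\ reversibility.

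The genuinely computational point, which I expect to be the only real obstacle (and even so it is routine), is the kernel norm estimate and its passage to the off-diagonal norm. For the operator action I would perform the diagonal change of variables $\eta=\theta+\sigma$, rewriting $(T\rho)(\mu,\varphi,\theta)=\int_{\mathbb{T}}K(\mu,\varphi,\theta,\theta+\sigma)\rho(\mu,\varphi,\theta+\sigma)\,d\sigma$, and then apply Minkowski's integral inequality, the $\theta$-translation invariance of the weighted Sobolev norm (Lemma~\ref{Lem-lawprod}(i)) and the product law (Lemma~\ref{Lem-lawprod}(iv)) to the integrand; this produces the two-term tame bound for $\|T\rho\|_{q,s}^{\gamma,\mathcal{O}}$, whose right-hand side is then controlled by $\|K\|_{q,H_{\varphi,\theta,\eta}^{s}}^{\gamma,\mathcal{O}}$ and $\|K\|_{q,H_{\varphi,\theta,\eta}^{s_0}}^{\gamma,\mathcal{O}}$ through Fubini's theorem, a Cauchy--Schwarz inequality in $\eta$, and the fact that $(\varphi,\theta,\eta)\mapsto K(\mu,\varphi,\theta,\eta+\theta)$ is $K$ composed with the volume-preserving shear $(\varphi,\theta,\eta)\mapsto(\varphi,\theta,\eta+\theta)$, which preserves Sobolev regularity. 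For the off-diagonal norm I would instead start from $T_j^k(l)=\widehat K_{l,j,-k}$ and bound the supremum over $j-k=m$ appearing in \eqref{Top-NormX} by an $\ell^2$-sum via Cauchy--Schwarz against the summable weight $\big(\sum_{n}\langle n\rangle^{-2s_0}\big)$ (convergent since $2s_0>1$), which is exactly where the loss of $s_0$ derivatives on the kernel side originates; recombining the weights $\langle l,m\rangle$ and using once more the shear and Fubini then delivers $\|T\|_{\textnormal{\tiny{O-d}},q,s}^{\gamma,\mathcal{O}}\lesssim\int_{\mathbb{T}}\|K(\ast,\cdot,\centerdot,\eta+\centerdot)\|_{q,s+s_0}^{\gamma,\mathcal{O}}\,d\eta\lesssim\|K\|_{q,H_{\varphi,\theta,\eta}^{s+s_0}}^{\gamma,\mathcal{O}}$. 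Assembling the three blocks gives all the assertions of the lemma.
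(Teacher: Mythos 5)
Your proposal is correct. Note that the paper does not prove Lemma \ref{lemma symmetry and reversibility} at all: it delegates the proof to \cite[Lem. 4.4]{HR21}, so there is no internal argument to compare against. Your self-contained sketch is essentially the standard one used in that reference and in \cite{BM18}: the structural claims (real, reversible, reversibility preserving, Toeplitz) reduce to the Fourier-coefficient identities of Lemma \ref{characterization of real operator by its Fourier coefficients}, or equivalently to the direct comparison of $T\mathscr{S}_2$ with $\mathscr{S}_2T$, and your formulas $T_{l_0,j_0}^{l,j}=M_{l-l_0,j-j_0}$ and $T_{l_0,j_0}^{l,j}=\widehat K_{l-l_0,j,-j_0}$ are exact, as is the observation that the multiplication case gives $\|T\|_{\textnormal{\tiny{O-d}},q,s}^{\gamma,\mathcal{O}}=\|M\|_{q,s}^{\gamma,\mathcal{O}}$ with no loss. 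For the integral case your two mechanisms — the diagonal shear $\eta=\theta+\sigma$ plus Minkowski, translation invariance and the law products of Lemma \ref{Lem-lawprod} for the action bound, and Cauchy--Schwarz in the free index against the weight $\langle\cdot\rangle^{-2s_0}$ (summable since $s_0>\tfrac12$) followed by $\langle l,m\rangle^{s}\langle j\rangle^{s_0}\lesssim\langle l,j,j-m\rangle^{s+s_0}$ for the off-diagonal bound — indeed produce exactly the stated estimates; an alternative that avoids the Fourier bookkeeping is to write $T=\int_{\mathbb{T}}M_\sigma\tau_\sigma\,d\sigma$ with $M_\sigma$ the multiplication by $K(\mu,\varphi,\theta,\theta+\sigma)$ and $\tau_\sigma$ the $\theta$-translation, and invoke part (i) under the integral, which yields the intermediate bound $\int_{\mathbb{T}}\|K(\ast,\cdot,\centerdot,\eta+\centerdot)\|_{q,s}^{\gamma,\mathcal{O}}d\eta$ most directly. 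The only point needing a word of care in a full write-up is the passage from that $\eta$-integral (which carries the $\sup_\mu$ inside) to $\|K\|_{q,H^{s+s_0}_{\varphi,\theta,\eta}}^{\gamma,\mathcal{O}}$, where the $s_0$ loss is consumed by a Sobolev embedding or Cauchy--Schwarz in $\eta$ together with the fact that the shear distorts the $\langle l,j,c\rangle$ weights only by a fixed constant; your sketch gestures at exactly these tools, so there is no gap of substance.
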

In the following lemma we shall study the action of a change of variables as in  \eqref{definition change of variables} on an integral operator. More precisely, we shall need two partial change of coordinates $\mathcal{B}^1$ and $\mathcal{B}^2$ acting respectively on the variables $\theta$ and $\eta$ and defined through
\begin{align}\label{partial change of variables}
	(\mathcal{B}^1\rho)(\mu,\varphi,\theta,\eta)&=\rho\big(\mu,\varphi,\theta+\beta_1(\mu,\varphi,\theta),\eta\big),\\
	(\mathcal{B}^2\rho)(\mu,\varphi,\theta,\eta)&=\rho\big(\mu,\varphi,\theta,\eta+\beta_2(\mu,\varphi,\eta)\big),\nonumber
\end{align}
with $\beta_{1},\beta_2$ two smooth functions satisfying \eqref{small beta lem}. 
A similar result is proved in \cite[Lem. 2.34]{BM18} for pseudo-differential integral operators, so we omit the proof here. We also include the difference estimate which is useful to study the stability of the Cantor sets in Section \ref{Section measure of the final Cantor set}. The proof of the difference estimate is standard and we shall also skip it here.
\begin{lem}\label{lem CVAR kernel}
	Let $(\gamma,q,d,s_0,s)$ satisfy \eqref{initial parameter condition}. Given $r\in W^{q,\infty,\gamma}(\mathcal{O},H^{s})$, we consider a $C^{\infty}$ function in the form
	$$K:(\mu,\varphi,\theta,\eta)\mapsto K(\mu,\varphi,\theta,\eta).$$
We consider the integral operator associated to $K$, namely
$$T\rho(\mu,\varphi,\theta)=\int_{\mathbb{T}}\rho(\varphi,\eta)K(\mu,\varphi,\theta,\eta)d\eta.$$
	Then the following assertions hold true.
	\begin{enumerate}[label=(\roman*)]
	\item Let $\mathcal{B}^{1}$ and $\mathcal{B}^{2}$ as in \eqref{partial change of variables} associated to $\beta_{1}$ and $\beta_{2},$ respectively and enjoying the smallness condition \eqref{small beta lem}. Then,
	\begin{equation}\label{e-B1B2Kr}
		\|\mathcal{B}^1\mathcal{B}^{2}K\|_{q,H_{\varphi,\theta,\eta}^{s}}^{\gamma,\mathcal{O}}\lesssim\|K\|_{q,H_{\varphi,\theta,\eta}^{s}}^{\gamma,\mathcal{O}}+\Big(\max_{i\in\{1,2\}}\|\beta_{i}\|_{q,s}^{\gamma,\mathcal{O}}\Big)\|K\|_{q,H_{\varphi,\theta,\eta}^{s_0}}^{\gamma,\mathcal{O}}.
	\end{equation}
	Now, assume that $\beta_{1}=\beta_2=\beta$ satisfies the following symmetry conditions
	\begin{align}\label{assumption sym beta1}
		\beta(\mu,-\varphi,-\theta)=-\beta(\mu,\varphi,\theta).
	\end{align}
	Consider   $\mathscr{B}$, $\mathcal{B}$ be quasi-periodic changes of variables as in  \eqref{definition symplectic change of variables}-\eqref{definition change of variables}, 
then
\begin{enumerate}[label=\textbullet]
	\item if $K(\mu,-\varphi,-\theta,-\eta)=K(\mu,\varphi,\theta,\eta)$, then $\mathcal{B}^{-1}T\mathscr{B}$ is a real and reversibility preserving Toeplitz in time integral operator.
	\item if $K(\mu,-\varphi,-\theta,-\eta)=-K(\mu,\varphi,\theta,\eta)$, then $\mathcal{B}^{-1}T\mathscr{B}$ is a real and reversible Toeplitz in time inegral operator.
\end{enumerate}
In this case, for any $k\in\mathbb{N},$
	\begin{equation}\label{e-odsBtB}
		\|\partial_{\theta}^{k}\mathcal{B}^{-1}T\mathscr{B}\|_{\textnormal{\tiny{O-d}},q,s}^{\gamma,\mathcal{O}}\lesssim\|K\|_{q,H_{\varphi,\theta,\eta}^{s+s_0+k}}^{\gamma,\mathcal{O}}+\|\beta\|_{q,s+s_0+k}^{\gamma,\mathcal{O}}\|K\|_{q,H_{\varphi,\theta,\eta}^{s_0}}^{\gamma,\mathcal{O}}.
	\end{equation}
\item Introduce $\mathcal{B}_{r}$ a quasi-periodic change of variables as in \eqref{definition change of variables} associated to $\beta_{r}$ (linked to $r$) Consider $r_1,\,r_2\in W^{q,\infty,\gamma}(\mathcal{O},H^s).$ Denote
$$\Delta_{12}r=r_1-r_2,\quad\Delta_{12}f_r=f_{r_1}-f_{r_2}$$ 
for any quantity $f_r$ depending on $r$ and assume that there exist $\varepsilon_0>0$ small enough such that 
\begin{equation}\label{hyp Kr-r}
	\forall i\in\{1,2\},\quad \|\beta_{r_i}\|_{q,2s_0}^{\gamma,\mathcal{O}}+\|K_{r_i}\|_{q,H_{\varphi,\theta,\eta}^{s_0+1}}^{\gamma,\mathcal{O}}\leqslant\varepsilon_0.
\end{equation}
Then, for any $k\in\mathbb{N},$ the following estimate holds
\begin{align}\label{e-d12odsBtB}
	\|\Delta_{12}\partial_{\theta}^{k}\mathcal{B}_{r}^{-1}T_r\mathscr{B}_{r}&\|_{\textnormal{\tiny{O-d}},q,s+s_0+k}^{\gamma,\mathcal{O}}\lesssim\|\Delta_{12}K_r\|_{q,H_{\varphi,\theta,\eta}^{s+s_0+k}}^{\gamma,\mathcal{O}}+\|\Delta_{12}\beta_{r}\|_{q,s+s_0+k}^{\gamma,\mathcal{O}}\\
	&\quad+\Big(\max_{i\in\{1,2\}}\|\beta_{r_i}\|_{q,s+s_0+k}^{\gamma,\mathcal{O}}\Big)\|\Delta_{12}K_r\|_{q,H_{\varphi,\theta,\eta}^{s_0}}^{\gamma,\mathcal{O}}\nonumber\\
	&\quad+\Big(\max_{i\in\{1,2\}}\|K_{r_i}\|_{q,H_{\varphi,\theta,\eta}^{s+s_0+k+1}}^{\gamma,\mathcal{O}}+\max_{i\in\{1,2\}}\|\beta_{r_i}\|_{q,s+s_0+k+1}^{\gamma,\mathcal{O}}\Big)\|\Delta_{12}\beta_r\|_{q,s_0}^{\gamma,\mathcal{O}}.\nonumber
\end{align}
\end{enumerate}
\end{lem}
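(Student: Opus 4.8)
The plan is to first make the conjugated operator $\mathcal{B}^{-1}T\mathscr{B}$ completely explicit, and then reduce all the estimates to the tame composition and product machinery already available. Performing in the $\eta$-integral defining $T\mathscr{B}\rho$ the change of variables $\sigma=\eta+\beta(\mu,\varphi,\eta)$ --- whose Jacobian is precisely the prefactor $1+\partial_\eta\beta$ built into $\mathscr{B}$ in \eqref{definition symplectic change of variables} --- and then composing with $\mathcal{B}^{-1}$ as in \eqref{mathscrB1}, one finds that $\mathcal{B}^{-1}T\mathscr{B}$ is again an integral operator, with kernel
$$\widetilde K(\mu,\varphi,\theta,\eta)=K\big(\mu,\varphi,\theta+\widehat\beta(\mu,\varphi,\theta),\eta+\widehat\beta(\mu,\varphi,\eta)\big),$$
where $\widehat\beta$ is the inverse function of \eqref{def betahat}. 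In other words $\widetilde K=\mathcal{B}^1\mathcal{B}^2K$ in the notation \eqref{partial change of variables}, now with $\beta_1=\beta_2=\widehat\beta$. With this identity the estimate \eqref{e-B1B2Kr} follows by viewing $\mathcal{B}^iK$ as the composition of $K\in W^{q,\infty,\gamma}(\mathcal{O},H^s_{\varphi,\theta,\eta})$ with the near-identity diffeomorphism $\theta\mapsto\theta+\beta_i(\mu,\varphi,\theta)$ (with the remaining variable a harmless parameter) and applying the tame composition estimate of Lemma \ref{Compos1-lemm} together with the product estimates of Lemma \ref{Lem-lawprod}; the smallness \eqref{small beta lem} keeps us in the regime where these apply. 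This is exactly the route of \cite[Lem. 2.34]{BM18}.

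Next I would establish the algebraic properties. Reality is clear: $\beta$ real-valued forces $\widehat\beta$ real-valued, hence $\widetilde K$ real whenever $K$ is, and since $\widetilde K$ contains no translation in $\varphi$ the operator is Toeplitz in time. For the reversibility, I would first deduce from the symmetry hypothesis \eqref{assumption sym beta1} and the defining relation \eqref{def betahat} that $\widehat\beta(\mu,-\varphi,-\theta)=-\widehat\beta(\mu,\varphi,\theta)$, so that $-\theta+\widehat\beta(\mu,-\varphi,-\theta)=-(\theta+\widehat\beta(\mu,\varphi,\theta))$ and likewise in $\eta$; then
$$\widetilde K(\mu,-\varphi,-\theta,-\eta)=K\big(\mu,-\varphi,-(\theta+\widehat\beta(\mu,\varphi,\theta)),-(\eta+\widehat\beta(\mu,\varphi,\eta))\big),$$
which equals $\widetilde K(\mu,\varphi,\theta,\eta)$ when $K$ is even and $-\widetilde K(\mu,\varphi,\theta,\eta)$ when $K$ is odd under $(\varphi,\theta,\eta)\mapsto(-\varphi,-\theta,-\eta)$. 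Invoking Lemma \ref{lemma symmetry and reversibility}(ii) then gives the claimed real/reversible or real/reversibility-preserving Toeplitz-in-time structure, and also converts kernel norms into off-diagonal operator norms at the cost of $s_0$ derivatives. The estimate \eqref{e-odsBtB} is obtained by differentiating $\widetilde K$ $k$ times in $\theta$ --- the chain rule generating derivatives of $\widehat\beta$, controlled by those of $\beta$ through \eqref{link betah and beta} --- bounding the result with the product estimates of Lemma \ref{Lem-lawprod} and \eqref{e-B1B2Kr}, and then passing to the off-diagonal norm via Lemma \ref{lemma symmetry and reversibility}(ii); this produces the shifted index $s+s_0+k$.

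Finally, for the difference estimate \eqref{e-d12odsBtB} I would write, with $\widetilde K_{r_i}(\mu,\varphi,\theta,\eta)=K_{r_i}\big(\mu,\varphi,\theta+\widehat\beta_{r_i},\eta+\widehat\beta_{r_i}\big)$, the telescoping decomposition
$$\Delta_{12}\widetilde K=\big(\Delta_{12}K_r\big)\big(\mu,\varphi,\theta+\widehat\beta_{r_1},\eta+\widehat\beta_{r_1}\big)+\Big[K_{r_2}\big(\mu,\varphi,\theta+\widehat\beta_{r_1},\eta+\widehat\beta_{r_1}\big)-K_{r_2}\big(\mu,\varphi,\theta+\widehat\beta_{r_2},\eta+\widehat\beta_{r_2}\big)\Big].$$
The first summand is treated exactly as in the previous step and yields the terms carrying $\Delta_{12}K_r$. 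The second is a difference of compositions of a fixed function with two near-identity diffeomorphisms; writing it through the fundamental theorem of calculus along the segment joining $\widehat\beta_{r_1}$ to $\widehat\beta_{r_2}$ bounds it by $\|\Delta_{12}\widehat\beta_r\|$ times a derivative of $K_{r_2}$, and \eqref{link diff beta hat and diff beta} turns $\|\Delta_{12}\widehat\beta_r\|$ into $\|\Delta_{12}\beta_r\|$ up to the $\beta_{r_i}$-dependent factor visible in \eqref{e-d12odsBtB}. Differentiating $k$ times in $\theta$ and passing to the off-diagonal norm as before concludes, with \eqref{hyp Kr-r} making all composition estimates uniform in $i\in\{1,2\}$. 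I expect the only real difficulty to be the careful accounting of how many derivatives of $\beta$ (equivalently $\widehat\beta$) must be spent to absorb both the $k$ space derivatives and the $s_0$ loss in the passage to the off-diagonal norm --- this is precisely what forces the indices $s+s_0+k$ and $s+s_0+k+1$ appearing in the statement.
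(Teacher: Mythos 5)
Your argument is correct and follows exactly the route the paper intends: the authors omit the proof, referring to \cite[Lem. 2.34]{BM18} and calling the difference estimate standard, and your sketch is precisely that standard argument — the explicit kernel $\mathcal{B}^{1}\mathcal{B}^{2}K$ with $\beta_1=\beta_2=\widehat{\beta}$ obtained by the change of variables $\sigma=\eta+\beta(\mu,\varphi,\eta)$, the tame composition/product estimates for \eqref{e-B1B2Kr} and \eqref{e-odsBtB}, the oddness of $\widehat{\beta}$ combined with Lemma \ref{lemma symmetry and reversibility}(ii) for the real/reversible/Toeplitz structure, and the telescoping plus \eqref{link diff beta hat and diff beta} for \eqref{e-d12odsBtB}. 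The only point to flag is cosmetic: Lemma \ref{Compos1-lemm} is stated for functions on $\mathbb{T}^{d+1}$, so its use on the three-variable kernel norm $H^{s}_{\varphi,\theta,\eta}$ requires the routine adaptation via the norm splitting \eqref{decomp norm} as in Lemma \ref{cheater lemma}, which does not affect the result.
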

\section{Functional of interest and regularity aspects}
The main  goal of this section  is to reformulate the problem in a dynamical system language more adapted to KAM techniques.  
More precisely, we shall write   the equation \eqref{Ham eq ED r}  as a Hamiltonian perturbation of an integrable system, given by the linear dynamics at the equilibrium state. Then, by 
 selecting finitely-many tangential sites  and decomposing the phase space into tangential
and normal subspaces  we can introduce action-angle variables on the tangential part allowing to reformulate the problem in terms of embedded tori. This reduces the problem into the  search for zeros of a functional $\mathcal{F}$ to which the Nash-Moser implicit function theorem will be applied. We shall also study in this section some regularity aspects for the perturbed Hamiltonian vector field appearing in $\mathcal{F}$ and
needed during the Nash-Moser scheme. This approach has been intensively used
before, for instance  in \cite{BBMH18,BBM16,BFM21,BFM21-1,BM18}.\\
Notice that, according to  Lemmata \ref{lem eq ED r} and \ref{lemma linearized operator at equilibrium}, the equation \eqref{Ham eq ED r}, that is also \eqref{ED eq r}, can be written in the form
\begin{equation}\label{def XP}\partial_{t}r=\partial_{\theta}\mathrm{L}(b)(r)+X_{P}(r)\quad \textnormal{with}\quad X_{P}(r):=\tfrac{1}{2}\partial_{\theta}r+\partial_{\theta}\mathcal{K}_{b}\ast r-F_{b}[r],
\end{equation}
where the nonlinear functional $F_{b}[r]$ is introduced in  \eqref{Fb} and the convolution kernel is given by \eqref{mathcalKb}. Since we shall  look for small amplitude quasi-periodic solutions then it is  more convenient  to 
 rescale the solution as follows  $r\mapsto\varepsilon r$ with $r$ bounded. Hence, the Hamiltonian equation \eqref{Ham eq ED r} takes the form
\begin{equation}\label{perturbed hamiltonian}
	\partial_{t}r=\partial_{\theta}\mathrm{L}(b)(r)+\varepsilon X_{P_{\varepsilon}}(r),
\end{equation}
where $X_{P_{\varepsilon}}$ is the Hamiltonian vector field defined by
$X_{P_{\varepsilon}}(r):=\varepsilon^{-2}X_{P}(\varepsilon r).$
Notice  that \eqref{perturbed hamiltonian} is the Hamiltonian system generated by
 the rescaled Hamiltonian  
\begin{align}\label{HEE}
	\nonumber \mathcal{H}_{\varepsilon}(r)&=\varepsilon^{-2}H(\varepsilon r)\\
	&:=H_{\mathrm{L}}(r)+\varepsilon P_{\varepsilon}(r),
\end{align}
with  $H_{\mathrm{L}}$ the quadratic Hamiltonian defined in Lemma \ref{lemma linearized operator at equilibrium} and 
$\varepsilon P_{\varepsilon}(r)$ containing terms
of higher order more than cubic.
\subsection{Reformulation with  the action-angle and normal variables}\label{subsec act-angl}
Recall from \eqref{def bbS} that the tangential sites are defined by 
$$\quad{\mathbb S} := \{ j_1, \ldots, j_d \}\subset\mathbb{N}^* \quad\textnormal{with}\quad 
1 \leqslant j_1 < j_2 < \ldots < j_d.$$
We now define the symmetrized tangential sets $\overline{\mathbb{S}}$ and $\mathbb{S}_{0}$ by
\begin{align}\label{tangent-set}
	\overline{\mathbb{S}}:={\mathbb S}\cup (-{\mathbb S})= \{ \pm j,\,\,j\in {\mathbb S}\}\quad\textnormal{and}\quad{\mathbb S}_0=\overline{\mathbb{S}}\cup\{0\}.
\end{align}
Then we decompose the phase space $L^2_0 (\mathbb{T})$  into the following $L^2(\mathbb{T})$-orthogonal direct sum 
\begin{equation}\label{decoacca}
	L^2_0 (\mathbb{T}) = 
	L_{\overline{\mathbb{S}}}
	\overset{\perp}{\oplus} {L}^2_{\bot},\quad L_{\overline{\mathbb{S}}} := \Big\{ \sum_{ j\in\overline{\mathbb{S}}} r_j e_j,\,\, \overline{r_j}=r_{-j} \Big\}  , \quad
	L_{\bot}^2:= \Big\{ z = 
	\sum_{ j\in \mathbb{Z}\setminus\mathbb{S}_0} z_j e_j  \in L^2_0 (\mathbb{T})\Big\} \, ,
\end{equation}
where we denote $e_{j}(\theta)=e^{\ii j\theta}.$ The associated orthogonal projectors $\Pi_{\overline{\mathbb{S}}}, \Pi^\bot_{\mathbb S_0}$ are defined by
\begin{equation}\label{def proj}
	r=\sum_{ j\in \mathbb{Z}^*} r_j e_j =v+z,\quad v:=\Pi_{\overline{\mathbb{S}}}r:=\sum_{ j\in\overline{\mathbb{S}}} r_j e_j,\quad z:=\Pi^\bot_{\mathbb S_0}r:=\sum_{ j\in \mathbb{Z}\setminus\mathbb{S}_0} r_j e_j,
\end{equation}
where $v$ and $z$ are respectively called the tangential and normal variables. For fixed  small amplitudes $(\mathtt{a}_{j})_{j\in\overline{\mathbb{S}}}\in(\mathbb{R}_{+}^{*})^{d}$ satisfying  $\mathtt{a}_{-j}=\mathtt{a}_{j}$, we introduce the  action-angle variables on the tangential set $L_{\overline{\mathbb{S}}}$ by making the following symplectic polar change of coordinates
\begin{equation}\label{ham-syst-k}
	\forall j\in\overline{\mathbb{S}},\quad r_j  =
	\sqrt{\mathtt{a}_{j}^{2}+{|j|}I_j}\,  e^{\ii \vartheta_j},
\end{equation}
where
\begin{equation}\label{sym I-vartheta}
	\forall j\in\overline{\mathbb{S}},\quad I_{-j}=I_j\in\mathbb{R}\quad \textnormal{and}\quad \vartheta_{-j}=-\vartheta_j\in \mathbb{T}.
\end{equation}
Thus, any function $r$
of the phase space $L_0^2$ can be represented as
\begin{equation}\label{definition of A action-angle-normal}
	r= A( \vartheta,I,z) :=  
	v (\vartheta,I)+ z  \quad
	\textnormal{where} \quad  
	v (\vartheta, I) := \sum_{j \in\overline{\mathbb{S}}}    
	\sqrt{\mathtt{a}_{j}^{2}+{|j|}I_j}\,  e^{\ii \vartheta_j}e_j \, . 
\end{equation}
Observe that the function  $v (-{\omega}_{\textnormal{Eq}} (b)t,0)$, where ${\omega}_{\textnormal{Eq}}$ is defined in \eqref{def freq vec eqb}, corresponds to the solution of the linear system \eqref{Ham eq 0} described by  \eqref{rev sol eq}. In these new coordinates, the involution $ {\mathscr S}$ defined in \eqref{definition of the involution mathcal S} reads 
\begin{equation}\label{rev_aa}
	{\mathfrak S} : (   \vartheta,I, z)\mapsto ( -\vartheta,I, {\mathscr S} z )
\end{equation}
and the symplectic  $2$-form in \eqref{def symp-form} becomes, after straightforward computations using \eqref{ham-syst-k} and \eqref{sym I-vartheta},
\begin{equation}\label{sympl_form}
	{\mathcal W} =  
	\sum_{j \in \mathbb{S}} d\vartheta_j  \wedge  d I_j   +\frac12\sum_{j \in \mathbb{Z} \setminus \mathbb{S}_0}\frac{1}{\ii j}dr_j\wedge dr_{-j}=
	\Big(\sum_{j \in\mathbb{S}}d\vartheta_j  \wedge  d I_j   \Big)  \oplus {\mathcal W}_{|L_\bot^2},  
\end{equation}
where ${\mathcal W}_{|L^2_{\bot}}$ denotes the restriction of $\mathcal{W}$ to $L^2_{\bot}$.
This proves that the transformation $A$ defined in \eqref{definition of A action-angle-normal} is symplectic and in the action-angle and normal coordinates $ (\vartheta,I,z)\in  \mathbb{T}^d\times \mathbb{R}^d\times {L}^2_{\bot}$,  the Hamiltonian system generated by $ {\mathcal H}_\varepsilon   $ in \eqref{HEE} transforms into the one generated
by the Hamiltonian 
\begin{equation}\label{Hepsilon}
	H_{\varepsilon} =\mathcal{H}_{\varepsilon} \circ  A.
\end{equation}
Since $ \mathrm{L}(b) $ in Lemma \ref{lemma linearized operator at equilibrium} preserves the subspaces $L_{\overline{\mathbb{S}}}$ and ${L}^2_{\bot}$ then the quadratic Hamiltonian $ H_{\rm L}$ in \eqref{defLHL} (see \eqref{Ham-Fourier})
in the variables $ (\vartheta, I, z) $  reads, up to an additive constant,
 \begin{align}\label{QHAM}
	H_\mathrm{L} \circ A =  -\sum_{j\in\mathbb{S}} \, \Omega_j(b)I_j+ \frac12  \langle  \mathrm{L}(b)\, z, z \rangle_{L^2(\mathbb{T})} =   -{\omega}_{\textnormal{Eq}}(b)\cdot I
	+ \frac12  \langle  \mathrm{L}(b)\, z, z \rangle_{L^2(\mathbb{T})},  
\end{align}
where $ {\omega}_{\textnormal{Eq}} \in \mathbb{R}^d $ is the unperturbed 
tangential frequency vector defined by \eqref{def freq vec eqb}.
By \eqref{HEE} and \eqref{QHAM}, 
the Hamiltonian $H_{\varepsilon} $ in \eqref{Hepsilon} reads
\begin{equation}\label{cNP}
	H_{\varepsilon} = 
	{\mathcal N} + \varepsilon \mathcal{ P}_{\varepsilon}  \quad {\rm with} \quad{\mathcal N} :=   -{\omega}_{\textnormal{Eq}}(b)\cdot I  + \frac12  \langle  \mathrm{L}(b)\, z, z \rangle_{L^2(\mathbb{T})}  
	\quad \textnormal{and}
	\quad \mathcal{ P}_{\varepsilon} :=   P_\varepsilon \circ A .  
\end{equation}
We look for an embedded invariant torus
\begin{equation}\label{rev-torus}
	i:\begin{array}[t]{rcl} \mathbb{T}^d &\rightarrow&
		\mathbb{R}^d \times \mathbb{R}^d \times {L}^2_{\bot} \\
		\varphi &\mapsto& i(\varphi):= (  \vartheta(\varphi), I(\varphi), z(\varphi))
	\end{array}  
\end{equation}
of the Hamiltonian vector field 
\begin{equation}\label{hamiltonian vector filed associated with Hepsilon}
	X_{H_{\varepsilon}}:= 
	(\partial_I H_{\varepsilon} , -\partial_\vartheta H_{\varepsilon} , \Pi_{\mathbb{S}_0}^\bot
	\partial_\theta \nabla_{z} H_{\varepsilon} ) 
\end{equation}
filled by quasi-periodic solutions with Diophantine frequency 
vector $\omega$. 
We point out that for the value   $\varepsilon=0$ the Hamiltonian system 
$$\omega\cdot\partial_\varphi i (\varphi) = X_{H_0} ( i (\varphi))$$   possesses, for any value of the parameter $b\in (b_0,b_1)$, the invariant torus
\begin{equation}\label{def iflat}
	i_{\textnormal{\tiny{flat}}}(\varphi):=(\varphi,0,0).
\end{equation}
Now we consider the family  of Hamiltonians,
\begin{equation}\label{H alpha}
	\begin{aligned}
		H_\varepsilon^\alpha := {\mathcal N}_\alpha +\varepsilon  {\mathcal P}_{\varepsilon} \quad  \textnormal{where}\quad{\mathcal N}_\alpha :=  
		\alpha \cdot I 
		+ \frac12 \langle \mathrm{L}(b)\, z, z\rangle_{L^2(\mathbb{T})},
	\end{aligned}
\end{equation}
which depends on the constant vector $ \alpha \in \mathbb{R}^d $. For the value $\alpha=-{\omega}_{\textnormal{Eq}}(b)$ we have $H_\varepsilon^\alpha= H_\varepsilon$.
The parameter $\alpha$ is introduced in order to ensure the validity of some compatibility conditions  during the approximate inverse process.
We look for zeros of the nonlinear operator
\begin{equation}\label{main function}
	\begin{array}{l}
		\mathcal{F}(i,\alpha,(b,\omega),\varepsilon):=\omega\cdot\partial_{\varphi}i(\varphi)-X_{H_{\varepsilon}^{\alpha}}(i(\varphi))\\
		\mbox{\hspace{2cm}}=\left(\begin{array}{c}
			\omega\cdot\partial_{\varphi}\vartheta(\varphi)-\alpha-\varepsilon\partial_{I}\mathcal{P}_{\varepsilon}(i(\varphi))\\
			\omega\cdot\partial_{\varphi}I(\varphi)+\varepsilon\partial_{\vartheta}\mathcal{P}_{\varepsilon}(i(\varphi))\\
			\omega\cdot\partial_{\varphi}z(\varphi)-\partial_{\theta}\big[\mathrm{L}(b)z(\varphi)+\varepsilon\nabla_{z}\mathcal{P}_{\varepsilon}\big(i(\varphi)\big)\big]
		\end{array}\right),
	\end{array}
\end{equation}
where $\mathcal{P}_{\varepsilon}$ is defined in \eqref{HEE}. For any $ \alpha \in \mathbb{R}^d $,
the Hamiltonian $H_{\varepsilon}^{\alpha}$  is invariant under the involution $\mathfrak{S}$  defined in \eqref{rev_aa}, 
$$H_{\varepsilon}^{\alpha}\circ\mathfrak{S}=H_{\varepsilon}^{\alpha}.$$
Thus, we look for reversible solutions of $\mathcal{F}(i,\alpha,(b,\omega),\varepsilon)=0,$ namely satisfying
\begin{equation}\label{reversibility condition in the variables theta I and z}
	\begin{array}{ccc}
		\vartheta(-\varphi)=-\vartheta(\varphi), & I(-\varphi)=I(\varphi), & z(-\varphi)=(\mathscr{S}z)(\varphi).
	\end{array}
\end{equation}
We define the periodic component $\mathfrak{I}$ of the torus $i$ by
$$\mathfrak{I}(\varphi):=i(\varphi)-(\varphi,0,0)=(\Theta(\varphi),I(\varphi),z(\varphi))\quad \mbox{ with }\quad \Theta(\varphi)=\vartheta(\varphi)-\varphi,$$
and the weighted Sobolev norm of $\mathfrak{I}$ as 
$$\|\mathfrak{I}\|_{q,s}^{\gamma,\mathcal{O}}:=\|\Theta\|_{q,s}^{\gamma,\mathcal{O}}+\| I\|_{q,s}^{\gamma,\mathcal{O}}+\| z\|_{q,s}^{\gamma,\mathcal{O}}.$$
\subsection{Regularity of the perturbed Hamiltonian vector field}
This section is devoted to some regularity aspects of the Hamiltonian involved in the equation \eqref{Ham eq ED r}.  
We shall need the following lemma.
\begin{lem}\label{lemma estimates L S}
Let $(\gamma,q,s_{0},s)$ satisfy \eqref{initial parameter condition}. There exists $\varepsilon_{0}\in(0,1]$ such that if
		$$\| r\|_{q,s_{0}+2}^{\gamma,\mathcal{O}}\leqslant\varepsilon_{0},$$
		then the operators $\partial_{\theta}\mathbf{L}_{r}$ and $\partial_{\theta}\mathbf{S}_{r}$, defined in \eqref{mathbfLr} and \eqref{mathbfSr} write 
	\begin{align}
		\partial_{\theta}\mathbf{L}_{r}&=\partial_{\theta}\mathcal{K}_{1,b}\ast\cdot+\partial_{\theta}\mathbf{L}_{r,1}\quad {\rm with}\quad \mathbf{L}_{r,1}(\rho)(b,\varphi,\theta):=\int_{\mathbb{T}}\rho(\varphi,\eta)\mathbb{K}_{r,1}(b,\varphi,\theta,\eta)d\eta, \label{dcp Lr}\\
	\partial_{\theta}\mathbf{S}_{r}&=\partial_{\theta}\mathcal{K}_{2,b}\ast\cdot+\partial_{\theta}\mathbf{S}_{r,1} \;\;\;\quad\textnormal{with}\quad\mathbf{S}_{r,1}(\rho)(b,\varphi,\theta)=\int_{\mathbb{T}}\rho(\varphi,\eta)\mathscr{K}_{r,1}(b,\varphi,\theta,\eta)d\eta\label{dcp Sr}
	\end{align}
where $\mathcal{K}_{1,b}$, $\mathcal{K}_{2,b}$ are given by \eqref{mathcalK1b}-\eqref{mathcalK2b} and  the kernels $\mathbb{K}_{r,1}(b,\varphi,\theta,\eta), \mathscr{K}_{r,1}(b,\varphi,\theta,\eta)\in\mathbb{R}$ satisfy the following symmetry property:  if $r(-\varphi,-\theta)=r(\varphi,\theta)$ then
	\begin{align}\label{symmetry kernel K1}
		\mathbb{K}_{r,1}(b,-\varphi,-\theta,-\eta)&=\mathbb{K}_{r,1}(b,\varphi,\theta,\eta),\\
		\label{sym scrK}
	\mathscr{K}_{r,1}(b,-\varphi,-\theta,-\eta)&=\mathscr{K}_{r,1}(b,\varphi,\theta,\eta)
	\end{align}
	and the following estimates
	\begin{align}
		\|\mathbb{K}_{r,1}\|_{q,H_{\varphi,\theta,\eta}^{s}}^{\gamma,\mathcal{O}}&\lesssim\| r\|_{q,s+1}^{\gamma,\mathcal{O}}, \label{estimate kernel mathbbK1}\\
		\|\mathscr{K}_{r,1}\|_{q,H_{\varphi,\theta,\eta}^s}^{\gamma,\mathcal{O}}&\lesssim\|r\|_{q,s}^{\gamma,\mathcal{O}}.\label{e-scrK}
	\end{align}
	Moreover, 
\begin{align}
		\|\partial_{\theta}\mathcal{K}_{1,b}\ast \rho\|_{q,s}^{\gamma,\mathcal{O}}&\lesssim\|\rho\|_{q,s}^{\gamma,\mathcal{O}},\label{e-dKb}\\
		\|\partial_{\theta}\mathcal{K}_{2,b}\ast \rho\|_{q,s}^{\gamma,\mathcal{O}}&\lesssim\|\rho\|_{q,s}^{\gamma,\mathcal{O}},\label{e-dKb2}\\
	\|\partial_{\theta}\mathbf{L}_{r,1}\rho\|_{q,s}^{\gamma,\mathcal{O}}&\lesssim \|r\|_{q,s_0+2}^{\gamma,\mathcal{O}}\|\rho\|_{q,s}^{\gamma,\mathcal{O}}+\|r\|_{q,s+2}^{\gamma,\mathcal{O}}\|\rho\|_{q,s_{0}}^{\gamma,\mathcal{O}},\label{e-dLr1}\\
	\|\partial_{\theta}\mathbf{S}_{r,1}\rho\|_{q,s}^{\gamma,\mathcal{O}}&\lesssim\|r\|_{q,s_0+1}^{\gamma,\mathcal{O}} \|\rho\|_{q,s}^{\gamma,\mathcal{O}}+\|r\|_{q,s+1}^{\gamma,\mathcal{O}}\|\rho\|_{q,s_{0}}^{\gamma,\mathcal{O}}.\label{e-dSr1}
\end{align}
\end{lem}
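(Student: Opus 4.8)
The plan is to start from the explicit integral expressions \eqref{mathbfLr} and \eqref{mathbfSr} for $\mathbf{L}_r$ and $\mathbf{S}_r$, isolate the constant-coefficient (convolution) parts by subtracting and adding the equilibrium kernels $\mathcal{K}_{1,b}$, $\mathcal{K}_{2,b}$, and then show that the remainders $\mathbf{L}_{r,1}$, $\mathbf{S}_{r,1}$ are genuine integral operators with smooth kernels enjoying the claimed tame bounds. Concretely, for $\mathbf{L}_r$ one writes
$$\log\big(A_r(b,\varphi,\theta,\eta)\big)=\log\big(2b\big|\sin(\tfrac{\eta-\theta}{2})\big|\big)+\log\left(\frac{A_r(b,\varphi,\theta,\eta)}{2b|\sin(\tfrac{\eta-\theta}{2})|}\right),$$
using \eqref{A0 R0}; the first term produces $\mathcal{K}_{1,b}\ast\cdot$ after differentiation in $\theta$, and the second term defines a new kernel whose regularity we must control. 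The ratio inside the second logarithm can be rewritten, via the second line of \eqref{formula}, as $\big(R^2(b,\varphi,\theta)R^2(b,\varphi,\eta)+\text{(smooth remainder)}\big)$ divided by $b^2$; the crucial point is that the singularity $|\sin(\tfrac{\eta-\theta}{2})|^{-2}$ in the difference quotient $\big(R(b,\varphi,\theta)-R(b,\varphi,\eta)\big)^2/\sin^2(\tfrac{\eta-\theta}{2})$ is resolved using Lemma \ref{cheater lemma}, which bounds exactly such quotients in the weighted space $W^{q,\infty,\gamma}(\mathcal{O},H^s_{\varphi,\theta,\eta})$. Thus the new kernel is a smooth function of $\varphi,\theta,\eta$ and of the quantities $R(b,\varphi,\theta),R(b,\varphi,\eta)$ and the regularized difference quotient; applying the composition law Lemma \ref{Lem-lawprod}(v) (with $f$ a smooth function of these building blocks, noting $R^2=b^2+2r$ is smooth in $r$ for $\|r\|$ small), together with the product laws Lemma \ref{Lem-lawprod}(iv), yields \eqref{estimate kernel mathbbK1}. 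Differentiating in $\theta$ and invoking Lemma \ref{lemma symmetry and reversibility}(ii) turns the kernel bound into the operator bound \eqref{e-dLr1}; the loss of two derivatives there ($s+2$ rather than $s+1$) comes from the $\partial_\theta$ applied to the kernel plus the $s_0$-shift in the kernel-to-operator estimate, combined via interpolation Lemma \ref{Lem-lawprod}(iii) to split into the two terms displayed.

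The treatment of $\mathbf{S}_r$ is analogous but easier, since $B_r$ is never singular: from \eqref{Br} and \eqref{A0 R0} one has $B_r(b,\varphi,\theta,\eta)=|1-b^2e^{\ii(\eta-\theta)}|+\big(B_r-B_0\big)$, and $B_r$ stays bounded away from zero (as $R$ is close to $b<1$, so $RR'<1$), hence $\log B_r$ is a smooth function of $\varphi,\theta,\eta,R(b,\varphi,\theta),R(b,\varphi,\eta)$ with no difference quotient needed. This gives \eqref{e-scrK} directly from the composition and product laws, and then \eqref{e-dSr1} by $\partial_\theta$ and Lemma \ref{lemma symmetry and reversibility}(ii); here only one derivative is lost because no difference-quotient regularization (which costs an extra derivative) is involved. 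The symmetry properties \eqref{symmetry kernel K1} and \eqref{sym scrK} follow by the change of variables $(\varphi,\theta,\eta)\mapsto(-\varphi,-\theta,-\eta)$: under the hypothesis $r(-\varphi,-\theta)=r(\varphi,\theta)$ one gets $R(b,-\varphi,-\theta)=R(b,\varphi,\theta)$, and $A_r,B_r$ are even under the simultaneous sign change $(\theta,\eta)\mapsto(-\theta,-\eta)$ because $\sin(\eta-\theta)$ and $\cos(\eta-\theta)$ appear only through $\cos$ or squared; the sign factor from $\partial_\theta$ is absorbed into the operator rather than the kernel. Finally \eqref{e-dKb} and \eqref{e-dKb2} are the statements that convolution with $\partial_\theta\mathcal{K}_{1,b}$ (the periodic Hilbert transform, by the remark after the kernels) and with $\partial_\theta\mathcal{K}_{2,b}$ (which by \eqref{fourierk2b} is the Fourier multiplier $-\tfrac{1}{2}\,\mathrm{sgn}(j)\,b^{2|j|}$, bounded uniformly in $j$) are bounded on $H^s$; the parameter dependence on $b$ is smooth and the $b$-derivatives only improve the decay, so the weighted estimate is immediate.

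The main obstacle is the singular part of $\mathbf{L}_r$: one must carefully extract from $\log A_r$ the exact logarithmic singularity $\log|\sin(\tfrac{\eta-\theta}{2})|$, verify that what remains is a smooth function of $\theta,\eta$ with the singularity fully cancelled, and track the derivative count precisely — the difference quotient $(R(b,\varphi,\theta)-R(b,\varphi,\eta))/\sin(\tfrac{\eta-\theta}{2})$ costs one derivative when estimated by Lemma \ref{cheater lemma}, and this is exactly what forces the $s+2$ (rather than $s+1$) loss in \eqref{e-dLr1}. Everything else is a mechanical application of the product, composition, and interpolation estimates of Lemma \ref{Lem-lawprod} together with the operator-kernel correspondence of Lemma \ref{lemma symmetry and reversibility}, and the arguments closely parallel \cite[Lem.~5.1]{HR21}.
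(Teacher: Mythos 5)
Your proposal follows essentially the same route as the paper's proof: factor out the equilibrium kernels by writing $A_r=2b\big|\sin\big(\tfrac{\eta-\theta}{2}\big)\big|\,v_{r,1}$ (with the difference quotient controlled by Lemma \ref{cheater lemma}) and $B_r^2=B_0^2(1+P_r)$, estimate the remainder kernels $\log v_{r,1}$ and $\tfrac12\log(1+P_r)$ via the product and composition laws of Lemma \ref{Lem-lawprod}, pass from kernel to operator bounds through Lemma \ref{lemma symmetry and reversibility}(ii), and treat the equilibrium parts as explicit Fourier multipliers via \eqref{fourierk1b}--\eqref{fourierk2b}. The only slips are cosmetic (the ratio inside the logarithm involves $R(b,\varphi,\theta)R(b,\varphi,\eta)/b^2$, not $R^2R^2/b^2$, and no interpolation is needed for the two-term tame form of \eqref{e-dLr1}), so the argument is correct and matches the paper.
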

\begin{proof}
 According to \eqref{formula} we may write
	\begin{align}\label{expression of Ar with vr1}
		\nonumber A_{r}(\varphi,\theta,\eta)&=2b\left|\sin\left(\tfrac{\eta-\theta}{2}\right)\right|\bigg(\bigg(\frac{R(b,\varphi,\eta)-R(b,\varphi,\theta)}{2b\sin\big(\tfrac{\eta-\theta}{2}\big)}\bigg)^{2}+\frac{1}{b^2} R(b,\varphi,\eta)R(b,\varphi,\theta)\bigg)^{\frac{1}{2}}\\
		&:=2b\left|\sin\left(\tfrac{\eta-\theta}{2}\right)\right|v_{r,1}(b,\varphi,\theta,\eta).
	\end{align}
	Notice that $v_{r,1}$ is smooth when $r$ is smooth and small enough, and $v_{0,1}=1.$ More precisely, by using  Lemma \ref{Lem-lawprod}-$(iv)$-$(v)$ combined with Lemma \ref{cheater lemma}-$(ii)$ and the smallness condition on $r$, we get
	\begin{align}\label{estimate vr1}
		\| v_{r,1}-1\|_{q,H_{\varphi,\theta,\eta}^s}^{\gamma,\mathcal{O}}&\lesssim \| r\|_{q,s+1}^{\gamma,\mathcal{O}}.
	\end{align}
Using the morphism property of the logarithm, we can write
\begin{align}
	\log(A_{r}(b,\varphi,\theta,\eta))&=\log\big(2b\big)+\tfrac{1}{2}\log\left(\sin^{2}\left(\tfrac{\eta-\theta}{2}\right)\right)+\log\left(v_{r,1}(b,\varphi,\theta,\eta)\right)\nonumber\\
	&:=\log\big(2b\big)+\mathcal{K}_{1,b}(\eta-\theta)+\mathbb{K}_{r,1}(b,\varphi,\theta,\eta)\label{dcp kLr}
\end{align}
and \eqref{symmetry kernel K1} immediately follows. Moreover,    \eqref{mathbfLr} and \eqref{dcp kLr} give  \eqref{dcp Lr}. 
Applying Lemma \ref{Lem-lawprod}-(v) together with \eqref{estimate vr1} and the smallness condition on $r$, we obtain \eqref{estimate kernel mathbbK1}. Using \eqref{estimate kernel mathbbK1}, Lemma \ref{lemma symmetry and reversibility}-$(ii)$ and the smallness property on $r$, we get \eqref{e-dLr1}.
	Similarly, from \eqref{formulb} we can link $B_r^2$ to $B_0^2$ by
\begin{align*}
B_r^{2}(b,\varphi,\theta,\eta)&=B_0^{2}(b,\varphi,\theta,\eta)+\Big(R^2(b,\varphi,\theta)R^2(b,\varphi,\eta)-b^4\Big)-2\Big(R(b,\varphi,\theta)R(b,\varphi,\eta)-b^2\Big)\cos(\eta-\theta)\\
&=B_0^{2}(b,\varphi,\theta,\eta)\big(1+P_r(b,\varphi,\theta,\eta)\big)
\end{align*}
with $$P_r(b,\varphi,\theta,\eta):=\tfrac{\big(R^2(b,\varphi,\theta)R^2(b,\varphi,\eta)-b^4\big)-2\big(R(b,\varphi,\theta)R(b,\varphi,\eta)-b^2\big)\cos(\eta-\theta)}{1+b^4-2b^2\cos(\eta-\theta)}.$$
so that we can write
\begin{align}\label{scrK}
\log\big(B_{r}(b,\varphi,\theta,\eta)\big)&=\log\big(B_0(b,\varphi,\theta,\eta)\big)+\tfrac12\log\big(1+P_r(b,\varphi,\theta,\eta)\big) \nonumber\\ 
	&:=\mathcal{K}_{2,b}(\eta-\theta)+\mathscr{K}_{r,1}(b,\varphi,\theta,\eta)
\end{align}
and \eqref{sym scrK} immediately follows. Moreover, \eqref{mathbfSr} and \eqref{scrK} give \eqref{dcp Sr}. 
Notice that that $P_r$ is smooth with respect to each variable and with respect to $r$ with $P_0=0$.  We conclude by Lemma \ref{Lem-lawprod}-(iv)-(v) and the smallness property on $r$ that
$$\|P_r\|_{q,H_{\varphi,\theta,\eta}^s}^{\gamma,\mathcal{O}}\lesssim\|r\|_{q,s}^{\gamma,\mathcal{O}}.$$
As a consequence, composition laws in Lemma \ref{Lem-lawprod} together with the smallness property on $r$ imply \eqref{e-scrK}.
Then, using \eqref{e-scrK}, Lemma \ref{lemma symmetry and reversibility}-$(ii)$ and the smallness property on $r$, we get \eqref{e-dSr1}. The estimates \eqref{e-dKb}-\eqref{e-dKb2} can be obtained using \eqref{fourierk1b}, \eqref{fourierk2b} and Leibniz rule combined with the following estimate
	$$\sup_{n\in\mathbb{N}}\|b\mapsto b^n\|_{q}^{\gamma,\mathcal{O}}\lesssim 1.$$ 
This ends the proof of Lemma \ref{lemma estimates L S}.
\end{proof}

We now provide tame estimates for  the vector field $X_P$  defined in \eqref{def XP}.
\begin{lem}\label{lemma estimates vector field XP}
	{Let $(\gamma,q,s_{0},s)$ satisfy \eqref{initial parameter condition}. There exists $\varepsilon_{0}\in(0,1]$ such that if
		$$\| r\|_{q,s_{0}+2}^{\gamma,\mathcal{O}}\leqslant\varepsilon_{0},$$
		then the vector field $X_{P}$, defined in \eqref{def XP} satisfies the following estimates
		\begin{enumerate}[label=(\roman*)]
			\item $\| X_{P}(r)\|_{q,s}^{\gamma,\mathcal{O}}\lesssim \| r\|_{q,s+2}^{\gamma,\mathcal{O}}\| r\|_{q,s_0+1}^{\gamma,\mathcal{O}}.$
			\item $\| d_{r}X_{P}(r)[\rho]\|_{q,s}^{\gamma,\mathcal{O}}\lesssim\|\rho\|_{q,s+2}^{\gamma,\mathcal{O}}\| r\|_{q,s_0+1}^{\gamma,\mathcal{O}}+\| r\|_{q,s+2}^{\gamma,\mathcal{O}}\|\rho\|_{q,s_{0}+1}^{\gamma,\mathcal{O}}.$
			\item 
			$\| d_r^{2}X_{P}(r)[\rho_{1},\rho_{2}]\|_{q,s}^{\gamma,\mathcal{O}}\lesssim\|\rho_{1}\|_{q,s_{0}+1}^{\gamma,\mathcal{O}}\|\rho_{2}\|_{q,s+2}^{\gamma,\mathcal{O}}+\big(\|\rho_{1}\|_{q,s+2}^{\gamma,\mathcal{O}}+\| r\|_{q,s+2}^{\gamma,\mathcal{O}}\|\rho_{1}\|_{q,s_{0}+1}^{\gamma,\mathcal{O}}\big)\|\rho_{2}\|_{q,s_{0}+1}^{\gamma,\mathcal{O}}$.
	\end{enumerate}}
\end{lem}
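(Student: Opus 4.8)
The crucial structural fact is that the vector field $X_P$ in \eqref{def XP} vanishes to \emph{second order} at the equilibrium. On one hand, since $r\equiv 0$ (the Rankine vortex $b\mathbb{D}$) is a stationary solution of \eqref{ED eq r}, we have $F_b[0]=0$, hence $X_P(0)=\tfrac12\partial_\theta 0+\partial_\theta\mathcal{K}_b\ast 0-F_b[0]=0$. On the other hand, by Lemma \ref{lemma general form of the linearized operator} together with \eqref{V0}--\eqref{fourierk2b}, the linearization of $F_b$ at the equilibrium is $d_rF_b(0)[\rho]=\partial_\theta\big(\tfrac12\rho+\mathcal{K}_{1,b}\ast\rho-\mathcal{K}_{2,b}\ast\rho\big)=\tfrac12\partial_\theta\rho+\partial_\theta\mathcal{K}_b\ast\rho$, which is exactly the affine part of $X_P$; hence $d_rX_P(0)=0$ and $d_r^2X_P(r)=-d_r^2F_b(r)$. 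Since $X_P$ is smooth near $0$ in the relevant spaces (all square roots and logarithms are smooth for $\|r\|$ small because $R\geqslant b_0>0$), Taylor's formula with integral remainder yields, for $\|r\|_{q,s_0+2}^{\gamma,\mathcal{O}}\leqslant\varepsilon_0$,
\begin{equation*}
X_P(r)=\int_0^1(1-t)\,d_r^2X_P(tr)[r,r]\,dt,\qquad d_rX_P(r)[\rho]=\int_0^1 d_r^2X_P(tr)[\rho,r]\,dt.
\end{equation*}
Since $\|tr\|_{q,s_0+2}^{\gamma,\mathcal{O}}\leqslant\|r\|_{q,s_0+2}^{\gamma,\mathcal{O}}\leqslant\varepsilon_0$ for $t\in[0,1]$, items $(i)$ and $(ii)$ follow by integrating the bound $(iii)$ along this path, the cubic cross terms being absorbed thanks to $\|r\|_{q,s_0+1}^{\gamma,\mathcal{O}}\leqslant\varepsilon_0\leqslant 1$. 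So the whole matter reduces to $(iii)$.

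\textbf{Proof of $(iii)$.} Differentiating once more the identity $d_rF_b(r)[\rho_2]=\partial_\theta\big(V_r\rho_2+\mathbf{L}_r(\rho_2)-\mathbf{S}_r(\rho_2)\big)$ of Lemma \ref{lemma general form of the linearized operator} gives
\begin{equation*}
d_r^2X_P(r)[\rho_1,\rho_2]=-\partial_\theta\Big(\big(d_rV_r[\rho_1]\big)\rho_2+\big(d_r\mathbf{L}_r[\rho_1]\big)(\rho_2)-\big(d_r\mathbf{S}_r[\rho_1]\big)(\rho_2)\Big),
\end{equation*}
where $d_r\mathbf{L}_r[\rho_1]$, resp. $d_r\mathbf{S}_r[\rho_1]$, is the integral operator with kernel $d_r(\log A_r)[\rho_1]$, resp. $d_r(\log B_r)[\rho_1]$. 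Now exploit the decompositions $\log A_r=\log(2b)+\mathcal{K}_{1,b}(\eta-\theta)+\mathbb{K}_{r,1}$ and $\log B_r=\mathcal{K}_{2,b}(\eta-\theta)+\mathscr{K}_{r,1}$ from \eqref{dcp kLr} and \eqref{scrK}: the $r$-independent pieces $\mathcal{K}_{1,b},\mathcal{K}_{2,b}$ drop out under $d_r$, so $d_r\mathbf{L}_r[\rho_1]$, $d_r\mathbf{S}_r[\rho_1]$ have the \emph{smooth} kernels $d_r\mathbb{K}_{r,1}[\rho_1]=d_r(\log v_{r,1})[\rho_1]$ and $\tfrac12 d_r(\log(1+P_r))[\rho_1]$. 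Differentiating the explicit formula \eqref{expression of Ar with vr1} (and the one for $P_r$), and using the composition and product laws of Lemma \ref{Lem-lawprod}, the incremental-quotient estimate of Lemma \ref{cheater lemma}$(ii)$ (applied with $f$ equal to $R$, $\rho_1/R$, $r$ or $\rho_1$) and the smallness of $r$, one gets tame kernel bounds of the type $\|d_r\mathbb{K}_{r,1}[\rho_1]\|_{q,H_{\varphi,\theta,\eta}^{s}}^{\gamma,\mathcal{O}}\lesssim\|\rho_1\|_{q,s+1}^{\gamma,\mathcal{O}}+\|r\|_{q,s+1}^{\gamma,\mathcal{O}}\|\rho_1\|_{q,s_0+1}^{\gamma,\mathcal{O}}$ (and similarly for $d_r\mathscr{K}_{r,1}[\rho_1]$ with $s$ in place of $s+1$), together with the analogous estimate for $d_rV_r[\rho_1]$ read off from \eqref{Vr}. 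Inserting these kernel bounds into the operator estimate of Lemma \ref{lemma symmetry and reversibility}$(ii)$ (losing one derivative for the outer $\partial_\theta$) and combining with the tame product law of Lemma \ref{Lem-lawprod}$(iv)$ for the term $\partial_\theta\big((d_rV_r[\rho_1])\rho_2\big)$ bounds $\|d_r^2X_P(r)[\rho_1,\rho_2]\|_{q,s}^{\gamma,\mathcal{O}}$ by a sum of products in $\rho_1,\rho_2$ (and one trilinear term in $r,\rho_1,\rho_2$) of total regularity $2s+\mathrm{const}$; a final application of the interpolation inequality of Lemma \ref{Lem-lawprod}$(iii)$, together with $\|r\|_{q,s_0+2}^{\gamma,\mathcal{O}}\leqslant\varepsilon_0$ to absorb the lower-order cubic pieces, rearranges this sum into exactly the right-hand side of $(iii)$.

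\textbf{Main obstacle.} The delicate point is not the quadratic-vanishing reduction but the second $r$-differentiation of the nonlocal operators with logarithmic kernels, together with the need to keep the regularity bookkeeping sharp enough that, after interpolation, the two lost derivatives land precisely as in the statement. What makes this tractable is the decomposition \eqref{dcp kLr}, \eqref{scrK}: the only genuine singularity sits in the $r$-independent kernel $\mathcal{K}_{1,b}$ (while $\mathcal{K}_{2,b}$ is smooth for $b\in(b_0,b_1)\subset(0,1)$), so differentiation in $r$ produces only smooth factors, and the sole sources of derivative loss are the $\partial_\eta$ hitting $R(b,\varphi,\eta)$ in $V_r$, the incremental quotients inside $v_{r,1}$ — both tamed by Lemma \ref{cheater lemma} — and the outermost $\partial_\theta$. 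The realness and reversibility-preserving character of all operators is inherited throughout from the symmetry rules in Lemmata \ref{lemma estimates L S} and \ref{lemma symmetry and reversibility}.
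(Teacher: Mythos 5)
Your proposal is correct and follows essentially the same route as the paper: prove $(iii)$ first by differentiating the linearized operator of Lemma \ref{lemma general form of the linearized operator} once more in $r$, using the decompositions \eqref{dcp Lr}--\eqref{dcp Sr} so that only the smooth kernels $\mathbb{K}_{r,1}$, $\mathscr{K}_{r,1}$ (and $V_r$) get differentiated, then estimate via Lemmata \ref{Lem-lawprod}, \ref{cheater lemma} and \ref{lemma symmetry and reversibility}, and finally deduce $(ii)$ and $(i)$ by Taylor's formula from $X_P(0)=0$, $d_rX_P(0)=0$. The only cosmetic deviation is your invocation of interpolation at the end, where the paper simply uses the tame product laws and Sobolev embeddings; this does not affect the validity of the argument.
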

\begin{proof}
	We shall follow the proof developed in \cite[Lem 10.2]{BHM21}. We first prove the estimate $(iii)$ and
the estimates $(ii)$ and $(i)$ then follow by Taylor formula since $d_{r}X_{P}(0)=0$ and $X_{P}(0)=0$.
Recall from Lemma \ref{lemma general form of the linearized operator}, \eqref{dcp Lr} and \eqref{dcp Sr}  that
	$$
	d_rX_H(r)[\rho]=-d_{r}F_{b}(r)[\rho]=-\partial_{\theta}\left(V_{r}\rho\right)-\partial_{\theta}\mathcal{K}_{b}\ast \rho-\partial_{\theta}\mathbf{L}_{r,1}\rho+\partial_{\theta}\mathbf{S}_{r,1}\rho.
	$$
According to \eqref{def XP},  $P$ is is the Hamiltonian  generated by  higher order more than cubic  terms $H_{\geqslant 3}$. Then differentiating with respect to $r$ the last expression we obtain
\begin{equation}\label{Second-diff}
		d_{r}^{2}X_{P}(r)[\rho_{1},\rho_{2}]=-\partial_{\theta}\left(\big(d_{r}V_{r}[\rho_{2}]\big)\rho_{1}\right)-\partial_{\theta}\left(d_{r}\mathbf{L}_{r,1}[\rho_{2}]\rho_{1}\right)+\partial_{\theta}\left(d_{r}\mathbf{S}_{r,1}[\rho_{2}]\rho_{1}\right).
	\end{equation}
	Recall, from \eqref{dcp Lr} and \eqref{dcp kLr}, that 
	\begin{equation}\label{id-L1r}
		\mathbf{L}_{r,1}(\rho)(b,\varphi,\theta)=\int_{\mathbb{T}}\rho(\varphi,\eta)\log(v_{r,1}(b,\varphi,\theta,\eta))d\eta.
	\end{equation} 
	Hence by differentiation  we obtain
	\begin{align}\label{definition of mathbbK1M}
		d_{r}\mathbf{L}_{r,1}(r)[\rho_{2}]\rho_{1}(b,\varphi,\theta)
		&=\frac12\int_{\mathbb{T}}\rho_1(\varphi,\eta)\tfrac{\big(d_{r}v_{r,1}^2\big)[\rho_{2}](b,\varphi,\theta,\eta)}{v_{r,1}^2(b,\varphi,\theta,\eta)}d\eta.
	\end{align} 
	Coming back to \eqref{expression of Ar with vr1} it is obvious that the dependance in $r$ of the functional $v_{r,1}^2$ is smooth. Straightforward calculus leads to   
	\begin{align*}
		\frac12 d_{r}v_{r,1}^2(r)[\rho_2](b,\varphi,\theta,\eta)&=\tfrac{R(b,\varphi,\theta)-R(b,\varphi,\eta)}{\sin^2\left(\frac{\eta-\theta}{2}\right)}\left(\tfrac{\rho_2(\varphi,\theta)}{R(b,\varphi,\theta)}-\tfrac{\rho_2(\varphi,\eta)}{R(b,\varphi,\eta)}\right)+\tfrac{\rho_2(\varphi,\theta)R^{2}(b,\varphi,\eta)+\rho_2(\varphi,\eta)R^{2}(b,\varphi,\theta)}{R(b,\varphi,\theta)R(b,\varphi,\eta)}.
	\end{align*}
	Using \eqref{estimate vr1} combined with the  law products stated in Lemma \ref{Lem-lawprod}, Lemma \ref{cheater lemma}-$(ii)$ and the smallness condition of Lemma \ref{lemma estimates vector field XP} we find that
	\begin{equation}\label{estimate vr1PP}
		\| d_{r}v_{r,1}^2(r)[\rho_2]\|_{q,H_{\varphi,\theta,\eta}^s}^{\gamma,\mathcal{O}}\lesssim \| \rho_2\|_{q,s}^{\gamma,\mathcal{O}}+\| r\|_{q,s+1}^{\gamma,\mathcal{O}}\| \rho_2\|_{q,s_0}^{\gamma,\mathcal{O}}.
	\end{equation}
	According to \eqref{estimate vr1PP},  \eqref{definition of mathbbK1M} and using Lemma \ref{Lem-lawprod}-$(iv)$-$(v)$, Lemma \ref{lemma symmetry and reversibility}-$(ii)$ and the smallness condition we obtain,
	\begin{align}\label{e-drL1}
		\nonumber \|\partial_{\theta}d_{r}\mathbf{L}_{r,1}(r)[\rho_{2}]\rho_{1}\|_{q,s}^{\gamma,\mathcal{O}}\lesssim&\|d_{r}\mathbf{L}_{r,1}(r)[\rho_{2}]\rho_{1}\|_{q,s+1}^{\gamma,\mathcal{O}}\\
		\lesssim&\|\rho_{1}\|_{q,s+1}^{\gamma,\mathcal{O}}\|\rho_{2}\|_{q,s_{0}+1}^{\gamma,\mathcal{O}}+\|\rho_{1}\|_{q,s_{0}}^{\gamma,\mathcal{O}}\big(\|\rho_{2}\|_{q,s+1}^{\gamma,\mathcal{O}}+\| r\|_{q,s+2}^{\gamma,\mathcal{O}}\|\rho_{2}\|_{q,s_{0}+1}^{\gamma,\mathcal{O}}\big).
	\end{align}
Now we shall move to the estimate of $d_{r}\mathbf{S}_{r,1}(r)[\rho_{2}]\rho_{1}(b,\varphi,\theta).$ 
By differentiating with respect to $r$ in \eqref{dcp Sr} and \eqref{scrK}, we obtain
\begin{align*}
	 d_{r}\mathbf{S}_{r,1}(r)[\rho_{2}]\rho_{1}(b,\varphi,\theta)&=\frac12\int_{\mathbb{T}}\rho_1(\varphi,\eta)\tfrac{\big(d_{r}B_{r}^2\big)[\rho_{2}](b,\varphi,\theta,\eta)}{B_{r}^2(b,\varphi,\theta,\eta)}d\eta.
\end{align*}
	In view of \eqref{formulb}, direct computations yield
	\begin{align*}
		\frac{1}{2}d_rB_r^2(r)[\rho_2](b,\varphi,\theta,\eta)&=\rho_2(\varphi,\theta)R^2(b,\varphi,\eta)+\rho_2(\varphi,\eta)R^2(b,\varphi,\theta)\\
		&\quad-\Big(\rho_2(\varphi,\theta)\tfrac{R(b,\varphi,\eta)}{R(b,\varphi,\theta)}+\rho_2(\varphi,\eta)\tfrac{R(b,\varphi,\theta)}{R(b,\varphi,\eta)}\Big)\cos(\eta-\theta).
	\end{align*}
Then, Lemma \ref{Lem-lawprod}-$(iv)$-$(v)$  and the smallness condition on $r$ imply
\begin{align*}
	\|d_rB_r^2(r)[\rho_2]\|_{q,H_{\varphi,\theta,\eta}^s}^{\gamma,\mathcal{O}}\lesssim\|\rho_2\|_{q,s}^{\gamma,\mathcal{O}}+\|r\|_{q,s}^{\gamma,\mathcal{O}}\|\rho_2\|_{q,s_0}^{\gamma,\mathcal{O}}.
\end{align*}
It follows from Lemma \ref{lemma symmetry and reversibility}-$(ii)$, that
\begin{align}\label{e-drS1}
	\nonumber \|\partial_{\theta}d_{r}\mathbf{S}_{r,1}(r)[\rho_{2}]\rho_{1}\|_{q,s}^{\gamma,\mathcal{O}}\lesssim&\|d_{r}\mathbf{S}_{r,1}(r)[\rho_{2}]\rho_{1}\|_{q,s+1}^{\gamma,\mathcal{O}}\\
	\lesssim&\|\rho_{1}\|_{q,s+1}^{\gamma,\mathcal{O}}\|\rho_{2}\|_{q,s_{0}+1}^{\gamma,\mathcal{O}}+\|\rho_{1}\|_{q,s_{0}}^{\gamma,\mathcal{O}}\big(\|\rho_{2}\|_{q,s+1}^{\gamma,\mathcal{O}}+\| r\|_{q,s+1}^{\gamma,\mathcal{O}}\|\rho_{2}\|_{q,s_{0}+1}^{\gamma,\mathcal{O}}\big).
\end{align}
Next we shall move to the estimate of $d_rV_r[\rho_2]$. From Lemma \ref{lemma general form of the linearized operator}, we can write
\begin{align*}
V_r=V_r^0+V_r^1+V_r^2,\quad \textnormal{with}\quad	V_r^0(b,\varphi,\theta)&:=-\tfrac{1}{2}\int_{\mathbb{T}}\tfrac{R^{2}(b,\varphi,\eta)}{R^{2}(b,\varphi,\theta)}d\eta,\\
	V_r^1(b,\varphi,\theta)&:=-\tfrac{1}{R(b,\varphi,\theta)}\int_{\mathbb{T}}\log\big(A_{r}(b,\varphi,\theta,\eta)\big)\partial_{\eta}\big(R(b,\varphi,\eta)\sin(\eta-\theta)\big)d\eta,\\
	V_r^2(b,\varphi,\theta)&:=-\tfrac{1}{R^{3}(b,\varphi,\theta)}\int_{\mathbb{T}}\log\big(B_{r}(b,\varphi,\theta,\eta)\big)\partial_{\eta}\big(R(b,\varphi,\eta)\sin(\eta-\theta)\big)d\eta.
\end{align*}
Differentiating $V_r^0$ with respect to $r$ in the direction $\rho_2$ yields
$$d_{r}V_{r}^0(r)[\rho_{2}](\theta)=-\int_{\mathbb{T}}\tfrac{\rho_2(\varphi,\theta)R^2(b,\varphi,\eta)-\rho_2(\varphi,\eta)R^2(b,\varphi,\theta)}{R^4(b,\varphi, \theta)}d\eta.$$
Law products in Lemma \ref{Lem-lawprod} and the smallness condition in $r$ then imply
\begin{equation}\label{e-dVr0}
	\|d_{r}V_{r}^0(r)[\rho_{2}]\|_{q,s}^{\gamma,\mathcal{O}}\lesssim\|\rho_2\|_{q,s}^{\gamma,\mathcal{O}}+\|r\|_{q,s}^{\gamma,\mathcal{O}}\|\rho_2\|_{q,s_0}^{\gamma,\mathcal{O}}.
\end{equation}
Differentiating $V_r^1$ with respect to $r$ in the direction $\rho_2$  gives
\begin{align*}
	d_{r}V_{r}^1(r)[\rho_{2}](\theta)&=-\int_{\mathbb{T}}\log\left( A_{r}(b,\varphi,\theta,\eta)\right)\partial_{\eta}d_rf_r[\rho_2](b,\varphi,\theta,\eta)d\eta\\
	&\quad- \frac12 \int_{\mathbb{T}}\tfrac{\big(d_{r}v_{r,1}^2\big)[\rho_{2}](b,\varphi,\theta,\eta)}{v_{r,1}^2(b,\varphi,\theta,\eta)} \partial_{\eta}f_r(b,\varphi,\theta,\eta)d\eta
	\\
	&:=\mathcal{I}_1(\theta)+\mathcal{I}_2(\theta),
\end{align*}
with
\begin{align*}
f_r(b,\varphi,\theta,\eta)&:=\tfrac{R(b,\varphi,\eta)}{R(b,\varphi,\theta)}\sin(\eta-\theta).
\end{align*}
Straightforward computations give
$$d_rf_r[\rho_2](b,\varphi,\theta)=\tfrac{\rho_{2}(\varphi,\eta)R^2(b,\varphi,\theta)-\rho_2(\varphi, \theta)R^2(b,\varphi,\eta)}{R^3(b,\varphi,\theta)R(b,\varphi,\eta)}\sin(\eta-\theta).$$
Then,  by law products and composition laws in Lemma \ref{Lem-lawprod} we immediately deduce that
	\begin{align}\label{estimate partialthetaeta}
	\| \partial_{\eta}{f}_{r}\|_{q,H^s_{\varphi,\theta,\eta}}^{\gamma,\mathcal{O}}&\lesssim 1+\| r\|_{q,s+1}^{\gamma,\mathcal{O}},\\
	\| \partial_{\eta}d_rf_r[\rho_2]\|_{q,H^s_{\varphi,\theta,\eta}}^{\gamma,\mathcal{O}}&\lesssim \big(1+\| r\|_{q,s_0+1}^{\gamma,\mathcal{O}}\big)\| \rho_2\|_{q,s+1}^{\gamma,\mathcal{O}}+\big(1+\| r\|_{q,s+1}^{\gamma,\mathcal{O}}\big)\| \rho_2\|_{q,s_0+1}^{\gamma,\mathcal{O}}.\label{estimate partialthetaeta2}
	\end{align}
The following estimate on $\mathcal{I}_2$ can be obtained combining \eqref{estimate vr1PP}, \eqref{estimate vr1} and \eqref{estimate partialthetaeta} together with Lemma \ref{Lem-lawprod}-$(iv)$-$(v)$ and the smallness property on $r$.
\begin{equation}\label{estimateFlambdaP-0}
	\| \mathcal{I}_2\|_{q,s}^{\gamma,\mathcal{O}}\lesssim  \| \rho_2\|_{q,s}^{\gamma,\mathcal{O}}+\| r\|_{q,s+1}^{\gamma,\mathcal{O}}\| \rho_2\|_{q,s_0}^{\gamma,\mathcal{O}}.
\end{equation}
As for   $\mathcal{I}_1$ we argue in a similar way to Lemma \ref{lemma estimates L S}   to get
\begin{equation}\label{estimateFlambdaP-1}
	\| \mathcal{I}_1\|_{q,s}^{\gamma,\mathcal{O}}\lesssim  \| \rho_2\|_{q,s+1}^{\gamma,\mathcal{O}}+\| r\|_{q,s+1}^{\gamma,\mathcal{O}}\| \rho_2\|_{q,s_0+1}^{\gamma,\mathcal{O}}.
\end{equation}
Putting together \eqref{estimateFlambdaP-0} and  \eqref{estimateFlambdaP-1}  yields
\begin{equation}\label{e-dVr1}
	\|d_{r}V_{r}^1(r)[\rho_{2}]\|_{q,s}^{\gamma,\mathcal{O}}\lesssim  \| \rho_2\|_{q,s+1}^{\gamma,\mathcal{O}}+\| r\|_{q,s+1}^{\gamma,\mathcal{O}}\| \rho_2\|_{q,s_0+1}^{\gamma,\mathcal{O}}.
\end{equation}
Differentiating $V_r^2$ with respect to $r$ in the direction $\rho_2$ yields
\begin{align*}
	d_{r}V_{r}^2(r)[\rho_{2}](b,\varphi,\theta)&=-\bigintsss_{\mathbb{T}}\log\big(B_r(b,\varphi,\theta,\eta)\big)\partial_{\eta}\left(\tfrac{\rho_{2}(\varphi,\eta)R^2(b,\varphi,\theta)-3\rho_{2}(\varphi,\theta)R^2(b,\varphi,\eta)}{R^5(b,\varphi,\theta)R(b,\varphi,\eta)}\sin(\eta-\theta)\right)d\eta\\
	&\quad-\frac{1}{2}\bigintsss_{\mathbb{T}}\tfrac{\big(d_{r}B_{r}^2\big)[\rho_{2}](b,\varphi,\theta,\eta)}{B_{r}^2(b,\varphi,\theta,\eta)}\partial_{\eta}\Big(\tfrac{R(b,\varphi,\eta)}{R^3(b,\varphi,\theta)}\sin(\eta-\theta)\Big)d\eta.
\end{align*}
Arguing in a similar way as above we find 
\begin{equation}\label{e-dVr2}
	\|d_{r}V_{r}^2(r)[\rho_{2}]\|_{q,s}^{\gamma,\mathcal{O}}\lesssim\|\rho_2\|_{q,s}^{\gamma,\mathcal{O}}+\|r\|_{q,s}^{\gamma,\mathcal{O}}\|\rho_2\|_{q,s_0}^{\gamma,\mathcal{O}}.
\end{equation} 
	Putting together  \eqref{e-dVr0}, \eqref{e-dVr1} and \eqref{e-dVr2} gives
	\begin{equation}\label{secondpart}
		\|d_{r}V_{r}(r)[\rho_{2}]\|_{q,s}^{\gamma,\mathcal{O}}\lesssim\|\rho_2\|_{q,s+1}^{\gamma,\mathcal{O}}+\|r\|_{q,s+1}^{\gamma,\mathcal{O}}\|\rho_2\|_{q,s_0+1}^{\gamma,\mathcal{O}}.
	\end{equation}
	Therefore,   according to the law products in  Lemma \ref{Lem-lawprod}, \eqref{secondpart} and the smallness condition we obtain
	\begin{align*}\big\| \partial_{\theta}\big(d_{r}V_{r}(r)[\rho_{2}]\rho_{1}\big)\big\|_{q,s}^{\gamma,\mathcal{O}}&\leqslant\|d_{r}V_{r}(r)[\rho_{2}]\rho_{1}\|_{q,s+1}^{\gamma,\mathcal{O}}\\
		&\lesssim\big\| d_{r}V_{r}(r)[\rho_{2}]\big\|_{q,s+1}^{\gamma,\mathcal{O}}\big\| \rho_{1}\big\|_{q,s_0}^{\gamma,\mathcal{O}}+\big\| d_{r}V_{r}(r)[\rho_{2}]\big\|_{q,s_0}^{\gamma,\mathcal{O}}\big\| \rho_{1}\big\|_{q,s+1}^{\gamma,\mathcal{O}}\\
		&\lesssim\|\rho_{1}\|_{q,s_{0}}^{\gamma,\mathcal{O}}\|\rho_{2}\|_{q,s+2}^{\gamma,\mathcal{O}}+\| r\|_{q,s+2}^{\gamma,\mathcal{O}}\|\rho_{1}\|_{q,s_{0}}^{\gamma,\mathcal{O}}\|\rho_{2}\|_{q,s_{0}+1}^{\gamma,\mathcal{O}}+\|\rho_{1}\|_{q,s+1}^{\gamma,\mathcal{O}}\|\rho_{2}\|_{q,s_{0}+1}^{\gamma,\mathcal{O}}.
	\end{align*}
	Combining the latter estimate with \eqref{Second-diff},  \eqref{e-drL1} and \eqref{e-drS1} allows to get
	$$\| d_r^{2}X_{P}(r)[\rho_{1},\rho_{2}]\|_{q,s}^{\gamma,\mathcal{O}}\lesssim\|\rho_{1}\|_{q,s_{0}}^{\gamma,\mathcal{O}}\|\rho_{2}\|_{q,s+2}^{\gamma,\mathcal{O}}+\| r\|_{q,s+2}^{\gamma,\mathcal{O}}\|\rho_{1}\|_{q,s_{0}}^{\gamma,\mathcal{O}}\|\rho_{2}\|_{q,s_{0}+1}^{\gamma,\mathcal{O}}+\|\rho_{1}\|_{q,s+1}^{\gamma,\mathcal{O}}\|\rho_{2}\|_{q,s_{0}+1}^{\gamma,\mathcal{O}}.$$
	Using Sobolev embeddings we get the desired result. This concludes the proof of Lemma \ref{lemma estimates vector field XP}.
\end{proof}

Notice in particular that Lemma \ref{lemma estimates vector field XP}-(i) implies that there is no singlarity in $\varepsilon$ for the rescaled vector field $X_{P_{\varepsilon}}$ defined in \eqref{perturbed hamiltonian}.
Based on the previous lemma we obtain tame estimates for the Hamiltonian vector field 
$$X_{\mathcal{P}_{\varepsilon}}=(\partial_{I}\mathcal{P}_{\varepsilon},-\partial_{\vartheta}\mathcal{P}_{\varepsilon},\Pi_{\mathbb{S}}^{\perp}\partial_{\theta}\nabla_{z}\mathcal{P}_{\varepsilon})$$
defined by \eqref{cNP} and \eqref{hamiltonian vector filed associated with Hepsilon}. The proof can be done in a similar way to  \cite[Lem. 5.1]{BM18}. 
\begin{lem}\label{tame estimates for the vector field XmathcalPvarepsilon}
	{Let $(\gamma,q,s_{0},s)$ satisfy \eqref{initial parameter condition}.
		There exists $\varepsilon_0\in(0,1)$ such that if 
		$$\varepsilon\leqslant\varepsilon_0\quad\textnormal{and}\quad\|\mathfrak{I}\|_{q,s_{0}+2}^{\gamma,\mathcal{O}}\leqslant 1,$$ 
		then  the  perturbed Hamiltonian vector field $X_{\mathcal{P}_{\varepsilon}}$ satisfies the following tame estimates,
		\begin{enumerate}[label=(\roman*)]
			\item $\| X_{\mathcal{P}_{\varepsilon}}(i)\|_{q,s}^{\gamma,\mathcal{O}}\lesssim 1+\|\mathfrak{I}\|_{q,s+2}^{\gamma,\mathcal{O}}.$
			\item $\big\| d_{i}X_{\mathcal{P}_{\varepsilon}}(i)[\,\widehat{i}\,]\big\|_{q,s}^{\gamma,\mathcal{O}}\lesssim \|\,\widehat{i}\,\|_{q,s+2}^{\gamma,\mathcal{O}}+\|\mathfrak{I}\|_{q,s+2}^{\gamma,\mathcal{O}}\|\,\widehat{i}\,\|_{q,s_{0}+1}^{\gamma,\mathcal{O}}.$
			\item $\big\| d_{i}^{2}X_{\mathcal{P}_{\varepsilon}}(i)[\,\widehat{i},\widehat{i}\,]\big\|_{q,s}^{\gamma,\mathcal{O}}\lesssim \|\,\widehat{i}\,\|_{q,s+2}^{\gamma,\mathcal{O}}\|\,\widehat{i}\,\|_{q,s_{0}+1}^{\gamma,\mathcal{O}}+\|\mathfrak{I}\|_{q,s+2}^{\gamma,\mathcal{O}}\big(\|\,\widehat{i}\,\|_{q,s_{0}+1}^{\gamma,\mathcal{O}}\big)^{2}.$
	\end{enumerate}}
\end{lem}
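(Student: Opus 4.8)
The plan is to deduce the three estimates from the tame bounds for $X_{P}$ obtained in Lemma \ref{lemma estimates vector field XP}, by pushing them through the symplectic change of variables $A$ of \eqref{definition of A action-angle-normal} and undoing the rescaling $X_{P_{\varepsilon}}(r)=\varepsilon^{-2}X_{P}(\varepsilon r)$; this is precisely the mechanism used in \cite[Lem. 5.1]{BM18}, so I will only indicate the main points. First I would write out the chain rule for $\mathcal P_{\varepsilon}=P_{\varepsilon}\circ A$, where $A(\vartheta,I,z)=v(\vartheta,I)+z$: for the normal component one gets $\Pi_{\mathbb S_{0}}^{\perp}\partial_{\theta}\nabla_{z}\mathcal P_{\varepsilon}(i)=\Pi_{\mathbb S_{0}}^{\perp}X_{P_{\varepsilon}}\big(A(i)\big)$, since $\partial_{\theta}$ preserves $L^{2}_{\perp}$; while the tangential components $\partial_{I}\mathcal P_{\varepsilon}(i)$ and $\partial_{\vartheta}\mathcal P_{\varepsilon}(i)$ are $L^{2}_{\theta}$-pairings of $\nabla_{r}P_{\varepsilon}(A(i))$ against $\partial_{I}v$ and $\partial_{\vartheta}v$. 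As these last two functions are trigonometric polynomials in $\theta$ of degree $j_{d}$ with zero average, an integration by parts in $\theta$ turns them into pairings of $X_{P_{\varepsilon}}(A(i))=\partial_{\theta}\nabla_{r}P_{\varepsilon}(A(i))$ against $\partial_{\theta}^{-1}\partial_{I}v$ and $\partial_{\theta}^{-1}\partial_{\vartheta}v$. Hence every component of $X_{\mathcal P_{\varepsilon}}(i)$ is an $L^{2}$-contraction between $X_{P_{\varepsilon}}(A(i))$ and an explicit trigonometric polynomial built from $v(\vartheta,I)$.

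The second ingredient is a family of tame bounds for $A$. Because $v(\vartheta,I)=\sum_{j\in\overline{\mathbb S}}\sqrt{\mathtt{a}_{j}^{2}+|j|I_{j}}\,e^{\ii\vartheta_{j}}e_{j}$ is a trigonometric polynomial in $(\varphi,\theta)$ of fixed degree whose coefficients are $C^{\infty}$ functions of the finitely many scalars $(\vartheta_{j},I_{j})_{j\in\overline{\mathbb S}}$ — smoothness being ensured by $\mathtt{a}_{j}>0$ together with the hypothesis $\|\mathfrak I\|_{q,s_{0}+2}^{\gamma,\mathcal O}\leqslant 1$ — the product and composition laws of Lemma \ref{Lem-lawprod} give $\|A(i)\|_{q,s}^{\gamma,\mathcal O}\lesssim 1+\|\mathfrak I\|_{q,s}^{\gamma,\mathcal O}$ and the corresponding low--high estimates for $d_{i}A$, $d_{i}^{2}A$, for $\partial_{\vartheta}v,\partial_{I}v,\partial_{\theta}^{-1}\partial_{\vartheta}v,\partial_{\theta}^{-1}\partial_{I}v$, and for all their $i$-derivatives up to order two.

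Combining the two, I would apply Lemma \ref{lemma estimates vector field XP} with argument $r=\varepsilon A(i)$: the hypothesis $\|\varepsilon A(i)\|_{q,s_{0}+2}^{\gamma,\mathcal O}\lesssim\varepsilon\leqslant\varepsilon_{0}$ holds, and since $X_{P}$ vanishes to second order at the origin the rescaling $X_{P_{\varepsilon}}=\varepsilon^{-2}X_{P}(\varepsilon\,\cdot)$ produces no negative power of $\varepsilon$; e.g. $\|X_{P_{\varepsilon}}(A(i))\|_{q,s}^{\gamma,\mathcal O}\lesssim\|A(i)\|_{q,s+2}^{\gamma,\mathcal O}\|A(i)\|_{q,s_{0}+1}^{\gamma,\mathcal O}\lesssim 1+\|\mathfrak I\|_{q,s+2}^{\gamma,\mathcal O}$, which with the product law of Lemma \ref{Lem-lawprod} and the $A$-bounds above yields (i). For (ii) and (iii) I would differentiate once, resp. twice, in $i$: $d_{i}X_{\mathcal P_{\varepsilon}}(i)[\widehat i\,]$ is a sum of terms pairing $d_{r}X_{P_{\varepsilon}}(A(i))\big[d_{i}A(i)[\widehat i\,]\big]$ with a $v$-polynomial, plus $X_{P_{\varepsilon}}(A(i))$ with a first $i$-derivative of that polynomial; $d_{i}^{2}X_{\mathcal P_{\varepsilon}}$ involves at worst $d_{r}^{2}X_{P_{\varepsilon}}$ and second $i$-derivatives of $A$ and of the $v$-polynomials. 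Feeding in Lemma \ref{lemma estimates vector field XP}(ii)--(iii) and the $A$-bounds, and using the interpolation inequality of Lemma \ref{Lem-lawprod} to absorb intermediate norms, delivers the stated estimates.

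The only genuine difficulty is the bookkeeping in this last step: one must check that composing with $A$ and inserting $\partial_{\theta}^{-1}$ or $\partial_{\theta}$ never costs more than the two extra derivatives allowed by the $s+2$ budget, and that after all the product, composition and interpolation steps each term carries the high index $\|\cdot\|_{q,s+2}^{\gamma,\mathcal O}$ on exactly one factor and the low index $\|\cdot\|_{q,s_{0}+1}^{\gamma,\mathcal O}$ on the others, so that the right-hand sides match the statement exactly. Since all the constituent inequalities are already in hand (Lemmata \ref{Lem-lawprod} and \ref{lemma estimates vector field XP}), this is routine, and I would refer to the analogous computation in \cite[Lem. 5.1]{BM18} for the details.
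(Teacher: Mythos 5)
Your proposal is correct and follows essentially the same route as the paper, which itself gives no detailed argument but simply invokes the tame estimates of Lemma \ref{lemma estimates vector field XP} for $X_{P}$ and refers to the analogous computation in \cite[Lem. 5.1]{BM18} for the composition with the action-angle map $A$ and the rescaling $X_{P_{\varepsilon}}=\varepsilon^{-2}X_{P}(\varepsilon\,\cdot)$. Your sketch of the chain rule, the tame bounds for $A$, and the bookkeeping of high/low norms is exactly what that reference carries out.
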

\section{Construction of an approximate right inverse}\label{sec red lin opb}
In order to apply a modified Nash-Moser scheme, we need to construct an approximate right inverse
of the linearized operator associated to the functional $\mathcal{F}$, that is 
\begin{equation}\label{Lineqrized-op-F}
	d_{(i, \alpha)} {\mathcal F}(i_0, \alpha_0 )[\widehat \imath \,, \widehat \alpha ] =
	\omega \cdot \partial_\varphi \widehat \imath - d_i X_{H_\varepsilon^{\alpha_0}} ( i_0 (\varphi) ) [\widehat \imath ] - (\widehat \alpha,0, 0 ).
\end{equation} 
where $\mathcal{F}$ is  defined in \eqref{main function}, $\alpha_0:\mathcal{O}\to \mathbb{R}^d$ is a vector-valued function  and   $i_0=(\vartheta_0,I_0,z_0)$ is an arbitrary torus close to the flat one and satisfying the reversibility condition 
\begin{equation}\label{rev cond i0}
	\vartheta_0(-\varphi)=-\vartheta_0(\varphi),\quad I_0(-\varphi)=I_0(\varphi)\quad\textnormal{and}\quad z_0(-\varphi)=(\mathscr{S}z_0)(\varphi).
\end{equation}
For this aim, we may follow the procedure introduced in \cite{BB15} and slightly  simplified  in \cite[Sec. 6]{HHM21}. The main idea consists in conjugating \eqref{Lineqrized-op-F} by a linear diffeomorphism of the toroidal phase space $\mathbb{T}^d \times \mathbb{R}^d \times  L^2_{\perp}$ to a triangular system in the action-angles-normal variables up to small fast decaying error terms and terms vanishing at an exact solution. Then, to solve the triangular system we are led to almost invert the linearized operator in the normal directions, given by 
 \begin{equation}\label{linearized:normal-directons}
		\widehat{\mathcal{L}}_{\omega}:=\Pi_{\mathbb{S}_0}^{\perp}\Big(\omega\cdot\partial_{\varphi}-\partial_{\theta}( \partial_{z}\nabla_z H_\varepsilon^{\alpha_0}) (i_0(\varphi)) -\varepsilon\partial_{\theta}\mathcal{R}(\varphi)\Big)\Pi_{\mathbb{S}_0}^{\perp},
	\end{equation}
	where $H_\varepsilon^{\alpha_0}$ is given by \eqref{H alpha},
    \begin{align}
		\mathcal{R}(\varphi)&:=L_{2}^{\top}(\varphi)\partial_{I}\nabla_I\mathcal{P}_{\varepsilon}(i_{0}(\varphi))L_{2}(\varphi)+L_{2}^{\top}(\varphi)\partial_{z}\nabla_{I}\mathcal{P}_{\varepsilon}(i_{0}(\varphi))+\partial_{I}\nabla_{z}\mathcal{P}_{\varepsilon}(i_{0}(\varphi))L_{2}(\varphi),\label{Y-101}
	\end{align}
	 $\mathcal{P}_{\varepsilon}$ is  defined by \eqref{cNP} and  
\begin{equation}\label{l1l20}
	\begin{aligned}
		L_2(\phi) &:= - [(\partial_\vartheta \widetilde{z}_0)(\vartheta_0(\phi))]^\top \partial_\theta ^{-1},\quad
		\widetilde{z}_0 (\vartheta) &:= z_0 (\vartheta_0^{-1} (\vartheta)).
	\end{aligned}
\end{equation}
Here, for any linear operator $A\in{\mathcal L}({\mathbb R}^d, L^2_{\perp})$ the transposed operator $A^{\top}:L^2_{\perp} \to\mathbb{R}^d$ is defined through  the duality relation
\begin{align}\label{Def-dua}
	\forall \, u\in L^2_{\perp}\, ,\quad\forall\,  v\in\mathbb{R}^d,\quad\big\langle A^{\top}u,v\big\rangle_{\mathbb{R}^d} =\big\langle u,  Av\big\rangle_{L^2(\mathbb{T}^d)}.
\end{align}
We point out the presence of the remainder term due to the  linear change of variables performed to  decouple the dynamics of the action-angle components from the normal ones. For more details we refer the reader to \cite[Sec. 6]{HHM21}.

\subsection{Linearized operator in the normal direction}
Our main goal here is to explore the structure of the linear operator $\widehat{\mathcal{L}}_{\omega}$, introduced in \eqref{linearized:normal-directons}. We have the following result.
The following lemma describes the asymptotic structure of $\widehat{\mathcal{L}}_{\omega}$
around the equilibrium state, described in Lemma \ref{lemma general form of the linearized operator}.

\begin{prop}\label{prop hat L omega} Let $(\gamma,q,d,s_{0})$ satisfy \eqref{initial parameter condition}.
	Then, 
	the operator $\widehat{\mathcal{L}}_{\omega}$ defined in \eqref{linearized:normal-directons} takes the form
	\begin{equation}\label{hat-L-omega}
		\widehat{\mathcal{L}}_{\omega}=\Pi_{\mathbb{S}_0}^{\perp}\Big(\mathcal{L}_{\varepsilon r}-\varepsilon\partial_{\theta}\mathcal{R}\Big)\Pi_{\mathbb{S}_0}^{\perp},
	\end{equation}
	where
	\begin{enumerate}[label=(\roman*)]
		\item the operator  $\mathcal{L}_{\varepsilon r}$ is given by
		\begin{equation}\label{L eps r}
			\mathcal{L}_{\varepsilon r}:=\omega\cdot\partial_{\varphi}+\partial_{\theta}\big(V_{\varepsilon r}\cdot\big)+\partial_{\theta}\mathbf{L}_{\varepsilon r}-\partial_{\theta}\mathbf{S}_{\varepsilon r},
		\end{equation}
	with $V_{\varepsilon r},$ $\mathbf{L}_{\varepsilon r}$ and $\mathbf{S}_{\varepsilon r}$ defined by \eqref{Vr}, \eqref{mathbfLr} and \eqref{mathbfSr}.
		\item the function  $r$ is given by
		\begin{equation}\label{defr}
			r(\varphi,\cdot)=A(i_{0}(\varphi)),
		\end{equation}
		satisfies the following symmetry property
		\begin{equation}\label{sym r}
			r(-\varphi,-\theta)=r(\varphi,\theta)
		\end{equation}
		and the following estimates
		\begin{align}
			\| r\|_{q,s}^{\gamma,\mathcal{O}}&\lesssim 1+\|\mathfrak{I}_{0}\|_{q,s}^{\gamma,\mathcal{O}},\label{estimate r and mathfrakI0}
			\\
			\|\Delta_{12}r\|_{q,s}^{\gamma,\mathcal{O}}&\lesssim\|\Delta_{12}i\|_{q,s}^{\gamma,\mathcal{O}}+\|\Delta_{12}i\|_{q,s_0}^{\gamma,\mathcal{O}}\max_{j\in\{1,2\}}\|\mathfrak{I}_j\|_{q,s}^{\gamma,\mathcal{O}}. \label{control difference r}
		\end{align}
		\item the operator $\mathcal{R}$, defined in \eqref{Y-101}, is an integral operator with kernel $J$ satisfying the symmetry property
		\begin{equation}\label{symmetry for the kernel J}
			J(-\varphi,-\theta,-\eta)=J(\varphi,\theta,\eta)
		\end{equation}
		and the following estimates: for all $\ell\in\mathbb{N}$,
		\begin{align}
			\sup_{\eta\in\mathbb{T}}\|(\partial_{\theta}^{\ell}J)(\ast,\cdot,\centerdot,\eta+\centerdot)\|_{q,s}^{\gamma,\mathcal{O}}&\lesssim 1+\|\mathfrak{I}_{0}\|_{q,s+3+\ell}^{\gamma,\mathcal{O}},\label{estimate J}\\
			\sup_{\eta\in\mathbb{T}}\|\Delta_{12}(\partial_{\theta}^{\ell}J)(\ast,\cdot,\centerdot,\eta+\centerdot)\|_{q,s}^{\gamma,\mathcal{O}}&\lesssim\|\Delta_{12}i\|_{q,s+3+\ell}^{\gamma,\mathcal{O}}+\|\Delta_{12}i\|_{q,s_0+3}^{\gamma,\mathcal{O}}\max_{j\in\{1,2\}}\|\mathfrak{I}_j\|_{q,s+3+\ell}^{\gamma,\mathcal{O}}.\label{differences J} 
		\end{align}
	\end{enumerate}
	where $*,\cdot,\centerdot,$ denote successively the variables $b,\varphi,\theta$ and $\displaystyle\mathfrak{I}_{j}(\varphi)=i_{j}(\varphi)-(\varphi,0,0).$
\end{prop}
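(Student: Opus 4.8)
\textbf{Proof plan for Proposition \ref{prop hat L omega}.}

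The plan is to unwind the definition \eqref{linearized:normal-directons} of $\widehat{\mathcal{L}}_\omega$ and identify the operator $\partial_\theta(\partial_z\nabla_z H_\varepsilon^{\alpha_0})(i_0(\varphi))$ with the linearized operator $\mathcal{L}_{\varepsilon r}$ written at the rescaled state $\varepsilon r$, where $r(\varphi,\cdot)=A(i_0(\varphi))$. First I would recall that, by \eqref{H alpha} and \eqref{cNP}, the $z$-component of $H_\varepsilon^{\alpha_0}$ is $\tfrac12\langle \mathrm{L}(b)z,z\rangle + \varepsilon\mathcal{P}_\varepsilon$, so $\partial_z\nabla_z H_\varepsilon^{\alpha_0}(i_0(\varphi)) = \mathrm{L}(b) + \varepsilon\, \partial_z\nabla_z\mathcal{P}_\varepsilon(i_0(\varphi))$. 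Since $\mathcal{P}_\varepsilon = P_\varepsilon\circ A$ and $P_\varepsilon$ is the rescaled nonlinear part of $H$ (see \eqref{HEE}, \eqref{def XP}), a chain-rule computation — using that $A$ restricted to the normal subspace is the identity on $z$ — shows $\partial_\theta\big(\mathrm{L}(b) + \varepsilon\partial_z\nabla_z\mathcal{P}_\varepsilon(i_0)\big)$ coincides, on the normal directions, with $\partial_\theta\mathrm{L}(b) + d_rX_P(\varepsilon r)[\cdot]$ evaluated at $r(\varphi,\cdot)=A(i_0(\varphi))$. By Lemma \ref{lemma general form of the linearized operator} together with the decompositions \eqref{dcp Lr}--\eqref{dcp Sr} of Lemma \ref{lemma estimates L S}, the full linearized transport operator at a state $\tilde r$ equals $\omega\cdot\partial_\varphi + \partial_\theta(V_{\tilde r}\cdot) + \partial_\theta\mathbf{L}_{\tilde r} - \partial_\theta\mathbf{S}_{\tilde r} = \mathcal{L}_{\tilde r}$; applying this with $\tilde r = \varepsilon r$ gives $(i)$, and conjugating by $\Pi_{\mathbb{S}_0}^\perp$ on both sides recovers \eqref{hat-L-omega} once the remainder $\varepsilon\partial_\theta\mathcal{R}$ coming from the action-angle decoupling (definitions \eqref{Y-101}, \eqref{l1l20}) is carried along.

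For part $(ii)$, the identity $r(\varphi,\cdot)=A(i_0(\varphi)) = v(\vartheta_0(\varphi),I_0(\varphi)) + z_0(\varphi)$ from \eqref{definition of A action-angle-normal} is just the definition; the symmetry \eqref{sym r} follows from the reversibility conditions \eqref{rev cond i0} on $i_0$ together with $v(-\vartheta,I)=\overline{v(\vartheta,I)}$ reflected through the Fourier structure and the fact that $\mathscr{S}z_0(\varphi)=z_0(-\varphi)$. The tame estimate \eqref{estimate r and mathfrakI0} and the difference estimate \eqref{control difference r} are obtained by expanding $\sqrt{\mathtt{a}_j^2+|j|I_j}$ via the composition law Lemma \ref{Lem-lawprod}-(v)-(vi) (the square root is smooth near $\mathtt{a}_j^2>0$ since $I_j$ is small), then using the product laws of Lemma \ref{Lem-lawprod}-(iv); the $+1$ in $1+\|\mathfrak{I}_0\|_{q,s}^{\gamma,\mathcal{O}}$ accounts for the constant term $v(\vartheta_0,0)$ coming from the amplitudes $\mathtt{a}$.

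For part $(iii)$, one reads off from \eqref{Y-101} that $\mathcal{R}(\varphi)$ is a sum of three operators, each a composition of $L_2(\varphi)$, $L_2^\top(\varphi)$ with second derivatives of $\mathcal{P}_\varepsilon$ evaluated at $i_0(\varphi)$; since $L_2(\varphi) = -[(\partial_\vartheta\widetilde z_0)(\vartheta_0(\varphi))]^\top\partial_\theta^{-1}$ is a finite-rank-in-$\mathbb{R}^d$ map, $\mathcal{R}$ is an integral operator whose kernel $J$ is built from the components $z_0$, $\vartheta_0$ of the torus and the Hamiltonian data; the symmetry \eqref{symmetry for the kernel J} follows from the reversibility of $\mathcal{P}_\varepsilon$ (invariance of $H_\varepsilon^{\alpha}$ under $\mathfrak S$) and of $i_0$, handled exactly as in \cite[Sec. 6]{HHM21}. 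The estimates \eqref{estimate J}--\eqref{differences J} then come from Lemma \ref{tame estimates for the vector field XmathcalPvarepsilon} (which gives tame bounds on $d_iX_{\mathcal{P}_\varepsilon}$ and $d_i^2X_{\mathcal{P}_\varepsilon}$, hence on the relevant Hessians of $\mathcal{P}_\varepsilon$) combined with the tame estimates on $L_2$, $L_2^\top$ in terms of $\|\mathfrak{I}_0\|$; the loss of $3$ derivatives plus $\ell$ reflects the two extra $\theta$-derivatives already present in Lemma \ref{lemma estimates vector field XP}, the $\partial_\theta^{-1}$ in $L_2$, and the $\partial_\theta^\ell$ applied to the kernel. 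I expect the main obstacle to be the bookkeeping in part $(i)$: carefully matching the abstract Hamiltonian expression $\partial_\theta(\partial_z\nabla_z H_\varepsilon^{\alpha_0})(i_0)$ against the concrete contour-dynamics operator $\mathcal{L}_{\varepsilon r}$ through the symplectic change of coordinates $A$, making sure the projections $\Pi_{\mathbb{S}_0}^\perp$ and the remainder $\varepsilon\partial_\theta\mathcal{R}$ are accounted for with the correct signs and that no spurious terms survive on the normal subspace.
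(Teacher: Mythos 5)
Your proposal is correct and follows essentially the same route as the paper: identify $\partial_z\nabla_z H_\varepsilon^{\alpha_0}(i_0(\varphi))$ with $\Pi_{\mathbb{S}_0}^{\perp}\partial_r\nabla_r H(\varepsilon A(i_0(\varphi)))$ and invoke Lemma \ref{lemma general form of the linearized operator} at the state $\varepsilon r$, then exploit the finite-rank structure of $L_2$, $L_2^{\top}$ to write $\mathcal{R}$ as an integral operator whose kernel inherits the symmetry from the reversibility of $i_0$, with the tame and difference estimates obtained from the bounds on $\mathcal{P}_\varepsilon$ (the paper defers these details to the analogue in \cite{HR21}, exactly the bookkeeping you sketch).
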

\begin{proof}
	From \eqref{H alpha}, \eqref{Hepsilon}, \eqref{definition of A action-angle-normal} and   \eqref{HEE}    we obtain 
	\begin{align*}
		( \partial_{z}\nabla_z H_\varepsilon^{\alpha_0}) (i_0(\varphi))& =  \mathrm{L}(b)\Pi_{\mathbb{S}_0}^{\perp}+\varepsilon\partial_{z}\nabla_{z}\mathcal{P}_{\varepsilon}(i_{0}(\varphi))\\
		& =  \mathrm{L}(b)\Pi_{\mathbb{S}_0}^{\perp}+\varepsilon\Pi_{\mathbb{S}_0}^{\perp}\partial_{r}\nabla_{r}P_{\varepsilon}(A(i_{0}(\varphi)))\\
		& =  \Pi_{\mathbb{S}_0}^{\perp}\partial_{r}\nabla_{r}\mathcal{H}_{\varepsilon}(A(i_{0}(\varphi)))\\
		& =  \Pi_{\mathbb{S}_0}^{\perp}\partial_{r}\nabla_{r}H(\varepsilon A(i_{0}(\varphi))).
	\end{align*}
	According to the general form of the linearized operator stated  in Lemma \ref{lemma general form of the linearized operator} one has
	$$-\partial_\theta ( \partial_{z}\nabla_z H_\varepsilon^{\alpha_0}) (i_0(\varphi))= \Pi_{\mathbb{S}_0}^{\perp}\big(\partial_{\theta}\big(V_{\varepsilon r}(b,\varphi,\centerdot )\cdot\big)+\partial_{\theta}\mathbf{L}_{\varepsilon r}-\partial_{\theta}\mathbf{S}_{\varepsilon r}\big)\Pi_{\mathbb{S}_0}^{\perp}.$$
	Inserting this identity into \eqref{linearized:normal-directons} gives \eqref{hat-L-omega}. 
The operator $\mathcal{R}(\varphi)$ in \eqref{Y-101} may be written as 
	\begin{align*}
		\mathcal{R}(\varphi)=\mathcal{R}_{1}(\varphi)+\mathcal{R}_{2}(\varphi)+\mathcal{R}_{3}(\varphi), \quad {\rm with} \quad \mathcal{R}_{1}(\varphi)&:=L_{2}^{\top}(\varphi)\partial_{I}\nabla_I\mathcal{P}_{\varepsilon}(i_{0}(\varphi))L_{2}(\varphi),\\ \mathcal{R}_{2}(\varphi)&:=L_{2}^{\top}(\varphi)\partial_{z}\nabla_{I}\mathcal{P}_{\varepsilon}(i_{0}(\varphi)),\\
		\mathcal{R}_{3}(\varphi)&:=\partial_{I}\nabla_{z}\mathcal{P}_{\varepsilon}(i_{0}(\varphi))L_{2}(\varphi).
	\end{align*}
Notice that  $\mathcal{R}_{1}(\varphi),$ $\mathcal{R}_{2}(\varphi)$ and  $\mathcal{R}_{3}(\varphi)$    have a finite-dimensional rank. In fact, from \eqref{l1l20} and \eqref{Def-dua} one may write 
	$$L_{2}(\varphi)[\rho]=\sum_{k=1}^{d}\big\langle L_{2}(\varphi)[\rho],\underline{e}_{k}\big\rangle_{\mathbb{R}^{d}}\,\underline{e}_{k}=\sum_{k=1}^{d}\big\langle\rho,L_{2}^{\top}(\varphi)[\underline{e}_{k}]\big\rangle_{L^{2}(\mathbb{T})}\,\underline{e}_{k},$$
	with $\displaystyle(\underline{e}_{k})_{k=1}^d$ being the canonical basis of $\mathbb{R}^d.$
	Hence
	$$\begin{array}{ll}
		\displaystyle \mathcal{R}_{1}(\varphi)[\rho]=\sum_{k=1}^{d}\big\langle\rho,L_{2}^{\top}(\varphi)[\underline{e}_{k}]\big\rangle_{L^{2}(\mathbb{T})}A_{1}(\varphi)[\underline{e}_{k}] & \quad\mbox{with }\quad A_{1}(\varphi)=L_{2}^{\top}(\varphi)\partial_{I}\nabla_I\mathcal{P}_{\varepsilon}(i_{0}(\varphi)),\\
		\displaystyle \mathcal{R}_{3}(\varphi)[\rho]=\sum_{k=1}^{d}\big\langle\rho,L_{2}^{\top}(\varphi)[\underline{e}_{k}]\big\rangle_{L^{2}(\mathbb{T})}A_{3}(\varphi)[\underline{e}_{k}] &\quad \mbox{with }\quad A_{3}(\varphi)=\partial_{I}\nabla_{z}\mathcal{P}_{\varepsilon}(i_{0}(\varphi)).
	\end{array}$$
	Analogously,  since $A_{2}(\varphi):=\partial_{z}\nabla_{I}\mathcal{P}_{\varepsilon}(i_{0}(\varphi)):L^2_{\perp}\rightarrow\mathbb{R}^{d},$ then we may write
	$$
	\mathcal{R}_{2}(\varphi)[\rho]=\sum_{k=1}^{d}\big\langle\rho,A_{2}^{\top}(\varphi)[\underline{e}_{k}]\big\rangle_{L^{2}\,(\mathbb{T})}L_{2}^{\top}(\varphi)[\underline{e}_{k}].
	$$
	By setting
	\begin{align*}
	g_{k,1}(\varphi,\theta)=g_{k,3}(\varphi,\theta)=\chi_{k,2}(\varphi,\theta):=L_{2}^{\top}(\varphi)[\underline{e}_{k}](\theta),\qquad g_{k,2}(\varphi,\theta):=A_{2}^{\top}(\varphi)[\underline{e}_{k}](\theta),\\
 \chi_{k,1}(\varphi,\theta):=A_{1}(\varphi)[\underline{e}_{k}](\theta),\qquad \chi_{k,3}(\varphi,\theta):=A_{3}(\varphi)[\underline{e}_{k}](\theta),
	\end{align*}
	we can see that   the operator $\mathcal{R}$ takes the integral form
	\begin{align*}\mathcal{R}\rho(\varphi,\theta)
		&=\int_{\mathbb{T}}\rho(\varphi,\eta)J(\varphi,\theta,\eta)d\eta,\quad\textnormal{with}\quad J(\varphi,\theta,\eta):=\sum_{k'=1}^{3}\sum_{k=1}^{d}g_{k,k'}(\varphi,\eta)\chi_{k,k'}(\varphi,\theta).
	\end{align*}
	The symmetry property \eqref{symmetry for the kernel J} is a consequence of the definition of $r$ and the reversibility condition \eqref{rev cond i0} imposed on the torus $i_{0}.$ The estimates \eqref{differences J},    \eqref{estimate J}, \eqref{estimate r and mathfrakI0} and  \eqref{control difference r} are straightforward and follow in a similar way to  Proposition 6.1 in \cite{HR21}. 
\end{proof}

\subsection{Diagonalization of the linearized operator in the normal directions}
This section is devoted to the reduction of the linearized operator $\widehat{\mathcal{L}}_{\omega}$, defined in \eqref{hat-L-omega}, to constant coefficients. This procedure is done in three steps. First, we reduce the operator $\mathcal{L}_{\varepsilon r}$ introduced in \eqref{L eps r} up to smoothing reminders. Then we study the action of the localization in the normal directions. Finally, we almost eliminate the remainders by using a KAM reduction procedure. 
We fix the following parameters. 
\begin{equation}\label{param}
	\begin{array}{ll} 
		s_{l}:=s_0+\tau_1q+\tau_1+2,& \overline{\mu}_2:=4\tau_1q+6\tau_1+3, \\
		\overline{s}_{l}:=s_l+\tau_2q+\tau_2, & \overline{s}_{h}:=\frac{3}{2}\overline{\mu}_{2}+s_{l}+1.
	\end{array}
\end{equation}
\subsubsection{Leading orders reduction}
In this section, we shall straighten  the transport part by using a suitable quasi-periodic symplectic change of variables and look at its conjugation action on the non-local terms. The reduction of the transport part is done by  a KAM iterative scheme.
Such technique is now well-developed  in \cite{BBMH18,BM21,BFM21-1,FGMP19,HHM21,HR21}. The result reads as follows.
\begin{prop}\label{reduction of the transport part}
	Let $(\gamma,q,d,\tau_{1},s_{0},\overline{\mu}_2,s_l,\overline{s}_h,S)$ satisfy \eqref{initial parameter condition}, \eqref{setting tau1 and tau2} and \eqref{param}. Let  $\upsilon\in\left(0,\frac{1}{q+2}\right].$ We set
	\begin{equation}\label{sigma1}
		\sigma_{1}=s_0+\tau_{1}q+2\tau_{1}+4\quad\textnormal{and}\quad\sigma_2=s_0+\sigma_{1}+3.
	\end{equation}
For any $(\mu_2,\mathtt{p},s_h)$ satisfying
	\begin{equation}\label{param-trans}
		\mu_{2}\geqslant \overline{\mu}_{2}, \quad\mathtt{p}\geqslant 0,\quad s_{h}\geqslant\max\left(\frac{3}{2}\mu_{2}+s_{l}+1,\overline{s}_{h}+\mathtt{p}\right),
	\end{equation}
	there exists $\varepsilon_{0}>0$ such that if
	\begin{equation}\label{small-2}
		\varepsilon\gamma^{-1}N_{0}^{\mu_{2}}\leqslant\varepsilon_{0}\quad\textnormal{and}\quad \|\mathfrak{I}_{0}\|_{q,s_{h}+\sigma_{2}}^{\gamma,\mathcal{O}}\leqslant 1,
	\end{equation}
then following assertions hold true.
\begin{enumerate}
	\item There exist
	$$V_{i_{0}}^{\infty}\in W^{q,\infty,\gamma }(\mathcal{O},\mathbb{C})\quad\mbox{ and }\quad\beta\in W^{q,\infty,\gamma }(\mathcal{O},H^{S})$$
	such that with $\mathscr{B}$ defined in \eqref{definition symplectic change of variables} one gets the following results.
	\begin{enumerate}[label=(\roman*)]
		\item The function $V_{i_{0}}^{\infty}$ satisfies the estimate:
		\begin{equation}\label{estimate r1}
			\| V_{i_{0}}^{\infty}-\tfrac{1}{2}\|_{q}^{\gamma ,\mathcal{O}}\lesssim \varepsilon.
		\end{equation}
		\item The transformations $\mathscr{B}^{\pm 1},\mathcal{B}^{\pm 1}, {\beta}$ and $\widehat{\beta}$ satisfy the following estimates: for all $s\in[s_{0},S]$, 
		\begin{align}\label{estimate on the first reduction operator and its inverse}
			\|\mathscr{B}^{\pm 1}\rho\|_{q,s}^{\gamma ,\mathcal{O}}+\|\mathcal{B}^{\pm 1}\rho\|_{q,s}^{\gamma ,\mathcal{O}}&\lesssim\|\rho\|_{q,s}^{\gamma ,\mathcal{O}}+\varepsilon\gamma ^{-1}\| \mathfrak{I}_{0}\|_{q,s+\sigma_{1}}^{\gamma ,\mathcal{O}}\|\rho\|_{q,s_{0}}^{\gamma ,\mathcal{O}}, \\
			\|\widehat{\beta}\|_{q,s}^{\gamma,\mathcal{O}}\lesssim\|\beta\|_{q,s}^{\gamma ,\mathcal{O}}&\lesssim \varepsilon\gamma ^{-1}\left(1+\| \mathfrak{I}_{0}\|_{q,s+\sigma_{1}}^{\gamma ,\mathcal{O}}\right).\label{estimate beta and r}
		\end{align}
	Moreover, $\beta$ and $\widehat{\beta}$ satisfy the following symmetry condition:
	\begin{equation}\label{symmetry for beta}
		\beta(\mu,-\varphi,-\theta)=-\beta(\mu,\varphi,\theta)\quad\textnormal{and}\quad\widehat{\beta}(\mu,-\varphi,-\theta)=-\widehat{\beta}(\mu,\varphi,\theta).
	\end{equation}
		\item Let $n\in\mathbb{N}$, then in the truncated Cantor set
		\begin{equation}\label{def Cantor trans}
			\mathcal{O}_{\infty,n}^{\gamma,\tau_{1}}(i_{0})=\bigcap_{(l,j)\in\mathbb{Z}^{d}\times\mathbb{Z}\setminus\{(0,0)\}\atop|l|\leqslant N_{n}}\left\lbrace(b,\omega)\in \mathcal{O}\quad\textnormal{s.t.}\quad\big|\omega\cdot l+jV_{i_{0}}^{\infty}(b,\omega)\big|>\tfrac{4\gamma^{\upsilon}\langle j\rangle}{\langle l\rangle^{\tau_{1}}}\right\rbrace,
		\end{equation}
		we have  the decomposition
		$$\mathfrak{L}_{\varepsilon r}:=\mathscr{B}^{-1}\mathcal{L}_{\varepsilon r}\mathscr{B}=\omega\cdot\partial_{\varphi}+V_{i_{0}}^{\infty}\partial_{\theta}+\partial_{\theta}\mathcal{K}_{b}\ast\cdot+\partial_{\theta}\mathfrak{R}_{\varepsilon r}+\mathtt{E}_{n}^{0},$$
		where $\mathcal{L}_{\varepsilon r}$ is given by \eqref{L eps r}, $\mathcal{K}_{b}$ is defined in Lemma \ref{lemma linearized operator at equilibrium} and $\mathtt{E}_{n}^{0}=\mathtt{E}_{n}^{0}(b,\omega,i_{0})$ is a linear operator satisfying
		\begin{equation}\label{estim En0 s0}
			\|\mathtt{E}_{n}^{0}\rho\|_{q,s_{0}}^{\gamma,\mathcal{O}}\lesssim\varepsilon N_{0}^{\mu_{2}}N_{n+1}^{-\mu_{2}}\|\rho\|_{q,s_{0}+2}^{\gamma,\mathcal{O}}.
		\end{equation}
	The operator $\mathfrak{R}_{\varepsilon r}$ is a real and reversibility preserving integral operator satisfying
	\begin{align}
	\forall s\in[s_0,S],\quad \max_{k\in\{0,1,2\}}\|\partial_{\theta}^{k}\mathfrak{R}_{\varepsilon r}\|_{\textnormal{\tiny{O-d}},q,s}^{\gamma,\mathcal{O}}&\lesssim\varepsilon\gamma^{-1}\left(1+\|\mathfrak{I}_{0}\|_{q,s+\sigma_{2}}^{\gamma,\mathcal{O}}\right).\label{e-frakR}
\end{align}
	\end{enumerate}
	\item Given two tori $i_{1}$ and $i_{2}$ both satisfying \eqref{small-2}, we have 
\begin{align}
	\|\Delta_{12}V_{i}^{\infty}\|_{q}^{\gamma,\mathcal{O}}&\lesssim\varepsilon\| \Delta_{12}i\|_{q,\overline{s}_{h}+2}^{\gamma,\mathcal{O}},  \label{difference ci}
\\
	\|\Delta_{12}\beta\|_{q,\overline{s}_{h}+\mathtt{p}}^{\gamma,\mathcal{O}}+\|\Delta_{12}\widehat\beta\|_{q,\overline{s}_{h}+\mathtt{p}}^{\gamma,\mathcal{O}}&\lesssim\varepsilon\gamma^{-1}\|\Delta_{12}i\|_{q,\overline{s}_{h}+\mathtt{p}+\sigma_{1}}^{\gamma,\mathcal{O}}.\label{difference beta}
\end{align}
In addition, we have 
\begin{align}
	\max_{k\in\{0,1\}}\|\Delta_{12}(\partial_{\theta}^k\mathfrak{R}_{\varepsilon r})\|_{\textnormal{\tiny{O-d}},q,\overline{s}_{h}+\mathtt{p}}^{\gamma,\mathcal{O}}&\lesssim\varepsilon\gamma^{-1}\|\Delta_{12}i\|_{q,\overline{s}_{h}+\mathtt{p}+\sigma_{2}}^{\gamma,\mathcal{O}}.\label{e-dfrakR}
\end{align}
\end{enumerate}
\end{prop}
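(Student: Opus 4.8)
The plan is to reduce the transport part of $\mathcal{L}_{\varepsilon r}$ to constant coefficients through a KAM iterative scheme on the symbol $V_{\varepsilon r}$, following closely the strategy of \cite{BBMH18,HHM21,HR21}, and then to track how the symplectic change of variables $\mathscr{B}$ acts on the non-local terms $\partial_\theta\mathbf{L}_{\varepsilon r}$ and $\partial_\theta\mathbf{S}_{\varepsilon r}$. First I would set up the iteration: at step $n$ one conjugates the current operator $\omega\cdot\partial_\varphi+V_n\partial_\theta+\dots$ by an elementary symplectic map $\mathscr{B}_n=(1+\partial_\theta\beta_n)\mathcal{B}_n$, where $\beta_n$ is chosen (via the homological equation $\omega\cdot\partial_\varphi\beta_n+V_n\partial_\theta\beta_n=\langle V_n\rangle-V_n$, solved by Fourier after truncation at frequency $N_n$) so that $V_{n+1}=\langle V_n\rangle_{\varphi,\theta}$ up to a remainder quadratically small in the previous error plus a tail term of size $N_n^{-\mu_2}$ coming from the frequency truncation. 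One uses Lemma \ref{algeb1} to see that $\mathscr{B}_n^{-1}\partial_\theta\mathscr{B}_n$ stays of the form $a_n\partial_\theta$, and Lemma \ref{Compos1-lemm} together with Lemma \ref{Lem-lawprod} to propagate the tame estimates \eqref{tame comp symp} along the scheme. The small-divisor control needed at each step is exactly the lower bound defining $\mathcal{O}_{\infty,n}^{\gamma,\tau_1}(i_0)$ in \eqref{def Cantor trans}; the Diophantine losses $\tau_1 q+\dots$ in $\sigma_1,\sigma_2$ are dictated by how many $\varphi$-derivatives the solution of the homological equation costs, accounting for the $q$ parameter-derivatives carried by the weighted norms. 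Convergence of $\sum_n\beta_n$ in $W^{q,\infty,\gamma}(\mathcal{O},H^S)$ under the smallness condition $\varepsilon\gamma^{-1}N_0^{\mu_2}\leqslant\varepsilon_0$ yields $\beta$, the limit symbol $V_{i_0}^\infty$ with \eqref{estimate r1}, and the estimates \eqref{estimate on the first reduction operator and its inverse}--\eqref{estimate beta and r}; the odd symmetry \eqref{symmetry for beta} is inherited from \eqref{symVr} (which gives $V_{\varepsilon r}(b,-\varphi,-\theta)=V_{\varepsilon r}(b,\varphi,\theta)$) via the change of variable $\eta\mapsto-\eta$ in the homological equation, as in Lemma \ref{lem CVAR kernel}.

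Second, I would analyze the conjugation of the non-local terms. Since $\mathcal{K}_{1,b}\ast\cdot$ is the Hilbert transform and $\mathcal{K}_{2,b}\ast\cdot$ has a smooth kernel, I write $\partial_\theta\mathbf{L}_{\varepsilon r}=\partial_\theta\mathcal{K}_{1,b}\ast\cdot+\partial_\theta\mathbf{L}_{\varepsilon r,1}$ and $\partial_\theta\mathbf{S}_{\varepsilon r}=\partial_\theta\mathcal{K}_{2,b}\ast\cdot+\partial_\theta\mathbf{S}_{\varepsilon r,1}$ using Lemma \ref{lemma estimates L S}. The point is that $\mathscr{B}^{-1}(\partial_\theta\mathcal{K}_b\ast\cdot)\mathscr{B}=\partial_\theta\mathcal{K}_b\ast\cdot+\partial_\theta(\text{smoothing})$: the commutator of the Hilbert transform with the diffeomorphism $\theta\mapsto\theta+\beta$ is a smoothing integral operator of order $-\infty$ (a Calderón-type commutator estimate, provable here by hand since $\beta$ is smooth), and the conjugation of the smooth-kernel operator stays a smooth-kernel operator by Lemma \ref{lem CVAR kernel}. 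All these smoothing pieces, together with $\mathscr{B}^{-1}\partial_\theta\mathbf{L}_{\varepsilon r,1}\mathscr{B}$ and $\mathscr{B}^{-1}\partial_\theta\mathbf{S}_{\varepsilon r,1}\mathscr{B}$ (estimated via \eqref{estimate kernel mathbbK1}--\eqref{e-scrK} and \eqref{e-odsBtB}), assemble into $\partial_\theta\mathfrak{R}_{\varepsilon r}$, which by Lemma \ref{lemma symmetry and reversibility}-(ii) and the symmetry \eqref{symmetry for beta}, \eqref{sym r} is real, reversibility preserving, and Toeplitz in time; its off-diagonal estimates \eqref{e-frakR} follow by composing the kernel bounds with \eqref{estimate r and mathfrakI0} and the tame estimates for $\mathscr{B}^{\pm1}$. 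The error term $\mathtt{E}_n^0$ collects exactly the contributions of the conjugations by $\mathscr{B}_m$ with $m>n$ restricted outside the truncated Cantor set, so $\|\mathtt{E}_n^0\rho\|_{q,s_0}^{\gamma,\mathcal{O}}\lesssim\varepsilon N_0^{\mu_2}N_{n+1}^{-\mu_2}\|\rho\|_{q,s_0+2}^{\gamma,\mathcal{O}}$, which is \eqref{estim En0 s0}.

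Third, for the difference estimates in part 2, I would run the same iteration for two tori $i_1,i_2$ in parallel, subtract the homological equations step by step, and use the difference bounds \eqref{link diff beta hat and diff beta}, \eqref{control difference r}, \eqref{differences J} together with the linearity-in-differences of the scheme. The key point is that $\Delta_{12}\beta_n$ satisfies a homological equation with the same transport symbol but forcing term $\Delta_{12}V_n$, so one gains a factor $\|\Delta_{12}i\|$ uniformly; summing gives \eqref{difference beta}, \eqref{difference ci}, and then \eqref{e-dfrakR} follows from the difference estimate \eqref{e-d12odsBtB} in Lemma \ref{lem CVAR kernel} combined with \eqref{control difference r} and the uniform bounds already established. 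The main obstacle I expect is twofold: first, keeping careful bookkeeping of the regularity losses so that the parameters $\sigma_1,\sigma_2$ in \eqref{sigma1} indeed suffice at every step of the KAM scheme (this is where the precise interplay between $\tau_1$, $q$, and the truncation order $\mu_2\geqslant\overline{\mu}_2$ matters, and sloppiness here breaks convergence); and second, proving that conjugating the Hilbert transform $\partial_\theta\mathcal{K}_{1,b}\ast\cdot$ by the diffeomorphism $\theta\mapsto\theta+\beta$ produces only a genuinely smoothing (order $-\infty$) perturbation with $q$-parameter-dependent tame estimates — this commutator analysis is the technical heart of why the ``planar Euler action'' part survives the reduction, and it is where the smoothness of $\beta$ (coming from the good Diophantine control) is essential.
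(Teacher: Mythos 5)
Your proposal is correct and follows essentially the same route as the paper: the paper checks that $f_0=V_{\varepsilon r}-\tfrac12$ is even and of size $O(\varepsilon)$ and then invokes the transport-reduction KAM scheme of \cite[Prop. 6.2]{HR21} (the very iteration you describe), and it handles the conjugated convolution operators exactly as you propose, writing $\mathcal{B}^{-1}\big(\mathcal{K}_{1,b}\ast\cdot\big)\mathscr{B}-\mathcal{K}_{1,b}\ast\cdot$ and $\mathcal{B}^{-1}\big(\mathcal{K}_{2,b}\ast\cdot\big)\mathscr{B}-\mathcal{K}_{2,b}\ast\cdot$ as integral operators with explicit kernels $\log v_{\widehat{\beta}}$ and $\mathscr{K}_{\widehat{\beta},2}$ obtained from the morphism property of the logarithm, which is the concrete realization of your ``by hand'' commutator argument, before assembling $\mathfrak{R}_{\varepsilon r}$ and the difference bounds via Lemmata \ref{lemma symmetry and reversibility} and \ref{lem CVAR kernel}. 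The only caveat is that this correction is not genuinely order $-\infty$ smoothing (its kernel is only as regular as $\widehat{\beta}$), but the proposition only requires the finitely many off-diagonal bounds \eqref{e-frakR}, which is what both you and the paper actually establish.
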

\begin{proof} Notice that along the proof, to simplify the notation, we shall omit  the dependence with respect to the parameters $b,\omega$ kipping in mind that the functions appearing actually depend on them.
	We begin by setting 
	\begin{equation}\label{V0 and f0}
		V_0=\tfrac{1}{2}\quad\textnormal{and}\quad f_{0}(\varphi,\theta):=V_{\varepsilon r}(\varphi,\theta)-\tfrac{1}{2},
	\end{equation}
	with $V_{\varepsilon r}$ defined by \eqref{Vr}. According to \eqref{sym r} and \eqref{symVr}, one gets
	\begin{equation}\label{sym f0}
		f_{0}(-\varphi,-\theta)=f_{0}(\varphi,\theta).
	\end{equation} 
	Notice that according to \eqref{Vr}, \eqref{secondpart} and  Taylor formula, one has
	\begin{equation}\label{es-f0}
		\|f_{0}\|_{q,s}^{\gamma,\mathcal{O}}\lesssim\varepsilon\left(1+\|\mathfrak{I}_{0}\|_{q,s+1}^{\gamma,\mathcal{O}}\right).
	\end{equation}
	These properties allow to apply \cite[Prop. 6.2]{HR21}, whose proof is based on a KAM iterative scheme reduction of the perturbation term $f_0$ and construct $\beta$ and $V_{i_0}^{\infty}.$ In particular, for any $n\in\mathbb{N},$ we are able to construct a Cantor set $\mathcal{O}_{\infty,n}^{\gamma,\tau_1}(i_0)$ in the form \eqref{def Cantor trans} in which the following reduction holds
	\begin{equation}\label{reduc lin trans op}
		\mathscr{B}^{-1}\big(\omega\cdot\partial_{\varphi}+\partial_{\theta}\big(V_{\varepsilon r}\cdot\big)\big)\mathscr{B}=\omega\cdot\partial_{\varphi}+V_{i_{0}}^{\infty}\partial_{\theta}+\mathtt{E}_{n}^{0},
	\end{equation}
where $\mathtt{E}_{n}^{0}$ is an operator enjoying the decay property stated in \eqref{estim En0 s0}.	
	\noindent Using \eqref{reduc lin trans op}, \eqref{dcp Lr}, \eqref{dcp Sr} and Lemma \ref{algeb1}-(i), one obtains in the Cantor set $\mathcal{O}_{\infty,n}^{\gamma,\tau_{1}}(i_{0})$ the following decomposition
	\begin{align*}
		\mathscr{B}^{-1}\mathcal{L}_{\varepsilon r}\mathscr{B}&=\mathscr{B}^{-1}\big(\omega\cdot\partial_{\varphi}+\partial_{\theta}\left(V_{\varepsilon r}\cdot\right)\big)\mathscr{B}+\mathscr{B}^{-1}\partial_{\theta}\mathbf{L}_{\varepsilon r}\mathscr{B}-\mathscr{B}^{-1}\partial_{\theta}\mathbf{S}_{\varepsilon r}\mathscr{B}\\
		&=\omega\cdot\partial_{\varphi}+V_{i_{0}}^{\infty}\partial_{\theta}+\mathscr{B}^{-1}\partial_{\theta}\big(\mathcal{K}_{1,b}\ast\cdot\big)\mathscr{B}+\mathscr{B}^{-1}\partial_{\theta}\mathbf{L}_{\varepsilon r,1}\mathscr{B}\\
		&\quad-\mathscr{B}^{-1}\partial_{\theta}\big(\mathcal{K}_{2,b}\ast\cdot\big)\mathscr{B}-\mathscr{B}^{-1}\partial_{\theta}\mathbf{S}_{\varepsilon r,1}\mathscr{B}+\mathtt{E}_{n}^{0}\\
		&=\omega\cdot\partial_{\varphi}+V_{i_{0}}^{\infty}\partial_{\theta}+\partial_{\theta}\mathcal{K}_{1,b}\ast\cdot-\partial_{\theta}\mathcal{K}_{2,b}\ast\cdot+\partial_{\theta}\mathfrak{R}_{\varepsilon r}+\mathtt{E}_{n}^{0},
	\end{align*}
	wehre
\begin{equation}\label{frk Rr}
	\mathfrak{R}_{\varepsilon r}:=\Big[\mathcal{B}^{-1}\big(\mathcal{K}_{1,b}\ast\cdot\big)\mathscr{B}-\mathcal{K}_{1,b}\ast\cdot\Big]-\Big[\mathcal{B}^{-1}\big(\mathcal{K}_{2,b}\ast\cdot\big)\mathscr{B}-\mathcal{K}_{2,b}\ast\cdot\Big]+\mathcal{B}^{-1}\mathbf{L}_{\varepsilon r,1}\mathscr{B}-\mathcal{B}^{-1}\mathbf{S}_{\varepsilon r,1}\mathscr{B}.
\end{equation}
Direct computations using \eqref{mathcalK1b} lead to
$$\mathcal{B}^{-1}(\mathcal{K}_{1,b}\ast(\mathscr{B}\rho))(\varphi,\theta)=\int_{\mathbb{T}}\rho(\varphi,\eta)\log(\mathscr{A}_{\widehat{\beta}}(\varphi,\theta,\eta))d\eta,$$
where
$$\mathscr{A}_{\widehat{\beta}}(\varphi,\theta,\eta):=\left|\sin\left(\tfrac{\eta-\theta}{2}+\widehat{h}(\varphi,\theta,\eta)\right)\right|\quad\textnormal{with}\quad \widehat{h}(\varphi,\theta,\eta):=\tfrac{\widehat{\beta}(\varphi,\theta)-\widehat{\beta}(\varphi,\eta)}{2}\cdot$$
 Using elementary trigonometric identities, we can write
$$\mathscr{A}_{\widehat{\beta}}(\varphi,\theta,\eta)=\left|\sin\left(\tfrac{\eta-\theta}{2}\right)\right|v_{\widehat{\beta}}(\varphi,\theta,\eta)\quad\textnormal{with}\quad v_{\widehat{\beta}}(\varphi,\theta,\eta):=\cos\big(\widehat{h}(\varphi,\theta,\eta)\big)+\tfrac{\sin\left(\widehat{h}(\varphi,\theta,\eta)\right)}{\tan\big(\tfrac{\eta-\theta}{2}\big)}\cdot$$
In view of \eqref{symmetry for beta}, one finds that $v_{\widehat{\beta}}$ enjoys the following symmetry property,
\begin{equation}\label{sym vbeta}
	v_{\widehat{\beta}}(-\varphi,-\theta,-\eta)=v_{\widehat{\beta}}(\varphi,\theta,\eta).
\end{equation}
Using the morphism property of the logarithm, one gets
$$\Big[\mathcal{B}^{-1}\big(\mathcal{K}_{1,b}\ast\rho\big)\mathscr{B}-\mathcal{K}_{1,b}\ast \rho\big] (\varphi,\theta)=\int_{\mathbb{T}}\rho(\varphi,\eta)\mathbb{K}_{\widehat{\beta},2}(\varphi,\theta,\eta)d\eta
$$
where 
\begin{equation}\label{def Kbeta2}
	\mathbb{K}_{\widehat{\beta},2}(\varphi,\theta,\eta):=\log\big(v_{\widehat{\beta}}(\varphi,\theta,\eta)\big).
\end{equation}
Notice that \eqref{def Kbeta2} and \eqref{sym vbeta} imply
\begin{equation}\label{sym bbK2}
	\mathbb{K}_{\widehat{\beta},2}(-\varphi,-\theta,-\eta)=\mathbb{K}_{\widehat{\beta},2}(\varphi,\theta,\eta)\in\mathbb{R}.
\end{equation}
Hence, we deduce from Lemma \ref{lemma symmetry and reversibility} that $\mathcal{B}^{-1}\big(\mathcal{K}_{1,b}\ast\cdot\big)\mathscr{B}-\mathcal{K}_{1,b}\ast\cdot$ is a real and reversibility preserving Toeplitz in time operator.
Writting 
$$v_{\widehat{\beta}}(\varphi,\theta,\eta)=1+\Big(\cos\big(\widehat{h}(\varphi,\theta,\eta)\big)-1\Big)+\tfrac{\sin\left(\widehat{h}(\varphi,\theta,\eta)\right)}{\tan\big(\tfrac{\eta-\theta}{2}\big)},$$
one finds, by Lemma \ref{Lem-lawprod}-(v), Lemma \ref{cheater lemma} and \eqref{estimate beta and r},
\begin{align*}
	\|v_{\widehat{\beta}}-1\|_{q,H_{\varphi,\theta,\eta}^{s}}^{\gamma,\mathcal{O}}&\lesssim\|\widehat{\beta}\|_{q,s+1}^{\gamma,\mathcal{O}}\\
	&\lesssim\varepsilon\gamma^{-1}\left(1+\|\mathfrak{I}_{0}\|_{q,s+1+\sigma_{1}}^{\gamma,\mathcal{O}}\right).
\end{align*}
Moreover, by \eqref{difference beta} and the Mean Value Theorem  (applied with $\mathtt{p}$ replaced by $\mathtt{p}+s_0+2$), we find
\begin{align*}
	\|\Delta_{12}v_{\widehat{\beta}}\|_{q,H_{\varphi,\theta,\eta}^{\overline{s}_h+\mathtt{p}+s_0+1}}^{\gamma,\mathcal{O}}&\lesssim\|\Delta_{12}\widehat{\beta}\|_{q,\overline{s}_h+\mathtt{p}+s_0+2}^{\gamma,\mathcal{O}}\\
	&\lesssim\varepsilon\gamma^{-1}\|\Delta_{12}i\|_{q,\overline{s}_h+\mathtt{p}+s_0+2+\sigma_{1}}.
\end{align*}
In a similar way, we deduce that
\begin{align}
	\|\mathbb{K}_{\widehat{\beta},2}\|_{q,H_{\varphi,\theta,\eta}^{s}}^{\gamma,\mathcal{O}}\lesssim\varepsilon\gamma^{-1}\left(1+\|\mathfrak{I}_{0}\|_{q,s+1+\sigma_{1}}^{\gamma,\mathcal{O}}\right), \label{e-Kb2h}\\
	\|\Delta_{12}\mathbb{K}_{\widehat{\beta},2}\|_{q,H_{\varphi,\theta,\eta}^{\overline{s}_h+\mathtt{p}+s_0+1}}^{\gamma,\mathcal{O}}\lesssim\varepsilon\gamma^{-1}\|\Delta_{12}i\|_{q,\overline{s}_h+\mathtt{p}+s_0+2+\sigma_{1}}^{\gamma,\mathcal{O}}.\label{ed-Kb2h}
\end{align}
In view of Lemma \ref{lemma symmetry and reversibility} we get, from \eqref{e-Kb2h} and \eqref{ed-Kb2h},
\begin{align}
	\max_{k\in\{0,1,2\}}\Big\|\partial_{\theta}^{k}\Big[\mathcal{B}^{-1}\big(\mathcal{K}_{1,b}\ast\cdot\big)\mathscr{B}-\mathcal{K}_{1,b}\ast\cdot\Big]\Big\|_{\textnormal{\tiny{O-d}},q,s}^{\gamma,\mathcal{O}}\lesssim\varepsilon\gamma^{-1}\left(1+\|\mathfrak{I}_0\|_{q,s+s_0+3+\sigma_{1}}^{\gamma,\mathcal{O}}\right),\label{e-cLr2}
\\
	\max_{k\in\{0,1\}}\Big\|\Delta_{12}\partial_{\theta}^{k}\Big[\mathcal{B}^{-1}\big(\mathcal{K}_{1,b}\ast\cdot\big)\mathscr{B}-\mathcal{K}_{1,b}\ast\cdot\Big]\Big\|_{\textnormal{\tiny{O-d}},q,\overline{s}_h+\mathtt{p}}^{\gamma,\mathcal{O}}\lesssim\varepsilon\gamma^{-1}\|\Delta_{12}i\|_{q,\overline{s}_h+\mathtt{p}+s_0+2+\sigma_{1}}^{\gamma,\mathcal{O}}.\label{ed-cLr2}
\end{align}
According to \eqref{mathcalK2b}, one finds
\begin{align*}
	\mathcal{B}^{-1}\big(\mathcal{K}_{2,b}\ast(\mathscr{B}\rho)\big)(\varphi,\theta)-\mathcal{K}_{2,b}\ast\rho(\varphi,\theta)=\int_{\mathbb{T}}\rho(\varphi,\eta)\mathscr{K}_{\widehat{\beta},2}(\varphi,\theta,\eta)d\eta,
\end{align*}
with
\begin{align*}
	\mathscr{K}_{\widehat{\beta},2}(\varphi,\theta,\eta)&:=\tfrac{1}{2}\Big[\log\big(1+b^4-2b^2\cos(\eta-\theta+\widehat{h}(\varphi,\theta,\eta))\big)-\log\big(1+b^4-2b^2\cos(\eta-\theta)\big)\Big]\\
	&=\tfrac{1}{2}\log\left(\tfrac{1+b^4-2b^2\cos\big(\eta-\theta+\widehat{h}(\varphi,\theta,\eta)\big)}{1+b^4-2b^2\cos(\eta-\theta)}\right).
\end{align*}
From \eqref{symmetry for beta}, we deduce that
\begin{equation}\label{sym bbK3}
	\mathscr{K}_{\widehat{\beta},2}(-\varphi,-\theta,-\eta)=\mathscr{K}_{\widehat{\beta},2}(\varphi,\theta,\eta)\in\mathbb{R}.
\end{equation}
It follows from Lemma \ref{lemma symmetry and reversibility} that $\mathcal{B}^{-1}\big(\mathcal{K}_{2,b}\ast\cdot\big)\mathscr{B}-\mathcal{K}_{2,b}\ast\cdot$ is a real and reversibility preserving Toeplitz in time operator. 
Arguying as for \eqref{e-scrK} and using \eqref{estimate beta and r}, we obtain
\begin{align}\label{e-Kb3h}
	\|\mathscr{K}_{\widehat{\beta},2}\|_{q,H_{\varphi,\theta,\eta}^{s}}^{\gamma,\mathcal{O}}&\lesssim\|\widehat{\beta}\|_{q,s}^{\gamma,\mathcal{O}}\nonumber\\
	&\lesssim\varepsilon\gamma^{-1}\left(1+\|\mathfrak{I}_0\|_{q,s+\sigma_{1}}^{\gamma,\mathcal{O}}\right).
\end{align}
Using Mean Value theorem, applied with $\mathtt{p}$ replaced by $\mathtt{p}+s_0+1$, one also gets by \eqref{difference beta}
\begin{align}\label{ed-Kb3h}
	\|\Delta_{12}\mathscr{K}_{\widehat{\beta},2}\|_{q,H_{\varphi,\theta,\eta}^{\overline{s}_h+\mathtt{p}+s_0+1}}^{\gamma,\mathcal{O}}&\lesssim\|\Delta_{12}\widehat{\beta}\|_{q,\overline{s}_h+\mathtt{p}+s_0+1}^{\gamma,\mathcal{O}}\nonumber\\
	&\lesssim\varepsilon\gamma^{-1}\|\Delta_{12}i\|_{q,\overline{s}_h+\mathtt{p}+s_0+1+\sigma_1}^{\gamma,\mathcal{O}}.
\end{align}
Consequently, in view of Lemma \ref{lemma symmetry and reversibility}, we get from \eqref{e-Kb3h}
\begin{equation}\label{e-cLr3}
	\max_{k\in\{0,1,2\}}\Big\|\partial_{\theta}^{k}\Big[\mathcal{B}^{-1}\big(\mathcal{K}_{2,b}\ast\cdot\big)\mathscr{B}-\mathcal{K}_{2,b}\ast\cdot\Big]\Big\|_{\textnormal{\tiny{O-d}},q,s}^{\gamma,\mathcal{O}}\lesssim\varepsilon\gamma^{-1}\left(1+\|\mathfrak{I}_0\|_{q,s+s_0+2+\sigma_{1}}^{\gamma,\mathcal{O}}\right)
\end{equation}
and from \eqref{ed-Kb3h}
\begin{equation}\label{ed-cLr3}
	\max_{k\in\{0,1\}}\Big\|\Delta_{12}\partial_{\theta}^{k}\Big[\mathcal{B}^{-1}\big(\mathcal{K}_{2,b}\ast\cdot\big)\mathscr{B}-\mathcal{K}_{2,b}\ast\cdot\Big]\Big\|_{\textnormal{\tiny{O-d}},q,\overline{s}_h+\mathtt{p}}^{\gamma,\mathcal{O}}\lesssim\varepsilon\gamma^{-1}\|\Delta_{12}i\|_{q,\overline{s}_h+\mathtt{p}+s_0+1+\sigma_{1}}^{\gamma,\mathcal{O}}.
\end{equation}
Next, putting together \eqref{symmetry kernel K1}, \eqref{symmetry for beta} and   Lemma \ref{lem CVAR kernel}, we infer that $\mathcal{B}^{-1}\mathbf{L}_{\varepsilon r,1}\mathscr{B}$ is a real and reversibility preserving Toeplitz in time operator. Moreover, we obtain from \eqref{e-odsBtB} in  Lemma \ref{lem CVAR kernel}, \eqref{estimate beta and r}, \eqref{estimate kernel mathbbK1}, \eqref{estimate r and mathfrakI0} and the smallness condition \eqref{small-2},
\begin{align}\label{e-cLr1}
	\max_{k\in\{0,1,2\}}\|\partial_{\theta}^{k}\mathcal{B}^{-1}\mathbf{L}_{\varepsilon r,1}\mathscr{B}\|_{\textnormal{\tiny{O-d}},q,s}^{\gamma,\mathcal{O}}&\lesssim\|\mathbb{K}_{\varepsilon r,1}\|_{q,H_{\varphi,\theta,\eta}^{s+s_0+2}}^{\gamma,\mathcal{O}}+\|\beta\|_{q,s+s_0+2}^{\gamma,\mathcal{O}}\|\mathbb{K}_{\varepsilon r,1}\|_{q,H_{\varphi,\theta,\eta}^{s_0}}^{\gamma,\mathcal{O}}\nonumber\\
	&\lesssim\varepsilon\gamma^{-1}\left(1+\|\mathfrak{I}_0\|_{q,s+s_0+2+\sigma_{1}}^{\gamma,\mathcal{O}}\right).
\end{align} 
Applying Lemma \ref{Lem-lawprod}, we get from \eqref{expression of Ar with vr1}, Lemma \ref{cheater lemma} and \eqref{control difference r},
\begin{align*}
	\|\Delta_{12}\mathbb{K}_{\varepsilon r,1}\|_{q,H_{\varphi,\theta,\eta}^s}^{\gamma,\mathcal{O}}&\lesssim\|\Delta_{12}v_{\varepsilon r,1}\|_{q,H_{\varphi,\theta,\eta}^s}^{\gamma,\mathcal{O}}\\
	&\lesssim\varepsilon\gamma^{-1}\Big(\|\Delta_{12}i\|_{q,s+1}^{\gamma,\mathcal{O}}+\|\Delta_{12}i\|_{q,s_0}^{\gamma,\mathcal{O}}\max_{j\in\{1,2\}}\|\mathfrak{I}_j\|_{q,s+1}^{\gamma,\mathcal{O}}\Big).
\end{align*}
Added to Lemma \ref{lem CVAR kernel}-(ii), \eqref{difference beta}, \eqref{estimate beta and r}, \eqref{estimate kernel mathbbK1} and \eqref{small-2}, we infer 
 \begin{align}\label{ed-cLr1}
 	\max_{k\in\{0,1\}}\|\Delta_{12}\partial_{\theta}^{k}\mathcal{B}^{-1}\mathbf{L}_{\varepsilon r,1}\mathscr{B}\|_{\textnormal{\tiny{O-d}},q,\overline{s}_h+\mathtt{p}}^{\gamma,\mathcal{O}}&\lesssim\varepsilon\gamma^{-1}\|\Delta_{12}i\|_{q,\overline{s}_h+\mathtt{p}+s_0+1+\sigma_{1}}^{\gamma,\mathcal{O}}.
 \end{align}
The next task is to estimate the term $\mathcal{B}^{-1}\mathbf{S}_{\varepsilon r,1}\mathscr{B}$ in \eqref{frk Rr}.  Note that \eqref{sym scrK}, \eqref{symmetry for beta} and Lemma \ref{lem CVAR kernel} imply that $\mathcal{B}^{-1}\mathbf{S}_{\varepsilon r,1}\mathscr{B}$ is a real and reversibility preserving Toeplitz in time operator. In addition, Lemma \ref{lem CVAR kernel} together with the estimates \eqref{e-scrK}, \eqref{estimate r and mathfrakI0} and \eqref{estimate beta and r} give
\begin{align}\label{e-cLr4}
	\max_{k\in\{0,1,2\}}\|\partial_{\theta}^{k}\mathcal{B}^{-1}\mathbf{S}_{\varepsilon r,1}\mathscr{B}\|_{\textnormal{\tiny{O-d}},q,s}^{\gamma,\mathcal{O}}&\lesssim\|\mathscr{K}_{\varepsilon r,1}\|_{q,H_{\varphi,\theta,\eta}^{s+2}}^{\gamma,\mathcal{O}}+\|\beta\|_{q,s+2}^{\gamma,\mathcal{O}}\|\mathscr{K}_{\varepsilon r,1}\|_{q,H_{\varphi,\theta,\eta}^{s_0}}^{\gamma,\mathcal{O}}\nonumber\\
	&\lesssim\varepsilon\gamma^{-1}\left(1+\|\mathfrak{I}_0\|_{q,s+s_0+2+\sigma_{1}}^{\gamma,\mathcal{O}}\right).
\end{align} 
Applying Lemma \ref{Lem-lawprod}, we get from \eqref{scrK} and \eqref{control difference r},
\begin{align*}
	\|\Delta_{12}\mathscr{K}_{\varepsilon r,1}\|_{q,H_{\varphi,\theta,\eta}^s}^{\gamma,\mathcal{O}}&\lesssim\varepsilon\gamma^{-1}\Big(\|\Delta_{12}i\|_{q,s}^{\gamma,\mathcal{O}}+\|\Delta_{12}i\|_{q,s_0}^{\gamma,\mathcal{O}}\max_{i\in\{1,2\}}\|\mathfrak{I}_i\|_{q,s}^{\gamma,\mathcal{O}}\Big).
\end{align*}
Then, combining Lemma \ref{lem CVAR kernel}-(ii), \eqref{difference beta}, \eqref{estimate beta and r}, \eqref{e-scrK} and \eqref{small-2}, we get
\begin{align}\label{ed-cLr4}
	\max_{k\in\{0,1\}}\|\Delta_{12}\partial_{\theta}^{k}\mathcal{B}^{-1}\mathbf{S}_{\varepsilon r,1}\mathscr{B}\|_{\textnormal{\tiny{O-d}},q,\overline{s}_h+\mathtt{p}}^{\gamma,\mathcal{O}}&\lesssim\varepsilon\gamma^{-1}\|\Delta_{12}i\|_{q,\overline{s}_h+\mathtt{p}+s_0+1+\sigma_{1}}^{\gamma,\mathcal{O}}.
\end{align}
In view of \eqref{frk Rr}, Lemma \ref{lem CVAR kernel} and the previous computations, we conclude that  $\mathfrak{R}_{\varepsilon r}$ is a real and reversibility preserving toeplitz in time integral operator which satisfies, by \eqref{e-cLr2}, \eqref{e-cLr3}, \eqref{e-cLr1} and \eqref{e-cLr4},
\begin{align*}
	\max_{k\in\{0,1,2\}}\|\partial_{\theta}^{k}\mathfrak{R}_{\varepsilon r}\|_{\textnormal{\tiny{O-d}},q,s}^{\gamma,\mathcal{O}}&\lesssim\varepsilon\gamma^{-1}\left(1+\|\mathfrak{I}_0\|_{q,s+s_0+3+\sigma_{1}}^{\gamma,\mathcal{O}}\right).
\end{align*}
In addition, combining \eqref{ed-cLr2}, \eqref{ed-cLr3}, \eqref{ed-cLr1} and \eqref{ed-cLr4} yields
\begin{align*}
	\max_{k\in\{0,1\}}\|\Delta_{12}\partial_{\theta}^{k}\mathfrak{R}_{\varepsilon r}\|_{\textnormal{\tiny{O-d}},q,\overline{s}_{h}+\mathtt{p}}^{\gamma,\mathcal{O}}&\lesssim\varepsilon\gamma^{-1}\|\Delta_{12}i\|_{q,\overline{s}_{h}+\mathtt{p}+s_0+2+\sigma_{1}}^{\gamma,\mathcal{O}}.
\end{align*}
This ends the proof of Proposition \ref{reduction of the transport part}.
\end{proof}
\subsubsection{Projection in the normal directions}
In this section, we study the effects of the localization in the normal directions for the reduction of the transport part. For that purpose, we consider the localized quasi-periodic symplectic change of coordinates defined by
$$\mathscr{B}_{\perp}=\Pi_{\mathbb{S}_0}^{\perp}\mathscr{B}\Pi_{\mathbb{S}_{0}}^{\perp}.$$
Then, the main result of this section reads as follows.
\begin{prop}\label{projection in the normal directions}
	Let $(\gamma,q,d,\tau_{1},s_{0},s_{h},\overline{s}_{h},\mathtt{p},S)$ satisfy the assumptions \eqref{initial parameter condition}, \eqref{setting tau1 and tau2} and \eqref{param-trans}.\\
	There exist $\varepsilon_0>0$ and $\sigma_{3}=\sigma_{3}(\tau_{1},q,d,s_{0})\geqslant\sigma_{2}$ such that if 
	\begin{equation}\label{small-3}
		\varepsilon\gamma^{-1}N_0^{\mu_2}\leqslant\varepsilon\quad\textnormal{and}\quad\|\mathfrak{I}_0\|_{q,s_h+\sigma_3}^{\gamma,\mathcal{O}}\leqslant 1,
	\end{equation}
	then the following assertions hold true.
	\begin{enumerate}[label=(\roman*)]
		\item The operators $\mathscr{B}_{\perp}^{\pm 1}$ satisfy the following estimate
		\begin{equation}\label{estimate for mathscrBperp and its inverse}
			\|\mathscr{B}_{\perp}^{\pm 1}\rho\|_{q,s}^{\gamma ,\mathcal{O}}\lesssim\|\rho\|_{q,s}^{\gamma ,\mathcal{O}}+\varepsilon\gamma ^{-1}\| \mathfrak{I}_{0}\|_{q,s+\sigma_{3}}^{\gamma ,\mathcal{O}}\|\rho\|_{q,s_{0}}^{\gamma ,\mathcal{O}}.
		\end{equation}
		\item For any $n\in\mathbb{N}^{*},$ in the Cantor set $\mathcal{O}_{\infty,n}^{\gamma,\tau_{1}}(i_{0})$ introduced in Proposition \ref{reduction of the transport part}, we have 
		\begin{align*}
			\mathscr{B}_{\perp}^{-1}\widehat{\mathcal{L}}_{\omega}\mathscr{B}_{\perp}&=\big(\omega\cdot\partial_{\varphi}+V_{i_{0}}^{\infty}\partial_{\theta}+\partial_{\theta}\mathcal{K}_{b}\ast\cdot\big)\Pi_{\mathbb{S}_0}^{\perp}+\mathscr{R}_{0}+\mathtt{E}_{n}^{1}\\
			&:=\omega\cdot\partial_{\varphi}\Pi_{\mathbb{S}_0}^{\perp}+\mathscr{D}_{0}+\mathscr{R}_{0}+\mathtt{E}_{n}^{1}\\
			&:=\mathscr{L}_{0}+\mathtt{E}_{n}^{1},
		\end{align*}
		where $\mathscr{R}_{0}=\Pi_{\mathbb{S}_0}^\perp \mathscr{R}_{0}\Pi_{\mathbb{S}_0}^\perp$ is reversible and $\mathscr{D}_{0}=\Pi_{\mathbb{S}_0}^\perp \mathscr{D}_{0}\Pi_{\mathbb{S}_0}^\perp$ is a reversible Fourier multiplier operator given by 
		$$
		\forall (l,j)\in \mathbb{Z}^{d}\times\mathbb{S}_{0}^{c},\quad  \mathscr{D}_{0}\mathbf{e}_{l,j}=\ii\,\mu_{j}^{0}\,\mathbf{e}_{l,j},$$
		with 
		\begin{equation}\label{def mu0 and r1}
			\mu_{j}^{0}(b,\omega,i_{0})=\Omega_{j}(b)+jr^{1}(b,\omega,i_{0})\quad\mbox{ and }\quad r^{1}(b,\omega,i_{0})=V_{i_{0}}^{\infty}(b,\omega)-\tfrac{1}{2}
		\end{equation}
		and such that
		\begin{equation}\label{differences mu0}
			\|r^1\|_{q}^{\gamma,\mathcal{O}}\lesssim \varepsilon \quad\textnormal{and}\quad \|\Delta_{12}r^{1}\|_{q}^{\gamma,\mathcal{O}}\lesssim \varepsilon \| \Delta_{12}i\|_{q,\overline{s}_{h}+2}^{\gamma,\mathcal{O}}.
		\end{equation}
		\item The operator $\mathtt{E}_{n}^{1}$ satisfies the following estimate
		\begin{equation}\label{En1-s0}
			\|\mathtt{E}_{n}^{1}\rho\|_{q,s_{0}}^{\gamma,\mathcal{O}}\lesssim \varepsilon N_{0}^{\mu_{2}}N_{n+1}^{-\mu_{2}}\|\rho\|_{q,s_{0}+2}^{\gamma,\mathcal{O}}.
		\end{equation}
		\item The operator $\mathscr{R}_{0}$ is a real and reversible Toeplitz in time operator satisfying 
		\begin{equation}\label{estimate mathscr R in off diagonal norm}
			\forall s\in [s_{0},S],\quad \max_{k\in\{0,1\}}\|\partial_{\theta}^{k}\mathscr{R}_{0}\|_{\textnormal{\tiny{O-d}},q,s}^{\gamma,\mathcal{O}}\lesssim\varepsilon\gamma^{-1}\left(1+\| \mathfrak{I}_{0}\|_{q,s+\sigma_{3}}^{\gamma,\mathcal{O}}\right)
		\end{equation}
		and
		\begin{equation}\label{estimate differences mathscr R in off diagonal norm}
			\|\Delta_{12}\mathscr{R}_{0}\|_{\textnormal{\tiny{O-d}},q,\overline{s}_{h}+\mathtt{p}}^{\gamma,\mathcal{O}}\lesssim\varepsilon\gamma^{-1}\| \Delta_{12}i\|_{q,\overline{s}_{h}+\mathtt{p}+\sigma_{3}}^{\gamma,\mathcal{O}}.
		\end{equation}
		\item Furthermore the operator $\mathscr{L}_{0}$ satisfies
		\begin{equation}\label{Taptap1}
			\forall s\in[s_{0},S],\quad\|\mathscr{L}_{0}\rho\|_{q,s}^{\gamma ,\mathcal{O}}\lesssim\|\rho\|_{q,s+1}^{\gamma ,\mathcal{O}}+\varepsilon\gamma ^{-1}\| \mathfrak{I}_{0}\|_{q,s+\sigma_{3}}^{\gamma ,\mathcal{O}}\|\rho\|_{q,s_{0}}^{\gamma ,\mathcal{O}}.
		\end{equation}
	\end{enumerate}
\end{prop}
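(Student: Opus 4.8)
The plan is to obtain Proposition \ref{projection in the normal directions} by localizing the already-established decomposition of Proposition \ref{reduction of the transport part} to the normal subspace $L^2_\perp$ and carefully bookkeeping the commutator errors produced by the projectors. Recall that on the whole space we have, in the Cantor set $\mathcal{O}_{\infty,n}^{\gamma,\tau_1}(i_0)$,
\[
\mathfrak{L}_{\varepsilon r}=\mathscr{B}^{-1}\mathcal{L}_{\varepsilon r}\mathscr{B}=\omega\cdot\partial_{\varphi}+V_{i_0}^{\infty}\partial_{\theta}+\partial_{\theta}\mathcal{K}_{b}\ast\cdot+\partial_{\theta}\mathfrak{R}_{\varepsilon r}+\mathtt{E}_n^{0},
\]
and that by Proposition \ref{prop hat L omega} one has $\widehat{\mathcal{L}}_{\omega}=\Pi_{\mathbb{S}_0}^{\perp}\big(\mathcal{L}_{\varepsilon r}-\varepsilon\partial_{\theta}\mathcal{R}\big)\Pi_{\mathbb{S}_0}^{\perp}$. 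The first step is to write $\mathscr{B}_\perp^{-1}\widehat{\mathcal{L}}_\omega\mathscr{B}_\perp$ and insert $\mathrm{Id}=\Pi_{\mathbb{S}_0}^{\perp}+\Pi_{\overline{\mathbb{S}}}$ between factors; the ``diagonal'' contribution is $\Pi_{\mathbb{S}_0}^{\perp}\mathfrak{L}_{\varepsilon r}\Pi_{\mathbb{S}_0}^{\perp}$ restricted suitably, and all remaining pieces are finite-rank in $\varphi$-Fourier-localized sense or involve $\Pi_{\overline{\mathbb{S}}}$ sandwiched between $\mathscr{B}^{\pm1}$ and the operators, hence are smoothing and small. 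This is exactly the scheme of \cite[Sec.~6]{HHM21} and \cite[Prop.~6.3]{HR21}, so I would invoke those computations rather than redo them.

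Concretely, the steps are: \textbf{(1)} Establish \eqref{estimate for mathscrBperp and its inverse} for $\mathscr{B}_\perp^{\pm1}$ directly from \eqref{estimate on the first reduction operator and its inverse} in Proposition \ref{reduction of the transport part} together with the boundedness of $\Pi_{\mathbb{S}_0}^{\perp}$ on weighted Sobolev spaces (the projector is $\varphi$-independent and acts only in $\theta$, so it commutes with $\partial_\mu^\alpha$ and is a contraction on $H^s$). One must also check that $\mathscr{B}_\perp^{-1}$ equals $\Pi_{\mathbb{S}_0}^{\perp}\mathscr{B}^{-1}\Pi_{\mathbb{S}_0}^{\perp}$ up to a remainder controlled by $\|\beta\|_{q,s_0}^{\gamma,\mathcal O}$, which is where the Neumann series argument and the smallness \eqref{small-3} enter. \textbf{(2)} Compute $\mathscr{B}_\perp^{-1}\widehat{\mathcal{L}}_\omega\mathscr{B}_\perp$: the transport and zero-order parts of $\Pi_{\mathbb{S}_0}^{\perp}\mathcal{L}_{\varepsilon r}\Pi_{\mathbb{S}_0}^{\perp}$ conjugate, by Step 1 and Proposition \ref{reduction of the transport part}, to $\Pi_{\mathbb{S}_0}^{\perp}\big(\omega\cdot\partial_\varphi+V_{i_0}^\infty\partial_\theta+\partial_\theta\mathcal{K}_b\ast\cdot\big)\Pi_{\mathbb{S}_0}^{\perp}+\partial_\theta\Pi_{\mathbb{S}_0}^{\perp}\mathfrak{R}_{\varepsilon r}\Pi_{\mathbb{S}_0}^{\perp}+(\text{commutator remainders})$, while the term $-\varepsilon\partial_\theta\Pi_{\mathbb{S}_0}^{\perp}\mathcal{R}\Pi_{\mathbb{S}_0}^{\perp}$ conjugates to a further smoothing contribution via Lemma \ref{lem CVAR kernel} applied to the kernel $J$ of $\mathcal{R}$ and its estimates \eqref{estimate J}--\eqref{differences J}. \textbf{(3)} Identify $\mathscr{D}_0$: since $\mathcal{K}_b\ast\cdot$ is the Fourier multiplier with symbol $-\Omega_j(b)/j$ (Lemma \ref{lemma linearized operator at equilibrium}) and $V_{i_0}^\infty$ is a function of $(b,\omega)$ only (independent of $\varphi,\theta$), the operator $\big(V_{i_0}^\infty\partial_\theta+\partial_\theta\mathcal{K}_b\ast\cdot\big)\Pi_{\mathbb{S}_0}^{\perp}$ is a Fourier multiplier with eigenvalue $\ii\big(jV_{i_0}^\infty(b,\omega)+jr\text{-part}\big)=\ii\big(\Omega_j(b)+j(V_{i_0}^\infty-\tfrac12)\big)=\ii\mu_j^0$; this gives \eqref{def mu0 and r1}, and \eqref{differences mu0} follows immediately from \eqref{estimate r1} and \eqref{difference ci}. \textbf{(4)} Set $\mathscr{R}_0:=\partial_\theta\Pi_{\mathbb{S}_0}^{\perp}\mathfrak{R}_{\varepsilon r}\Pi_{\mathbb{S}_0}^{\perp}-\varepsilon\partial_\theta\Pi_{\mathbb{S}_0}^{\perp}\mathcal{R}_{\text{conj}}\Pi_{\mathbb{S}_0}^{\perp}$ plus the commutator remainders coming from $\Pi_{\overline{\mathbb{S}}}$, verify it is real and reversible by Lemma \ref{lemma symmetry and reversibility} (using the symmetry properties \eqref{symmetry for beta}, the reversibility-preserving property of $\mathfrak{R}_{\varepsilon r}$, and \eqref{symmetry for the kernel J}), and derive \eqref{estimate mathscr R in off diagonal norm}--\eqref{estimate differences mathscr R in off diagonal norm} from \eqref{e-frakR}, \eqref{e-dfrakR}, \eqref{estimate J}, \eqref{differences J}, with $\sigma_3$ chosen to absorb $\sigma_2$, the extra loss from $\mathcal{R}$ (which costs $3$ derivatives, cf.\ \eqref{estimate J}), and the two $\theta$-derivatives. \textbf{(5)} Collect all $N_{n+1}$-decaying pieces into $\mathtt{E}_n^1$: this includes $\mathscr{B}_\perp^{-1}\mathtt{E}_n^0\mathscr{B}_\perp$ and the contributions from the tail $\Pi_N^\perp$ of the various integral operators on the Cantor set $\mathcal{O}_{\infty,n}^{\gamma,\tau_1}(i_0)$; \eqref{En1-s0} follows from \eqref{estim En0 s0}, \eqref{estimate for mathscrBperp and its inverse} and the smallness conditions. \textbf{(6)} Finally \eqref{Taptap1} is a direct consequence of the decomposition: $\omega\cdot\partial_\varphi\Pi_{\mathbb{S}_0}^{\perp}$ and $\mathscr{D}_0$ lose one derivative with bounded coefficients, while $\mathscr{R}_0$ and $\mathtt{E}_n^1$ are lower order with the stated tame bounds, so summing gives the claimed estimate with the gain in $\mathfrak{I}_0$-dependence from \eqref{estimate mathscr R in off diagonal norm} and Lemma \ref{properties of Toeplitz in time operators}-(iv).

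The main obstacle I anticipate is Step 2, the precise bookkeeping of the commutators $[\Pi_{\mathbb{S}_0}^{\perp},\mathscr{B}^{\pm1}]$ and the mixed terms $\Pi_{\mathbb{S}_0}^{\perp}\mathscr{B}^{-1}\Pi_{\overline{\mathbb{S}}}(\cdots)\Pi_{\overline{\mathbb{S}}}\mathscr{B}\Pi_{\mathbb{S}_0}^{\perp}$: one has to argue that these are genuinely smoothing (the key point being that $\Pi_{\overline{\mathbb{S}}}$ has finite rank in $\theta$, so sandwiching any bounded operator between two copies of it yields a finite-rank-in-$\theta$, hence $OPS^{-\infty}$, operator whose size is controlled by $\|\beta\|$ and therefore by $\varepsilon\gamma^{-1}$) and that they respect the reversibility structure. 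This is precisely the content of \cite[Sec.~6]{HHM21} and the analogous step in \cite{HR21}, so I would state the needed decomposition carefully and then refer to those references for the routine commutator estimates, emphasizing only the points where the presence of the extra boundary operator $\mathbf{S}_{\varepsilon r}$ and the disc kernel $\mathcal{K}_{2,b}$ require the mild modifications already handled in Proposition \ref{reduction of the transport part}.
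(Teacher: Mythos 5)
Your proposal follows essentially the same route as the paper: localize the decomposition of Proposition \ref{reduction of the transport part} to the normal subspace via $\mathscr{B}_{\perp}=\Pi_{\mathbb{S}_0}^{\perp}\mathscr{B}\Pi_{\mathbb{S}_0}^{\perp}$, insert $\textnormal{Id}=\Pi_{\mathbb{S}_0}+\Pi_{\mathbb{S}_0}^{\perp}$ to isolate the finite-rank commutator pieces, identify $\mathscr{D}_0$ as the Fourier multiplier with spectrum $\ii\mu_j^{0}=\ii\big(\Omega_j(b)+j(V_{i_0}^{\infty}-\tfrac12)\big)$, set $\mathtt{E}_n^1$ to be the conjugated $\mathtt{E}_n^0$-term, absorb everything else into $\mathscr{R}_0$, and defer the technical off-diagonal and difference estimates for $\mathscr{R}_0$ (the duality representation of $\mathscr{B}_{\perp}^{\pm1}$ in terms of $\mathscr{B}^{\pm1}$) to \cite{HHM21,HR21}, exactly as the paper does. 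The minor deviations (e.g.\ writing $\mathtt{E}_n^1$ as $\mathscr{B}_{\perp}^{-1}\mathtt{E}_n^0\mathscr{B}_{\perp}$ rather than $\mathscr{B}_{\perp}^{-1}\Pi_{\mathbb{S}_0}^{\perp}\mathscr{B}\mathtt{E}_n^0\Pi_{\mathbb{S}_0}^{\perp}$) are inconsequential for the argument and the stated estimates.
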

\begin{proof}
	\textbf{(i)} Follows from \eqref{estimate on the first reduction operator and its inverse} and Lemma \ref{Lem-lawprod}-(ii).\\
	\textbf{(ii)} From \eqref{hat-L-omega} and the decomposition $\textnormal{Id}=\Pi_{\mathbb{S}_{0}}+\Pi_{\mathbb{S}_{0}}^{\perp}$  we write
	\begin{align*}
		\mathscr{B}_{\perp}^{-1}\widehat{\mathcal{L}}_{\omega}\mathscr{B}_{\perp}&=\mathscr{B}_{\perp}^{-1}\Pi_{\mathbb{S}_{0}}^{\perp}(\mathcal{L}_{\varepsilon r}-\varepsilon\partial_{\theta}\mathcal{R})\mathscr{B}_{\perp}
		\\
		&=\mathscr{B}_{\perp}^{-1}\Pi_{\mathbb{S}_{0}}^{\perp}\mathcal{L}_{\varepsilon r}\mathscr{B}\Pi_{\mathbb{S}_{0}}^{\perp}-\mathscr{B}_{\perp}^{-1}\Pi_{\mathbb{S}_{0}}^{\perp}\mathcal{L}_{\varepsilon r}\Pi_{\mathbb{S}_{0}}\mathscr{B}\Pi_{\mathbb{S}_{0}}^{\perp}-\varepsilon\mathscr{B}_{\perp}^{-1}\Pi_{\mathbb{S}_{0}}^{\perp}\partial_{\theta}\mathcal{R}\mathscr{B}_{\perp}.
	\end{align*}
	According to the definitions of $\mathfrak{L}_{\varepsilon r}$ and $\mathcal{L}_{\varepsilon r}$ seen  in Proposition \ref{reduction of the transport part} and in Lemma \ref{lemma general form of the linearized operator} and using \eqref{dcp Lr}, \eqref{dcp Sr} and \eqref{mathcalKb}, one has in the Cantor set $\mathcal{O}_{\infty,n}^{\gamma,\tau_{1}}(i_{0})$
	$$
	\mathcal{L}_{\varepsilon r}\mathscr{B}=\mathscr{B}\mathfrak{L}_{\varepsilon r}\quad\hbox{and}\quad
	\mathcal{L}_{\varepsilon r}=\omega\cdot\partial_{\varphi}+\partial_{\theta}\left(V_{\varepsilon r}\cdot\right)+\partial_{\theta}\mathcal{K}_{b}\ast\cdot+\partial_{\theta}\mathbf{L}_{\varepsilon r,1}-\partial_{\theta}\mathbf{S}_{\varepsilon r,1}$$
	and therefore
	\begin{align*}
		\mathscr{B}_{\perp}^{-1}\widehat{\mathcal{L}}_{\omega}\mathscr{B}_{\perp}=&\mathscr{B}_{\perp}^{-1}\Pi_{\mathbb{S}_{0}}^{\perp}\mathscr{B}\mathfrak{L}_{\varepsilon r}\Pi_{\mathbb{S}_{0}}^{\perp}-\mathscr{B}_{\perp}^{-1}\Pi_{\mathbb{S}_{0}}^{\perp}\left(\partial_{\theta}\left(V_{\varepsilon r}\cdot\right)+\partial_{\theta}\mathbf{L}_{\varepsilon r,1}-\partial_{\theta}\mathbf{S}_{\varepsilon r,1}\right)\Pi_{\mathbb{S}_{0}}\mathscr{B}\Pi_{\mathbb{S}_{0}}^{\perp}-\varepsilon\mathscr{B}_{\perp}^{-1}\partial_{\theta}\mathcal{R}\mathscr{B}_{\perp},
	\end{align*}
	where we have used the identities
	$$
	\mathscr{B}_{\perp}^{-1}\Pi_{\mathbb{S}_{0}}^{\perp}=\mathscr{B}_{\perp}^{-1}\quad\hbox{and}\quad [\Pi_{\mathbb{S}_{0}}^{\perp},T]=0=[\Pi_{\mathbb{S}_{0}},T], 
	$$
	for any  Fourier multiplier $T$. The structure of $\mathfrak{L}_{\varepsilon r}$ is detailed  in Proposition \ref{reduction of the transport part}, and from this we deduce that
	\begin{align*}\Pi_{\mathbb{S}_{0}}^{\perp}\mathscr{B}\mathfrak{L}_{\varepsilon r}\Pi_{\mathbb{S}_{0}}^{\perp}&=\Pi_{\mathbb{S}_{0}}^{\perp}\mathscr{B}\big(\omega\cdot\partial_{\varphi}+V_{i_{0}}^{\infty}\partial_{\theta}+\partial_{\theta}\mathcal{K}_{b}\ast\cdot+\partial_{\theta}\mathfrak{R}_{\varepsilon r}+\mathtt{E}_{n}^{0}\big)\Pi_{\mathbb{S}_{0}}^{\perp}\\
		&= \Pi_{\mathbb{S}_{0}}^{\perp}\mathscr{B}\Pi_{\mathbb{S}_{0}}^{\perp}\big(\omega\cdot\partial_{\varphi}+V_{i_{0}}^{\infty}\partial_{\theta}+\partial_{\theta}\mathcal{K}_{b}\ast\cdot\big)+\Pi_{\mathbb{S}_{0}}^{\perp}\mathscr{B}\partial_{\theta}\mathfrak{R}_{\varepsilon r}\Pi_{\mathbb{S}_{0}}^{\perp}+\Pi_{\mathbb{S}_{0}}^{\perp}\mathscr{B}\mathtt{E}_{n}^{0}\Pi_{\mathbb{S}_{0}}^{\perp}\\
		&=\mathscr{B}_\perp\big(\omega\cdot\partial_{\varphi}+V_{i_{0}}^{\infty}\partial_{\theta}+\partial_{\theta}\mathcal{K}_{b}\ast\cdot\big)+\Pi_{\mathbb{S}_{0}}^{\perp}\mathscr{B}\partial_{\theta}\mathfrak{R}_{\varepsilon r}\Pi_{\mathbb{S}_{0}}^{\perp}+\Pi_{\mathbb{S}_{0}}^{\perp}\mathscr{B}\mathtt{E}_{n}^{0}\Pi_{\mathbb{S}_{0}}^{\perp}.
	\end{align*}
	It follows that
	\begin{align*}
		\mathscr{B}_{\perp}^{-1}\Pi_{\mathbb{S}_{0}}^{\perp}\mathscr{B}\mathfrak{L}_{\varepsilon r}\Pi_{\mathbb{S}_{0}}^{\perp}
		&=\big(\omega\cdot\partial_{\varphi}+V_{i_{0}}^{\infty}\partial_{\theta}+\partial_{\theta}\mathcal{K}_{b}\ast\cdot\big)\Pi_{\mathbb{S}_{0}}^{\perp}+\mathscr{B}_{\perp}^{-1}\Pi_{\mathbb{S}_{0}}^{\perp}\mathscr{B}\partial_{\theta}\mathfrak{R}_{\varepsilon r}\Pi_{\mathbb{S}_{0}}^{\perp}+\mathscr{B}_{\perp}^{-1}\Pi_{\mathbb{S}_{0}}^{\perp}\mathscr{B}\mathtt{E}_{n}^{0}\Pi_{\mathbb{S}_{0}}^{\perp}\\
		&=\big(\omega\cdot\partial_{\varphi}+V_{i_{0}}^{\infty}\partial_{\theta}+\partial_{\theta}\mathcal{K}_{b}\ast\cdot\big)\Pi_{\mathbb{S}_{0}}^{\perp}+\Pi_{\mathbb{S}_{0}}^{\perp}\partial_{\theta}\mathfrak{R}_{\varepsilon r}\Pi_{\mathbb{S}_{0}}^{\perp}+\mathscr{B}_{\perp}^{-1}\mathscr{B}\Pi_{\mathbb{S}_{0}}\partial_{\theta}\mathfrak{R}_{\varepsilon r}\Pi_{\mathbb{S}_{0}}^{\perp}\\
		&\quad+\mathscr{B}_{\perp}^{-1}\Pi_{\mathbb{S}_{0}}^{\perp}\mathscr{B}\mathtt{E}_{n}^{0}\Pi_{\mathbb{S}_{0}}^{\perp}.
	\end{align*}
	Consequently, in the Cantor set $\mathcal{O}_{\infty,n}^{\gamma,\tau_{1}}(i_{0})$, one has the following reduction
	\begin{align}\label{Form1A}
		\nonumber \mathscr{B}_{\perp}^{-1}\widehat{\mathcal{L}}_{\omega}\mathscr{B}_{\perp}=&\big(\omega\cdot\partial_{\varphi}+V_{i_{0}}^{\infty}\partial_{\theta}+\partial_{\theta}\mathcal{K}_{b}\ast\cdot\big)\Pi_{\mathbb{S}_{0}}^{\perp}+\Pi_{\mathbb{S}_{0}}^{\perp}\partial_{\theta}\mathfrak{R}_{\varepsilon r}\Pi_{\mathbb{S}_{0}}^{\perp}+\mathscr{B}_{\perp}^{-1}\mathscr{B}\Pi_{\mathbb{S}_{0}}\partial_{\theta}\mathfrak{R}_{\varepsilon r}\Pi_{\mathbb{S}_{0}}^{\perp}\\
		\nonumber &-\mathscr{B}_{\perp}^{-1}\Pi_{\mathbb{S}_{0}}^{\perp}\left(\partial_{\theta}\left(V_{\varepsilon r}\cdot\right)+\partial_{\theta}\mathbf{L}_{\varepsilon r,1}-\partial_{\theta}\mathbf{S}_{\varepsilon r,1}\right)\Pi_{\mathbb{S}_{0}}\mathscr{B}\Pi_{\mathbb{S}_{0}}^{\perp}-\varepsilon\mathscr{B}_{\perp}^{-1}\partial_{\theta}\mathcal{R}\mathscr{B}_{\perp}+\mathscr{B}_{\perp}^{-1}\Pi_{\mathbb{S}_{0}}^{\perp}\mathscr{B}\mathtt{E}_{n}^{0}\Pi_{\mathbb{S}_{0}}^{\perp}\\
		:=&\omega\cdot\partial_{\varphi}\Pi_{\mathbb{S}_{0}}^{\perp}+\mathscr{D}_0+\mathscr{R}_0+\mathtt{E}_{n}^{1},
	\end{align}
	where we set
	$$\mathscr{D}_0:=\big(V_{i_{0}}^{\infty}\partial_{\theta}+\partial_{\theta}\mathcal{K}_{b}\ast\cdot\big)\Pi_{\mathbb{S}_{0}}^{\perp}$$
	and
	\begin{equation}\label{def En1}
		\mathtt{E}_{n}^{1}:=\mathscr{B}_{\perp}^{-1}\Pi_{\mathbb{S}_{0}}^{\perp}\mathscr{B}\mathtt{E}_{n}^{0}\Pi_{\mathbb{S}_{0}}^{\perp}.
	\end{equation}
\textbf{(iii)} Results from \eqref{def En1}, \eqref{estimate for mathscrBperp and its inverse}, \eqref{estimate on the first reduction operator and its inverse}, \eqref{estim En0 s0} and Lemma \ref{Lem-lawprod}-(ii).\\
\textbf{(iv)}
	For the estimates \eqref{estimate mathscr R in off diagonal norm} and \eqref{estimate differences mathscr R in off diagonal norm}, we refer to Lemma \cite[Prop 6.3 and Lem. 6.3]{HR21}. They are based on suitable duality representations of $\mathscr{B}_{\perp}^{\pm 1}$ linked to $\mathscr{B}^{\pm 1}.$\\
	\textbf{(v)} It is obtained by \eqref{e-dKb}, \eqref{estimate r1}, \eqref{estimate mathscr R in off diagonal norm} and Lemma \ref{properties of Toeplitz in time operators}-(iv).
\end{proof}
\subsubsection{Elimination of the remainder term}
We perform here the KAM reduction of the remainder $\mathscr{R}_0$ of Proposition \ref{projection in the normal directions}. This procedure allows to diagonalise the linearized operator in the normal directions, namely to conjugate it to a constant coefficients operator $\mathscr{L}_{\infty},$ up to fast decaying terms. 
\begin{prop}\label{reduction of the remainder term}
	Let $(\gamma,q,d,\tau_{1},\tau_2,s_{0}, s_l, \overline{s}_l,\overline{s}_h, \overline{\mu}_2,S)$ satisfy \eqref{initial parameter condition}, \eqref{setting tau1 and tau2}, \eqref{param}. For any $(\mu_2,s_h)$ satisfying 
	\begin{align}\label{Conv-T2}
		\mu_2\geqslant \overline{\mu}_2+2\tau_2q+2\tau_2,\quad\textnormal{and}\quad  s_h\geqslant \frac{3}{2}\mu_{2}+\overline{s}_{l}+1,
	\end{align}
	there exist $\varepsilon_{0}\in(0,1)$ and $\sigma_{4}=\sigma_{4}(\tau_1,\tau_2,q,d)\geqslant\sigma_{3}$ such that if 
	\begin{equation}\label{hypothesis KAM reduction of the remainder term}
		\varepsilon\gamma^{-2-q}N_{0}^{\mu_{2}}\leqslant \varepsilon_{0}\quad\textnormal{and}\quad\|\mathfrak{I}_{0}\|_{q,s_{h}+\sigma_{4}}^{\gamma,\mathcal{O}}\leqslant 1,
	\end{equation}
	then the following assertions hold true.
	\begin{enumerate}[label=(\roman*)]
		\item There exists a  family of invertible linear operator $\Phi_{\infty}:\mathcal{O}\to \mathcal{L}\big(H^{s}\cap L^2_{\perp}\big)$ satisfying the estimates
		\begin{equation}\label{estimate on Phiinfty and its inverse}
			\forall s\in[s_{0},S],\quad \mbox{ }\|\Phi_{\infty}^{\pm 1}\rho\|_{q,s}^{\gamma ,\mathcal{O}}\lesssim \|\rho\|_{q,s}^{\gamma,\mathcal{O}}+\varepsilon\gamma^{-2}\| \mathfrak{I}_{0}\|_{q,s+\sigma_{4}}^{\gamma,\mathcal{O}}\|\rho\|_{q,s_{0}}^{\gamma,\mathcal{O}}.
		\end{equation}
		There exists a diagonal operator $\mathscr{L}_\infty=\mathscr{L}_{\infty}(b,\omega,i_{0})$ taking the form
		\begin{align*}\mathscr{L}_{\infty}&=\omega\cdot\partial_{\varphi}\Pi_{\mathbb{S}_0}^{\perp}+\mathscr{D}_{\infty}
		\end{align*}
		where $\mathscr{D}_{\infty}=\mathscr{D}_{\infty}(b,\omega,i_{0})=\Pi_{\mathbb{S}_0}^\perp\mathscr{D}_\infty\Pi_{\mathbb{S}_{0}}^\perp$ is a reversible Fourier multiplier operator given by,
		$$
		\forall (l,j)\in \mathbb{Z}^{d}\times\mathbb{S}_{0}^{c},\quad  \mathscr{D}_{\infty}\mathbf{e}_{l,j}=\ii\,\mu_{j}^{\infty}\,\mathbf{e}_{l,j},$$
		with
		\begin{equation}\label{estim mujinfty}
			\forall j\in\mathbb{S}_{0}^{c},\quad\mu_{j}^{\infty}(b,\omega,i_{0})=\mu_{j}^{0}(b,\omega,i_{0})+r_{j}^{\infty}(b,\omega,i_{0})
		\end{equation}
		and 
		\begin{align}\label{estimate rjinfty}
			\sup_{j\in\mathbb{S}_{0}^{c}}|j|\| r_{j}^{\infty}\|_{q}^{\gamma ,\mathcal{O}}\lesssim\varepsilon\gamma^{-1}
		\end{align}
		such that in the Cantor set
		\begin{align*}
			\mathscr{O}_{\infty,n}^{\gamma,\tau_1,\tau_{2}}(i_{0}):=\bigcap_{\underset{|l|\leqslant N_{n}}{(l,j,j_{0})\in\mathbb{Z}^{d}\times(\mathbb{S}_0^{c})^{2}}\atop(l,j)\neq(0,j_{0})}\Big\{&(b,\omega)\in\mathcal{O}_{\infty,n}^{\gamma,\tau_{1}}(i_{0}),\big|\omega\cdot l+\mu_{j}^{\infty}(b,\omega,i_{0})-\mu_{j_{0}}^{\infty}(b,\omega,i_{0})\big|>\tfrac{2\gamma \langle j-j_{0}\rangle}{\langle l\rangle^{\tau_{2}}}\Big\}
		\end{align*}
		we have 
		\begin{align*}
			\Phi_{\infty}^{-1}\mathscr{L}_{0}\Phi_{\infty}&=\mathscr{L}_{\infty}+\mathtt{E}^2_n,
		\end{align*}
		and the linear operator  $\mathtt{E}_n^{2}$ satisfies the estimate
		\begin{equation}\label{Error-Est-2D}
			\|\mathtt{E}^2_n\rho\|_{q,s_0}^{\gamma ,\mathcal{O}}\lesssim \varepsilon\gamma^{-2}N_{0}^{{\mu}_{2}}N_{n+1}^{-\mu_{2}} \|\rho\|_{q,s_0+1}^{\gamma,\mathcal{O}}.
		\end{equation}
		Notice that the Cantor set $\mathcal{O}_{\infty,n}^{\gamma,\tau_{1}}(i_{0})$ was introduced in Proposition \ref{reduction of the transport part}, the operator $\mathscr{L}_{0}$ and the frequencies $\big(\mu_{j}^{0}(b,\omega,i_{0})\big)_{j\in\mathbb{S}_0^c}$ were stated in Proposition  \ref{projection in the normal directions}.
		\item Given two tori $i_{1}$ and $i_{2}$ both satisfying \eqref{hypothesis KAM reduction of the remainder term}, then
		\begin{equation}\label{diffenrence rjinfty}
			\forall j\in\mathbb{S}_{0}^{c},\quad\|\Delta_{12}r_{j}^{\infty}\|_{q}^{\gamma,\mathcal{O}}\lesssim\varepsilon\gamma^{-1}\|\Delta_{12}i\|_{q,\overline{s}_{h}+\sigma_{4}}^{\gamma,\mathcal{O}}
		\end{equation}
		
		\begin{equation}\label{estimate differences mujinfty}
			\forall j\in\mathbb{S}_{0}^{c},\quad\|\Delta_{12}\mu_{j}^{\infty}\|_{q}^{\gamma,\mathcal{O}}\lesssim\varepsilon\gamma^{-1}|j|\| \Delta_{12}i\|_{q,\overline{s}_{h}+\sigma_{4}}^{\gamma,\mathcal{O}}.
		\end{equation}
	\end{enumerate}	
\end{prop}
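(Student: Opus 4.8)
The plan is to prove Proposition~\ref{reduction of the remainder term} by a KAM iterative scheme in the Toeplitz off-diagonal topology, following closely the scheme in \cite[Prop. 6.5]{HR21} and the analogous statements in \cite{BM18,BBMH18}. The starting point is the operator $\mathscr{L}_0 = \omega\cdot\partial_\varphi\Pi_{\mathbb{S}_0}^\perp + \mathscr{D}_0 + \mathscr{R}_0$ produced by Proposition~\ref{projection in the normal directions}, where $\mathscr{D}_0$ is the reversible Fourier multiplier with eigenvalues $\ii\mu_j^0$ and $\mathscr{R}_0$ is a real, reversible Toeplitz-in-time integral operator with the smallness estimate \eqref{estimate mathscr R in off diagonal norm}, namely $\|\partial_\theta^k\mathscr{R}_0\|_{\textnormal{\tiny{O-d}},q,s}^{\gamma,\mathcal{O}}\lesssim\varepsilon\gamma^{-1}(1+\|\mathfrak{I}_0\|_{q,s+\sigma_3}^{\gamma,\mathcal{O}})$. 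First I would set up the iterative scheme: at step $n$ one has $\mathscr{L}_n = \omega\cdot\partial_\varphi\Pi_{\mathbb{S}_0}^\perp + \mathscr{D}_n + \mathscr{R}_n$ with $\mathscr{D}_n$ diagonal reversible with eigenvalues $\ii\mu_j^n = \ii(\mu_j^0 + r_j^n)$ and $\mathscr{R}_n$ a small reversible Toeplitz remainder satisfying $\|\mathscr{R}_n\|_{\textnormal{\tiny{O-d}},q,s_l}^{\gamma,\mathcal{O}}\lesssim \varepsilon\gamma^{-1}N_{n-1}^{-\mathtt{a}}$ (high-norm estimate $\|\mathscr{R}_n\|_{\textnormal{\tiny{O-d}},q,\overline{s}_h}^{\gamma,\mathcal{O}}\lesssim\varepsilon\gamma^{-1}N_{n-1}^{\mathtt{b}}$) for suitable exponents $\mathtt{a},\mathtt{b}$ fixed in terms of $\tau_2,q,d$. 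One conjugates by $\Phi_n = \mathrm{e}^{\Psi_n}$ with $\Psi_n$ solving the homological equation $\omega\cdot\partial_\varphi\Psi_n + [\mathscr{D}_n,\Psi_n] + \Pi_{N_n}\mathscr{R}_n = [\mathscr{R}_n]$, where $[\cdot]$ keeps only the time-independent diagonal part (the new correction to the eigenvalues), and the off-diagonal coefficients of $\Psi_n$ are $(\Psi_n)_j^{j'}(l) = -(\mathscr{R}_n)_j^{j'}(l)/(\ii(\omega\cdot l + \mu_j^n - \mu_{j'}^n))$ for $\langle l, j-j'\rangle \leqslant N_n$ and $0$ otherwise. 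The small-divisor bound needed here is exactly the condition defining $\mathscr{O}_{\infty,n}^{\gamma,\tau_1,\tau_2}(i_0)$, and the key point (verified by induction) is that one only needs this Cantor condition for the \emph{final} eigenvalues $\mu_j^\infty$, using that $|\mu_j^n - \mu_j^\infty|$ is summably small compared with $\gamma\langle j-j'\rangle\langle l\rangle^{-\tau_2}$ thanks to \eqref{estimate rjinfty}; this is the standard trick of \cite{BM18} allowing the Cantor sets to depend only on the limit frequencies.

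The core estimates of the scheme would be carried out using Lemma~\ref{properties of Toeplitz in time operators}: the homological equation solution $\Psi_n$ satisfies $\|\Psi_n\|_{\textnormal{\tiny{O-d}},q,s}^{\gamma,\mathcal{O}}\lesssim \gamma^{-1}N_n^{2\tau_2 + \tau_2 q}\|\mathscr{R}_n\|_{\textnormal{\tiny{O-d}},q,s}^{\gamma,\mathcal{O}}$ (loss from the small divisors and from differentiating the divisors in $\mu=(b,\omega)$ up to order $q$, which is why $\gamma^{-2-q}$ appears in \eqref{hypothesis KAM reduction of the remainder term}), and then the new remainder $\mathscr{R}_{n+1} = \Pi_{N_n}^\perp\mathscr{R}_n + (\text{quadratic terms in }\Psi_n,\mathscr{R}_n)$ obeys the improved bound via the smoothing property of $\Pi_{N_n}^\perp$ (Lemma~\ref{properties of Toeplitz in time operators}-(i)) for the linear part and the interpolation/composition estimates (Lemma~\ref{properties of Toeplitz in time operators}-(ii)-(iii)) for the quadratic part. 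The choice $N_n = N_0^{(3/2)^n}$ makes the quadratic gain $(\varepsilon\gamma^{-2-q})^{2^n}$ beat the polynomial loss $N_n^{C}$, giving superexponential convergence. The eigenvalue corrections $r_j^{n+1} = r_j^n + (\text{diagonal part of }\mathscr{R}_n)_j^j(0)/j$ (or a similar normalization) converge because $\|(\mathscr{R}_n)_j^j(0)\|$ is controlled by $|j|\|\mathscr{R}_n\|_{\textnormal{\tiny{O-d}},q,s_l}^{\gamma,\mathcal{O}}$, yielding \eqref{estim mujinfty} and \eqref{estimate rjinfty}; the limit operators $\Phi_\infty = \lim \Phi_0\cdots\Phi_n$, $\mathscr{D}_\infty$, and $\mathscr{L}_\infty$ exist with the stated estimates \eqref{estimate on Phiinfty and its inverse}. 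The truncation error $\mathtt{E}_n^2$ arises because on $\mathscr{O}_{\infty,n}^{\gamma,\tau_1,\tau_2}(i_0)$ we only imposed the small-divisor condition for $|l|\leqslant N_n$, so the difference between the "full" conjugation and the truncated one is an operator supported on high time-frequencies, whence the bound \eqref{Error-Est-2D} with the factor $N_{n+1}^{-\mu_2}$; this is the exact analogue of the error terms $\mathtt{E}_n^0, \mathtt{E}_n^1$ in the previous propositions and is handled identically.

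For the Lipschitz-type dependence on the torus (part (ii)), I would run the same scheme for two tori $i_1, i_2$ in parallel, tracking $\Delta_{12}$ of every object. The key input is \eqref{estimate differences mathscr R in off diagonal norm}, $\|\Delta_{12}\mathscr{R}_0\|_{\textnormal{\tiny{O-d}},q,\overline{s}_h+\mathtt{p}}^{\gamma,\mathcal{O}}\lesssim\varepsilon\gamma^{-1}\|\Delta_{12}i\|_{q,\overline{s}_h+\mathtt{p}+\sigma_3}^{\gamma,\mathcal{O}}$, together with the difference estimate $\|\Delta_{12}r^1\|_q^{\gamma,\mathcal{O}}\lesssim\varepsilon\|\Delta_{12}i\|_{q,\overline{s}_h+2}^{\gamma,\mathcal{O}}$ from \eqref{differences mu0}. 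Since the small divisors differ between the two scans, one uses the standard device of intersecting the Cantor sets and exploiting that $|\Delta_{12}\mu_j^\infty|$ is itself small (which is what one is proving, in a bootstrap) to control $\Delta_{12}$ of the inverse divisors; the telescoping over $n$ converges because of the same superexponential smallness, yielding \eqref{diffenrence rjinfty} and \eqref{estimate differences mujinfty}. The main obstacle, as always in KAM reducibility, is the careful bookkeeping of the loss exponents: one must choose $s_l, \overline{s}_l, \overline{s}_h$ and $\mu_2$ (constrained by \eqref{param}, \eqref{Conv-T2}) so that simultaneously (a) the low norm improves at each step, (b) the high norm grows only polynomially, (c) the interpolation between them closes the induction, and (d) enough derivatives in $\theta$ and in $\mu$ are available at every stage. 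All of this is essentially routine once the scheme of \cite[Prop. 6.5]{HR21} is imported, the new features being only the precise form of $\mathscr{D}_0$ coming from the disc geometry (the $b^{2j}$ terms in $\Omega_j(b)$), which enters only through the already-established estimates on $\mu_j^0$ and does not affect the abstract iteration.
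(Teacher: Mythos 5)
Your proposal follows essentially the same route as the paper's proof: a KAM reducibility iteration in the Toeplitz off-diagonal topology imported from \cite[Prop. 6.5]{HR21}, with truncated homological equations solved via a cut-off on the whole parameter set, recursively defined Cantor sets shown a posteriori to contain $\mathscr{O}_{\infty,n}^{\gamma,\tau_1,\tau_2}(i_0)$ defined through the limit eigenvalues, convergence of $\widehat\Phi_m=\Phi_0\circ\cdots\circ\Phi_m$ to $\Phi_\infty$, identification of $\mathtt{E}_n^2$ with the discrepancy between the finite-step and limit conjugations, and a parallel run over two tori (with a fixed $\mathtt{p}$) for part (ii). The choice $\Phi_n=e^{\Psi_n}$ instead of the paper's $\Phi_m=\Pi_{\mathbb{S}_0}^{\perp}+\Psi_m$ is immaterial.

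One point in your write-up is genuinely off, namely the mechanism behind the decay \eqref{estimate rjinfty}. The conjugation forces the eigenvalue correction to be exactly the diagonal Fourier coefficient, $\mu_j^{m+1}-\mu_j^{m}=r_j^{m}=-\ii\big\langle P_{N_m}\mathscr{R}_m\mathbf{e}_{0,j},\mathbf{e}_{0,j}\big\rangle_{L^{2}(\mathbb{T}^{d+1})}$, with no division by $j$; and your bound ``$|(\mathscr{R}_n)_j^j(0)|\lesssim |j|\,\|\mathscr{R}_n\|_{\textnormal{\tiny{O-d}},q,s_l}^{\gamma,\mathcal{O}}$'' goes in the wrong direction: combined with your normalization it yields at best $\|r_j^\infty\|_q^{\gamma,\mathcal{O}}\lesssim\varepsilon\gamma^{-1}$ uniformly in $j$, not $\sup_j|j|\,\|r_j^\infty\|_q^{\gamma,\mathcal{O}}\lesssim\varepsilon\gamma^{-1}$, and the division by $j$ is not consistent with the homological equation. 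The $|j|^{-1}$ gain is obtained in the paper by an integration by parts, $r_j^{m}=-\tfrac{\ii}{j}\big\langle P_{N_m}\partial_\theta\mathscr{R}_m\mathbf{e}_{0,j},\mathbf{e}_{0,j}\big\rangle_{L^{2}(\mathbb{T}^{d+1})}$, which requires propagating off-diagonal bounds on $\partial_\theta\mathscr{R}_m$ alongside those on $\mathscr{R}_m$ through the iteration (the quantities $\widehat\delta_m$ and the modified step \eqref{KAM steph remainder term}); this is precisely why Propositions \ref{reduction of the transport part} and \ref{projection in the normal directions} provide estimates for $\partial_\theta^k$ of the remainders. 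A minor calibration remark in the same vein: with the truncations $P_{N_m}$ the scheme does not converge like $(\varepsilon\gamma^{-2-q})^{2^n}$; the induction closes with the polynomial-in-$N_m$ decay $\delta_m(\overline{s}_l)\leqslant\delta_0(s_h)N_0^{\mu_2}N_m^{-\mu_2}$ of \eqref{hypothesis of induction for deltamprime}, which is exactly what produces the factor $N_{n+1}^{-\mu_2}$ in \eqref{Error-Est-2D}.
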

\begin{proof}
	The proof consists in a KAM reduction algorithm. We may refer for instance to \cite{BBM14,B19,FP14,HHM21,HR21} to see some implementations of this method. Here, the main reference which fits with our purpose is \cite[Prop 6.5]{HR21}. Hence, we only explain here the main lines of this procedure and refer to this work for the technical details. In Proposition \ref{projection in the normal directions}, we managed to find the following reduction valid in the Cantor set $\mathcal{O}_{\infty,n}^{\gamma,\tau_1}(i_0),$ 
	$$\mathscr{B}_{\perp}^{-1}\widehat{\mathcal{L}}_{\omega}\mathscr{B}_{\perp}=\mathscr{L}_{0}+\mathtt{E}_{n}^{1},$$
	where $\mathscr{L}_{0}$ is an operator admitting the following structure 
	\begin{equation}\label{mouka1}
		\mathscr{L}_0=\omega\cdot\partial_{\varphi}\Pi_{\mathbb{S}_0}^{\perp}+\mathscr{D}_0+\mathscr{R}_0,
	\end{equation}
	with $\mathscr{D}_0=\Pi_{\mathbb{S}_0}^\perp \mathscr{D}_0\Pi_{\mathbb{S}_0}^\perp=\big(\ii\mu_{j}^0(b,\omega)\big)_{j\in\mathbb{S}_{0}^c}$ a diagonal operator of pure imaginary spectrum and $\mathscr{R}_0=\Pi_{\mathbb{S}_0}^\perp \mathscr{R}_0\Pi_{\mathbb{S}_0}^\perp$ a real and reversible Toeplitz in time operator of zero order. The proof consists in a recursive elimination of the remainder term $\mathscr{R}_0$. First notice that if we denote
	$$\delta_{0}(s):=\gamma ^{-1}\|\mathscr{R}_{0}\|_{\textnormal{\tiny{O-d}},q,s}^{\gamma ,\mathcal{O}},$$
	then, in view of \eqref{estimate mathscr R in off diagonal norm}, we get
	\begin{align}\label{whab1}\delta_{0}(s)\leqslant C\varepsilon\gamma^{-2}\left(1+\|\mathfrak{I}_{0}\|_{q,s+\sigma_{3}}^{\gamma,\mathcal{O}}\right).
	\end{align}
	Hence, according to \eqref{Conv-T2}, \eqref{hypothesis KAM reduction of the remainder term} and the fact that $\sigma_4\geqslant \sigma_3,$ we infer 
	\begin{align}\label{Conv-P3}
		\nonumber N_{0}^{\mu_{2}}\delta_{0}(s_{h}) &\leqslant  C N_{0}^{\mu_{2}}\varepsilon\gamma^{-2}\\
		&\leqslant C\varepsilon_{0}.
	\end{align}
We construct recursively a sequence $(\mathscr{L}_m)_{m\in\mathbb{N}}$ of operators in the form  
\begin{align}\label{Op-Lm}
	\mathscr{L}_{m}:=\omega\cdot\partial_{\varphi}\Pi_{\mathbb{S}_0}^{\perp}+\mathscr{D}_{m}+\mathscr{R}_{m},
\end{align}
with $\mathscr{D}_m=\Pi_{\mathbb{S}_0}^{\perp}\mathscr{D}_m\Pi_{\mathbb{S}_0}^{\perp}=\big(\ii\mu_{j}^m(b,\omega)\big)_{j\in\mathbb{S}_0^c}$ a diagonal real reversible operator, that is,
\begin{align}\label{TUma1}
	\mathscr{D}_m\mathbf{e}_{l,j}=\ii\,\mu_j^m(b,\omega) \,\mathbf{e}_{l,j}\quad\hbox{and}\quad \mu_{-j}^m(b,\omega)=-\mu_{j}^m(b,\omega)
\end{align}
and $\mathscr{R}_m=\Pi_{\mathbb{S}_0}^\perp \mathscr{R}_m\Pi_{\mathbb{S}_0}^\perp$ a real and reversible Toeplitz in time operator of zero order quadratically smaller than the previous one in the Toeplitz topology introduced in Section \ref{section-ope}. To construct $\mathscr{D}_{m+1}$ and $\mathscr{R}_{m+1},$ we consider a linear invertible transformation close to the identity $$\Phi_m=\Pi_{\mathbb{S}_0}^{\perp}+\Psi_m:\mathcal{O}\rightarrow\mathcal{L}(H^{s}\cap L^2_{\perp}),$$
where $\Psi_m$ is small and depends on $\mathscr{R}_m$. Straightforward computations give
\begin{align*}
	\Phi_m^{-1}\mathscr{L}_m\Phi_m & =  \omega\cdot\partial_{\varphi}\Pi_{\mathbb{S}_0}^{\perp}+\mathscr{D}_m+\Phi_m^{-1}\Big(\big[\omega\cdot\partial_{\varphi}\Pi_{\mathbb{S}_0}^{\perp}+\mathscr{D}_m,\Psi_m\big]+P_{N_m}\mathscr{R}_m+P_{N_m}^{\perp}\mathscr{R}_m+\mathscr{R}_m\Psi_m\Big),
\end{align*}
		where the projector $P_{N_m}$ was defined in \eqref{definition of projections for operators}. Therefore, we shall define $\Psi_m$ so that it satisfies the following {\it homological equation}
		\begin{equation}\label{equation Psi}
			\big[\omega\cdot\partial_{\varphi}\Pi_{\mathbb{S}_0}^{\perp}+\mathscr{D}_m,\Psi_m\big]+P_{N_m}\mathscr{R}_m=\lfloor P_{N_m}\mathscr{R}_m\rfloor,
		\end{equation}
		where $\lfloor P_{N_m}\mathscr{R}_m\rfloor$ is the diagonal part of the operator $P_{N_m}\mathscr{R}_m$. We emphasize that the notation   $\lfloor  \mathcal{R}\rfloor$ with a general  operator $ \mathcal{R}$ is defined as follows, {for all $(l_{0},j_{0})\in\mathbb{Z}^{d}\times\mathbb{S}_{0}^{c}$},
		\begin{equation}\label{Dp1X}
			\mathcal{R}{\bf e}_{l_0,j_0}=\sum_{(l,j)\in\mathbb{Z}^{d}\times\mathbb{S}_{0}^{c}}\mathcal{R}_{l_0,j_0}^{l,j}{\bf e}_{l,j}\Longrightarrow  \lfloor\mathcal{R}\rfloor {\bf e}_{l_0,j_0}=\mathcal{R}_{l_0,j_0}^{l_0,j_0}{\bf e}_{l_0,j_0}=\big \langle \mathcal{R}{\bf e}_{l_0,j_0}, {\bf e}_{l_0,j_0}\big\rangle_{L^2(\T^{d+1})}\,{\bf e}_{l_0,j_0}.
		\end{equation}
		Recall that we denote ${\bf e}_{l_0,j_0}(\varphi,\theta)=e^{\ii(l_0\cdot\varphi+j_0\theta)}$. The equation \eqref{equation Psi} can be solved using the Fourier decomposition of operators. Indeed, since $\mathscr{R}_m$ is a real and reversible Toeplitz in time operator, then by virtue of Lemma \ref{characterization of real operator by its Fourier coefficients}, we can write
		 \begin{equation}\label{coefficients of the remainder operator R}
		 	(\mathscr{R}_m)_{l_{0},j_{0}}^{l,j}:=\ii\,r_{j_{0},m}^{j}(b,\omega,l_{0}-l)\in \ii\,\mathbb{R}\quad\mbox{ and }\quad (\mathscr{R}_m)_{-l_{0},-j_{0}}^{-l,-j}=-(\mathscr{R}_m)_{l_{0},j_{0}}^{l,j}.
		 \end{equation}
	 Therefore, we define the operator $\Psi_m$ by
	 \begin{equation}\label{Ext-psi-op}
	 	(\Psi_m)_{j_{0}}^{j}(b,\omega,l)=\left\lbrace\begin{array}{ll}
	 		-\varrho_{j_{0},m}^{j}(b,\omega,l)\,\, r_{j_{0},m}^{j}(b,\omega,l),& \mbox{if }\quad (l,j)\neq(0,j_{0})\\
	 		0, & \mbox{if }\quad (l,j)=(0,j_{0}),
	 	\end{array}\right.
	 \end{equation}
	 with
	 \begin{align}\label{varr-d}
	 	\varrho_{j_{0},m}^{j}(b,\omega,l):=\frac{\chi\left((\omega\cdot l+\mu_{j}^{m}(b,\omega)-\mu_{j_{0}}^{m}(b,\omega))(\gamma\langle j-j_{0}\rangle)^{-1}\langle l\rangle^{\tau_{2}}\right)}{\omega\cdot l+\mu_{j}^m(b,\omega)-\mu_{j_{0}}^m(b,\omega)},
	 \end{align}
  where the cut-off function $\chi\in C^{\infty}(\mathbb{R},[0,1])$ is even and defined by
  \begin{equation}\label{properties cut-off function first reduction}
  	\chi(\xi)=\left\lbrace\begin{array}{ll}
  		0 & \textnormal{if }|\xi|\leqslant\frac{1}{3}\\
  		1 & \textnormal{if }|\xi|\geqslant\frac{1}{2}.
  	\end{array}\right.
  \end{equation}
  supplemented by the normal conditions
 $$
 \forall \,l\,\in\mathbb{Z}^d,\,\forall\, j\,\,\hbox{or}\,\,\,j_0\in\mathbb{S}_0,\quad (\Psi_m)_{j_{0}}^{j}(b,\omega,l)=0.
 $$
Using Lemma \ref{Lem-lawprod}-(v), we can prove that 
 \begin{align}\label{link Psi and R}
 	\|\Psi_m\|_{\textnormal{\tiny{O-d}},q,s}^{\gamma ,\mathcal{O}}&
 	\leqslant C \gamma ^{-1}\| P_{N_m}\mathscr{R}_m\|_{\textnormal{\tiny{O-d}},q,s+\tau_{2} q+\tau_{2}}^{\gamma ,\mathcal{O}},
 \end{align}
provided that 
 \begin{align}\label{reg-G-10}
 	\forall\,(j,j_{0})\in(\mathbb{S}_0^c)^{2},\quad\max_{|\alpha|\in\llbracket 0, q\rrbracket}\sup_{(b,\omega)\in\mathcal{O}}\left|\partial_{b,\omega}^{\alpha}\left(\mu_{j}^m(b,\omega)-\mu_{j_{0}}^m(b,\omega)\right)\right|\leqslant C\,|j-j_{0}|.
 \end{align}

Remark that \eqref{reg-G-10} is satisfied for $m=0$ in view of Lemma \ref{properties omegajb}-$(iv)$ and \eqref{estimate r1}. By construction, $\Psi_m=\Pi_{\mathbb{S}_0}^{\perp}\Psi_m\Pi_{\mathbb{S}_0}^{\perp}$ is a real and reversibility preserving Toeplitz in time operator well-defined in the whole set of parameters $\mathcal{O}$ and solves the equation \eqref{equation Psi} when restricted to the Cantor set $\mathscr{O}_{m+1}^{\gamma}$ defined recursively by $\mathscr{O}_0^{\gamma}=\mathcal{O}$ and
\begin{equation}\label{Cantor-SX}
	\mathscr{O}_{m+1}^{\gamma}=\bigcap_{\underset{|l|\leqslant N_m}{(l,j,j_{0})\in\mathbb{Z}^{d }\times({\mathbb{S}}_{0}^{c})^{2}}\atop (l,j)\neq(0,j_0)}\left\lbrace(b,\omega)\in\mathscr{O}_m^{\gamma}\quad\textnormal{s.t.}\quad |\omega\cdot l+\mu_{j}^m(b,\omega)-\mu_{j_{0}}^m(b,\omega)|>\tfrac{\gamma\langle j-j_{0}\rangle }{\langle l\rangle^{\tau_{2}}}\right\rbrace.
\end{equation}
Consequently, in the Cantor set $\mathscr{O}_{m+1}^{\gamma},$ one has
\begin{equation}\label{Op-Lm1}
	\mathscr{L}_{m+1}:=\Phi_m^{-1}\mathscr{L}_m\Phi_m=\omega\cdot\partial_{\varphi}\Pi_{\mathbb{S}_0}^{\perp}+\mathscr{D}_{m+1}+\mathscr{R}_{m+1},
\end{equation}
with		
\begin{equation}\label{PU-RT}
	\mathscr{D}_{m+1}=\mathscr{D}_m+\lfloor P_{N_m}\mathscr{R}_m\rfloor\quad\textnormal{and}\quad \mathscr{R}_{m+1}=\Phi_m^{-1}\big(-\Psi_m\,\lfloor P_{N_m}\mathscr{R}_m\rfloor +P_{N_m}^{\perp}\mathscr{R}_m+\mathscr{R}_m\Psi_m\big).
\end{equation}
Remark  that $\mathscr{D}_{m}$ and $\lfloor P_{N_{m}}\mathscr{R}_{m}\rfloor$ are Fourier multiplier Toeplitz operators that can be identified to their spectra  $(\ii \mu_{j}^{m})_{j\in\mathbb{S}_{0}^{c}}$ and $(\ii r_{j}^{m})_{j\in\mathbb{S}_{0}^{c}}$, namely 
\begin{align}\label{Spect-T1}
	\forall (l,j)\in\mathbb{Z}^d\times\mathbb{S}_0^c,\quad \mathscr{D}_{m} {\bf e}_{l,j}=\ii\mu_{j}^{m}\,{\bf e}_{l,j}\quad\hbox{and}\quad \lfloor P_{N_{m}}\mathscr{R}_{m}\rfloor \mathbf{e}_{l,j}=\ii r_{j}^{m}\,{\bf e}_{l,j}.
\end{align}
By construction, we find
\begin{align}\label{Spect-T2}
	\mu_{j}^{m+1}=\mu_{j}^{m}+r_{j}^{m}.
\end{align}
	We set 
	$$\delta_m(s):=\gamma^{-1}\|\mathscr{R}_m\|_{\textnormal{\tiny{O-d}},q,s}^{\gamma ,\mathcal{O}}\quad \textnormal{and}\quad\widehat{\delta}_m(s):=\max\left(\gamma^{-1}\|\partial_{\theta}\mathscr{R}_m\|_{\textnormal{\tiny{O-d}},q,s}^{\gamma ,\mathcal{O}}\,,\delta_m(s)\right).$$
		By Lemma  \ref{properties of Toeplitz in time operators} and \eqref{link Psi and R}, we get for all $S\geqslant \overline{s}\geqslant s\geqslant s_{0}$,
		\begin{align}
			\delta_{m+1}(s)&\leqslant N_m^{s-\overline{s}}\delta_m(\overline{s})+CN_m^{\tau_{2} q+\tau_{2}}\delta_m(s_0)\delta_m(s),\label{KAM step remainder term}\\
			\widehat{\delta}_{m+1}(s)&\leqslant N_m^{s-\overline{s}}\widehat{\delta}_m(\overline{s})+CN_m^{\tau_{2} q+\tau_{2}+1}\widehat{\delta}_m(s_0)\widehat{\delta}_m(s).\label{KAM steph remainder term}
		\end{align}
	These recursive relations allow us to prove  by induction on $m$ that
	\begin{equation}\label{hypothesis of induction for deltamprime}
		\delta_{m}(\overline{s}_{l})\leqslant \delta_{0}(s_{h})N_{0}^{\mu_{2}}N_{m}^{-\mu_{2}}\quad \mbox{ and }\quad \delta_{m}(s_{h})\leqslant\left(2-\tfrac{1}{m+1}\right)\delta_{0}(s_{h}),
	\end{equation}
	\begin{align}\label{Ind-Ty1}
		\widehat{\delta}_{m}(s_{0})\leqslant\widehat{\delta}_{0}(s_{h})N_{0}^{\mu_{2}}N_{m}^{-\mu_{2}}\quad \mbox{ and }\quad \widehat{\delta}_{m}(s_{h})\leqslant\left(2-\tfrac{1}{m+1}\right)\widehat{\delta}_{0}(s_{h}),
	\end{align}
	with $\overline{s}_{l}$ and $s_h$ fixed by \eqref{param} and \eqref{Conv-T2}. Using the definition of the Fourier coefficients for an operator and the Toeplitz structure of $\mathscr{R}_m$, we find
	\begin{align*}
		\mu_{j}^{m+1}-\mu_{j}^{m}&=r_j^m\\
		&=-\ii\langle P_{N_{m}}\mathscr{R}_{m}\mathbf{e}_{l,j},\mathbf{e}_{l,j}\rangle_{L^{2}(\mathbb{T}^{d +1})}\\
		&=-\ii\langle P_{N_{m}}\mathscr{R}_{m}\mathbf{e}_{0,j},\mathbf{e}_{0,j}\rangle_{L^{2}(\mathbb{T}^{d +1})}.
	\end{align*} 
		By a duality argument combined with Lemma \ref{properties of Toeplitz in time operators}, \eqref{hypothesis of induction for deltamprime}, \eqref{whab1} and \eqref{hypothesis KAM reduction of the remainder term}, we infer 
		\begin{align}\label{Mahma1-R}
			\|\mu_{j}^{m+1}-\mu_{j}^{m}\|_{q}^{\gamma ,\mathcal{O}}
			&\leqslant C \varepsilon\gamma^{-1}N_0^{\mu_{2}}N_{m}^{-\mu_{2}}.
		\end{align}
		As the assumption \eqref{reg-G-10} is satisfied  with  $\mathscr{D}_{m}$, 
		then we obtain  by  \eqref{Mahma1-R}
		\begin{align*} \forall\,(j,j_{0})\in(\mathbb{S}_0^c)^{2},\quad\max_{|\alpha| \in\llbracket 0,q\rrbracket}\sup_{(\lambda,\omega)\in\mathcal{O}}\left|\partial_{\lambda,\omega}^{\alpha}\left(\mu_{j}^{m+1}(\lambda,\omega)-\mu_{j_{0}}^{m+1}(\lambda,\omega)\right)\right|\leqslant C\big(1+\varepsilon\gamma^{-1-q}N_0^{\mu_2}N_{m}^{-\mu_{2}}\big)\,|j-j_{0}|.
		\end{align*}
		Therefore, the sequence $\big(\mu_j^m\big)_{m\in\mathbb{N}}$ converges in $W^{q,\infty,\gamma}(\mathcal{O},\mathbb{C}).$ We denote $\mu_j^{\infty}$ its limit and consider the associated diagonal operator $\mathscr{D}_{\infty}=\big(\ii\mu_j^\infty\big)_{j\in\mathbb{S}_0^c}.$
		We introduce the sequence of operators 
		$\left(\widehat{\Phi}_m\right)_{m\in\mathbb{N}}$ by 
		\begin{equation}\label{Def-Phi}\widehat\Phi_0:=\Phi_0\quad\hbox{and}\quad \quad  \forall m\geqslant1,\,\, \widehat\Phi_m:=\Phi_0\circ\Phi_1\circ...\circ\Phi_m.
		\end{equation} 
		It is obvious  from the identity ${\Phi}_{m}=\hbox{Id}+\Psi_m$ that 
		\begin{equation}\label{rec Phih}
			\widehat\Phi_{m+1}=\widehat\Phi_{m}+\widehat\Phi_{m}\Psi_{m+1}.
		\end{equation}
		From \eqref{rec Phih}, \eqref{link Psi and R} and \eqref{hypothesis KAM reduction of the remainder term}, we can make the sequence $(\widehat{\Phi}_m)_{m\in\mathbb{N}}$ converge to an element $\Phi_\infty$  in the Toeplitz operator topology introduced in Section \ref{section-ope}. Moreover, arguing in a similar way to \cite{HR21} we may prove that
			\begin{align}\label{Conv-op-od}
\|\widehat\Phi_{m}^{-1}-\Phi_\infty^{-1}\|_{\textnormal{\tiny{O-d}},q,s_{0}}^{\gamma ,\mathcal{O}}+\|\widehat\Phi_{m}-\Phi_\infty\|_{\textnormal{\tiny{O-d}},q,s_{0}}^{\gamma ,\mathcal{O}}&\leqslant   C\,\delta_{0}(s_{h})N_{0}^{\mu_{2}}N_{m+1}^{-\mu_2}.
\end{align} 
In addition, for any $m\in\mathbb{N},$ we find in view of \eqref{Mahma1-R}
		\begin{align}\label{Spect-TD1}
			\|\mu_{j}^{\infty}-\mu_{j}^{m}\|_{q}^{\gamma ,\mathcal{O}}&\leqslant  \sum_{n=m}^\infty\|\mu_{j}^{n+1}-\mu_{j}^{n}\|_{q}^{\gamma ,\mathcal{O}}\nonumber\\
			&\leqslant C\gamma\,\delta_{0}(s_{h})N_{0}^{\mu_{2}}N_{m}^{-\mu_{2}}.
		\end{align}
		Therefore, we deduce 
		\begin{align}\label{dekomp}
			\nonumber \mu_{j}^{\infty}&=\mu_{j}^{0}+\sum_{m=0}^\infty\big(\mu_{j}^{m+1}-\mu_{j}^{m}\big)\\
			&:=\mu_{j}^{0}+ r_{j}^{\infty},
		\end{align}
		where $(\mu_{j}^{0})_{j\in\mathbb{S}_0^c}$ is  described in Proposition \ref{projection in the normal directions} and takes the form  
		$$\mu_{j}^{0}(b,\omega,i_{0})=\Omega_{j}(b)+j\big(V_{i_{0}}^{\infty}(b,\omega)-\tfrac{1}{2}\big).
		$$
		Define the diagonal  operator $\mathscr{D}_{\infty}$  by
		\begin{align}\label{Dinfty-op}
			\forall (l,j)\in\mathbb{Z}^d\times\mathbb{S}_0^c,\quad \mathscr{D}_{\infty} {\bf e}_{l,j}=\ii\mu_{j}^{\infty}{\bf e}_{l,j}.
		\end{align}
		By the norm definition we obtain
		\[
		\|\mathscr{D}_{m}-\mathscr{D}_{\infty}\|_{\textnormal{\tiny{O-d}},q,s_{0}}^{\gamma ,\mathcal{O}}=\displaystyle\sup_{j\in\mathbb{S}_{0}^{c}}\|\mu_{j}^{m}-\mu_{j}^{\infty}\|_{q}^{\gamma ,\mathcal{O}},
		\]
		which gives by virtue  of \eqref{Spect-TD1}
		\begin{align}\label{Conv-Dinf}
			\|\mathscr{D}_{m}-\mathscr{D}_{\infty}\|_{\textnormal{\tiny{O-d}},q,s_{0}}^{\gamma ,\mathcal{O}}\leqslant C\, \gamma\,\delta_{0}(s_{h})N_{0}^{\mu_{2}} N_{m}^{-\mu_{2}}.
		\end{align}
		Let us consider the following Cantor set for a given $n\in\mathbb{N},$ 
		$$\mathscr{O}_{\infty,n}^{\gamma,\tau_1,\tau_{2}}(i_{0}):=\bigcap_{(l,j,j_0)\in\mathbb{Z}^{d}\times(\mathbb{S}_0^c)^2\atop\underset{(l,j)\neq(0,j_0)}{\langle l,j-j_0\rangle\leqslant N_n}}\Big\{(b,\omega)\in\mathcal{O}_{\infty,n}^{\gamma,\tau_1}(i_0)\quad\textnormal{s.t.}\quad \big|\omega\cdot l+\mu_j^{\infty}(b,\omega)-\mu_{j_0}^{\infty}(b,\omega)\big|>\tfrac{2\langle j-j_0\rangle}{\langle l\rangle^{\tau_2}}\Big\},$$
		where  the intermediate Cantor sets are defined in \eqref{Cantor-SX}. A finite induction, in a similar way to \cite{HR21}, allows  to prove that
		$$\mathscr{O}_{\infty,n}^{\gamma,\tau_1,\tau_{2}}(i_{0})\subset\mathscr{O}_{n+1}^{\gamma},$$
		 Now define the operator
		$$\mathscr{L}_{\infty}:=\omega\cdot\partial_{\varphi}\Pi_{\mathbb{S}_0}^{\perp}+\mathscr{D}_{\infty}.$$
		Writing 
		$$\mathscr{L}_m-\mathscr{L}_{\infty}=\mathscr{D}_m-\mathscr{D}_{\infty}+\mathscr{R}_m,$$
		then using \eqref{Conv-Dinf} and \eqref{hypothesis of induction for deltamprime}, we can deduce the convergence of the sequence $(\mathscr{L}_m)_{m\in\mathbb{N}}$ to the operator $\mathscr{L}_{\infty}$ in the Toeplitz operator topology. In view of \eqref{Def-Phi} and \eqref{Op-Lm1} one obtains 
			\begin{align*}
				\forall (\lambda,\omega)\in \mathscr{O}_{n+1}^{\gamma },\quad\widehat{\Phi}_{n}^{-1}\mathscr{L}_{0}\widehat{\Phi}_{n}&=\omega\cdot\partial_{\varphi}\Pi_{\mathbb{S}_0}^{\perp}+\mathscr{D}_{n+1}+\mathscr{R}_{n+1}\\
				&=\mathscr{L}_{\infty}+\mathscr{D}_{n+1}-\mathscr{D}_{\infty}+\mathscr{R}_{n+1}.
			\end{align*} 
			It follows that for any $(\lambda,\omega)\in \mathscr{O}_{n+1}^{\gamma }$ 					\begin{align*}
				\Phi_{\infty}^{-1}\mathscr{L}_{0}\Phi_{\infty}&=\mathscr{L}_{\infty}+\mathscr{D}_{n+1}-\mathscr{D}_{\infty}+\mathscr{R}_{n+1}\\
				&+\Phi_{\infty}^{-1}\mathscr{L}_{0}\left(\Phi_{\infty}-\widehat{\Phi}_{n}\right)+\left(\Phi_{\infty}^{-1}-\widehat{\Phi}_{n}^{-1}\right)\mathscr{L}_{0}\widehat{\Phi}_{n}\\
				&:=\mathscr{L}_{\infty}+\mathtt{E}_{n}^2.
			\end{align*}
		According to Lemma \ref{properties of Toeplitz in time operators}, \eqref{Taptap1}, \eqref{Spect-TD1}, \eqref{hypothesis KAM reduction of the remainder term} and \eqref{Conv-op-od}  we get \eqref{Error-Est-2D}.
		Next let us see how to get the estimate  \eqref{estimate rjinfty}.
			Recall that
			$$r_{j}^{\infty}=\sum_{m=0}^{\infty}r_{j}^{m}\quad\quad \textnormal{with}\quad\quad r_{j}^{m}=-\ii\big\langle P_{N_{m}}\mathscr{R}_{m}\mathbf{e}_{0,j},\mathbf{e}_{0,j}\big\rangle_{L^{2}(\mathbb{T}^{d +1})}.$$
			An integration by parts gives
			\begin{align*}
				\big\langle P_{N_{m}}\mathscr{R}_{m}\mathbf{e}_{0,j},\mathbf{e}_{0,j}\big\rangle_{L^{2}(\mathbb{T}^{d +1})}&=\tfrac{\ii}{j}\big\langle P_{N_{m}}\mathscr{R}_{m}\mathbf{e}_{0,j},\partial_{\theta}\mathbf{e}_{0,j}\big\rangle_{L^{2}(\mathbb{T}^{d +1})}\\
				&=\tfrac{-\ii}{j}\big\langle P_{N_{m}}\partial_{\theta}\mathscr{R}_{m}\mathbf{e}_{0,j},\mathbf{e}_{0,j}\big\rangle_{L^{2}(\mathbb{T}^{d +1})}.
			\end{align*}
			Using a duality argument $H^{s_{0}}-H^{-s_{0}}$ combined with   Lemma \ref{properties of Toeplitz in time operators}, \eqref{Ind-Ty1}, \eqref{estimate mathscr R in off diagonal norm} and \eqref{hypothesis KAM reduction of the remainder term}, we obtain
			\begin{align*}
				\| r_{j}^{\infty}\|_{q}^{\gamma ,\mathcal{O}}& \lesssim\displaystyle|j|^{-1}\varepsilon\gamma^{-1}.
			\end{align*}
			The difference estimates \eqref{diffenrence rjinfty} and \eqref{estimate differences mujinfty} can be obtained by fixing $\mathtt{p}=4\tau_2q+4\tau_2$ as in \cite{HR21}. The proof of Proposition \ref{reduction of the remainder term} is now complete.
\end{proof}
\subsection{Construction and tame estimates for the approximate inverse}
At this step, we can construct an almost approximate right inverse for $\widehat{\mathcal{L}}_{\omega}$ defined in \eqref{hat-L-omega}. This enables  to find in turn an almost approximate right inverse for the whole operator $d_{i,\alpha}\mathcal{F}(i_0,\alpha_0)$ given by \eqref{Lineqrized-op-F}.

\begin{prop}\label{inversion of the linearized operator in the normal directions}
	Let $(\gamma,q,d,\tau_{1},s_{0},s_h,\mu_2,S)$ satisfying \eqref{initial parameter condition}, \eqref{setting tau1 and tau2} and \eqref{Conv-T2}. 
	There exists $\sigma:=\sigma(\tau_1,\tau_2,q,d)\geqslant\sigma_{4}$ such that if 
	\begin{equation}\label{small-1}
		\varepsilon\gamma^{-2-q}N_0^{\mu_2}\leqslant\varepsilon_0\quad\textnormal{and}\quad\|\mathfrak{I}_0\|_{q,s_h+\sigma}^{\gamma,\mathcal{O}}\leqslant 1,
	\end{equation}
then, the following assertions hold true.
	\begin{enumerate}[label=(\roman*)]
		\item Consider the operator  $\mathscr{L}_{\infty}$ defined in Proposition $\ref{reduction of the remainder term},$ then there exists a family of  linear reversible operators $\big(\mathtt{T}_n\big)_{n\in\mathbb{N}}$  defined in $\mathcal{O}$ satisfying the estimate
		$$
		\forall s\in[s_0,S],\quad \sup_{n\in\mathbb{N}}\|\mathtt{T}_{n}\rho\|_{q,s}^{\gamma ,\mathcal{O}}\lesssim \gamma ^{-1}\|\rho\|_{q,s+\tau_{1}q+\tau_{1}}^{\gamma ,\mathcal{O}}$$
		and such that for any $n\in\mathbb{N}$, in the Cantor set
		$$\Lambda_{\infty,n}^{\gamma,\tau_{1}}(i_{0})=\bigcap_{(l,j)\in\mathbb{Z}^{d }\times\mathbb{S}_{0}^{c}\atop |l|\leqslant N_{n}}\left\lbrace(b,\omega)\in\mathcal{O}\quad\textnormal{s.t.}\quad\left|\omega\cdot l+\mu_{j}^{\infty}(b,\omega,i_{0})\right|>\tfrac{\gamma \langle j\rangle }{\langle l\rangle^{\tau_{1}}}\right\rbrace,
		$$
		we have
		$$
		\mathscr{L}_{\infty}\mathtt{T}_n=\textnormal{Id}+\mathtt{E}^3_n,
		$$
		with 
		$$
		\forall s_{0}\leqslant s\leqslant\overline{s}\leqslant S, \quad \|\mathtt{E}^3_{n}\rho\|_{q,s}^{\gamma ,\mathcal{O}} \lesssim 
		N_n^{s-\overline{s}}\gamma^{-1}\|\rho\|_{q,\overline{s}+1+\tau_{1}q+\tau_{1}}^{\gamma ,\mathcal{O}}.
		$$
		\item 
		There exists a family of linear reversible operators $\big(\mathtt{T}_{\omega,n}\big)_{n\in\mathbb{N}}$ satisfying
		\begin{equation}\label{estimate mathcalTomega}
			\forall \, s\in\,[ s_0, S],\quad\sup_{n\in\mathbb{N}}\|\mathtt{T}_{\omega,n}\rho\|_{q,s}^{\gamma ,\mathcal{O}}\lesssim\gamma^{-1}\left(\|\rho\|_{q,s+\sigma}^{\gamma ,\mathcal{O}}+\| \mathfrak{I}_{0}\|_{q,s+\sigma}^{\gamma ,\mathcal{O}}\|\rho\|_{q,s_{0}+\sigma}^{\gamma,\mathcal{O}}\right)
		\end{equation}
		and such that in the Cantor set
		\begin{equation}\label{def calGn}
			\mathcal{G}_n(\gamma,\tau_{1},\tau_{2},i_{0}):=\mathcal{O}_{\infty,n}^{\gamma,\tau_{1}}(i_{0})\cap\mathcal{O}_{\infty,n}^{\gamma,\tau_{1},\tau_{2}}(i_{0})\cap\Lambda_{\infty,n}^{\gamma,\tau_{1}}(i_{0}),
		\end{equation}
		we have
		$$
		\widehat{\mathcal{L}}_{\omega}\mathtt{T}_{\omega,n}=\textnormal{Id}+\mathtt{E}_n,
		$$
		where $\mathtt{E}_n$ satisfies the following estimate
		\begin{align}\label{e-En-s0}
			\forall\, s\in [s_0,S],\quad  &\|\mathtt{E}_n\rho\|_{q,s_0}^{\gamma ,\mathcal{O}}
			\nonumber\lesssim N_n^{s_0-s}\gamma^{-1}\Big( \|\rho\|_{q,s+\sigma}^{\gamma,\mathcal{O}}+\varepsilon\gamma^{-2}\| \mathfrak{I}_{0}\|_{q,s+\sigma}^{\gamma,\mathcal{O}}\|\rho\|_{q,s_{0}}^{\gamma,\mathcal{O}} \Big)\nonumber\\
			&\qquad\qquad\quad+ \varepsilon\gamma^{-3}N_{0}^{{\mu}_{2}}N_{n+1}^{-\mu_{2}} \|\rho\|_{q,s_0+\sigma}^{\gamma,\mathcal{O}}.
		\end{align}
		Recall that  $\widehat{\mathcal{L}}_{\omega},$ $  \mathcal{O}_{\infty,n}^{\gamma,\tau_{1}}(i_{0})$ and $\mathcal{O}_{\infty,n}^{\gamma,\tau_{1},\tau_{2}}(i_{0})$ are given by \eqref{hat-L-omega} and Propositions \ref{reduction of the transport part} and \ref{reduction of the remainder term}, respectively.
		\item In the Cantor set $\mathcal{G}_{n}(\gamma,\tau_{1},\tau_{2},i_{0})$, we have the following splitting
		$$\widehat{\mathcal{L}}_{\omega}=\widehat{\mathtt{L}}_{\omega,n}+\widehat{\mathtt{R}}_{n}\quad\textnormal{with}\quad\widehat{\mathtt{L}}_{\omega,n}\mathtt{T}_{\omega,n}=\textnormal{Id}\quad\textnormal{and}\quad\widehat{\mathtt{R}}_{n}=\mathtt{E}_{n}\widehat{\mathtt{L}}_{\omega,n},$$
		where  $\widehat{\mathtt{L}}_{\omega,n}$ and $\widehat{\mathtt{R}}_{n}$ are  reversible operators defined in  $\mathcal{O}$ and satisfy the following estimates
		\begin{align}
			\forall s\in[s_{0},S],\quad& \sup_{n\in\mathbb{N}}\|\widehat{\mathtt{L}}_{\omega,n}\rho\|_{q,s}^{\gamma,\mathcal{O}}\lesssim\|\rho\|_{q,s+1}^{\gamma,\mathcal{O}}+\varepsilon\gamma^{-2}\|\mathfrak{I}_{0}\|_{q,s+\sigma}^{\gamma,\mathcal{O}}\|\rho\|_{q,s_{0}+1}^{\gamma,\mathcal{O}},\label{e-Lomeganh}\\
			\forall s\in[s_{0},S],\quad &\|\widehat{\mathtt{R}}_{n}\rho\|_{q,s_{0}}^{\gamma,\mathcal{O}}\lesssim N_{n}^{s_{0}-s}\gamma^{-1}\left(\|\rho\|_{q,s+\sigma}^{\gamma,\mathcal{O}}+\varepsilon\gamma^{-2}\|\mathfrak{I}_{0}\|_{q,s+\sigma}^{\gamma,\mathcal{O}}\|\rho\|_{q,s_{0}+\sigma}^{\gamma,\mathcal{O}}\right)\nonumber\\
			&\qquad\qquad\quad+\varepsilon\gamma^{-3}N_{0}^{\mu_{2}}N_{n+1}^{-\mu_{2}}\|\rho\|_{q,s_{0}+\sigma}^{\gamma,\mathcal{O}}.\label{e-Rnh}
		\end{align}
	\end{enumerate}
\end{prop}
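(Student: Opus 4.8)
\textbf{Proof strategy for Proposition \ref{inversion of the linearized operator in the normal directions}.} The plan is to chain together the three reductions already established (Propositions \ref{reduction of the transport part}, \ref{projection in the normal directions} and \ref{reduction of the remainder term}) and then invert the final diagonal operator $\mathscr{L}_\infty=\omega\cdot\partial_\varphi\Pi_{\mathbb{S}_0}^\perp+\mathscr{D}_\infty$ by a truncated Neumann/Fourier-multiplier argument on the Cantor set $\Lambda_{\infty,n}^{\gamma,\tau_1}(i_0)$. For point \emph{(i)}, since $\mathscr{D}_\infty$ acts diagonally as $\mathbf{e}_{l,j}\mapsto\ii\mu_j^\infty\mathbf{e}_{l,j}$, on $\Lambda_{\infty,n}^{\gamma,\tau_1}(i_0)$ the ``divisors'' $\ii(\omega\cdot l+\mu_j^\infty)$ satisfy $|\omega\cdot l+\mu_j^\infty|>\gamma\langle j\rangle\langle l\rangle^{-\tau_1}$ for $|l|\le N_n$. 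I would therefore define $\mathtt{T}_n$ as the Fourier multiplier whose symbol is $\big(\ii(\omega\cdot l+\mu_j^\infty)\big)^{-1}$ times a cut-off supported on $|l|\le N_n$ and extended smoothly (à la Rüssmann, cf. the cut-off $\chi$ in \eqref{properties cut-off function first reduction}) to the whole of $\mathcal{O}$, so that $\mathtt{T}_n$ is globally defined and reversible (the sign condition $\mu_{-j}^\infty=-\mu_j^\infty$ from \eqref{TUma1}--\eqref{Spect-T2} gives reversibility). The loss $\tau_1 q+\tau_1$ in the estimate comes from differentiating the divisor $q$ times in $(b,\omega)$ and using the regularity bound on $\mu_j^\infty$ inherited from \eqref{estim mujinfty}--\eqref{estimate rjinfty}; the remainder $\mathtt{E}_n^3=\mathscr{L}_\infty\mathtt{T}_n-\mathrm{Id}$ is supported on $|l|>N_n$, hence enjoys the claimed $N_n^{s-\overline s}$ smoothing gain by Lemma \ref{Lem-lawprod}-(ii).

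For point \emph{(ii)}, I would set
$$\mathtt{T}_{\omega,n}:=\mathscr{B}_\perp\,\Phi_\infty\,\mathtt{T}_n\,\Phi_\infty^{-1}\,\mathscr{B}_\perp^{-1},$$
using that on $\mathcal{G}_n(\gamma,\tau_1,\tau_2,i_0)=\mathcal{O}_{\infty,n}^{\gamma,\tau_1}(i_0)\cap\mathscr{O}_{\infty,n}^{\gamma,\tau_1,\tau_2}(i_0)\cap\Lambda_{\infty,n}^{\gamma,\tau_1}(i_0)$ all three reductions are simultaneously valid, so that
$$\widehat{\mathcal{L}}_\omega\,\mathtt{T}_{\omega,n}=\mathscr{B}_\perp\Phi_\infty\big(\mathscr{L}_\infty+\mathtt{E}_n^2\big)\mathtt{T}_n\Phi_\infty^{-1}\mathscr{B}_\perp^{-1}+(\text{contributions of }\mathtt{E}_n^0,\mathtt{E}_n^1)=\mathrm{Id}+\mathtt{E}_n.$$
Here $\mathtt{E}_n$ collects three types of error: $\mathscr{B}_\perp\Phi_\infty\mathtt{E}_n^3\Phi_\infty^{-1}\mathscr{B}_\perp^{-1}$ (the $N_n^{s_0-s}\gamma^{-1}$ term), $\mathscr{B}_\perp\Phi_\infty\mathtt{E}_n^2\mathtt{T}_n\Phi_\infty^{-1}\mathscr{B}_\perp^{-1}$ together with the conjugates of $\mathtt{E}_n^0$ and $\mathtt{E}_n^1$ (the $\varepsilon\gamma^{-3}N_0^{\mu_2}N_{n+1}^{-\mu_2}$ term, using \eqref{estim En0 s0}, \eqref{En1-s0}, \eqref{Error-Est-2D}), and the mixed derivative losses in $(b,\omega)$. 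The tame bound \eqref{estimate mathcalTomega} follows by composing the tame estimates \eqref{estimate on Phiinfty and its inverse}, \eqref{estimate for mathscrBperp and its inverse} and the bound on $\mathtt{T}_n$ from \emph{(i)}, absorbing the various fixed loss indices $\sigma_1,\sigma_3,\sigma_4,\tau_1 q+\tau_1$ into a single $\sigma=\sigma(\tau_1,\tau_2,q,d)\ge\sigma_4$, and using Lemma \ref{Lem-lawprod}-(iv) for the bilinear structure. Reversibility of $\mathtt{T}_{\omega,n}$ is preserved because $\mathscr{B}_\perp$ is reversibility preserving (by \eqref{symmetry for beta} and Lemma \ref{lem CVAR kernel}), $\Phi_\infty$ is reversibility preserving, and $\mathtt{T}_n$ is reversible.

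For point \emph{(iii)}, I would simply define $\widehat{\mathtt{L}}_{\omega,n}:=\mathscr{B}_\perp\Phi_\infty\mathscr{L}_\infty\Phi_\infty^{-1}\mathscr{B}_\perp^{-1}$, which by construction is a globally defined reversible operator with exact right inverse $\mathtt{T}_{\omega,n}$, and set $\widehat{\mathtt{R}}_n:=\widehat{\mathcal{L}}_\omega-\widehat{\mathtt{L}}_{\omega,n}$; the identity $\widehat{\mathtt{R}}_n=\mathtt{E}_n\widehat{\mathtt{L}}_{\omega,n}$ then follows formally from $\widehat{\mathcal{L}}_\omega\mathtt{T}_{\omega,n}=\mathrm{Id}+\mathtt{E}_n$ by right-multiplying by $\widehat{\mathtt{L}}_{\omega,n}$. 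The estimate \eqref{e-Lomeganh} comes from \eqref{Taptap1} conjugated by $\Phi_\infty^{\pm1}$ and $\mathscr{B}_\perp^{\pm1}$ (a one-order-loss operator sandwiched between tame transformations), and \eqref{e-Rnh} follows from \eqref{e-En-s0} composed with \eqref{e-Lomeganh}. \emph{The main obstacle} is bookkeeping: one must track how each conjugation by $\mathscr{B}_\perp^{\pm1}$, $\Phi_\infty^{\pm1}$ accumulates (i) fixed derivative losses in $(\varphi,\theta)$ and (ii) the dependence on $\|\mathfrak{I}_0\|_{q,s+\sigma}^{\gamma,\mathcal{O}}$ through the variable-coefficient nature of the transformations, making sure the final $\sigma$ is a legitimate finite index and that the smallness condition \eqref{small-1} (with the extra $\gamma^{-q}$ needed in Proposition \ref{reduction of the remainder term}) is exactly what is required to close all the estimates. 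The analytic content is entirely contained in the three preceding propositions; here the work is to assemble them cleanly, which is why I would follow verbatim the scheme of \cite[Sec. 6]{HHM21} and \cite[Sec. 6]{HR21}, citing those for the purely routine parts.
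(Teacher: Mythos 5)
Your points (i) and (ii) follow the paper's scheme: the same multiplier $\mathtt{T}_n$ with the Rüssmann-type cut-off $\chi$, the same conjugated operator $\mathtt{T}_{\omega,n}=\mathscr{B}_\perp\Phi_\infty\mathtt{T}_n\Phi_\infty^{-1}\mathscr{B}_\perp^{-1}$, and the same bookkeeping of the errors $\mathtt{E}_n^0,\mathtt{E}_n^1,\mathtt{E}_n^2,\mathtt{E}_n^3$. The gap is in point (iii), and it originates in your choice of $\mathtt{T}_n$. You define $\mathtt{T}_n$ to vanish on the high time frequencies $|l|>N_n$. Such an operator has an infinite-dimensional kernel (the image under $\mathscr{B}_\perp\Phi_\infty$ of the high-frequency modes), so $\mathtt{T}_{\omega,n}$ cannot admit any left factorization $\widehat{\mathtt{L}}_{\omega,n}\mathtt{T}_{\omega,n}=\textnormal{Id}$, which is exactly what (iii) asserts. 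Your explicit candidate $\widehat{\mathtt{L}}_{\omega,n}:=\mathscr{B}_\perp\Phi_\infty\mathscr{L}_\infty\Phi_\infty^{-1}\mathscr{B}_\perp^{-1}$ does not repair this: it gives $\widehat{\mathtt{L}}_{\omega,n}\mathtt{T}_{\omega,n}=\textnormal{Id}+\mathscr{B}_\perp\Phi_\infty\mathtt{E}_n^3\Phi_\infty^{-1}\mathscr{B}_\perp^{-1}\neq\textnormal{Id}$, since $\mathscr{L}_\infty\mathtt{T}_n=\textnormal{Id}+\mathtt{E}_n^3$ and $\mathtt{E}_n^3\neq0$. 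For the same reason your derivation of $\widehat{\mathtt{R}}_n=\mathtt{E}_n\widehat{\mathtt{L}}_{\omega,n}$ ``by right-multiplying by $\widehat{\mathtt{L}}_{\omega,n}$'' is not legitimate: it implicitly uses that $\mathtt{T}_{\omega,n}$ is a two-sided inverse of $\widehat{\mathtt{L}}_{\omega,n}$, which fails for a non-injective $\mathtt{T}_{\omega,n}$.

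The paper's device, which you need here, is to split $\mathscr{L}_\infty=\mathtt{L}_n-\mathtt{R}_n$ with $\mathtt{R}_n=\Pi_{N_n}^\perp\,\omega\cdot\partial_\varphi\,\Pi_{N_n}^\perp\Pi_{\mathbb{S}_0}^{\perp}$, i.e.\ to drop $\omega\cdot\partial_\varphi$ only on the high time frequencies, and to define $\mathtt{T}_n$ with symbol $\chi\big((\omega\cdot l+\mu_j^\infty)\gamma^{-1}\langle l\rangle^{\tau_1}\big)\big(\ii(\omega\cdot l+\mu_j^\infty)\big)^{-1}$ for $|l|\leqslant N_n$ and $\big(\ii\mu_j^\infty\big)^{-1}$ for $|l|>N_n$; the latter is well defined on all of $\mathcal{O}$ because $|\mu_j^\infty|\gtrsim|j|$ (Lemma \ref{properties omegajb}-(ii) together with \eqref{differences mu0}, \eqref{estimate rjinfty} and \eqref{small-1}). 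Then $\mathtt{L}_n\mathtt{T}_n=\textnormal{Id}$ exactly on $\Lambda_{\infty,n}^{\gamma,\tau_1}(i_0)$, so one may set $\widehat{\mathtt{L}}_{\omega,n}:=\mathscr{B}_\perp\Phi_\infty\mathtt{L}_n\Phi_\infty^{-1}\mathscr{B}_\perp^{-1}=\mathtt{T}_{\omega,n}^{-1}$ on the Cantor set, define $\widehat{\mathtt{R}}_n:=\widehat{\mathcal{L}}_\omega-\widehat{\mathtt{L}}_{\omega,n}$, and read off $\widehat{\mathtt{R}}_n=\mathtt{E}_n\widehat{\mathtt{L}}_{\omega,n}$ from $\widehat{\mathcal{L}}_\omega=\mathtt{T}_{\omega,n}^{-1}+\mathtt{E}_n\mathtt{T}_{\omega,n}^{-1}$; the error becomes $\mathtt{E}_n^3=-\mathtt{R}_n\mathtt{T}_n$, which still satisfies the claimed bound (this is where the factor $\gamma^{-1}$ and the loss $\overline{s}+1+\tau_1q+\tau_1$ come from, rather than from a bare projector). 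Note also that \eqref{e-Lomeganh} is then obtained from the structure of $\mathtt{L}_n$ in this splitting combined with \eqref{estimate for mathscrBperp and its inverse} and \eqref{estimate on Phiinfty and its inverse}, not from \eqref{Taptap1}, which concerns $\mathscr{L}_0$. With this modification of (i), the rest of your argument for (ii) and the tame estimates goes through as you describe.
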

\begin{proof}
	{\bf{(i)}} First recall from Proposition \ref{reduction of the remainder term} that
	\begin{align*}
		\mathscr{L}_{\infty}&=\omega\cdot\partial_{\varphi}\Pi_{\mathbb{S}_0}^{\perp}+\mathscr{D}_{\infty}.
	\end{align*}
	Using the projectors defined in \eqref{definition of projections for functions}, we can split this operator as follows
	\begin{align}\label{Tikl1}
		\nonumber\mathscr{L}_{\infty}&=\Pi_{N_n}\omega\cdot\partial_{\varphi}\Pi_{N_n}\Pi_{\mathbb{S}_0}^{\perp}+\mathscr{D}_{\infty}-\Pi_{N_n}^\perp\omega\cdot\partial_{\varphi}\Pi_{N_n}^\perp\Pi_{\mathbb{S}_0}^{\perp}\nonumber\\
		&:=\mathtt{L}_n-\mathtt{R}_n,
	\end{align}
	where 
	$$\mathtt{R}_{n}:=\Pi_{N_n}^\perp\omega\cdot\partial_{\varphi}\Pi_{N_n}^\perp\Pi_{\mathbb{S}_0}^{\perp}.$$
	According to the structure of $\mathscr{D}_{\infty}$ in Proposition \ref{reduction of the remainder term}, we obtain from \eqref{Tikl1},
	$$\forall(l,j)\in\mathbb{Z}^{d}\times\mathbb{S}_{0}^{c},\quad {\bf e}_{-l,-j}\mathtt{L}_n{\bf e}_{l,j}=\left\lbrace\begin{array}{ll}
		\ii\big(\omega\cdot l+\mu_j^\infty\big) & \hbox{if }  |l|\leqslant N_n\\
		\ii\,\mu_j^\infty&\hbox{if } |l|> N_n.
	\end{array}\right.$$
	Let us now consider the diagonal operator  $\mathtt{T}_n$ defined by 
	\begin{align*}
		\mathtt{T}_{n}\rho(b,\omega,\varphi,\theta):=&
		-\ii\sum_{(l,j)\in\mathbb{Z}^{d}\times\mathbb{S}_{0}^{c}\atop |l|\leqslant N_n }\tfrac{\chi\left((\omega\cdot l+\mu_{j}^{\infty}(b,\omega,i_{0}))\gamma ^{-1}\langle l\rangle^{\tau_{1}}\right)}{\omega\cdot l+\mu_{j}^{\infty}(b,\omega,i_{0})}\rho_{l,j}(b,\omega)\,e^{\ii(l\cdot\varphi+j\theta)}\\
		&-\ii\sum_{(l,j)\in\mathbb{Z}^{d}\times\mathbb{S}_{0}^{c}\atop|l|>N_n}\tfrac{\rho_{l,j}(b,\omega)}{\mu_{j}^{\infty}(b,\omega,i_{0})}\,e^{\ii(l\cdot\varphi+j\theta)},
	\end{align*}
	where $\chi$ is the cut-off function introduced in \eqref{properties cut-off function first reduction} and $\left(\rho_{l,j}(b,\omega)\right)_{l,j}$ are the Fourier coefficients of $\rho$. Now recall the expansion of the perturbed eigenvalues given by Proposition \ref{reduction of the remainder term}, namely
	$$\mu_{j}^{\infty}(b,\omega,i_{0})=\Omega_{j}(b)+jr^{1}(b,\omega)+r_{j}^{\infty}(b,\omega)\quad \mbox{ with }\quad r^{1}(b,\omega)=V_{i_{0}}^{\infty}(b,\omega)-\tfrac{1}{2}.$$
	In view of Lemma \ref{properties omegajb}-(iv), \eqref{differences mu0} and \eqref{estimate rjinfty}, they satisfy the following estimates  
	$$
	\forall j\in\mathbb{S}_{0}^{c},\quad \|\mu^\infty_{j}\|_{q}^{\gamma ,\mathcal{O}}\lesssim |j|.
	$$
	According  to Lemma \ref{properties omegajb}-(ii), \eqref{differences mu0}, \eqref{estimate rjinfty} and the smallness condition \eqref{small-1} we infer
	$$|j|\lesssim \|\mu^\infty_{j}\|_{0}^{\gamma ,\mathcal{O}}\leqslant  \|\mu^\infty_{j}\|_{q}^{\gamma ,\mathcal{O}}.$$
	Computations based on Lemma \ref{Lem-lawprod}-(vi) give  
	\begin{align}\label{Es-Mu1}
		\forall s\geqslant s_0,\quad  \|\mathtt{T}_{n}\rho\|_{q,s}^{\gamma ,\mathcal{O}}\lesssim \gamma ^{-1}\|\rho\|_{q,s+\tau_{1}q+\tau_{1}}^{\gamma ,\mathcal{O}}.
	\end{align}
	In addition, by construction 
	\begin{equation}\label{Inv-Ty1}
		\mathtt{L}_{n}\mathtt{T}_{n}=\textnormal{Id}\quad\textnormal{in }\Lambda_{\infty,n}^{\gamma,\tau_{1}}(i_{0})
	\end{equation} 
	since $\chi(\cdot)=1$ in this set. Gathering \eqref{Inv-Ty1} and \eqref{Tikl1} yields 
	\begin{align}\label{LinftyTn}
		\forall\, (b,\omega)\in \Lambda_{\infty,n}^{\gamma,\tau_{1}}(i_{0}),\quad \mathscr{L}_\infty \mathtt{T}_{n}&=\textnormal{Id}-\mathtt{R}_n\mathtt{T}_{n}\nonumber\\
		&:=\textnormal{Id}+\mathtt{E}_n^3.
	\end{align}
	Remark that by Lemma \ref{Lem-lawprod}-(ii),
	$$
	\forall \, s_0\leqslant s\leqslant \overline{s}\leqslant S,\quad \|\mathtt{R}_{n}\rho\|_{q,s}^{\gamma ,\mathcal{O}}\lesssim N_n^{s-\overline{s}}\|\rho\|_{q,\overline{s}+1}^{\gamma ,\mathcal{O}}.
	$$
	Putting this estimate with \eqref{Es-Mu1} implies
	\begin{align}\label{Mila-1}
		\forall \, s_0\leqslant s\leqslant \overline{s}\leqslant S,\quad \|\mathtt{E}^3_{n}\rho\|_{q,s}^{\gamma ,\mathcal{O}}&\lesssim 
		N_n^{s-\overline{s}}\gamma^{-1}\|\rho\|_{q,\overline{s}+1+\tau_{1}q+\tau_{1}}^{\gamma ,\mathcal{O}}.
	\end{align}
	\textbf{(ii)} We set  
	\begin{align}\label{tomega}
		\mathtt{T}_{\omega,n}:=\mathscr{B}_{\perp}\Phi_{\infty}\mathtt{T}_{n}\Phi_{\infty}^{-1}\mathscr{B}_{\perp}^{-1},
	\end{align}
	where the operators $ \mathscr{B}_{\perp}$ and $\Phi_{\infty}$ are defined in Propositions \ref{projection in the normal directions} and \ref{reduction of the remainder term} respectively. Notice that $\mathtt{T}_{\omega,n}$ is defined in the whole range of parameters $\mathcal{O}.$ Since the condition \eqref{small-1} is satisfied, then, both Propositions \ref{reduction of the transport part} and \ref{reduction of the remainder term} apply and the estimate \eqref{estimate mathcalTomega} is obtained combining \eqref{estimate for mathscrBperp and its inverse}, \eqref{estimate on Phiinfty and its inverse}, \eqref{Es-Mu1} and \eqref{small-1}.
	Now combining Propositions \ref{projection in the normal directions} and \ref{reduction of the remainder term}, we find that in the Cantor set  $\mathcal{O}_{\infty,n}^{\gamma,\tau_{1}}(i_{0})\cap\mathcal{O}_{\infty,n}^{\gamma,\tau_{1},\tau_{2}}(i_{0})$
	the following decomposition holds
	\begin{align*}
		\Phi^{-1}_{\infty}\mathscr{B}_{\perp}^{-1}\widehat{\mathcal{L}}_{\omega}\mathscr{B}_{\perp}\Phi_{\infty}&=\Phi^{-1}_{\infty}\mathscr{L}_{0}\Phi_{\infty}+\Phi^{-1}_{\infty}\mathtt{E}_{n}^1\Phi_{\infty}\\
		&=\mathscr{L}_{\infty}+\mathtt{E}_n^2+\Phi^{-1}_{\infty}\mathtt{E}_{n}^1\Phi_{\infty}.
	\end{align*}
	According to \eqref{LinftyTn}, one finds that in the Cantor set  $\mathcal{O}_{\infty,n}^{\gamma,\tau_{1}}(i_{0})\cap\mathscr{O}_{\infty,n}^{\gamma,\tau_{2}}(i_{0})\cap \Lambda_{\infty,n}^{\gamma,\tau_{1}}(i_{0})$ the following identity holds 
	\begin{align*}
		\Phi^{-1}_{\infty}\mathscr{B}_{\perp}^{-1}\widehat{\mathcal{L}}_{\omega}\mathscr{B}_{\perp}\Phi_{\infty}\mathtt{T}_{n}
		&=\textnormal{Id}+\mathtt{E}_n^3+\mathtt{E}_n^2\mathtt{T}_{n}+\Phi^{-1}_{\infty}\mathtt{E}_{n}^1\Phi_{\infty}\mathtt{T}_{n},
	\end{align*}
	which implies in turn, in view of \eqref{tomega}, the following identity in $\mathcal{G}_n(\gamma,\tau_{1},\tau_{2},i_{0})$
	\begin{align}\label{Id-Moh1}
		\widehat{\mathcal{L}}_{\omega}\mathtt{T}_{\omega,n}&=\textnormal{Id}+\mathscr{B}_{\perp}\Phi_{\infty}\big(\mathtt{E}_n^3+\mathtt{E}_n^2\mathtt{T}_{n}+\Phi^{-1}_{\infty}\mathtt{E}_{n}^1\Phi_{\infty}\mathtt{T}_{n}\big)\Phi^{-1}_{\infty}\mathscr{B}_{\perp}^{-1}
		\nonumber\\
		&:=\textnormal{Id}+\mathtt{E}_n.
	\end{align}
	Combining \eqref{Id-Moh1}, \eqref{En1-s0}, \eqref{Error-Est-2D}, \eqref{Mila-1}, \eqref{Es-Mu1}, \eqref{estimate for mathscrBperp and its inverse}, \eqref{estimate on Phiinfty and its inverse} and \eqref{small-1}, we get \eqref{e-En-s0}, up to take $\sigma$ large enough.\\
	\textbf{(iii)} By virtue of \eqref{Id-Moh1}, one can write in the Cantor set $\mathcal{G}_{n}(\gamma,\tau_{1},\tau_{2},i_{0})$
	\begin{equation}\label{Id-Moh11}
		\widehat{\mathcal{L}}_{\omega}=\mathtt{T}_{\omega,n}^{-1}+\mathtt{E}_{n}\mathtt{T}_{\omega,n}^{-1}.
	\end{equation}
	Putting together \eqref{tomega} and \eqref{Inv-Ty1}, one finds in the Cantor set $\mathcal{G}_n(\gamma,\tau_1,\tau_2,i_0)$
	$$\widehat{\mathtt{L}}_{\omega,n}:=\mathtt{T}_{\omega,n}^{-1}=\mathscr{B}_{\perp}\Phi_{\infty}\mathtt{L}_{n}\Phi_{\infty}^{-1}\mathscr{B}_{\perp}^{-1}.$$
	Therefore, \eqref{Id-Moh11} can be rewritten
	$$
		\widehat{\mathcal{L}}_{\omega}=\widehat{\mathtt{L}}_{\omega,n}+\widehat{\mathtt{R}}_{n}\quad\textnormal{with}\quad\widehat{\mathtt{R}}_{n}:=\mathtt{E}_{n}\widehat{\mathtt{L}}_{\omega,n}.
	$$
	The estimate \eqref{e-Lomeganh} is obtained gathering \eqref{Tikl1}, \eqref{estimate for mathscrBperp and its inverse}, \eqref{estimate on Phiinfty and its inverse} and \eqref{small-1}. Finally, \eqref{e-Lomeganh} together with \eqref{e-En-s0} implies \eqref{e-Rnh}. This ends the proof of Proposition \ref{inversion of the linearized operator in the normal directions}.  
\end{proof}

The following theorem, which can be found in \cite{BB15,BM18,HHM21}, states  that the linearized operator  $d_{i,\alpha} {\mathcal F}(i_0,\alpha_0 )$ in \eqref{Lineqrized-op-F} admits an approximate right  inverse on a suitable Cantor set.

\begin{theo}\label{thm ai}
	{\bf (Approximate inverse)}\\
	Let $(\gamma,q,d,\tau_{1},\tau_2,s_{0},s_h,\mu_{2})$ satisfy  \eqref{initial parameter condition}, \eqref{setting tau1 and tau2}, \eqref{param} and \eqref{Conv-T2}. 
	Then there exists $ \overline{\sigma}= \overline{\sigma}(\tau_1,\tau_2,d,q)>0$ and a family of reversible operators ${\rm T}_0:={\rm T}_{0,n}(i_0)$ such that if the smallness condition \eqref{small-1} holds, then
	for all $ g = (g_1, g_2, g_3) $, satisfying 
	$$g_1(\varphi)=g_1(\varphi),\quad g_2(-\varphi)=-g_2(\varphi)\quad\textnormal{and}\quad g_3(-\varphi)=(\mathscr{S}g_3)(\varphi),$$  
	the function  $ {\rm T}_0 g $  satisfies the following estimate  
	$$
		\forall s\in [s_0,S],\quad \| {\rm T}_0 g\|_{q,s}^{\gamma,\mathcal{O}}\lesssim\gamma^{-1}\left(\|g\|_{q,s+\overline{\sigma}}^{\gamma,\mathcal{O}}+\|\mathfrak{I}_{0}\|_{q,s+\overline{\sigma}}^{\gamma,\mathcal{O}}\|g\|_{q,s_{0}+\overline{\sigma}}^{\gamma,\mathcal{O}}\right).
	$$
	Moreover  ${\rm T}_0$ is an almost-approximate right inverse of $d_{i, \alpha} \mathcal{ F}(i_0,\alpha_0)$ in the Cantor set $\mathcal{G}_{n}(\gamma,\tau_{1},\tau_{2},i_{0})$ defined by \eqref{def calGn}. More precisely,   
	$$
		\forall (b,\omega)\in \mathcal{G}_{n}(\gamma,\tau_{1},\tau_{2},i_{0}), \quad d_{i , \alpha} \mathcal{ F} (i_0) \circ {\rm T}_0
		- {\rm Id} = \mathcal{E}^{(n)}_1+\mathcal{E}^{(n)}_2+\mathcal{E}^{(n)}_3, 
	$$
	where the operators $\mathcal{E}^{(n)}_1$, $\mathcal{E}^{(n)}_2$ and $\mathcal{E}^{(n)}_3$ are defined in the whole set $\mathcal{O}$ with the estimates
	\begin{align*}
		\|\mathcal{E}^{(n)}_1 g \|_{q,s_0}^{\gamma,\mathcal{O}} & \lesssim \gamma^{-1 } \| \mathcal{ F}(i_0, \alpha_0) \|_{q,s_0 +\overline{\sigma}}^{\gamma,\mathcal{O}} \| g \|_{q,s_0 + \overline{\sigma}}^{\gamma,\mathcal{O}},\\
		\forall b \geqslant 0,\quad \| \mathcal{E}^{(n)}_2 g \|_{q,s_0}^{\gamma,\mathcal{O}}& \lesssim 
		\gamma^{- 1} N_n^{- b } \big( \| g \|_{q,s_0+b+\overline{\sigma} }^{\gamma,\mathcal{O}}+\varepsilon
		\| \mathfrak{I}_{0} \|_{q,s_0
			+b+\overline{\sigma}    }^{\gamma,\mathcal{O}} \big \| g \|_{q,s_0 +\overline{\sigma}}^{\gamma,\mathcal{O}} \big)\,,
		\\
		\forall b\in [0,S], \quad \| \mathcal{E}^{(n)}_3 g \|_{q,s_0}^{\gamma,\mathcal{O}}& \lesssim N_n^{-b}\gamma^{-2}\Big( \|g\|_{q,s_0+b+\overline{\sigma}}^{\gamma,\mathcal{O}}+{\varepsilon\gamma^{-2}}\| \mathfrak{I}_{0}\|_{q,s_0+b+\overline{\sigma}}^{\gamma,\mathcal{O}}\|g\|_{q,s_0+\overline{\sigma}}^{\gamma,\mathcal{O}} \Big)\nonumber\\
		&\quad+ \varepsilon\gamma^{-4}N_{0}^{{\mu}_{2}}{N_{n}^{-\mu_{2}}} \|g\|_{q,s_0+\overline{\sigma}}^{\gamma,\mathcal{O}}. 
	\end{align*}
\end{theo}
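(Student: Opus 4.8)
\textbf{Proof plan for Theorem \ref{thm ai}.}
The plan is to follow the standard approximate-inverse construction of \cite{BB15}, in the streamlined form of \cite[Sec.~6]{HHM21} and \cite{BM18}, and to feed into it the inversion of $\widehat{\mathcal{L}}_\omega$ obtained in Proposition \ref{inversion of the linearized operator in the normal directions}. First I would conjugate the linearized operator $d_{(i,\alpha)}\mathcal{F}(i_0,\alpha_0)$, given by \eqref{Lineqrized-op-F}, by the linear symplectic diffeomorphism $G_{n}(\varphi)$ of the toroidal phase space $\mathbb{T}^d\times\mathbb{R}^d\times L^2_{\perp}$ built from the approximate torus $i_0$: in the new action-angle-normal coordinates $(\psi,\zeta,w)$ the operator becomes, up to remainders which vanish when $\mathcal{F}(i_0,\alpha_0)=0$, a block-triangular system whose diagonal blocks are $\omega\cdot\partial_\varphi$ on the $(\psi,\zeta)$ variables and $\widehat{\mathcal{L}}_\omega$ on the normal component $w$. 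The error terms produced here are exactly controlled by $\|\mathcal{F}(i_0,\alpha_0)\|_{q,s+\overline\sigma}^{\gamma,\mathcal{O}}$, which is the source of the operator $\mathcal{E}^{(n)}_1$. I would record the tame estimates on $G_n^{\pm1}$ (linear in $\|\mathfrak{I}_0\|$) and on the transposed operators $L_1,L_2$ entering \eqref{l1l20}, all of which are classical.

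Next I would invert the triangular system by back-substitution. The $\zeta$-equation $\omega\cdot\partial_\varphi\zeta=g_2$ is solved by $\zeta=(\omega\cdot\partial_\varphi)^{-1}g_2$, which requires only the Diophantine condition on $\omega$ used in the Nash--Moser scheme and loses $\tau_1 q+\tau_1$ derivatives; the truncation at $N_n$ produces the term $\mathcal{E}^{(n)}_2$ through the frequency cut-off $\Pi_{N_n}$ (estimated by Lemma \ref{Lem-lawprod}-(ii)). The normal equation $\widehat{\mathcal{L}}_\omega w=(\text{data})$ is solved \emph{approximately} using the operator $\mathtt{T}_{\omega,n}$ of Proposition \ref{inversion of the linearized operator in the normal directions}-(ii), valid in the Cantor set $\mathcal{G}_n(\gamma,\tau_1,\tau_2,i_0)$ of \eqref{def calGn}; the residual $\mathtt{E}_n$ in $\widehat{\mathcal{L}}_\omega\mathtt{T}_{\omega,n}=\mathrm{Id}+\mathtt{E}_n$, estimated by \eqref{e-En-s0}, is precisely what generates $\mathcal{E}^{(n)}_3$ (hence the $\varepsilon\gamma^{-4}N_0^{\mu_2}N_n^{-\mu_2}$ tail and the $\gamma^{-2}$, $\varepsilon\gamma^{-2}$ weights). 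Finally the $\psi$-equation $\omega\cdot\partial_\varphi\psi=g_1+\widehat\alpha+(\text{corrections})$ is solved together with the choice of $\widehat\alpha$: one fixes $\widehat\alpha$ so that the right-hand side has zero $\varphi$-average, which is exactly the compatibility condition that the extra parameter $\alpha$ was introduced to guarantee. Collecting the three solved components and conjugating back by $G_n$ defines $\mathrm{T}_0={\rm T}_{0,n}(i_0)$, and the composition of all the loss-of-regularity estimates, together with the interpolation/law-product estimates of Lemma \ref{Lem-lawprod}, yields the tame bound $\|\mathrm{T}_0 g\|_{q,s}^{\gamma,\mathcal{O}}\lesssim\gamma^{-1}(\|g\|_{q,s+\overline\sigma}^{\gamma,\mathcal{O}}+\|\mathfrak{I}_0\|_{q,s+\overline\sigma}^{\gamma,\mathcal{O}}\|g\|_{q,s_0+\overline\sigma}^{\gamma,\mathcal{O}})$ for a suitably large $\overline\sigma=\overline\sigma(\tau_1,\tau_2,d,q)\geqslant\sigma$.

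The reversibility structure must be tracked throughout: one checks that $G_n$ maps the involution $\mathfrak{S}$ of \eqref{rev_aa} to $\mathscr{S}_2$ in the normal block and to $(\psi,\zeta)\mapsto(-\psi,\zeta)$ on the finite-dimensional part, so that each inverted block preserves the parity of the data $(g_1,g_2,g_3)$ (even, odd, $\mathscr{S}$-reversible respectively); this is where the hypotheses $g_1(-\varphi)=g_1(\varphi)$, $g_2(-\varphi)=-g_2(\varphi)$, $g_3(-\varphi)=(\mathscr{S}g_3)(\varphi)$ are used, and it guarantees that $\mathrm{T}_0 g$ again satisfies the reversibility condition \eqref{reversibility condition in the variables theta I and z} and that $\mathrm{T}_0$ is a reversible operator. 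The main obstacle is not any single computation but the bookkeeping of the three distinct error mechanisms — the exact-solution error $\mathcal{E}^{(n)}_1$ proportional to $\mathcal{F}(i_0,\alpha_0)$, the time-truncation error $\mathcal{E}^{(n)}_2$, and the normal-direction approximate-inversion error $\mathcal{E}^{(n)}_3$ inherited from Proposition \ref{inversion of the linearized operator in the normal directions} — and verifying that each obeys the precise $N_n$-decay and $\gamma$-power stated, since these exponents are exactly what the Nash--Moser convergence in the next section will consume. Because all the hard analytic work (reduction of the transport part, KAM diagonalization, inversion of $\widehat{\mathcal{L}}_\omega$) has already been carried out in Propositions \ref{reduction of the transport part}--\ref{inversion of the linearized operator in the normal directions}, the remaining argument is essentially the abstract linear-algebra-with-estimates step, and I would refer to \cite{BB15,BM18,HHM21} for the routine portions while spelling out only the points where the boundary terms or the specific Cantor sets $\mathcal{O}_{\infty,n}^{\gamma,\tau_1}(i_0)$, $\mathscr{O}_{\infty,n}^{\gamma,\tau_1,\tau_2}(i_0)$, $\Lambda_{\infty,n}^{\gamma,\tau_1}(i_0)$ enter.
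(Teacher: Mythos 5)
Your plan is correct and coincides with the route the paper itself takes: the paper does not reprove this theorem but invokes the standard Berti--Bolle construction of \cite{BB15,BM18,HHM21}, feeding in the almost-inverse $\mathtt{T}_{\omega,n}$ of Proposition \ref{inversion of the linearized operator in the normal directions} on the Cantor set \eqref{def calGn}, exactly as you describe. Your attribution of the three error terms ($\mathcal{E}^{(n)}_1$ from terms vanishing at an exact solution, $\mathcal{E}^{(n)}_2$ from the $N_n$-truncation, $\mathcal{E}^{(n)}_3$ from the residual $\mathtt{E}_n$ in \eqref{e-En-s0}) and your tracking of reversibility match the cited construction.
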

\section{Nash-Moser iteration and measure of the final Cantor set}
In this last section, we shall find a non-trivial solution $(b,\omega)\mapsto(i_{\infty}(b,\omega),\alpha_{\infty}(b,\omega))$ to the equation
$$\mathcal{F}(i,\alpha,b,\omega,\varepsilon)=0,$$
where $\mathcal{F}$ is the functional defined in \eqref{main function}. This done by using a Nash-Moser scheme in a similar way to the series of papers \cite{BBMH18,BM18,HHM21,HR21}. The solutions are constructed for parameters $(b,\omega)$ belonging to the intersection of all the Cantor sets $\mathcal{G}_{\infty}^{\gamma}$ on which we are able to invert the linearized operator at the different steps. In order to find a solution to the original problem, we must rigidify the frequencies $\omega$ so that it coincides with the equilibrium frequencies. This amounts to consider a frequency curve $b\mapsto\omega(b,\varepsilon)$ implicitly  defined by the equation
$$\alpha_{\infty}\big(b,\omega(b,\varepsilon)\big)=-\omega_{\textnormal{Eq}}(b).$$ 
Considering the associated rigidified Cantor set
$$\mathcal{C}_{\infty}^{\varepsilon}=\Big\{b\in(b_0,b_1)\quad\textnormal{s.t.}\quad\big(b,\omega(b,\varepsilon)\big)\in\mathcal{G}_{\infty}^{\gamma}\Big\},$$
we have a solution to the original problem provided that the measure of $\mathcal{C}_{\infty}^{\varepsilon}$ is non-zero. This will be checked, in Section \ref{Section measure of the final Cantor set}, by perturbative arguments in the spirit of the previous works \cite{BBMH18,BBM14,BM18,HHM21,HR21}. This proves in particular Theorem \ref{thm QPS ED}.
\subsection{Nash-Moser iteration}
In this section we implement the Nash-Moser scheme, which is a modified Newton method consisting in a recursive construction of approximate solutions of the equation $\mathcal{F}\big(i,\alpha,b,\omega\big):=\mathcal{F}\big(i,\alpha,b,\omega,\varepsilon\big)=0$ where the functional  $\mathcal{F}$ is defined in \eqref{main function}. At each step of this procedure, we need to construct an approximate inverse of  the linearized operator at a state near the equilibrium by applying the reduction procedure developed in Section \ref{sec red lin opb}. This allows to get  Theorem \ref{thm ai}  with the suitable  tame estimates associated to  the final  loss of regularity $\overline{\sigma}$ that could be arranged to be large enough.  We point out that  $\overline{\sigma}$ depends only on the shape of the Cantor set through the  parameters $\tau_1,\tau_2,d$ and $q$ but  it is independent of  the regularity of the solutions that we want to construct.  Now, we shall fix the following parameters needed  to implement  the Nash-Moser scheme and related to the loss of regularity $\overline{\sigma}.$ 
\begin{equation}\label{param NM}
	\left\lbrace\begin{array}{rcl}
		\overline{a} & = & \tau_{2}+2\\
		\mu_1 & = & 3q(\tau_{2}+2)+6\overline{\sigma}+6\\
		a_{1} & = & 6q(\tau_{2}+2)+12\overline{\sigma}+15\\
		a_{2} & = & 3q(\tau_{2}+2)+6\overline{\sigma}+9\\
		\mu_{2} & = & 2q(\tau_{2}+2)+5\overline{\sigma}+7\\
		s_{h} & = & s_{0}+4q(\tau_{2}+2)+9\overline{\sigma}+11\\
		b_{1} & = & 2s_{h}-s_{0}.
	\end{array}\right.
\end{equation}
We shall now impose the constraint relating $\gamma$ and $N_{0}$ to $\varepsilon$
\begin{equation}\label{choice of gamma and N0 in the Nash-Moser}
	0<a<\tfrac{1}{\mu_{2}+q+2}, \quad \gamma:=\varepsilon^{a}, \quad N_{0}:=\gamma^{-1}.
\end{equation}
We consider the finite dimensional subspaces
\begin{align*}
	E_{n}:=&\Big\{\mathfrak{I}=(\Theta,I,z)\quad
	\textnormal{s.t.}\quad\Theta=\Pi_{n}\Theta,\quad I=\Pi_{n}I\quad\textnormal{and}\quad z=\Pi_{n}z\Big\},
\end{align*}
where $\Pi_{n}$ is the projector defined by
$$f(\varphi,\theta)=\sum_{(l,j)\in\mathbb{Z}^{d}\times\mathbb{Z}}f_{l,j}e^{\ii(l\cdot\varphi+j\theta)}\quad\Rightarrow\quad\Pi_{n}f(\varphi,\theta)=\sum_{\langle l,j\rangle\leqslant N_{n}}f_{l,j}e^{\ii(l\cdot\varphi+j\theta)}.$$
The main result of this section can be stated as follows. The proof is now standard and to avoid the technical details we refer to the works \cite{BM18,HHM21,HR21}. In our context, we may follow closely the version detailed in \cite[Prop. 7.1]{HR21}. In particular the roles of the parameters in \eqref{param NM} are explained  there.
\begin{prop}\label{Nash-Moser}
	\textbf{(Nash-Moser)}\\
	Let $(\tau_{1},\tau_{2},q,d,s_{0})$ satisfy \eqref{initial parameter condition} and \eqref{setting tau1 and tau2}. Consider the parameters fixed by \eqref{param NM} and \eqref{choice of gamma and N0 in the Nash-Moser}.
	There exist $C_{\ast}>0$ and $\varepsilon_{0}>0$ such that for any $\varepsilon\in[0,\varepsilon_{0}]$ we get for all $n\in\mathbb{N}$ the following properties.
		\begin{itemize}
			\item [$(\mathcal{P}1)_{n}$] There exists a $q$-times differentiable function 
			$$W_{n}:\begin{array}[t]{rcl}
				\mathcal{O} & \rightarrow &  E_{n-1}\times\mathbb{R}^{d}\times\mathbb{R}^{d+1}\\
				(b,\omega) & \mapsto & \big(\mathfrak{I}_{n},\alpha_{n}-\omega,0\big)
			\end{array}$$
			satisfying 
			$$W_{0}=0\quad\mbox{ and }\quad\hbox{for}\quad n\in\mathbb{N}^*,\quad\,\| W_{n}\|_{q,s_{0}+\overline{\sigma}}^{\gamma,\mathcal{O}}\leqslant C_{\ast}\varepsilon\gamma^{-1}N_{0}^{q\overline{a}}.$$
			By setting
			\begin{equation}\label{constru}
				U_{0}=\Big((\varphi,0,0),\omega,(b,\omega)\Big)\quad\textnormal{and}\quad  \hbox{for}\quad n\in\mathbb{N}^*,\quad U_{n}=U_{0}+W_{n}\quad \textnormal{and}\quad H_{n} =U_{n}-U_{n-1},
			\end{equation}
			then 
			\begin{equation}\label{estim Hm+1 tilde}
				\forall s\in[s_{0},S],\,\| H_{1}\|_{q,s}^{\gamma,\mathcal{O}}\leqslant \frac{1}{2}C_{\ast}\varepsilon\gamma^{-1}N_{0}^{q\overline{a}}\quad\mbox{ and }\quad\forall\, 2\leqslant k\leqslant n,\,\| H_{k}\|_{q,s_{0}+\overline{\sigma}}^{\gamma,\mathcal{O}}\leqslant C_{\ast}\varepsilon\gamma^{-1}N_{k-1}^{-a_{2}}.
			\end{equation}
			We also have for $n\geqslant 2,$
			\begin{equation}\label{e-Hn-diff} \|H_{n}\|_{q,\overline{s}_h+\sigma_{4}}^{\gamma,\mathcal{O}}\leqslant C_{\ast}\varepsilon\gamma^{-1}N_{n-1}^{-a_{2}}.
			\end{equation}
			\item [$(\mathcal{P}2)_{n}$] Define 
			\begin{equation}\label{def gamman}
				i_{n}=(\varphi,0,0)+\mathfrak{I}_{n},\quad \gamma_{n}=\gamma(1+2^{-n}),
			\end{equation}
			then $i_n$ satisfies the following reversibility condition
			\begin{equation}\label{rev in}
				\mathfrak{S}i_n(\varphi)=i_n(-\varphi),
			\end{equation}
		where $\mathfrak{S}$ is defined by \eqref{rev_aa}. Define also
			$$\mathcal{A}_{0}^{\gamma}=\mathcal{O}\quad \mbox{ and }\quad \mathcal{A}_{n+1}^{\gamma}=\mathcal{A}_{n}^{\gamma}\cap\mathcal{G}_{n}(\gamma_{n+1},\tau_{1},\tau_{2},i_{n})$$
			where $\mathcal{G}_{n}(\gamma_{n+1},\tau_{1},\tau_{2},i_{n})$ is defined in Proposition $\ref{inversion of the linearized operator in the normal directions}.$ Consider the open sets 
			$$
			\forall \mathtt{r}>0,\quad \mathrm{O}_{n}^\mathtt{r}:=\Big\{(b,\omega)\in\mathcal{O}\quad\textnormal{s.t.}\quad {\mathtt{dist}}\big((b,\omega),\mathcal{A}_{n}^{\gamma}\big)< \mathtt{r} N_{n}^{-\overline{a}}\Big\}
			$$
			where $\displaystyle {\mathtt{dist}}(x,A)=\inf_{y\in A}\|x-y\|$.  Then we have the following estimate 
			$$ \|\mathcal{F}(U_{n})\|_{q,s_{0}}^{\gamma,\mathrm{O}_{n}^{\gamma}}\leqslant C_{\ast}\varepsilon N_{n-1}^{-a_{1}}.
			$$
			\item[$(\mathcal{P}3)_{n}$] $\| W_{n}\|_{q,b_{1}+\overline{\sigma}}^{\gamma,\mathcal{O}}\leqslant C_{\ast}\varepsilon\gamma^{-1}N_{n-1}^{\mu_1}.$
	\end{itemize}
\end{prop}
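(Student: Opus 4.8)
The statement is the standard Nash--Moser iteration $(\mathcal{P}1)_n$--$(\mathcal{P}3)_n$, and the plan is to prove it by induction on $n$, exactly along the lines of \cite[Prop. 7.1]{HR21} and \cite{BM18,HHM21}, the difference being only that the approximate inverse now comes from our Theorem \ref{thm ai} with the loss $\overline{\sigma}$ coming from the boundary-corrected reduction of Section \ref{sec red lin opb}. First I would check the base case $n=0$: $W_0=0$ makes $(\mathcal{P}1)_0$ trivial, $(\mathcal{P}3)_0$ trivial, and $(\mathcal{P}2)_0$ reduces to the estimate $\|\mathcal{F}(U_0)\|_{q,s_0}^{\gamma,\mathcal{O}}\lesssim\varepsilon$, which follows since $\mathcal{F}((\varphi,0,0),\omega,(b,\omega),\varepsilon)=-\varepsilon X_{\mathcal{P}_\varepsilon}(i_{\textnormal{\tiny{flat}}})$ up to the $\mathcal{N}_\alpha$-part that vanishes at the flat torus for $\alpha=\omega$, together with the tame bound of Lemma \ref{tame estimates for the vector field XmathcalPvarepsilon}-(i) applied with $\mathfrak{I}_0=0$. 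The reversibility \eqref{rev in} holds for $i_0=i_{\textnormal{\tiny{flat}}}$.

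\textbf{The inductive step.} Assuming $(\mathcal{P}1)_n$--$(\mathcal{P}3)_n$, I would define the next approximation by the modified Newton rule
$$U_{n+1}:=U_n+H_{n+1},\qquad H_{n+1}:=-\Pi_n\,\mathrm{T}_{0,n}(i_n)\,\Pi_n\mathcal{F}(U_n),$$
with $\mathrm{T}_{0,n}(i_n)$ the almost-approximate right inverse furnished by Theorem \ref{thm ai}, which applies because the smallness condition \eqref{small-1} is guaranteed by $(\mathcal{P}1)_n$ together with the choices \eqref{param NM}, \eqref{choice of gamma and N0 in the Nash-Moser} (here $\varepsilon\gamma^{-2-q}N_0^{\mu_2}\leqslant\varepsilon^{1-a(\mu_2+q+2)}\to0$). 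The core computation is the quadratic estimate for $\mathcal{F}(U_{n+1})$ on the open set $\mathrm{O}_{n+1}^{\gamma}$: writing $\mathcal{F}(U_{n+1})=\mathcal{F}(U_n)+d\mathcal{F}(U_n)[H_{n+1}]+Q_n$ with $Q_n$ the quadratic Taylor remainder (controlled by Lemma \ref{tame estimates for the vector field XmathcalPvarepsilon}-(iii)), and using $d\mathcal{F}(U_n)\circ\mathrm{T}_{0,n}-\mathrm{Id}=\mathcal{E}_1^{(n)}+\mathcal{E}_2^{(n)}+\mathcal{E}_3^{(n)}$ on $\mathcal{G}_n(\gamma_{n+1},\tau_1,\tau_2,i_n)$, one gets a bound of the form
$$\|\mathcal{F}(U_{n+1})\|_{q,s_0}^{\gamma,\mathrm{O}_{n+1}^{\gamma}}\lesssim N_n^{-a_1}\varepsilon\cdot\big(\text{something}\big),$$
where the projector error $\Pi_n^{\perp}$ contributes $N_n^{-b_1+\ldots}$, the $\mathcal{E}_i^{(n)}$ contribute $\gamma^{-1}N_n^{-\ldots}$ and $\varepsilon\gamma^{-4}N_0^{\mu_2}N_n^{-\mu_2}$, and $Q_n$ contributes $(\varepsilon\gamma^{-1}N_{n-1}^{-a_1})^2$-type terms; the arithmetic of the exponents in \eqref{param NM} is arranged precisely so that all these are $\leqslant C_\ast\varepsilon N_n^{-a_1}$, closing $(\mathcal{P}2)_{n+1}$. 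Then $\|H_{n+1}\|_{q,s_0+\overline{\sigma}}^{\gamma}\lesssim\gamma^{-1}\|\mathcal{F}(U_n)\|_{q,s_0+\overline{\sigma}+\ldots}^{\gamma}$ combined with interpolation between the high norm $(\mathcal{P}3)_n$ and the low norm $(\mathcal{P}2)_n$ gives $\|H_{n+1}\|_{q,s_0+\overline{\sigma}}^{\gamma}\leqslant C_\ast\varepsilon\gamma^{-1}N_n^{-a_2}$, i.e. \eqref{estim Hm+1 tilde}; the high-norm estimate $(\mathcal{P}3)_{n+1}$ follows from the tame estimate on $\mathrm{T}_{0,n}$ in the $b_1$-norm plus the bound $\|W_n\|_{q,b_1+\overline{\sigma}}^{\gamma}\lesssim\varepsilon\gamma^{-1}N_{n-1}^{\mu_1}$; the intermediate norm \eqref{e-Hn-diff} is obtained analogously by interpolation at regularity $\overline{s}_h+\sigma_4$. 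Summability of $\sum_k\|H_k\|_{q,s_0+\overline{\sigma}}^{\gamma}$ then upgrades the bound on $W_{n+1}=\sum_{k\leqslant n+1}H_k$, closing $(\mathcal{P}1)_{n+1}$, while the reversibility \eqref{rev in} is preserved because $\mathrm{T}_{0,n}$ and $\Pi_n$ are reversible (Theorem \ref{thm ai}) and $\mathcal{F}(U_n)$ satisfies the reversibility of $(g_1,g_2,g_3)$ by $(\mathcal{P}2)_n$ and $\mathfrak{S}$-invariance of $H_\varepsilon^\alpha$.

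\textbf{Main obstacle.} The genuinely delicate point is \emph{not} the quadratic scheme, which is by now routine, but the bookkeeping that all the loss-of-regularity exponents in \eqref{param NM} are mutually compatible: one must verify that $a_1<2a_2$ (for the quadratic term to dominate), that $b_1-\overline{\sigma}$ and $\mu_2$ are large enough relative to $a_1$ (for the $\Pi_n^{\perp}$ and $\mathtt{E}_2^{(n)},\mathtt{E}_3^{(n)}$ errors), that $\mu_1$ is large enough to absorb the growth of $W_n$ in the $b_1$-norm, and that the choice $\gamma=\varepsilon^a$ with $a<(\mu_2+q+2)^{-1}$ makes every occurrence of $\varepsilon\gamma^{-c}N_0^{\mu_2}$ a positive power of $\varepsilon$ so that the smallness conditions \eqref{small-1} of Theorem \ref{thm ai} hold at every step. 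Since these verifications are purely arithmetic and identical to those carried out in detail in \cite[Prop. 7.1]{HR21}, I would reference that computation rather than reproduce it, noting only that our $\overline{\sigma}$ plays the role of the loss there and that the extra smoothing remainder $\mathbf{S}_{\varepsilon r}$ and the quasi-linear boundary term in $V_{\varepsilon r}$ have already been absorbed into the tame constants at the level of Propositions \ref{reduction of the transport part}--\ref{inversion of the linearized operator in the normal directions}, so no new difficulty arises at the Nash--Moser stage.
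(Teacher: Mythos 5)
Your proposal is correct and follows essentially the same route as the paper: the paper itself gives no detailed proof here but, exactly as you do, invokes the standard quadratic Nash--Moser scheme built on the almost-approximate right inverse of Theorem \ref{thm ai} and defers the exponent bookkeeping for \eqref{param NM}, \eqref{choice of gamma and N0 in the Nash-Moser} to \cite[Prop. 7.1]{HR21} (and \cite{BM18,HHM21}). Your sketch of the base case, the induction with the smoothed Newton step, the interpolation for the intermediate norm \eqref{e-Hn-diff}, and the preservation of reversibility matches that reference scheme, so nothing further is needed beyond the arithmetic you correctly identify as the only delicate point.
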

A non trivial reversible quasi-periodic solution of our problem is obtained as the limit of the sequence $(U_n)_{n\in\mathbb{N}}$ according to the fast convergence stated in Proposition \ref{Nash-Moser}. This is explained in the following corollary.
\begin{cor}\label{Corollary NM}
	There exists $\varepsilon_0>0$ such that, for all $\varepsilon\in(0,\varepsilon_0),$ the following assertions hold true. We consider the Cantor set $\mathcal{G}_{\infty}^{\gamma}$, related to $\varepsilon$ through $\gamma$, and defined by
	$$\mathcal{G}_{\infty}^{\gamma}:=\bigcap_{n\in\mathbb{N}}\mathcal{A}_{n}^{\gamma}.$$
	There exists a function
	$$U_{\infty}:\begin{array}[t]{rcl}
		\mathcal{O} & \rightarrow & \left(\mathbb{T}^{d}\times\mathbb{R}^{d}\times L^2_\perp\cap  H^{s_0}\right)\times\mathbb{R}^{d}\times\mathbb{R}^{d+1}\\
		(b,\omega) & \mapsto & \big(i_{\infty}(b,\omega),\alpha_{\infty}(b,\omega),(b,\omega)\big)
	\end{array}$$
	such that 
	$$\forall(b,\omega)\in\mathcal{G}_{\infty}^{\gamma},\quad\mathcal{F}(U_{\infty}(b,\omega))=0.$$
	In addition, $i_{\infty}$ is reversible and $\alpha_{\infty}\in W^{q,\infty,\gamma}(\mathcal{O},\mathbb{R}^d)$ with
	\begin{equation}\label{alpha infty}
		\alpha_{\infty}(b,\omega)=\omega+\mathrm{r}_{\varepsilon}(b,\omega)\quad\mbox{ and }\quad\|\mathrm{r}_{\varepsilon}\|_{q}^{\gamma,\mathcal{O}}\lesssim\varepsilon\gamma^{-1}N_{0}^{q\overline{a}}.
	\end{equation}
	Moreover, there exists a $q$-times differentiable function $b\in(b_{0},b_{1})\mapsto\omega(b,\varepsilon)$ with
	\begin{equation}\label{alpha infty-1}
		\omega(b,\varepsilon)=-\omega_{\textnormal{Eq}}(b)+\bar{r}_{\varepsilon}(b),\quad \|\bar{r}_{\varepsilon}\|_{q}^{\gamma,\mathcal{O}}\lesssim\varepsilon\gamma^{-1}N_{0}^{q\overline{a}},
	\end{equation}
	and 
	$$\forall b\in \mathcal{C}_{\infty}^{\varepsilon},\quad \mathcal{F}\big(U_{\infty}(b,\omega(b,\varepsilon))\big)=0\quad\textnormal{and}\quad\alpha_{\infty}\big(b,\omega(b,\varepsilon)\big)=-\omega_{\textnormal{Eq}}(b),
	$$
	where the  Cantor set $\mathcal{C}_{\infty}^{\varepsilon}$ is defined by 
	\begin{equation}\label{def final Cantor set}
		\mathcal{C}_{\infty}^{\varepsilon}=\Big\{b\in(b_{0},b_{1})\quad\textnormal{s.t.}\quad\big(b,\omega(b,\varepsilon)\big)\in\mathcal{G}_{\infty}^{\gamma}\Big\}.
	\end{equation}
\end{cor}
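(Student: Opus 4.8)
\textbf{Proof of Corollary \ref{Corollary NM}.}

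The plan is to extract the solution $U_\infty$ as the limit of the Nash--Moser sequence $(U_n)_{n\in\mathbb{N}}$ produced in Proposition \ref{Nash-Moser}, then to rigidify the frequency vector by an implicit function argument to make $\alpha_\infty$ coincide with $-\omega_{\textnormal{Eq}}$. First I would show convergence of the $q$-times differentiable maps $W_n$. By $(\mathcal{P}1)_n$, for $n\geqslant 2$ the increments $H_n=U_n-U_{n-1}$ obey $\| H_n\|_{q,s_0+\overline{\sigma}}^{\gamma,\mathcal{O}}\leqslant C_\ast\varepsilon\gamma^{-1}N_{n-1}^{-a_2}$, and since $N_{n-1}=N_0^{(3/2)^{n-1}}$ grows super-geometrically while $a_2>0$, this is the general term of a convergent series in the Banach space $W^{q,\infty,\gamma}(\mathcal{O},H^{s_0+\overline{\sigma}})$. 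Hence $W_n\to W_\infty$ in that norm, and therefore $U_n\to U_\infty:=U_0+W_\infty$; writing $U_\infty=(i_\infty,\alpha_\infty,(b,\omega))$ with $i_\infty=(\varphi,0,0)+\mathfrak{I}_\infty$, the limit belongs to $(\mathbb{T}^d\times\mathbb{R}^d\times L^2_\perp\cap H^{s_0})\times\mathbb{R}^d\times\mathbb{R}^{d+1}$ as stated. The reversibility of $i_\infty$ passes to the limit from \eqref{rev in}, since the involution $\mathfrak{S}$ in \eqref{rev_aa} is continuous and the constraint $\mathfrak{S}i_n(\varphi)=i_n(-\varphi)$ is closed.

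Next I would establish that $\mathcal{F}(U_\infty(b,\omega))=0$ for $(b,\omega)\in\mathcal{G}_\infty^\gamma=\bigcap_n\mathcal{A}_n^\gamma$. By $(\mathcal{P}2)_n$ one has $\|\mathcal{F}(U_n)\|_{q,s_0}^{\gamma,\mathrm{O}_n^\gamma}\leqslant C_\ast\varepsilon N_{n-1}^{-a_1}\to 0$; since $\mathcal{G}_\infty^\gamma\subset\mathcal{A}_n^\gamma\subset\mathrm{O}_n^\gamma$ for every $n$, and $\mathcal{F}$ is continuous in the relevant norms (using the tame estimates of Lemma \ref{tame estimates for the vector field XmathcalPvarepsilon} and the smallness of $\mathfrak{I}_\infty$), passing to the limit gives $\mathcal{F}(U_\infty)=0$ on $\mathcal{G}_\infty^\gamma$. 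The estimate \eqref{alpha infty} for $\alpha_\infty=\omega+\mathrm{r}_\varepsilon$, with $\mathrm{r}_\varepsilon$ being the $(\alpha-\omega)$-component of $W_\infty$, follows directly from the bound $\| W_n\|_{q,s_0+\overline{\sigma}}^{\gamma,\mathcal{O}}\leqslant C_\ast\varepsilon\gamma^{-1}N_0^{q\overline{a}}$ in $(\mathcal{P}1)_n$ passed to the limit, and $\alpha_\infty\in W^{q,\infty,\gamma}(\mathcal{O},\mathbb{R}^d)$ because it is a limit in that space.

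Finally I would perform the frequency rigidification. The goal is to solve, for $b\in(b_0,b_1)$, the equation $\alpha_\infty(b,\omega)=-\omega_{\textnormal{Eq}}(b)$ for $\omega$ as a function of $b$. Writing it as $\omega+\mathrm{r}_\varepsilon(b,\omega)=-\omega_{\textnormal{Eq}}(b)$, i.e.\ $\omega=-\omega_{\textnormal{Eq}}(b)-\mathrm{r}_\varepsilon(b,\omega)$, this is a fixed-point problem in $\omega$: since $\|\mathrm{r}_\varepsilon\|_q^{\gamma,\mathcal{O}}\lesssim\varepsilon\gamma^{-1}N_0^{q\overline{a}}=\varepsilon^{1-a(q\overline{a}+1)}$, which is small by the choice \eqref{choice of gamma and N0 in the Nash-Moser} of $a$ (so that $1-a(q\overline a+1)>0$), the map $\omega\mapsto-\omega_{\textnormal{Eq}}(b)-\mathrm{r}_\varepsilon(b,\omega)$ is a contraction on a small ball of $\mathscr{U}$ around $-\omega_{\textnormal{Eq}}(b)$ (using that $\omega_{\textnormal{Eq}}([b_0,b_1])$ sits well inside $\mathscr{U}=B(0,R_0)$ and that $\partial_\omega\mathrm{r}_\varepsilon$ is correspondingly small, thanks to the $\gamma$-weight in the $q$-norm). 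This yields a unique solution $b\mapsto\omega(b,\varepsilon)$, which is $q$-times differentiable in $b$ because $\mathrm{r}_\varepsilon$ is $q$-times differentiable in $(b,\omega)$ and one may differentiate the fixed-point relation, and it satisfies $\omega(b,\varepsilon)=-\omega_{\textnormal{Eq}}(b)+\bar r_\varepsilon(b)$ with $\|\bar r_\varepsilon\|_q^{\gamma,\mathcal{O}}\lesssim\varepsilon\gamma^{-1}N_0^{q\overline{a}}$, giving \eqref{alpha infty-1}. Setting $\mathcal{C}_\infty^\varepsilon$ as in \eqref{def final Cantor set}, for $b\in\mathcal{C}_\infty^\varepsilon$ the pair $(b,\omega(b,\varepsilon))$ lies in $\mathcal{G}_\infty^\gamma$, so $\mathcal{F}(U_\infty(b,\omega(b,\varepsilon)))=0$ and $\alpha_\infty(b,\omega(b,\varepsilon))=-\omega_{\textnormal{Eq}}(b)$ by construction, which is exactly the claim. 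The main obstacle here is of a bookkeeping nature rather than conceptual: one must verify that the smallness threshold $\varepsilon_0$ can be chosen uniformly so that simultaneously the Nash--Moser scheme of Proposition \ref{Nash-Moser} runs (requiring $\varepsilon\gamma^{-2-q}N_0^{\mu_2}\leqslant\varepsilon_0$, which the choice \eqref{choice of gamma and N0 in the Nash-Moser} guarantees with a positive power of $\varepsilon$) and the contraction constant in the rigidification step stays below $1$; both reduce to the single inequality $a(\mu_2+q+2)<1$ already imposed in \eqref{choice of gamma and N0 in the Nash-Moser}, so no genuinely new difficulty arises. The nontriviality of $\mathcal{C}_\infty^\varepsilon$, i.e.\ the measure bound $|\mathcal{C}_\infty^\varepsilon|\geqslant(b_1-b_0)-C\varepsilon^\delta$, is deferred to Proposition \ref{lem-meas-es1} in Section \ref{Section measure of the final Cantor set}.
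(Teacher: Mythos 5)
Your proof is correct and follows essentially the same route as the paper: convergence of the Nash--Moser iterates $W_n$ via $(\mathcal{P}1)_n$, passing the reversibility condition and $\mathcal{F}(U_n)\to 0$ (from $(\mathcal{P}2)_n$) to the limit on $\mathcal{G}_{\infty}^{\gamma}$, and then inverting $\omega\mapsto\alpha_{\infty}(b,\omega)=\omega+\mathrm{r}_{\varepsilon}(b,\omega)$ to define $\omega(b,\varepsilon)=\alpha_{\infty}^{-1}(b,-\omega_{\textnormal{Eq}}(b))$. Your explicit contraction argument for this inversion, with the smallness check $a(2+q\overline{a})<1$ implied by \eqref{choice of gamma and N0 in the Nash-Moser}, is just a more detailed rendering of the invertibility step the paper asserts, so no genuine difference or gap arises.
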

\begin{proof}
In view of \eqref{constru} and \eqref{estim Hm+1 tilde}, we obtain
$$\|W_{n+1}-W_{n}\|_{q,s_{0}}^{\gamma,\mathcal{O}}=\|H_{n+1}\|_{q,s_{0}}^{\gamma,\mathcal{O}}\leqslant\|H_{n+1}\|_{q,s_{0}+\overline{\sigma}}^{\gamma,\mathcal{O}}\leqslant C_{\ast}\varepsilon\gamma^{-1}N_{n}^{-a_{2}}.$$
This implies the convergence of the sequence $\left(W_{n}\right)_{n\in\mathbb{N}}.$ Its limit is denoted by
$$W_{\infty}:=\lim_{n\rightarrow\infty}W_{n}:=(\mathfrak{I}_{\infty},\alpha_{\infty}-\omega,0,0)$$
and we set
$$U_{\infty}:=\big(i_{\infty},\alpha_{\infty},(b,\omega)\big)=U_0+W_{\infty}.$$
Taking $n\to\infty$ in \eqref{rev in} gives
$$\mathfrak{S}i_\infty(\varphi)=i_\infty(-\varphi).$$
According to Proposition \ref{Nash-Moser}-$(\mathcal{P}2)_{n}$, we get for small values of $\varepsilon$ 
\begin{equation}\label{sol before rigidity}
	\forall(b,\omega)\in\mathcal{G}_{\infty}^{\gamma},\quad\mathcal{F}\Big(i_{\infty}(b,\omega),\alpha_{\infty}(b,\omega),(b,\omega),\varepsilon\Big)=0,
\end{equation}
where $\mathcal{F}$ is the functional defined in \eqref{main function}. We emphasize that the Cantor set $\mathcal{G}_{\infty}^{\gamma}$ depends on $\varepsilon$ through $\gamma$ fixed in  \eqref{choice of gamma and N0 in the Nash-Moser}. Now, from Proposition \ref{Nash-Moser}-$(\mathcal{P}1)_{n}$, we deduce that 
$$\alpha_{\infty}(b,\omega)=\omega+\mathrm{r}_{\varepsilon}(b,\omega) \quad\mbox{ with }\quad\|\mathrm{r}_{\varepsilon}\|_{q}^{\gamma,\mathcal{O}}\lesssim\varepsilon\gamma^{-1}N_{0}^{q\overline{a}}.$$
Next we shall prove the second result and check the existence of solutions to the original Hamiltonian equation. First recall that the open set $\mathcal{O}$ is defined in \eqref{def initial parameters setb} by
$$\mathcal{O}=(b_{0},b_{1})\times\mathscr{U}\quad\textnormal{with}\quad\mathscr{U}=B(0,R_{0})\quad\textnormal{for some large }R_{0}>0,$$
where the ball $\mathscr{U}$ is taken to contain the equilibrium frequency vector $b\mapsto{\omega}_{\textnormal{Eq}}(b).$ In view of \eqref{alpha infty}, we obtain that for any $b\in(b_{0},b_{1}),$ the mapping $\omega\mapsto\alpha_{\infty}(b,\omega)$ is invertible from $\mathscr{U}$ into its image $\alpha_{\infty}(b,\mathscr{U})$ and we have
$$\widehat{\omega}=\alpha_{\infty}(b,\omega)=\omega+\mathrm{r}_{\varepsilon}(b,\omega)\Leftrightarrow\omega=\alpha_{\infty}^{-1}(b,\widehat{\omega})=\widehat{\omega}+\widehat{\mathrm{r}}_{\varepsilon}(b,\widehat{\omega}).$$
In particular,
$$\widehat{\mathrm{r}}_{\varepsilon}(b,\widehat{\omega})=-\mathrm{r}_{\varepsilon}(b,\omega).$$
Differentiating the previous relation and using \eqref{alpha infty}, we find 
\begin{equation}\label{estimate mathrm repsilon}
	\|\widehat{\mathrm{r}}_{\varepsilon}\|_{q}^{\gamma,\mathcal{O}}\lesssim\varepsilon\gamma^{-1}N_{0}^{q\overline{a}}.
\end{equation}
Now, we set 
$${\omega}(b,\varepsilon):=\alpha_{\infty}^{-1}(b,-{\omega}_{\textnormal{Eq}}(b))=-{\omega}_{\textnormal{Eq}}(b)+\overline{\mathrm{r}}_{\varepsilon}(b)\quad \mbox{ with }\quad \overline{\mathrm{r}}_{\varepsilon}(b):=\widehat{\mathrm{r}}_{\varepsilon}\big(b,-{\omega}_{\textnormal{Eq}}(b)\big)
$$
and consider the following Cantor set
$$\mathcal{C}_{\infty}^{\varepsilon}:=\Big\{b\in(b_{0},b_{1})\quad\textnormal{s.t.}\quad\big(b,\omega(b,\varepsilon)\big)\in\mathcal{G}_{\infty}^{\gamma}\Big\}.$$
Then, according to \eqref{sol before rigidity}, we get
$$\forall b\in\mathcal{C}_{\infty}^{\varepsilon},\quad\mathcal{F}\Big(U_{\infty}\big(b,\omega(b,\varepsilon)\big)\Big)=0.$$
This gives a nontrivial reversible solution for the original Hamiltonian equation provided that $b\in\mathcal{C}_{\infty}^{\varepsilon}.$
From Lemma \ref{properties omegajb}, we obtain that all the derivatives up to order $q$ of $\omega_{\textnormal{Eq}}$ are uniformly bounded on $[b_{0},b_{1}].$ As a consequence, the chain rule and \eqref{estimate mathrm repsilon} imply
\begin{equation}\label{estimate repsilon1}
	\|\overline{\mathrm{r}}_{\varepsilon}\|_{q}^{\gamma,\mathcal{O}}\lesssim\varepsilon\gamma^{-1}N_{0}^{q\overline{a}}\quad\textnormal{and}\quad\|\omega(\cdot,\varepsilon)\|_{q}^{\gamma,\mathcal{O}}\lesssim 1+\varepsilon\gamma^{-1}N_{0}^{q\overline{a}}\lesssim 1.
\end{equation}
This achieves the proof of Corollary \ref{Corollary NM}.
\end{proof}
\subsection{Measure estimates}\label{Section measure of the final Cantor set}
In this last section, we check that the Cantor set $\mathcal{C}_{\infty}^{\varepsilon},$ defined in \eqref{def final Cantor set}, of parameters generating non-trivial quasi-periodic solutions is non trivial. More precisely, we have the following proposition giving a lower bound measure for $\mathcal{C}_{\infty}^{\varepsilon}.$ 
\begin{prop}\label{lem-meas-es1}
	Let $q_{0}$ be defined as in Lemma \ref{lemma transversalityb} and impose  \eqref{param NM} and \eqref{choice of gamma and N0 in the Nash-Moser}  with $q=q_0+1.$ Assume the additional conditions
	\begin{equation}\label{choice tau 1 tau2 upsilon} 
		\left\lbrace\begin{array}{l}
			\tau_{1}> dq_{0}\\
			\tau_{2}>\tau_{1}+dq_{0}\\
			\upsilon=\frac{1}{q_{0}+3}.
		\end{array}\right.
	\end{equation}
	Then there exists $C>0$ such that 
	$$
	\big|\mathcal{C}_{\infty}^{\varepsilon}\big|\geqslant (b_1-b_0)-C \varepsilon^{\frac{a\upsilon}{q_{0}}}.
	$$
	In particular, 
	$$\displaystyle \lim_{\varepsilon\to0}\big|\mathcal{C}_{\infty}^{\varepsilon}\big|=b_1-b_0.$$
\end{prop}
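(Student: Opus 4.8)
The plan is to bound the complement of $\mathcal{C}_\infty^\varepsilon$ in $(b_0,b_1)$ by decomposing it according to which of the three Diophantine conditions defining $\mathcal{G}_\infty^\gamma = \bigcap_n \mathcal{A}_n^\gamma$ fails. Writing $\mathcal{C}_\infty^\varepsilon = \{b\in(b_0,b_1) : (b,\omega(b,\varepsilon))\in\mathcal{G}_\infty^\gamma\}$ and recalling that $\mathcal{A}_{n+1}^\gamma = \mathcal{A}_n^\gamma\cap\mathcal{G}_n(\gamma_{n+1},\tau_1,\tau_2,i_n)$ with $\mathcal{G}_n = \mathcal{O}_{\infty,n}^{\gamma,\tau_1}(i_n)\cap\mathscr{O}_{\infty,n}^{\gamma,\tau_1,\tau_2}(i_n)\cap\Lambda_{\infty,n}^{\gamma,\tau_1}(i_n)$, the complement is covered by the ``resonant'' sublevel sets indexed by $(l,j)$ or $(l,j,j_0)$, pulled back along the curve $b\mapsto(b,\omega(b,\varepsilon))$. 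So the first step is to rewrite everything as a one-dimensional problem: define, for each tuple, the scalar function $g(b) := \omega(b,\varepsilon)\cdot l + \mu_j^\infty(b,\omega(b,\varepsilon),i_\infty)$ (and similarly the difference and the transport versions $\omega\cdot l + j V_{i_\infty}^\infty$), and estimate the Lebesgue measure of $\{b : |g(b)|\leqslant \text{threshold}\}$ using the Rüssmann Lemma \ref{lemma useful for measure estimates} with $q_0$ as in Lemma \ref{lemma transversalityb}.

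\textbf{Key steps.} First I would establish the perturbed transversality estimates: along the rigidified curve, using Corollary \ref{Corollary NM} (namely $\omega(b,\varepsilon) = -\omega_{\textnormal{Eq}}(b) + \bar r_\varepsilon(b)$ with $\|\bar r_\varepsilon\|_q^{\gamma,\mathcal{O}}\lesssim\varepsilon\gamma^{-1}N_0^{q\bar a}$), the corrections $r^1$, $r_j^\infty$ from Propositions \ref{reduction of the transport part} and \ref{reduction of the remainder term} (all $O(\varepsilon\gamma^{-1})$ in the $W^{q,\infty,\gamma}$ norm), and the unperturbed transversality of Lemma \ref{lemma transversalityb}, one shows that for $\varepsilon$ small enough the perturbed quantities still satisfy $\inf_{b}\max_{k\leqslant q_0}|\partial_b^k g(b)| \gtrsim \rho_0\langle l\rangle$ — the perturbation being $o(\langle l\rangle)$ provided $\varepsilon\gamma^{-1}N_0^{q\bar a}$ is small, which holds by \eqref{choice of gamma and N0 in the Nash-Moser}. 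One also needs the $C^{q_0}$ bound on $g$ used in the Rüssmann Lemma, which comes from the tame estimates on $\mu_j^\infty$, $V_{i_\infty}^\infty$ together with $|\mu_j^\infty|\lesssim\langle j\rangle$. Then the Rüssmann Lemma gives, for each resonant set $\mathtt{R}_{l,j}$, the bound $|\mathtt{R}_{l,j}\cap(b_0,b_1)| \lesssim \big(\tfrac{\gamma^\upsilon\langle j\rangle}{\langle l\rangle^{\tau_1}}\big)^{1/q_0}\big(\tfrac{1}{\rho_0\langle l\rangle}\big)^{1+1/q_0} \lesssim \gamma^{\upsilon/q_0}\langle l\rangle^{-(1+\tau_1)/q_0 - 1}$ for the first-order conditions, and $\lesssim \gamma^{1/q_0}\langle j-j_0\rangle^{1/q_0}\langle l\rangle^{-\tau_2/q_0-1}$ for the second-order ones; summing over $(l,j)$ with $\langle j\rangle\lesssim\langle l\rangle$ (the only non-trivially small range, the complementary range being empty by the triangle inequality as in Lemma \ref{lemma transversalityb}) converges precisely because of the choices $\tau_1 > dq_0$ and $\tau_2 > \tau_1 + dq_0$ in \eqref{choice tau 1 tau2 upsilon}. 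This yields a total complement measure $\lesssim \gamma^{\upsilon/q_0} = \varepsilon^{a\upsilon/q_0}$.

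\textbf{Stability across the iteration.} A subtlety is that $\mathcal{G}_\infty^\gamma$ is an intersection over $n$ of Cantor sets built from the \emph{approximate} tori $i_n$, not the limiting $i_\infty$; one must show these are all essentially captured by the Cantor set of the final torus up to a negligible error. This is the standard ``stability of the Cantor sets'' argument: using the difference estimates \eqref{estimate differences mujinfty}, \eqref{difference ci}, \eqref{diffenrence rjinfty} together with the Nash-Moser convergence $\|H_n\|_{q,\bar s_h+\sigma_4}^{\gamma,\mathcal{O}}\lesssim\varepsilon\gamma^{-1}N_{n-1}^{-a_2}$ from Proposition \ref{Nash-Moser}, one shows $\mathcal{G}_\infty^\gamma$ differs from $\bigcap_n\mathcal{G}_n(\gamma,\tau_1,\tau_2,i_\infty)$ by a set of measure $O(\varepsilon^{\text{something positive}})$, which can be absorbed; this is where the gap between $\gamma_{n+1}$ and $\gamma$ and the truncation at $|l|\leqslant N_n$ are used (a Fourier mode $(l,j)$ with $|l| > N_n$ for all relevant $n$ imposes no constraint, or is handled by the second Melnikov index truncation $\langle l, j-j_0\rangle\leqslant N_n$). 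I would invoke this verbatim from \cite{BM18,HHM21,HR21} as the excerpt suggests.

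\textbf{Main obstacle.} The technically delicate point — and the one I would spend the most care on — is the perturbed transversality combined with the measure sum for the \emph{second Melnikov} conditions $|\omega\cdot l + \mu_j^\infty - \mu_{j_0}^\infty| > 2\gamma\langle j-j_0\rangle\langle l\rangle^{-\tau_2}$: here $|j-j_0|$ can be unbounded while $|l|$ stays moderate, so one must carefully separate the regime $|j - j_0|\gtrsim\langle l\rangle$ (excluded by a triangle inequality using $|\mu_j^\infty - \mu_{j_0}^\infty|\gtrsim b_0^2|j-j_0|/6$ from Lemma \ref{properties omegajb}(iii) and the smallness of $r^1, r_j^\infty$) from the regime $|j-j_0|\lesssim\langle l\rangle$ where Rüssmann applies — and then the exponent counting must close with exactly $\tau_2 > \tau_1 + dq_0$. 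Everything else is bookkeeping: transferring $W^{q,\infty,\gamma}$ bounds to pointwise $C^{q_0}$ bounds along the curve, and verifying the smallness thresholds $\varepsilon\gamma^{-1}N_0^{q\bar a}\ll 1$, $\varepsilon\gamma^{-2-q}N_0^{\mu_2}\ll 1$ hold under \eqref{param NM}–\eqref{choice of gamma and N0 in the Nash-Moser} with $q = q_0+1$.
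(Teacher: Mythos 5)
Your organization differs from the paper's: you measure one Cantor set built on the limit torus $i_\infty$ and recover $\mathcal{G}_\infty^\gamma$ through a stability-of-Cantor-sets inclusion, whereas the paper never introduces limit frequencies at all; it telescopes $(b_0,b_1)\setminus\mathcal{C}_\infty^\varepsilon=\bigsqcup_n\big(\mathcal{C}_n^\varepsilon\setminus\mathcal{C}_{n+1}^\varepsilon\big)$, writes each difference as a union of resonant sets attached to the approximate torus $i_n$, and proves (Lemma \ref{lemm-dix1}) that for $n\geqslant 2$ the resonant sets with $|l|\leqslant N_{n-1}$ are empty, so step $n$ only contributes modes $N_{n-1}<|l|\leqslant N_n$ and the series in $n$ converges. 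Your route is a legitimate standard alternative, and your perturbed-transversality step (which is the paper's Lemma \ref{lemma Russeman condition for the perturbed frequencies}), the exclusions $|j|\leqslant C_0\langle l\rangle$, $|j-j_0|\leqslant C_0\langle l\rangle$, and the Rüssmann bounds for the first-Melnikov and transport conditions all match the paper's scheme.

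The genuine gap is in your treatment of the second Melnikov sums. After excluding $|j-j_0|\gtrsim\langle l\rangle$ by the triangle inequality (which is indeed the easy regime, contrary to what your ``main obstacle'' paragraph suggests), the pairs $(j,j_0)\in(\mathbb{S}_0^c)^2$ with $|j-j_0|\leqslant C_0\langle l\rangle$ still form an \emph{infinite} family, since both indices may tend to infinity with bounded difference, and the Rüssmann bound $\gamma^{1/q_0}\langle j-j_0\rangle^{1/q_0}\langle l\rangle^{-1-\frac{\tau_2+1}{q_0}}$ carries no decay in $\min(|j|,|j_0|)$; summing it over all such pairs diverges for every choice of $\tau_1,\tau_2$, so the step ``sum over the regime $|j-j_0|\lesssim\langle l\rangle$ where Rüssmann applies'' fails as written. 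The missing idea is the paper's Lemma \ref{some cantor set are empty}-(iv): when $\min(|j|,|j_0|)\geqslant c_2\gamma_{n+1}^{-\upsilon}\langle l\rangle^{\tau_1}$, the asymptotics $\mu_j^{\infty,n}-\mu_{j_0}^{\infty,n}=(j-j_0)V_n^{\infty}+O\big(\langle j-j_0\rangle/\min(|j|,|j_0|)\big)$ — coming from $|b^{2j}\pm b^{2j_0}|\lesssim\langle j\pm j_0\rangle/\min(|j|,|j_0|)$ and the decay $|r_j^{\infty,n}|\lesssim\varepsilon\gamma^{-1}|j|^{-1}$ of \eqref{estimate rjinfty} — give the inclusion $\mathcal{R}_{l,j,j_0}(i_n)\subset\mathcal{R}^{(0)}_{l,j-j_0}(i_n)$, so the infinitely many pairs with large minimal index are absorbed into transport-type sets indexed only by the difference (already measured), and only $O(\gamma^{-\upsilon}\langle l\rangle^{\tau_1})$ values of the minimal index remain for the Rüssmann count. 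This absorption is precisely why the transport Cantor set is defined with the weaker threshold $\gamma^{\upsilon}$ instead of $\gamma$, and it is where the final rate $\gamma^{\min(\upsilon/q_0,\,1/q_0-\upsilon)}=\varepsilon^{a\upsilon/q_0}$ and the choice $\upsilon=\frac{1}{q_0+3}$ in \eqref{choice tau 1 tau2 upsilon} originate; since $\upsilon$ never enters your counting, your sketch cannot produce the stated exponent. With this device added (either on the sets of $i_n$ as in the paper, or on your limit-torus sets), the rest of your plan goes through.
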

\begin{remark}
	The constraints listed in \eqref{choice tau 1 tau2 upsilon} appear naturally in the proof, see \eqref{assum2-d} and \eqref{assum-co1}, for the convergence of series and for smallness conditions. Notice that these conditions agree with \eqref{setting tau1 and tau2} and Proposition \ref{reduction of the transport part}.
\end{remark}
\begin{proof}
According to Corollary \ref{Corollary NM}, we can decompose the Cantor set $\mathcal{C}_{\infty}^{\varepsilon}$ in the following intersection
\begin{equation}\label{definition of the final Cantor set}
	\mathcal{C}_{\infty}^{\varepsilon}:=\bigcap_{n\in\mathbb{N}}\mathcal{C}_{n}^{\varepsilon}\quad \mbox{ where }\quad \mathcal{C}_{n}^{\varepsilon}:=\Big\{b\in(b_{0},b_{1})\quad\hbox{s.t}\quad \big(b,{\omega}(b,\varepsilon)\big)\in\mathcal{A}_n^{\gamma}\Big\}.
\end{equation}
Recall that the intermediate sets $\mathcal{A}_n^{\gamma}$ and the perturbed frequency vector $\omega(b,\varepsilon)$ are respectively defined in Proposition \ref{Nash-Moser} and in \eqref{alpha infty}. Instead of measuring directly $\mathcal{C}_{\infty}^{\varepsilon}$, we rather estimate the measure of its complementary set in $(b_0,b_1).$ Thus, we write
\begin{equation}\label{decomp Cantor}
	(b_{0},b_{1})\setminus\mathcal{C}_{\infty}^{\varepsilon}=\big((b_{0},b_{1})\setminus\mathcal{C}_{0}^{\varepsilon}\big)\sqcup\bigsqcup_{n=0}^{\infty}\big(\mathcal{C}_{n}^{\varepsilon}\setminus\mathcal{C}_{n+1}^{\varepsilon}\big).
\end{equation}
Then, we have to measure all the sets appearing in the decomposition \eqref{decomp Cantor}. This can be done by using   Lemma \ref{lemma useful for measure estimates} together with some trivial inclusions allowing to link the time and space Fourier modes in order to make the series converge. For more details, we refer to Lemmata  \ref{lemm-dix1}, \ref{some cantor set are empty} and \ref{lemma Russeman condition for the perturbed frequencies}. 

	From  \eqref{choice of gamma and N0 in the Nash-Moser} and \eqref{alpha infty-1}, one obtains
	$$\sup_{b\in(b_0,b_1)}\left|\omega(b,\varepsilon)+\omega_{\textnormal{Eq}}(b)\right|\leqslant\|\overline{\mathrm{r}}_{\varepsilon}\|_{q}^{\gamma,\mathcal{O}}\leqslant C\varepsilon\gamma^{-1}N_{0}^{q\overline{a}}=C\varepsilon^{1-a(1+q\overline{a})}.$$
	Notice that the conditions \eqref{param NM} and \eqref{choice of gamma and N0 in the Nash-Moser} imply in particular
	$$0<a<\frac{1}{1+q\overline{a}}.$$
	Therefore, taking $\varepsilon$ small enough yields
	$$\sup_{b\in(b_0,b_1)}\left|\omega(b,\varepsilon)+\omega_{\textnormal{Eq}}(b)\right|\leqslant\|\overline{\mathrm{r}}_{\varepsilon}\|_{q}^{\gamma,\mathcal{O}}\leqslant 1.$$
	 Recall that   $\mathscr{U}=B(0,R_0)$, then, up to take $R_{0}$ large enough, we get
	$$\forall b\in(b_0,b_1),\; \forall \varepsilon\in[0,\varepsilon_0),\quad \omega(b,\varepsilon)\in \mathscr{U}=B(0,R_{0}).$$
	Recall that $\mathcal{A}_0^{\gamma}=\mathcal{O}=(b_0,b_1)\times \mathscr{U}$ then, from \eqref{definition of the final Cantor set},
	$$\mathcal{C}_{0}^{\varepsilon}=(b_0,b_1)$$
	and coming back to \eqref{decomp Cantor}, we find 
	\begin{align}\label{Mir-1}
		\nonumber \Big|(b_{0},b_{1})\setminus\mathcal{C}_{\infty}^{\varepsilon}\Big|&\leqslant\sum_{n=0}^{\infty}\Big|\mathcal{C}_{n}^{\varepsilon}\setminus\mathcal{C}_{n+1}^{\varepsilon}\Big|\\
		&:=\sum_{n=0}^{\infty}\mathcal{S}_{n}.
	\end{align}
	In accordance with the notations used in Propositions \ref{projection in the normal directions} and \ref{reduction of the remainder term}, we denote the perturbed frequencies associated with the reduced linearized operator at state $i_n$ in the following way  
	\begin{align}\label{asy-z1}
		\nonumber \mu_{j}^{\infty,n}(b,\varepsilon)&:=\mu_{j}^{\infty}\big(b,{\omega}(b,\varepsilon),i_{n}\big)\\
		&=\Omega_{j}(b)+jr^{1,n}(b,\varepsilon)+r_{j}^{\infty,n}(b,\varepsilon),
	\end{align}
	where
	\begin{align*}
		r^{1,n}(b,\varepsilon)&:=V_{n}^{\infty}(b,\varepsilon)-\tfrac{1}{2},\\
		V_{n}^{\infty}(b,\varepsilon)&:=V_{i_{n}}^{\infty}(b,{\omega}(b,\varepsilon)),\\
		r_{j}^{\infty,n}(b,\varepsilon)&:=r_{j}^{\infty}\big(b,{\omega}(b,\varepsilon),i_{n}\big).
	\end{align*}
	Now, according to \eqref{definition of the final Cantor set}, Propositions \ref{reduction of the remainder term}, \ref{inversion of the linearized operator in the normal directions} and \ref{reduction of the transport part} one can write for any $n\in\mathbb{N}$, 
	\begin{equation}\label{set-U0}
		\mathcal{C}_{n}^{\varepsilon}\setminus\mathcal{C}_{n+1}^{\varepsilon}=\bigcup_{(l,j)\in\mathbb{Z}^{d}\times\mathbb{Z}\setminus\{(0,0)\}\atop |l|\leqslant N_{n}}\mathcal{R}_{l,j}^{(0)}(i_{n})\bigcup_{(l,j,j_{0})\in\mathbb{Z}^{d}\times(\mathbb{S}_{0}^{c})^{2}\atop |l|\leqslant N_{n}}\mathcal{R}_{l,j,j_{0}}(i_{n})\bigcup_{(l,j)\in\mathbb{Z}^{d}\times\mathbb{S}_{0}^{c}\atop |l|\leqslant N_{n}}\mathcal{R}_{l,j}^{(1)}(i_{n}),
	\end{equation}
	where we denote
	\begin{align*}
		\mathcal{R}_{l,j}^{(0)}(i_{n})&:=\left\lbrace b\in\mathcal{C}_{n}^{\varepsilon}\quad\textnormal{s.t.}\quad\Big|{\omega}(b,\varepsilon)\cdot l+jV_{n}^{\infty}(b,\varepsilon)\Big|\leqslant\tfrac{4\gamma_{n+1}^{\upsilon}\langle j\rangle}{\langle l\rangle^{\tau_{1}}}\right\rbrace,\\
		\mathcal{R}_{l,j,j_{0}}(i_{n})&:=\left\lbrace b\in\mathcal{C}_{n}^{\varepsilon}\quad\textnormal{s.t.}\quad\Big|{\omega}(b,\varepsilon)\cdot l+\mu_{j}^{\infty,n}(b,\varepsilon)-\mu_{j_{0}}^{\infty,n}(b,\varepsilon)\Big|\leqslant\tfrac{2\gamma_{n+1}\langle j-j_{0}\rangle}{\langle l\rangle^{\tau_{2}}}\right\rbrace,\\
		\mathcal{R}_{l,j}^{(1)}(i_{n})&:=\left\lbrace b\in\mathcal{C}_{n}^{\varepsilon}\quad\textnormal{s.t.}\quad\Big|{\omega}(b,\varepsilon)\cdot l+\mu_{j}^{\infty,n}(b,\varepsilon)\Big|\leqslant\tfrac{\gamma_{n+1}\langle j\rangle}{\langle l\rangle^{\tau_{1}}}\right\rbrace.
	\end{align*}
	In view of the inclusion
	$$W^{q,\infty,\gamma}(\mathcal{O},\mathbb{C})\hookrightarrow C^{q-1}(\mathcal{O},\mathbb{C})$$
	and the fact that $q=q_0+1$, one obtains that for any $n\in\mathbb{N}$  the curves
	\begin{align*}
		&b\mapsto\omega(b,\varepsilon)\cdot l+j V_{n}^{\infty}(b,\varepsilon), \quad (l,j)\in\mathbb{Z}^{d}\times \mathbb{Z}\backslash\{(0,0)\}\\
		&b\mapsto\omega(b,\varepsilon)\cdot l+\mu_{j}^{\infty,n}(b,\varepsilon)-\mu_{j_0}^{\infty,n}(b,\varepsilon),\quad (l,j,j_0)\in\mathbb{Z}^{d}\times(\mathbb{S}_0^c)^{2}\\
		&b\mapsto\omega(b,\varepsilon)\cdot l+\mu_{j}^{\infty,n}(b,\varepsilon),\quad (l,j)\in\mathbb{Z}^{d}\times \mathbb{S}_0^c	
	\end{align*}
	are of regularity $C^{q_0}.$ Therefore, applying Lemma \ref{lemma useful for measure estimates} together with Lemma \ref{lemma Russeman condition for the perturbed frequencies} yields 
	\begin{align}\label{kio1}
		\Big|\mathcal{R}_{l,j}^{(0)}(i_{n})\Big|&\lesssim\gamma^{\frac{\upsilon}{q_{0}}}\langle j\rangle^{\frac{1}{q_{0}}}\langle l\rangle^{-1-\frac{\tau_{1}+1}{q_{0}}},\nonumber\\
		\Big|\mathcal{R}_{l,j}^{(1)}(i_{n})\Big|&\lesssim\gamma^{\frac{1}{q_{0}}}\langle j\rangle^{\frac{1}{q_{0}}}\langle l\rangle^{-1-\frac{\tau_{1}+1}{q_{0}}},\\
		\Big|\mathcal{R}_{l,j,j_{0}}(i_{n})\Big|&\lesssim\gamma^{\frac{1}{q_{0}}}\langle j-j_{0}\rangle^{\frac{1}{q_{0}}}\langle l\rangle^{-1-\frac{\tau_{2}+1}{q_{0}}}.\nonumber
	\end{align}
	We first estimate the measure of $\mathcal{S}_0$ and $\mathcal{S}_{1}$ defined in \eqref{Mir-1}. 	From Lemma  \ref{some cantor set are empty}, we have some trivial inclusions allowing us to write for $n\in\{0,1\}$,
	\begin{align}\label{set-U5}
		\mathcal{S}_{n}\lesssim  \sum_{\underset{|j|\leqslant C_{0}\langle l\rangle, |l|\leqslant N_{n}}{(l,j)\in\mathbb{Z}^{d}\times\mathbb{Z}\setminus\{(0,0)\}}}\Big|\mathcal{R}_{l,j}^{(0)}(i_{n})\Big|+\sum_{\underset{\underset{\min(|j|,|j_{0}|)\leqslant c_{2}\gamma_{1}^{-\upsilon}\langle l\rangle^{\tau_{1}}}{|j-j_{0}|\leqslant C_{0}\langle l\rangle, |l|\leqslant N_{n}}}{(l,j,j_{0})\in\mathbb{Z}^{d}\times(\mathbb{S}_{0}^{c})^{2}}}\Big|\mathcal{R}_{l,j,j_{0}}(i_{n})\Big|+\sum_{\underset{|j|\leqslant C_{0}\langle l\rangle, |l|\leqslant N_{n}}{(l,j)\in\mathbb{Z}^{d}\times\mathbb{S}_{0}^{c}}}\Big|\mathcal{R}_{l,j}^{(1)}(i_{n})\Big|.
	\end{align}
	Inserting \eqref{kio1} into \eqref{set-U5} implies that for $n\in\{0,1\}$,
	\begin{align*}
		\mathcal{S}_{n}&\lesssim  \gamma^{\frac{1}{q_{0}}}\Bigg(\sum_{|j|\leqslant C_{0}\langle l\rangle}|j|^{\frac{1}{q_{0}}}\langle l\rangle^{-1-\frac{\tau_{1}+1}{q_{0}}}+\sum_{\underset{\min(|j|,|j_{0}|)\leqslant c_{2}\gamma^{-\upsilon}\langle l\rangle^{\tau_{1}}}{|j-j_{0}|\leqslant C_{0}\langle l\rangle}} |j-j_{0}|^{\frac{1}{q_{0}}}\langle l\rangle^{-1-\frac{\tau_{2}+1}{q_{0}}}\Bigg)\\
		&\quad+\gamma^{\frac{\upsilon}{q_0}}\sum_{{|j|\leqslant C_{0}\langle l\rangle}}|j|^{\frac{1}{q_{0}}}\langle l\rangle^{-1-\frac{\tau_{1}+1}{q_{0}}}.
	\end{align*}
	The first two conditions listed in \eqref{choice tau 1 tau2 upsilon} write
	\begin{align}\label{assum2-d}
		\tau_1>d \,q_0\quad\hbox{and}\quad \tau_2>\tau_1+ d\, q_0.
	\end{align}
	Hence, we can make the series appearing in the following expression converge and write
	\begin{align}\label{set-U6}
		\max_{n\in\{0,1\}}\mathcal{S}_{n}&\lesssim  \gamma^{\frac{1}{q_{0}}}\Bigg(\sum_{l\in\mathbb{Z}^d}\langle l\rangle^{-\frac{\tau_{1}}{q_{0}}}+\gamma^{-\upsilon}\sum_{l\in\mathbb{Z}^d}\langle l\rangle^{\tau_1-1-\frac{\tau_{2}}{q_{0}}}\Bigg)
		+\gamma^{\frac{\upsilon}{q_0}}\sum_{l\in\mathbb{Z}^d}\langle l\rangle^{-\frac{\tau_{1}}{q_{0}}}\\
		\nonumber &\lesssim  \gamma^{\min\left(\frac{\upsilon}{q_{0}},\frac{1}{q_{0}}-\upsilon\right)}.
	\end{align}
	Let us now move to the estimate of $\mathcal{S}_{n}$ for $n\geqslant 2$ defined by \eqref{Mir-1}. Using Lemma \ref{lemm-dix1} and Lemma \ref{some cantor set are empty}, we infer
	\begin{align*}
		\mathcal{S}_{n}\leqslant \sum_{\underset{|j|\leqslant C_{0}\langle l\rangle,N_{n-1}<|l|\leqslant N_{n}}{(l,j)\in\mathbb{Z}^{d}\times\mathbb{Z}\setminus\{(0,0)\}}}\Big|\mathcal{R}_{l,j}^{(0)}(i_{n})\Big|+\sum_{\underset{\underset{\min(|j|,|j_{0}|)\leqslant c_{2}\gamma_{n+1}^{-\upsilon}\langle l\rangle^{\tau_{1}}}{|j-j_{0}|\leqslant C_{0}\langle l\rangle,N_{n-1}<|l|\leqslant N_{n}}}{(l,j,j_{0})\in\mathbb{Z}^{d}\times(\mathbb{S}_{0}^{c})^{2}}}\Big|\mathcal{R}_{l,j,j_{0}}(i_{n})\Big|+\sum_{\underset{|j|\leqslant C_{0}\langle l\rangle,N_{n-1}<|l|\leqslant N_{n}}{(l,j)\in\mathbb{Z}^{d}\times\mathbb{S}_{0}^{c}}}\Big|\mathcal{R}_{l,j}^{(1)}(i_{n})\Big|.
	\end{align*}
	Notice that if $|j-j_{0}|\leqslant C_{0}\langle l\rangle$ and $\min(|j|,|j_{0}|)\leqslant\gamma_{n+1}^{-\upsilon}\langle l\rangle^{\tau_{1}}$, then 
	$$\max(|j|,|j_{0}|)=\min(|j|,|j_{0}|)+|j-j_{0}|\leqslant\gamma_{n+1}^{-\upsilon}\langle l\rangle^{\tau_{1}}+C_{0}\langle l\rangle\lesssim\gamma^{-\upsilon}\langle l\rangle^{\tau_{1}}.$$
	Hence, we deduce from \eqref{kio1} that
	\begin{align*}
		\mathcal{S}_{n}\lesssim 
		\gamma^{\frac{1}{q_{0}}}\Bigg(\sum_{|l|>N_{n-1}}\langle l\rangle^{-\frac{\tau_{1}}{q_{0}}}+\gamma^{-\upsilon}\sum_{|l|>N_{n-1}}\langle l\rangle^{\tau_1-1-\frac{\tau_{2}}{q_{0}}}\Bigg)
		+\gamma^{\frac{\upsilon}{q_0}}\sum_{|l|>N_{n-1}}\langle l\rangle^{-\frac{\tau_{1}}{q_{0}}}.
	\end{align*}
	Now according to \eqref{assum2-d}, we obtain
	\begin{align}\label{dtu-c1}
		\sum_{n=2}^\infty \mathcal{S}_{n}&\lesssim  \gamma^{\min\left(\frac{\upsilon}{q_{0}},\frac{1}{q_{0}}-\upsilon\right)}.
	\end{align}
	Inserting \eqref{dtu-c1} and \eqref{set-U6} into \eqref{Mir-1} yields
	$$\Big|(b_{0},b_{1})\setminus\mathcal{C}_{\infty}^{\varepsilon}\Big|  \lesssim  \gamma^{\min\left(\frac{\upsilon}{q_{0}},\frac{1}{q_{0}}-\upsilon\right)}.$$
	Remark also that \eqref{choice tau 1 tau2 upsilon} implies
	$$\min\left(\tfrac{\upsilon}{q_{0}},\tfrac{1}{q_{0}}-\upsilon\right)=\tfrac{\upsilon}{q_{0}}.$$
	Consequently, using the fact that $\gamma=\varepsilon^a$ due to \eqref{choice of gamma and N0 in the Nash-Moser}, we finally get
	$$\Big|(b_{0},b_{1})\setminus\mathcal{C}_{\infty}^{\varepsilon}\Big|\lesssim\varepsilon^{\frac{a\upsilon}{q_{0}}}.$$
	This ends the proof of Proposition \ref{lem-meas-es1}. 
\end{proof}
We shall now prove Lemmata \ref{lemm-dix1}, \ref{some cantor set are empty} and \ref{lemma Russeman condition for the perturbed frequencies} used in the proof of Proposition \ref{lem-meas-es1}.
\begin{lem}\label{lemm-dix1}
	{Let $n\in\mathbb{N}\setminus\{0,1\}$ and  $l\in\mathbb{Z}^{d}$ such that $|l|\leqslant N_{n-1}.$ Then the following assertions hold true.
		\begin{enumerate}[label=(\roman*)]
			\item For $ j\in\mathbb{Z}$ with $(l,j)\neq(0,0)$, we get  $\,\,\mathcal{R}_{l,j}^{(0)}(i_{n})=\varnothing.$
			\item For  $ (j,j_{0})\in(\mathbb{S}_{0}^{c})^{2}$ with $(l,j)\neq(0,j_0),$ we get $\,\,\mathcal{R}_{l,j,j_{0}}(i_{n})=\varnothing.$
			\item For  $j\in\mathbb{S}_{0}^{c}$, we get $\,\,\mathcal{R}_{l,j}^{(1)}(i_{n})=\varnothing.$
			\item For any $n\in\mathbb{N}\setminus\{0,1\},$
			\begin{equation*}
				\mathcal{C}_{n}^{\varepsilon}\setminus\mathcal{C}_{n+1}^{\varepsilon}=\bigcup_{\underset{N_{n-1}<|l|\leqslant N_{n}}{(l,j)\in\mathbb{Z}^{d}\times\mathbb{Z}\setminus\{(0,0)\}}}\mathcal{R}_{l,j}^{(0)}(i_{n})\cup\bigcup_{\underset{N_{n-1}<|l|\leqslant N_{n}}{(l,j,j_{0})\in\mathbb{Z}^{d}\times(\mathbb{S}_{0}^{c})^{2}}}\mathcal{R}_{l,j,j_{0}}(i_{n})\cup\bigcup_{\underset{N_{n-1}<|l|\leqslant N_{n}}{(l,j)\in\mathbb{Z}^{d}\times\mathbb{S}_{0}^{c}}}\mathcal{R}_{l,j}^{(1)}(i_{n}).
			\end{equation*}
	\end{enumerate}}
\end{lem}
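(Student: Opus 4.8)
\textbf{Proof strategy for Lemma \ref{lemm-dix1}.}
The plan is to show that the ``bad'' resonance sets indexed by small frequency vectors $l$ (with $|l|\leqslant N_{n-1}$) have already been removed at an earlier stage of the Nash--Moser scheme, hence are empty in the difference $\mathcal{C}_n^\varepsilon\setminus\mathcal{C}_{n+1}^\varepsilon$. The mechanism is the standard one: the sets $\mathcal{C}_n^\varepsilon$ are nested and defined via the intermediate Cantor sets $\mathcal{A}_n^\gamma=\mathcal{A}_{n-1}^\gamma\cap\mathcal{G}_{n-1}(\gamma_n,\tau_1,\tau_2,i_{n-1})$ from Proposition \ref{Nash-Moser}, with the parameters $\gamma_n=\gamma(1+2^{-n})$ strictly decreasing. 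So if $b\in\mathcal{C}_n^\varepsilon$, then in particular $\big(b,\omega(b,\varepsilon)\big)\in\mathcal{G}_{n-1}(\gamma_n,\tau_1,\tau_2,i_{n-1})$, which means the corresponding small-divisor quantities at the \emph{previous} torus $i_{n-1}$ satisfy the lower bounds with threshold $\gamma_n$. To conclude emptiness at $i_n$ with threshold $\gamma_{n+1}<\gamma_n$, I would carry out a perturbative comparison between the frequencies at $i_{n-1}$ and at $i_n$: using $(\mathcal{P}2)_n$--$(\mathcal{P}3)_n$ of Proposition \ref{Nash-Moser} (more precisely the bound \eqref{e-Hn-diff} on $\|H_n\|$) together with the difference estimates \eqref{difference ci}, \eqref{differences mu0}, \eqref{diffenrence rjinfty}, \eqref{estimate differences mujinfty} from Propositions \ref{reduction of the transport part}--\ref{reduction of the remainder term}, one controls $|V_n^\infty-V_{n-1}^\infty|$ and $|\mu_j^{\infty,n}-\mu_j^{\infty,n-1}|$ (and their $\langle j-j_0\rangle$-weighted analogues) by a quantity of size $\lesssim\varepsilon\gamma^{-1}N_{n-1}^{-a_2}$.

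Concretely, for item (i): suppose $b\in\mathcal{R}_{l,j}^{(0)}(i_n)$ with $|l|\leqslant N_{n-1}$ and $(l,j)\neq(0,0)$; then $\big|\omega(b,\varepsilon)\cdot l+jV_n^\infty(b,\varepsilon)\big|\leqslant 4\gamma_{n+1}^\upsilon\langle j\rangle\langle l\rangle^{-\tau_1}$. Since $b\in\mathcal{C}_n^\varepsilon\subset\mathcal{C}_{n-1}^\varepsilon$ forces, via membership in $\mathcal{G}_{n-1}$ and the truncation $|l|\leqslant N_{n-1}$, the reverse inequality $\big|\omega(b,\varepsilon)\cdot l+jV_{n-1}^\infty(b,\varepsilon)\big|>4\gamma_n^\upsilon\langle j\rangle\langle l\rangle^{-\tau_1}$, the triangle inequality yields
$$
|j|\,\big|V_n^\infty(b,\varepsilon)-V_{n-1}^\infty(b,\varepsilon)\big|>4\big(\gamma_n^\upsilon-\gamma_{n+1}^\upsilon\big)\tfrac{\langle j\rangle}{\langle l\rangle^{\tau_1}}\gtrsim \gamma^\upsilon 2^{-n}\tfrac{\langle j\rangle}{\langle l\rangle^{\tau_1}}.
$$
On the other hand $\big|V_n^\infty-V_{n-1}^\infty\big|\lesssim\varepsilon\gamma^{-1}N_{n-1}^{-a_2}$ by \eqref{difference ci} combined with \eqref{e-Hn-diff}, so using $\langle l\rangle\leqslant N_{n-1}$ and $\langle j\rangle\leqslant\langle l\rangle$ (on the relevant range, recalling the trivial bound $|j|\lesssim\langle l\rangle$ which is already incorporated in the decomposition \eqref{set-U5}) we reach
$$
\varepsilon\gamma^{-1}N_{n-1}^{-a_2}\gtrsim\gamma^\upsilon 2^{-n}N_{n-1}^{-\tau_1},
$$
i.e. $N_{n-1}^{\tau_1-a_2}\gtrsim\gamma^{1+\upsilon}2^{-n}\varepsilon^{-1}$. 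Given the choices \eqref{param NM}--\eqref{choice of gamma and N0 in the Nash-Moser} (in particular $a_2$ large compared to $\tau_1$, and $N_n=N_0^{(3/2)^n}$, $\gamma=\varepsilon^a$, $N_0=\gamma^{-1}$), the left side tends to $0$ faster than any power while the right side diverges; hence for $\varepsilon$ small this is impossible and $\mathcal{R}_{l,j}^{(0)}(i_n)=\varnothing$. Items (ii) and (iii) are handled identically, replacing $V^\infty$ by $\mu_j^{\infty,n}-\mu_{j_0}^{\infty,n}$ resp. $\mu_j^{\infty,n}$ and invoking \eqref{estimate differences mujinfty} resp. \eqref{diffenrence rjinfty}--\eqref{differences mu0} together with Lemma \ref{properties omegajb}-(iv) to control the weighted differences by $\langle j-j_0\rangle\varepsilon\gamma^{-1}N_{n-1}^{-a_2}$ resp. $\langle j\rangle\varepsilon\gamma^{-1}N_{n-1}^{-a_2}$; the same arithmetic with $\tau_2$ in place of $\tau_1$ in case (ii) closes the argument, which is why the constraint $a_2>\tau_2$-type inequalities are built into \eqref{param NM}.

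Finally, item (iv) is a purely set-theoretic consequence: by \eqref{set-U0} the set $\mathcal{C}_n^\varepsilon\setminus\mathcal{C}_{n+1}^\varepsilon$ is the union of all $\mathcal{R}_{l,j}^{(0)}(i_n)$, $\mathcal{R}_{l,j,j_0}(i_n)$, $\mathcal{R}_{l,j}^{(1)}(i_n)$ over $|l|\leqslant N_n$; by (i)--(iii) every term with $|l|\leqslant N_{n-1}$ is empty, so only the indices with $N_{n-1}<|l|\leqslant N_n$ survive, which is exactly the stated identity. The main obstacle is bookkeeping the threshold decrement: one must verify that the gap $\gamma_n^\upsilon-\gamma_{n+1}^\upsilon\gtrsim\gamma^\upsilon 2^{-n}$ (elementary, from $\gamma_n=\gamma(1+2^{-n})$ and concavity of $x\mapsto x^\upsilon$) beats the frequency perturbation $\varepsilon\gamma^{-1}N_{n-1}^{-a_2}$ uniformly in $n$, which reduces to the explicit inequality among $a_2,\tau_1,\tau_2,\upsilon,a$ encoded in \eqref{param NM}--\eqref{choice tau 1 tau2 upsilon}; this is where the precise calibration of the Nash--Moser parameters is used, and is entirely parallel to \cite[Lem. 7.x]{HR21}, so I would cite that reference for the routine estimates rather than reproduce them.
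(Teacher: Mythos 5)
Your proposal is correct and is essentially the paper's own argument: the paper packages the same step as the inclusion $\mathcal{R}_{l,j}^{(0)}(i_n)\subset\mathcal{R}_{l,j}^{(0)}(i_{n-1})$ followed by disjointness of $\mathcal{C}_{n-1}^{\varepsilon}\setminus\mathcal{C}_{n}^{\varepsilon}$ and $\mathcal{C}_{n}^{\varepsilon}\setminus\mathcal{C}_{n+1}^{\varepsilon}$, which is just a rephrasing of your direct contradiction between the lower bound encoded in $b\in\mathcal{C}_n^{\varepsilon}$ (threshold $\gamma_n$, torus $i_{n-1}$, modes $|l|\leqslant N_{n-1}$) and the upper bound defining the resonant set at $i_n$ (threshold $\gamma_{n+1}$). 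The ingredients you invoke are exactly those of the paper: the torus-difference bounds \eqref{e-Hn-diff} combined with \eqref{difference ci}, \eqref{differences mu0}, \eqref{diffenrence rjinfty}, \eqref{estimate differences mujinfty}, the gap $\gamma_n^{\upsilon}-\gamma_{n+1}^{\upsilon}\gtrsim\gamma^{\upsilon}2^{-n}$, and the calibrations $a_2>\tau_1,\tau_2$ with $a$ small, and item (iv) follows from \eqref{set-U0} exactly as you say.
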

\begin{proof}
	The following estimate, obtained from \eqref{e-Hn-diff}, turns to be very useful in the sequel. For any $n\geqslant 2$, we have
	\begin{align}\label{Bio-X1}
		\nonumber \| i_{n}-i_{n-1}\|_{q,\overline{s}_{h}+\sigma_{4}}^{\gamma,\mathcal{O}}&\leqslant\| U_{n}-U_{n-1}\|_{q,\overline{s}_{h}+\sigma_{4}}^{\gamma,\mathcal{O}}\\
		&\leqslant\| H_{n}\|_{q,s_{h}+\sigma_{4}}^{\gamma,\mathcal{O}}\nonumber\\
		&\leqslant C_{\ast}\varepsilon\gamma^{-1}N_{n-1}^{-a_{2}}.
	\end{align}
	Since \eqref{Bio-X1} is only true for $n\geqslant 2$, we had to estimate the measures of $\mathcal{S}_{0}$ and $\mathcal{S}_{1}$ differently in the proof of Proposition \ref{lem-meas-es1}.\\
	\noindent\textbf{(i)} Assume that $|l|\leqslant N_{n-1}$ and $(l,j)\neq (0,0).$ Let us prove that
	\begin{equation}\label{inc R0 in-R0in-1}
		\mathcal{R}_{l,j}^{(0)}(i_{n}) \subset\mathcal{R}_{l,j}^{(0)}(i_{n-1}).
	\end{equation}
	Take $b\in\mathcal{R}_{l,j}^{(0)}(i_{n}).$ In view of \eqref{set-U0}, we have in particular that $b\in\mathcal{C}_{n}^{\varepsilon}\subset \mathcal{C}_{n-1}^{\varepsilon}.$ In addition, the triangle inequality gives
	\begin{align*}
		\big|{\omega}(b,\varepsilon)\cdot l+jV_{n-1}^{\infty}(b,\varepsilon)\big| & \leqslant \big|{\omega}(b,\varepsilon)\cdot l+jV_{n}^{\infty}(b,\varepsilon)\big|+|j|\big|V_{n}^{\infty}(b,\varepsilon)-V_{n-1}^{\infty}(b,\varepsilon)\big|\\
		&\leqslant \displaystyle\tfrac{4\gamma_{n+1}^{\upsilon}\langle j\rangle}{\langle l\rangle^{\tau_{1}}}+C|j|\|V_{ i_{n}}^{\infty}-V_{i_{n-1}}^{\infty}\|_{q}^{\gamma,\mathcal{O}}.
	\end{align*}
	Thus, putting together \eqref{difference ci}, \eqref{Bio-X1}, \eqref{choice of gamma and N0 in the Nash-Moser} and the fact that $\sigma_{4}\geqslant 2$, we obtain
	\begin{align*}
		\big|{\omega}(b,\varepsilon)\cdot l+jV_{n-1}^{\infty}(b,\varepsilon)\big| & \leqslant  \displaystyle\tfrac{4\gamma_{n+1}^{\upsilon}\langle j\rangle}{\langle l\rangle^{\tau_{1}}}+C\varepsilon\langle j\rangle\| i_{n}-i_{n-1}\|_{q,\overline{s}_{h}+2}^{\gamma,\mathcal{O}}\\
		& \leqslant \displaystyle\tfrac{4\gamma_{n+1}^{\upsilon}\langle j\rangle}{\langle l\rangle^{\tau_{1}}}+C\varepsilon^{2-a}\langle j\rangle N_{n-1}^{-a_{2}}.
	\end{align*}
	According the definition of $\gamma_n$ in Proposition \ref{Nash-Moser}-$(\mathcal{P}2)_n,$ we infer
	$$
	\exists c_0>0,\quad \forall n\in \mathbb{N},\quad \gamma_{n+1}^{\upsilon}-\gamma_{n}^{\upsilon}\leqslant - c_0\,\gamma^{\upsilon} 2^{-n}.
	$$
	Notice that \eqref{choice tau 1 tau2 upsilon},  \eqref{param NM} and \eqref{choice of gamma and N0 in the Nash-Moser} give 
	\begin{align}\label{assum-co1}
		2-a-a \upsilon>1\quad\hbox{and}\quad a_2>\tau_1,
	\end{align}
which implies in turn
$$\displaystyle \sup_{n\in\mathbb{N}}2^{n}N_{n-1}^{-a_2+\tau_1}<\infty.$$
Consequently, for $\varepsilon$ small enough  and $|l|\leqslant N_{n-1}$,  
	\begin{align*}
		\big|{\omega}(b,\varepsilon)\cdot l+jV_{n-1}^{\infty}(b,\varepsilon)\big| & \leqslant \displaystyle\tfrac{4\gamma_{n}^{\upsilon}\langle j\rangle}{\langle l\rangle^{\tau_{1}}}+C\tfrac{\langle j\rangle\gamma^{\upsilon}}{2^n\langle l\rangle^{\tau_{1}}}\Big(-4c_0+C\varepsilon2^{n}N_{n-1}^{-a_2+\tau_1}\Big)\\
		& \leqslant \displaystyle\tfrac{4\gamma_{n}^{\upsilon}\langle j\rangle}{\langle l\rangle^{\tau_{1}}}.
	\end{align*}
	It follows that $b\in \mathcal{R}_{l,j}^{(0)}(i_{n-1})$ and this proves \eqref{inc R0 in-R0in-1}. Now, from \eqref{set-U0} we deduce  
	$$
	\mathcal{R}_{l,j}^{(0)}(i_{n}) \subset\mathcal{R}_{l,j}^{(0)}(i_{n-1})\subset \mathcal{C}_{n-1}^{\varepsilon}\setminus\mathcal{C}_{n}^{\varepsilon}.
	$$
	In view of \eqref{inc R0 in-R0in-1} and \eqref{set-U0}, we get $\mathcal{R}_{l,j}^{(0)}(i_{n}) \subset\mathcal{C}_{n}^{\varepsilon}\setminus\mathcal{C}_{n+1}^{\varepsilon}$ and thus we conclude
	$$
	\mathcal{R}_{l,j}^{(0)}(i_{n}) \subset\big(\mathcal{C}_{n}^{\varepsilon}\setminus\mathcal{C}_{n+1}^{\varepsilon}\big)\cap \big(\mathcal{C}_{n-1}^{\varepsilon}\setminus\mathcal{C}_{n}^{\varepsilon}\big)=\varnothing.
	$$
	This proves the first point.\\
	\noindent \textbf{(ii)} Let $(j,j_{0})\in(\mathbb{S}_{0}^{c})^{2}$ and  $(l,j)\neq(0,j_0).$ If $j=j_0$ then by construction $\mathcal{R}_{l,j_0,j_{0}}(i_{n})=\mathcal{R}_{l,0}^{(0)}(i_{n})$ and then the result is an immediate consequence of the first point. Then, we restrict the discussion to the case $j\neq j_0.$ In a similar way to the point (i), we only have to check that 
	$$\mathcal{R}_{l,j,j_{0}}(i_{n})\subset \mathcal{R}_{l,j,j_{0}}(i_{n-1}).$$
	Take $b\in\mathcal{R}_{l,j,j_{0}}(i_{n}).$ Then coming back to \eqref{set-U0}, we deduce from the triangle inequality that  $b\in\mathcal{C}_{n}^{\varepsilon}\subset \mathcal{C}_{n-1}^{\varepsilon}$ and 
	\begin{equation}\label{poiH1}
		\big|{\omega}(b,\varepsilon)\cdot l+\mu_{j}^{\infty,n-1}(b,\varepsilon)-\mu_{j_{0}}^{\infty,n-1}(b,\varepsilon)\big|  \leqslant  \displaystyle\tfrac{2\gamma_{n+1}\langle j-j_{0}\rangle}{\langle l\rangle^{\tau_{2}}}+\varrho^n_{j,j_0}(b,\varepsilon),
	\end{equation}
	where 
	$$ \varrho^n_{j,j_0}(b,\varepsilon):=\big|\mu_{j}^{\infty,n}(b,\varepsilon)-\mu_{j_0}^{\infty,n}(b,\varepsilon)-\mu_{j}^{\infty,n-1}(b,\varepsilon)+\mu_{j_{0}}^{\infty,n-1}(b,\varepsilon)\big|.
	$$ 
	According to \eqref{asy-z1}, one obtains
	\begin{align}\label{asy-z2}
		\nonumber \varrho^n_{j,j_0}(b,\varepsilon)\leqslant |j-j_0|\big|r^{1,n}(b,\varepsilon)-r^{1,n-1}(b,\varepsilon)\big|&+\big|r_{j}^{\infty,n}(b,\varepsilon)-r_{j}^{\infty,n-1}(b,\varepsilon)\big|\\
		&+\big|r_{j_0}^{\infty,n}(b,\varepsilon)-r_{j_0}^{\infty,n-1}(b,\varepsilon)\big|.
	\end{align}
	From \eqref{differences mu0}, \eqref{Bio-X1}, \eqref{choice of gamma and N0 in the Nash-Moser} and the fact that $\sigma_{4}\geqslant\sigma_{3},$ we deduce that 
	\begin{align*}
		\big|r^{1,n}(b,\varepsilon)-r^{1,n-1}(b,\varepsilon)\big|&\lesssim \varepsilon\| i_{n}-i_{n-1}\|_{q,\overline{s}_{h}+\sigma_{3}}^{\gamma,\mathcal{O}}\\
		&\lesssim \varepsilon^{2}\gamma^{-1}N_{n-1}^{-a_2}\\
		&\lesssim \varepsilon^{2-a} N_{n-1}^{-a_2}.
	\end{align*}
	Similarly, \eqref{diffenrence rjinfty}, \eqref{Bio-X1} and \eqref{choice of gamma and N0 in the Nash-Moser} imply
	\begin{align*}
		\big|r_{j}^{\infty,n}(b,\varepsilon)-r_{j}^{\infty,n-1}(b,\varepsilon)\big|&\lesssim \varepsilon\gamma^{-1}\| i_{n}-i_{n-1}\|_{q,\overline{s}_{h}+\sigma_{4}}^{\gamma,\mathcal{O}}\\
		&\lesssim \varepsilon^{2}\gamma^{-2}N_{n-1}^{-a_2}\\
		&\lesssim \varepsilon^{2(1-a)}\langle j-j_0\rangle N_{n-1}^{-a_2}.
	\end{align*}
	Plugging the preceding two estimates into \eqref{asy-z2} yields
	\begin{align}\label{asy-z3}
		\varrho^n_{j,j_0}(b,\varepsilon)\lesssim \varepsilon^{2(1-a)}\langle j-j_0\rangle N_{n-1}^{-a_{2}}.
	\end{align}
	Gathering \eqref{asy-z3} and \eqref{poiH1} and using $\gamma_{n+1}=\gamma_{n}-\varepsilon^a 2^{-n-1},$ we obtain
	\begin{align*}
		\big|{\omega}(b,\varepsilon)\cdot l+\mu_{j}^{\infty,n-1}(b,\varepsilon)-\mu_{j_{0}}^{\infty,n-1}(b,\varepsilon)\big| 
		&\leqslant \displaystyle\tfrac{2\gamma_{n}\langle j-j_0\rangle }{\langle l\rangle^{\tau_{2}}}-{\varepsilon^a \langle j-j_{0}\rangle}2^{-n}\langle l\rangle ^{-\tau_2}\\
		&\quad+C\varepsilon^{2(1-a)}\langle j-j_0\rangle N_{n-1}^{-a_{2}}.
	\end{align*} 
	Using the fact that $|l|\leqslant N_{n-1}$, we deduce
	\begin{align*}
		-{\varepsilon^a }2^{-n}\langle l\rangle ^{-\tau_2}+C\varepsilon^{2(1-a)}N_{n-1}^{-a_{2}}\leqslant {\varepsilon^a }2^{-n}\langle l\rangle ^{-\tau_2}\Big(-1+C\varepsilon^{2-3a}2^{n}N_{n-1}^{-a_{2}+\tau_{2}}\Big).
	\end{align*}
	Notice that \eqref{param NM} and \eqref{choice of gamma and N0 in the Nash-Moser} imply in particular
	\begin{align}\label{Condor-1}
		a_2>\tau_{2}\quad\hbox{and}\quad a<\tfrac{2}{3}.
	\end{align}
	Therefore, for $\varepsilon$ small enough, we get
	\begin{align*}
		\forall\, n\in\mathbb{N},\quad -1+C\varepsilon^{2-3a}2^{n}N_{n-1}^{-a_{2}+\tau_{2}}\leqslant 0,
	\end{align*}
	which implies in turn 
	$$\big|{\omega}(b,\varepsilon)\cdot l+\mu_{j}^{\infty,n-1}(b,\varepsilon)-\mu_{j_{0}}^{\infty,n-1}(b,\varepsilon)\big| \leqslant  \displaystyle\tfrac{2\gamma_{n}\langle j-j_0\rangle }{\langle l\rangle^{\tau_{2}}}\cdot$$
	Finally, $b \in  \mathcal{R}_{l,j,j_{0}}(i_{n-1}).$ This achieves the proof of the second point.\\
	\noindent \textbf{(iii)} Let $j\in\mathbb{S}_{0}^{c}.$ In particular, one has $(l,j)\neq (0,0).$ In a similar line to the first point, we shall prove that if $|l|\leqslant N_{n-1}$ and then 
	$$\mathcal{R}_{l,j}^{(1)}(i_{n}) \subset\mathcal{R}_{l,j}^{(1)}(i_{n-1}),$$
	where the set $\mathcal{R}_{l,j}^{(1)}(i_{n})$ is defined below \eqref{set-U0}. Take $b\in\mathcal{R}_{l,j}^{(1)}(i_{n}).$ Then, by construction, $b\in \mathcal{C}_{n}^{\varepsilon}\subset \mathcal{C}_{n-1}^{\varepsilon}.$ By using the triangle inequality, \eqref{estimate differences mujinfty}, \eqref{Bio-X1} and the choice $\gamma=\varepsilon^a$, we obtain 
	\begin{align*}
		\big|{\omega}(b,\varepsilon)\cdot l+\mu_{j}^{\infty,n-1}(b,\varepsilon)\big| & \leqslant  \big|{\omega}(b,\varepsilon)\cdot l+\mu_{j}^{\infty,n}(b,\varepsilon)\big|+|\mu_{j}^{\infty,n}(b,\varepsilon)-\mu_{j}^{\infty,n-1}(b,\varepsilon)|\\
		& \leqslant \displaystyle\tfrac{\gamma_{n+1}\langle j\rangle }{\langle l\rangle^{\tau_{1}}}+C\varepsilon\gamma^{-1}|j|\| i_{n}-i_{n-1}\|_{q,\overline{s}_{h}+\sigma_{4}}^{\gamma,\mathcal{O}}\\
		& \leqslant  \displaystyle\tfrac{\gamma_{n+1}\langle j\rangle }{\langle l\rangle^{\tau_{1}}}+C\varepsilon^{2(1-a)}\langle j\rangle N_{n-1}^{-a_{2}}.
	\end{align*}
	Now recalling that $\gamma_{n+1}=\gamma_{n}-\varepsilon^a 2^{-n-1}$ and $|l|\leqslant N_{n-1}$, we get 
	$$\big|{\omega}(b,\varepsilon)\cdot l+\mu_{j}^{\infty,n-1}(b,\varepsilon)\big| \leqslant \displaystyle\tfrac{\gamma_{n}\langle j\rangle }{\langle l\rangle^{\tau_{1}}}+\tfrac{\langle j\rangle \varepsilon^a}{2^{n+1}\langle l\rangle^{\tau_1}}\Big(-1+\varepsilon^{2-3a} 2^{n+1}N_{n-1}^{-a_{2}+\tau_{1}}\Big).$$
	As a byproduct of \eqref{Condor-1}, we infer 
	\begin{align}\label{samedi-1}
		a_2>\tau_{1}\quad\hbox{and}\quad a<\tfrac{2}{3}.
	\end{align}
	Therefore, up to take $\varepsilon$ small enough, we deduce
	$$
	\forall\,n\in \mathbb{N},\quad -1+\varepsilon^{2-3a} 2^{n+1}N_{n-1}^{-a_{2}+\tau_{1}}\leqslant 0,
	$$
	which implies in turn that 
	$$
	\big|{\omega}(b,\varepsilon)\cdot l+\mu_{j}^{\infty,n-1}(b,\varepsilon)\big|  \leqslant \tfrac{\gamma_{n}\langle j\rangle }{\langle l\rangle^{\tau_{1}}}.
	$$
	Finally, $b\in \mathcal{R}_{l,j}^{(1)}(i_{n-1})$ and the proof of the third point is now complete. 
	\\
	\noindent \textbf{(iv)} Follows immediately from \eqref{set-U0} and the points (i)-(ii)-(iii).
\end{proof}
The following lemma provides necessary constraints on the time and space Fourier modes so that the sets in \eqref{set-U0} are not void.
\begin{lem}\label{some cantor set are empty}
	There exists $\varepsilon_0$ such that for any $\varepsilon\in[0,\varepsilon_0]$ and $n\in\mathbb{N}$ the following assertions hold true. 
	\begin{enumerate}[label=(\roman*)]
		\item Let $(l,j)\in\mathbb{Z}^{d}\times\mathbb{Z}\setminus\{(0,0)\}.$ If $\,\displaystyle\mathcal{R}_{l,j}^{(0)}(i_{n})\neq\varnothing,$ then $|j|\leqslant C_{0}\langle l\rangle.$
		\item Let $(l,j,j_{0})\in\mathbb{Z}^{d}\times(\mathbb{S}_{0}^{c})^{2}.$ If $\,\displaystyle\mathcal{R}_{l,j,j_{0}}(i_{n})\neq\varnothing,$ then $|j-j_{0}|\leqslant C_{0}\langle l\rangle.$
		\item Let $(l,j)\in\mathbb{Z}^{d}\times\mathbb{S}_{0}^{c}.$ If $\,\displaystyle \mathcal{R}_{l,j}^{(1)}(i_{n})\neq\varnothing,$ then $|j|\leqslant C_{0}\langle l\rangle.$
		\item Let $(l,j,j_{0})\in\mathbb{Z}^{d}\times(\mathbb{S}_{0}^{c})^{2}.$ There exists $c_{2}>0$ such that if $\displaystyle \min(|j|,|j_{0}|)\geqslant c_{2}\gamma_{n+1}^{-\upsilon}\langle l\rangle^{\tau_{1}},$ then 
		$$\mathcal{R}_{l,j,j_{0}}(i_{n})\subset\mathcal{R}_{l,j-j_{0}}^{(0)}(i_{n}).$$
	\end{enumerate}
\end{lem}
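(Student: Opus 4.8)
\textbf{Proof plan for Lemma \ref{some cantor set are empty}.}
The plan is to exploit the boundedness of the perturbed frequency vector $\omega(b,\varepsilon)$ and the linear growth of the perturbed eigenvalues $\mu_j^{\infty,n}$ in $j$, together with the lower bounds on the equilibrium frequencies from Lemma \ref{properties omegajb}, to convert each of the three resonance inequalities defining $\mathcal{R}^{(0)}_{l,j}(i_n)$, $\mathcal{R}_{l,j,j_0}(i_n)$ and $\mathcal{R}^{(1)}_{l,j}(i_n)$ into a constraint forcing the relevant space mode to be controlled by $\langle l\rangle$. For all three points I would first record the uniform bound $\sup_{b\in(b_0,b_1)}|\omega(b,\varepsilon)|\leqslant R_0$ coming from Corollary \ref{Corollary NM} (since $\omega(b,\varepsilon)=-\omega_{\textnormal{Eq}}(b)+\bar r_\varepsilon(b)$ with $\|\bar r_\varepsilon\|_q^{\gamma,\mathcal{O}}\lesssim\varepsilon\gamma^{-1}N_0^{q\overline a}$ small), and the asymptotic description \eqref{asy-z1}, namely $\mu_j^{\infty,n}(b,\varepsilon)=\Omega_j(b)+jr^{1,n}(b,\varepsilon)+r_j^{\infty,n}(b,\varepsilon)$ with $|r^{1,n}|\lesssim\varepsilon$ by \eqref{differences mu0} and $\sup_j|j|\,|r_j^{\infty,n}|\lesssim\varepsilon\gamma^{-1}$ by \eqref{estimate rjinfty}.

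For point (i): if $b\in\mathcal{R}^{(0)}_{l,j}(i_n)$, then $|\omega(b,\varepsilon)\cdot l+jV_n^\infty(b,\varepsilon)|\leqslant 4\gamma_{n+1}^\upsilon\langle j\rangle\langle l\rangle^{-\tau_1}\leqslant 4$. Since $V_n^\infty=\tfrac12+r^{1,n}$ with $|r^{1,n}|\lesssim\varepsilon$ small, we have $|V_n^\infty|\geqslant\tfrac14$, so $\tfrac14|j|\leqslant|jV_n^\infty|\leqslant|\omega(b,\varepsilon)\cdot l|+4\leqslant R_0|l|+4$, hence $|j|\leqslant C_0\langle l\rangle$ for a suitable $C_0$. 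For point (iii): if $b\in\mathcal{R}^{(1)}_{l,j}(i_n)$ then $|\omega(b,\varepsilon)\cdot l+\mu_j^{\infty,n}(b,\varepsilon)|\leqslant\gamma_{n+1}\langle j\rangle\langle l\rangle^{-\tau_1}\leqslant 1$; using \eqref{asy-z1}, Lemma \ref{properties omegajb}-(ii) which gives $|\Omega_j(b)|\geqslant\tfrac{b_0^2}{2}|j|$, and the smallness of $jr^{1,n}+r_j^{\infty,n}$ relative to $\Omega_j(b)$ (more precisely $|jr^{1,n}|\lesssim\varepsilon|j|$ and $|r_j^{\infty,n}|\lesssim\varepsilon\gamma^{-1}$, both absorbable into a fraction of $\tfrac{b_0^2}{2}|j|$ once $|j|$ is large, the small $|j|$ case being trivial), we get $|\mu_j^{\infty,n}(b,\varepsilon)|\geqslant\tfrac{b_0^2}{4}|j|$ and conclude $|j|\leqslant C_0\langle l\rangle$ as before. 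Point (ii) is analogous, using this time Lemma \ref{properties omegajb}-(iv) to write $\mu_j^{\infty,n}-\mu_{j_0}^{\infty,n}=\Omega_j(b)-\Omega_{j_0}(b)+(j-j_0)r^{1,n}+(r_j^{\infty,n}-r_{j_0}^{\infty,n})$ and Lemma \ref{properties omegajb}-(iii) giving $|\Omega_j(b)-\Omega_{j_0}(b)|\geqslant\tfrac{b_0^2}{6}|j-j_0|$; the inequality $|\omega(b,\varepsilon)\cdot l+\mu_j^{\infty,n}-\mu_{j_0}^{\infty,n}|\leqslant 2\gamma_{n+1}\langle j-j_0\rangle\langle l\rangle^{-\tau_2}\leqslant 2$ then yields $|j-j_0|\leqslant C_0\langle l\rangle$, again treating the finitely many small values of $|j-j_0|$ separately.

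For point (iv): assume $\min(|j|,|j_0|)\geqslant c_2\gamma_{n+1}^{-\upsilon}\langle l\rangle^{\tau_1}$ and take $b\in\mathcal{R}_{l,j,j_0}(i_n)$. The idea is that for such large modes the diagonal contributions $j r^{1,n}(b,\varepsilon)$ and $j_0 r^{1,n}(b,\varepsilon)$ combine into $(j-j_0)r^{1,n}(b,\varepsilon)$ which is controlled, while the remainders $r_j^{\infty,n}, r_{j_0}^{\infty,n}$ are each $\lesssim\varepsilon\gamma^{-1}|j|^{-1}\lesssim\varepsilon\gamma^{-1}c_2^{-1}\gamma_{n+1}^{\upsilon}\langle l\rangle^{-\tau_1}$, hence after choosing $c_2$ large (and $\varepsilon$ small) their sum is bounded by $\tfrac{\gamma_{n+1}^\upsilon}{\langle l\rangle^{\tau_1}}$ up to the $\langle j-j_0\rangle$ factor. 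Writing $\mu_j^{\infty,n}-\mu_{j_0}^{\infty,n}=(\Omega_j(b)-\Omega_{j_0}(b))+(j-j_0)r^{1,n}+(r_j^{\infty,n}-r_{j_0}^{\infty,n})$ and recalling $\Omega_j(b)-\Omega_{j_0}(b)=\tfrac12(j-j_0)+\tfrac12(b^{2j}-b^{2j_0})$ with the exponential terms negligible, one identifies $\omega(b,\varepsilon)\cdot l+(j-j_0)(\tfrac12+r^{1,n})$ plus an error bounded by $\tfrac{\gamma_{n+1}^\upsilon\langle j-j_0\rangle}{\langle l\rangle^{\tau_1}}$; combined with the defining inequality $|\omega(b,\varepsilon)\cdot l+\mu_j^{\infty,n}-\mu_{j_0}^{\infty,n}|\leqslant\tfrac{2\gamma_{n+1}\langle j-j_0\rangle}{\langle l\rangle^{\tau_2}}\leqslant\tfrac{2\gamma_{n+1}^\upsilon\langle j-j_0\rangle}{\langle l\rangle^{\tau_1}}$ (using $\gamma_{n+1}\leqslant\gamma_{n+1}^\upsilon$ and $\tau_2>\tau_1$), this gives $|\omega(b,\varepsilon)\cdot l+(j-j_0)V_n^\infty(b,\varepsilon)|\leqslant\tfrac{4\gamma_{n+1}^\upsilon\langle j-j_0\rangle}{\langle l\rangle^{\tau_1}}$, i.e. $b\in\mathcal{R}^{(0)}_{l,j-j_0}(i_n)$. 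The main obstacle is the bookkeeping in (iv): tracking exactly how the $b^{2j}$-type exponential tails, the $r^{1,n}$ terms and the $r_j^{\infty,n}$ remainders are each dominated by $\gamma_{n+1}^\upsilon\langle l\rangle^{-\tau_1}$ requires carefully choosing the constant $c_2$ against the implicit constants from \eqref{differences mu0}–\eqref{estimate rjinfty} and using the smallness condition on $\varepsilon\gamma^{-1-q}N_0^{\mu_2}$; everything else is a direct triangle-inequality argument as in \cite[Lem. 7.5]{HR21}.
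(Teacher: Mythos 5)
Your overall route coincides with the paper's: for (i)--(iii) you combine the defining resonance inequality with the boundedness of $\omega(b,\varepsilon)$, the lower bound $|V_n^{\infty}(b,\varepsilon)|\geqslant\tfrac14$, and the lower bounds $|\mu_j^{\infty,n}|\gtrsim|j|$ and $|\mu_j^{\infty,n}-\mu_{j_0}^{\infty,n}|\gtrsim|j-j_0|$ coming from Lemma \ref{properties omegajb} together with the smallness of $r^{1,n}$ and $r_j^{\infty,n}$; for (iv) you use the cancellation of the $r^{1,n}$ contributions into $(j-j_0)r^{1,n}$, the decay of $r_j^{\infty,n}$ and of the $b^{2j}$ tails when $\min(|j|,|j_0|)\geqslant c_2\gamma_{n+1}^{-\upsilon}\langle l\rangle^{\tau_1}$, and the comparison $\gamma_{n+1}\leqslant\gamma_{n+1}^{\upsilon}$, $\tau_2>\tau_1$. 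This is exactly the structure of the paper's proof.

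However, the written chains in (i)--(iii) contain a step that fails as stated: the intermediate bounds $4\gamma_{n+1}^{\upsilon}\langle j\rangle\langle l\rangle^{-\tau_1}\leqslant 4$, $\gamma_{n+1}\langle j\rangle\langle l\rangle^{-\tau_1}\leqslant 1$ and $2\gamma_{n+1}\langle j-j_0\rangle\langle l\rangle^{-\tau_2}\leqslant 2$ presuppose $\langle j\rangle\lesssim\gamma_{n+1}^{-\upsilon}\langle l\rangle^{\tau_1}$ (resp.\ its analogues), which is essentially the conclusion you are trying to prove; for the large modes one wants to exclude, these inequalities are false. The repair is the one the paper uses: keep the term, bound $\gamma_{n+1}\leqslant 2\gamma=2\varepsilon^{a}$ via \eqref{def gamman} and \eqref{choice of gamma and N0 in the Nash-Moser}, so the right-hand side is controlled by $C\varepsilon^{a\upsilon}\langle j\rangle+C\langle l\rangle$ (and similarly with $\varepsilon^{a}$ in (ii)--(iii)), and then absorb the $\varepsilon^{a\upsilon}\langle j\rangle$ piece into the left-hand side, which is bounded below by $\tfrac14|j|$, $\tfrac{b_0^2}{4}|j|$ or $\tfrac{b_0^2}{12}|j-j_0|$ respectively once $\varepsilon$ is small (cf.\ \eqref{uniform estimate r1}, \eqref{uniform estimate rjinfty}). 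One further detail in (iv): since $j,j_0\in\mathbb{S}_{0}^{c}$ may have opposite signs, after reducing by the symmetry $\mu_{-j}^{\infty,n}=-\mu_{j}^{\infty,n}$ the difference becomes a sum, the $-1$'s in $\Omega_{|j|}+\Omega_{|j_0|}$ no longer cancel, and one must also control the resulting constant and the sum $b^{2|j|}+b^{2|j_0|}$; the paper bounds all such errors by $C\langle j\pm j_0\rangle/\min(|j|,|j_0|)$, which is precisely where the hypothesis on $\min(|j|,|j_0|)$ enters. With these adjustments your plan reproduces the paper's argument.
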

\begin{proof}
	\textbf{(i)} Let us assume that $\mathcal{R}_{l,j}^{(0)}(i_{n})\neq\varnothing.$ Then, there exists $b\in(b_{0},b_{1})$ such that
	$$|\omega(b,\varepsilon)\cdot l+j V_n^{\infty}(b,\varepsilon)|\leqslant\tfrac{4\gamma_{n+1}^{\upsilon}\langle j\rangle}{\langle l\rangle^{\tau_{1}}}.$$ 
	From triangle and Cauchy-Schwarz inequalities, \eqref{def gamman} and \eqref{choice of gamma and N0 in the Nash-Moser}, we deduce 
	\begin{align*}
		|V_{n}^{\infty}(b,\varepsilon)||j|&\leqslant 4|j|\gamma_{n+1}^{\upsilon}\langle l\rangle^{-\tau_{1}}+|{\omega}(b,\varepsilon)\cdot l|\\
		&\leqslant 4|j|\gamma_{n+1}^{\upsilon}+C\langle l\rangle\\
		&\leqslant 8\varepsilon^{a \upsilon}|j|+C\langle l\rangle.
	\end{align*}
Remark that we used the fact that $(b,\varepsilon)\mapsto \omega(b, \varepsilon)$ is bounded. Also notice that the identity
	$$V_{n}^{\infty}(b,\varepsilon)=\tfrac{1}{2}+r^{1,n}(b,\varepsilon)
	$$
	together with \eqref{estimate r1}, \eqref{estimate rjinfty} and Proposition \ref{Nash-Moser}-$(\mathcal{P}1)_{n}$ imply
	\begin{align}\label{uniform estimate r1}
		\forall k\in\llbracket 0,q\rrbracket,\quad\sup_{n\in\mathbb{N}}\sup_{b\in(b_{0},b_{1})}|\partial_{b}^{k}r^{1,n}(b,\varepsilon)|&\leqslant\gamma^{-k}\sup_{n\in\mathbb{N}}\| r^{1,n}\|_{q}^{\gamma,\mathcal{O}}\nonumber\\
		&\lesssim\varepsilon\gamma^{-k}\nonumber\\
		&\lesssim\varepsilon^{1-ak}.
	\end{align}
	Hence, taking $\varepsilon$ small enough, we infer
	$$\inf_{n\in\mathbb{N}}\inf_{b\in(b_{0},b_{1})}|V_{n}^{\infty}(b,\varepsilon)|\geqslant \tfrac{1}{4}.$$
	Therefore, up to choose $\varepsilon$ small enough we can ensure  $|j|\leqslant C_{0}\langle l\rangle$ for some $C_{0}>0.$\\
	\noindent\textbf{(ii)} In the case $j=j_0$ we get by definition $\mathcal{R}_{l,j_0,j_{0}}(i_{n})=\mathcal{R}^{(0)}_{l,0}(i_{n})$, so this case can be treated by the first point. Then, we shall restrict the discussion to the case $j\neq j_0.$ Let us assume that $\mathcal{R}_{l,j,j_{0}}(i_{n})\neq\varnothing.$ Then, there exists $b\in(b_{0},b_{1})$ such that 
	$$|\omega(b,\varepsilon)\cdot l+\mu_j^{\infty,n}(b,\varepsilon)-\mu_{j_0}^{\infty,n}(b,\varepsilon)|\leqslant\tfrac{2\gamma_{n+1}|j-j_0|}{\langle l\rangle^{\tau_2}}.$$
	By using triangle and Cauchy-Schwarz inequalities, \eqref{def gamman} and \eqref{choice of gamma and N0 in the Nash-Moser}, we get
	\begin{align*}
		|\mu_{j}^{\infty,n}(b,\varepsilon)-\mu_{j_{0}}^{\infty,n}(b,\varepsilon)|&\leqslant 2\gamma_{n+1}|j-j_{0}|\langle l\rangle^{-\tau_{2}}+|{\omega}(b,\varepsilon)\cdot l|\\
		&\leqslant 2\gamma_{n+1}|j-j_{0}|+C\langle l\rangle\\
		&\leqslant 4\varepsilon^a|j-j_{0}|+C\langle l\rangle.
	\end{align*}
	In a similar way to \eqref{uniform estimate r1}, we may obtain
	\begin{align}\label{uniform estimate rjinfty}
		\forall k\in\llbracket 0,q\rrbracket,\quad\sup_{n\in\mathbb{N}}\sup_{j\in\mathbb{S}_{0}^{c}}\sup_{b\in(b_{0},b_{1})}|j||\partial_{b}^{k}r_{j}^{\infty,n}(b,\varepsilon)|&\leqslant\gamma^{-k}\sup_{n\in\mathbb{N}}\sup_{j\in\mathbb{S}_{0}^{c}}|j|\| r_{j}^{\infty,n}\|_{q}^{\gamma,\mathcal{O}}\nonumber\\
		&\lesssim\varepsilon\gamma^{-1-k}\nonumber\\
		&\lesssim\varepsilon^{1-a(1+k)}.
	\end{align} 
	From the triangle inequality, Lemma \ref{properties omegajb}-(iii), \eqref{uniform estimate r1} and \eqref{uniform estimate rjinfty} we infer for $j\neq j_0$,
	\begin{align*}
		|\mu_{j}^{\infty,n}(b,\varepsilon)-\mu_{j_{0}}^{\infty,n}(b,\varepsilon)| & \geqslant  |\Omega_{j}(b)-\Omega_{j_{0}}(b)|-|r^{1,n}(b,\varepsilon)||j-j_{0}|-|r_{j}^{\infty,n}(b,\varepsilon)|-|r_{j_{0}}^{\infty,n}(b,\varepsilon)|\\
		& \geqslant  \big(\tfrac{b_0^2}{6}-C\varepsilon^{1-a}\big)|j-j_{0}|\\
		& \geqslant \tfrac{b_0^2}{12}|j-j_{0}|.
	\end{align*}
	Notice that the last inequality is obtained for  $\varepsilon$ sufficiently small. Gathering the previous inequalities implies that, up to choose $\varepsilon $ small enough, we can ensure $|j-j_{0}|\leqslant C_{0}\langle l\rangle,$ for some $C_{0}>0.$\\
	\noindent \textbf{(iii)} First notice that the case $j=0$ is obvious. Now  for $j\neq 0$ we assume that  $\mathcal{R}_{l,j}^{(1)}(i_{n})\neq\varnothing.$ Then, there exists $b\in(b_{0},b_{1})$ such that 
	$$|\omega(b,\varepsilon)\cdot l+\mu_j^{\infty,n}(b,\varepsilon)|\leqslant\tfrac{\gamma_{n+1}|j|}{\langle l\rangle^{\tau_1}}.$$
	Thus, triangle and Cauchy-Schwarz inequalities, \eqref{def gamman} and \eqref{choice of gamma and N0 in the Nash-Moser} imply
	\begin{align*}
		|\mu_{j}^{\infty,n}(b,\varepsilon)|&\leqslant \gamma_{n+1}|j|\langle l\rangle^{-\tau_{1}}+|{\omega}(b,\varepsilon)\cdot l|\\
		&\leqslant  2\varepsilon^a|j|+C\langle l\rangle.
	\end{align*}
	According to the definition \eqref{asy-z1} together with the triangle inequality, Lemma \ref{properties omegajb}-(ii), \eqref{uniform estimate r1} and \eqref{uniform estimate rjinfty}, we obtain
	\begin{align*}
		|\mu_{j}^{\infty,n}(b,\varepsilon)|&\geqslant \tfrac{b_0^2}{2}|j|-|j||r^{1,n}(b,\varepsilon)|-|r_{j}^{\infty,n}(b,\varepsilon)|\\
		&
		\geqslant \tfrac{b_0^2}{2}|j|-C\varepsilon^{1-a}|j|.
	\end{align*}
	Putting together the previous two inequalities and the second condition in \eqref{samedi-1} yields
	\begin{align*}
		\big( \tfrac{b_0^2}{2}-C\varepsilon^{1-a}-2\varepsilon^a\big)|j|
		&\leqslant  C\langle l\rangle.
	\end{align*}
	Finally, by choosing $\varepsilon$ small enough we get $|j|\leqslant C_0\langle  l\rangle,$ for some $C_{0}>0.$\\
	\noindent \textbf{(iv)} First remark that the case $j=j_0$ is obvious as a direct consequence of the definition \eqref{set-U0}. Let $j\neq j_0.$ In view of the symmetry property $\mu_{-j}^{\infty,n}=-\mu_j^{\infty,n}$ of the perturbed eigenvalues, we can always assume that $0<j<j_0.$ Take $b\in\mathcal{R}_{l,j,j_{0}}(i_{n}).$ Then by construction
	$$\big|{\omega}(b,\varepsilon)\cdot l+\mu_{j}^{\infty,n}(b,\varepsilon)\pm\mu_{j_{0}}^{\infty,n}(b,\varepsilon)\big|\leqslant\tfrac{2\gamma_{n+1}\langle j\pm j_{0}\rangle}{\langle l\rangle^{\tau_{2}}}.
	$$
	Putting together \eqref{asy-z1}, \eqref{Omegajb} and the triangle inequality, we find
	\begin{align*}
		\big|{\omega}(b,\varepsilon)\cdot l+(j\pm j_{0})V_{n}^{\infty}(b,\varepsilon)\big|  &\leqslant  \big|{\omega}(b,\varepsilon)\cdot l+\mu_{j}^{\infty,n}(b,\varepsilon)\pm\mu_{j_{0}}^{\infty,n}(b,\varepsilon)\big|+\tfrac12|b^{2j}\pm b^{2j_0}|\\
		&\quad+\tfrac12\big|(j-1)\pm (j_0-1)-(j\pm j_0) \big|+\big|r_{j}^{\infty,n}(b,\varepsilon)\pm r_{j_{0}}^{\infty,n}(b,\varepsilon)\big|.
	\end{align*}
	Hence, we deduce
	\begin{align}\label{ZaraX1}
		 \big|{\omega}(b,\varepsilon)\cdot l+(j\pm j_{0})V_{n}^{\infty}(b,\varepsilon)\big|  &\leqslant \tfrac{2\gamma_{n+1}\langle j\pm j_{0}\rangle}{\langle l\rangle^{\tau_{2}}}+\tfrac12|b^{2j}\pm b^{2j_0}|\nonumber\\
		 &\quad+\tfrac12\big|(j-1)\pm (j_0-1)-(j\pm j_0) \big|+\big|r_{j}^{\infty,n}(b,\varepsilon)\pm r_{j_{0}}^{\infty,n}(b,\varepsilon)\big|.
	\end{align}
	Notice that
	\begin{align*}
	b^{2j}+b^{2j_0}&\leqslant C\tfrac{\langle j+j_0\rangle}{j}.
\end{align*}
In addition, Taylor formula implies
	\begin{align*}
		b^{2j}-b^{2j_0}&\leqslant-2\ln(b)\int_{j}^{j_0}b^{2x}dx
 \leqslant \tfrac{c_1\langle j-j_{0}\rangle}{j},
	\end{align*}
where $c_1=\displaystyle\sup_{j\in\mathbb{N}, b\in(0,1)}\big(-2\ln(b)jb^{2j}\big)>0.$
On the other hand, one has
$$ 
 \big|(j-1)\pm (j_0-1)-(j\pm j_0) \big| =1\pm 1\leqslant \tfrac{\langle j+j_0\rangle}{j}.
$$
	Applying  \eqref{estimate rjinfty}, we find for $j\neq j_{0}$,
	\begin{align*}
		\big|r_{j}^{\infty,n}(b,\varepsilon)\pm r_{j_{0}}^{\infty,n}(b,\varepsilon)\big|  \leqslant &C \varepsilon^{1-a}\big(|j|^{-1}+|j_0|^{-1}\big)\\
		\leqslant & C \varepsilon^{1-a} \tfrac{\langle j\pm j_{0}\rangle}{j}\cdot
	\end{align*}
	Plugging the preceding estimates into \eqref{ZaraX1} yields 
	\begin{align*}
		\nonumber \big|{\omega}(b,\varepsilon)\cdot l+(j\pm j_{0})V_{n}^{\infty}(b,\varepsilon)\big|  \leqslant & \tfrac{2\gamma_{n+1}\langle j\pm j_{0}\rangle}{\langle l\rangle^{\tau_{2}}}+ C \tfrac{\langle j\pm j_{0}\rangle}{j}\cdot
	\end{align*}
	Therefore,  if we assume $\displaystyle j\geqslant \tfrac{1}{2} C\gamma_{n+1}^{-\upsilon}\langle l\rangle^{\tau_{1}}$  and $\tau_{2}>\tau_{1}$, then we deduce 
	$$\big|{\omega}(b,\varepsilon)\cdot l+(j\pm j_{0})V_{n}^{\infty}(b,\varepsilon)\big| \leqslant  \tfrac{4\gamma_{n+1}^{\upsilon}\langle j\pm j_{0}\rangle}{\langle l\rangle^{\tau_{1}}}\cdot$$
	This achieves the proof of Lemma \ref{some cantor set are empty}, taking $c_{2}=\frac{C}{2}.$
\end{proof}
We shall now establish that the perturbed frequencies $\omega(b,\varepsilon)$ satisfy the Rüssemann conditions. This is done by a perturbation argument on the transversality conditions of  the equilibrium linear frequencies $\omega_{\textnormal{Eq}}(b)$ stated in Lemma \ref{lemma transversalityb}. 
\begin{lem}\label{lemma Russeman condition for the perturbed frequencies}
	Let $q_{0}$, $C_{0}$ and $\rho_{0}$ as in Lemma \ref{lemma transversalityb}. There exist $\varepsilon_{0}>0$ small enough such that for any   $\varepsilon\in[0,\varepsilon_{0}]$ the following assertions hold true.
		\begin{enumerate}[label=(\roman*)]
			\item For all $l\in\mathbb{Z}^{d}\setminus\{0\}, $ we have 
			$$\inf_{b\in[b_{0},b_{1}]}\max_{k\in\llbracket 0,q_{0}\rrbracket}\big|\partial_{b}^{k}\left({\omega}(b,\varepsilon)\cdot l\right)\big|\geqslant\tfrac{\rho_{0}\langle l\rangle}{2}.$$
			\item For all $(l,j)\in\mathbb{Z}^{d+1}\setminus\{(0,0)\}$ such that $|j|\leqslant C_{0}\langle l\rangle,$ we have
			$$\forall n\in\mathbb{N},\quad\inf_{b\in[b_{0},b_{1}]}\max_{k\in\llbracket 0,q_{0}\rrbracket}|\partial_{b}^{k}\big({\omega}(b,\varepsilon)\cdot l+jV_{n}^{\infty}(b,\varepsilon)\big)|\geqslant\tfrac{\rho_{0}\langle l\rangle}{2}.$$
			\item For all $(l,j)\in\mathbb{Z}^{d}\times\mathbb{S}_{0}^{c}$ such that $|j|\leqslant C_{0}\langle l\rangle,$ we have
			$$\forall n\in\mathbb{N},\quad\inf_{b\in[b_{0},b_{1}]}\max_{k\in\llbracket 0,q_{0}\rrbracket}\big|\partial_{b}^{k}\big({\omega}(b,\varepsilon)\cdot l+\mu_{j}^{\infty,n}(b,\varepsilon)\big)\big|\geqslant\tfrac{\rho_{0}\langle l\rangle}{2}.$$
			\item For all $(l,j,j_{0})\in\mathbb{Z}^{d}\times(\mathbb{S}_{0}^{c})^{2}$ such that $|j-j_{0}|\leqslant C_{0}\langle l\rangle,$ we have
			$$\forall n\in\mathbb{N},\quad\inf_{b\in[b_{0},b_{1}]}\max_{k\in\llbracket 0,q_{0}\rrbracket}\big|\partial_{b}^{k}\big({\omega}(b,\varepsilon)\cdot l+\mu_{j}^{\infty,n}(b,\varepsilon)-\mu_{j_{0}}^{\infty,n}(b,\varepsilon)\big)\big|\geqslant\tfrac{\rho_{0}\langle l\rangle}{2}.$$
	\end{enumerate}
\end{lem}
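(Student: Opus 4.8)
The plan is to transfer the transversality estimates for the equilibrium frequencies $\omega_{\textnormal{Eq}}$ from Lemma \ref{lemma transversalityb} to the perturbed frequencies by treating the latter as small $C^{q_0}$ perturbations of the former, uniformly in $n\in\mathbb{N}$. The key quantitative input is that $\omega(b,\varepsilon)=-\omega_{\textnormal{Eq}}(b)+\overline{r}_\varepsilon(b)$ with $\|\overline{r}_\varepsilon\|_q^{\gamma,\mathcal{O}}\lesssim\varepsilon\gamma^{-1}N_0^{q\overline a}=\varepsilon^{1-a(1+q\overline a)}$ by \eqref{alpha infty-1}, that $V_n^\infty(b,\varepsilon)=\tfrac12+r^{1,n}(b,\varepsilon)$ with the uniform bound $\sup_{n}\sup_b|\partial_b^k r^{1,n}(b,\varepsilon)|\lesssim\varepsilon^{1-ak}$ for $k\in\llbracket 0,q\rrbracket$ (this is \eqref{uniform estimate r1}, which follows from \eqref{estimate r1}, \eqref{estimate rjinfty} and Proposition \ref{Nash-Moser}-$(\mathcal{P}1)_n$), and the analogous bound $\sup_n\sup_{j\in\mathbb{S}_0^c}\sup_b |j|\,|\partial_b^k r_j^{\infty,n}(b,\varepsilon)|\lesssim\varepsilon^{1-a(1+k)}$ from \eqref{uniform estimate rjinfty}. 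Since by \eqref{param NM}--\eqref{choice of gamma and N0 in the Nash-Moser} all these error terms tend to $0$ as $\varepsilon\to 0$, one chooses $\varepsilon_0$ small enough so that every perturbation below is bounded by $\tfrac{\rho_0}{4}\langle l\rangle$ (dividing the slack $\tfrac{\rho_0}{2}\langle l\rangle$ from Lemma \ref{lemma transversalityb} in half).

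\textbf{Step (i).} Apply Lemma \ref{lemma transversalityb}-(i) directly: $\inf_b\max_{k\leqslant q_0}|\partial_b^k(\omega_{\textnormal{Eq}}(b)\cdot l)|\geqslant\rho_0\langle l\rangle$. Since $\omega(b,\varepsilon)\cdot l=-\omega_{\textnormal{Eq}}(b)\cdot l+\overline{r}_\varepsilon(b)\cdot l$, the triangle inequality together with $|\partial_b^k\overline{r}_\varepsilon(b)\cdot l|\leqslant |l|\,\|\partial_b^k\overline{r}_\varepsilon\|_\infty\lesssim\varepsilon^{1-a(1+q\overline a)}\langle l\rangle$ gives, for $\varepsilon$ small, $\max_{k\leqslant q_0}|\partial_b^k(\omega(b,\varepsilon)\cdot l)|\geqslant(\rho_0-C\varepsilon^{1-a(1+q\overline a)})\langle l\rangle\geqslant\tfrac{\rho_0}{2}\langle l\rangle$. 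The key point is that the $\max$ over $k$ is taken \emph{at the same} value of $b$, so one picks, for each $b$, the index $k$ realizing the equilibrium lower bound and estimates the perturbation in that same derivative.

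\textbf{Step (ii).} Here $\omega(b,\varepsilon)\cdot l+jV_n^\infty(b,\varepsilon)=\big(-\omega_{\textnormal{Eq}}(b)\cdot l+\tfrac j2\big)+\big(\overline{r}_\varepsilon(b)\cdot l+j\,r^{1,n}(b,\varepsilon)\big)$. When $l\neq 0$ one invokes Lemma \ref{lemma transversalityb}-(ii) (with the sign $+\tfrac j2$; the case $j<0$ is the $-$ sign after relabeling), which gives $\inf_b\max_{k\leqslant q_0}|\partial_b^k(\omega_{\textnormal{Eq}}(b)\cdot l\pm\tfrac j2)|\geqslant\rho_0\langle l\rangle$; the perturbation is controlled using $|j|\leqslant C_0\langle l\rangle$ and \eqref{uniform estimate r1}, so $|\partial_b^k(\overline{r}_\varepsilon\cdot l+j r^{1,n})|\lesssim\langle l\rangle\varepsilon^{1-a(1+q\overline a)}+C_0\langle l\rangle\varepsilon^{1-ak}\lesssim\langle l\rangle\varepsilon^{1-a(1+q\overline a)}$ (the first exponent being the smaller), hence $\geqslant\tfrac{\rho_0}{2}\langle l\rangle$ for $\varepsilon$ small. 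When $l=0$ one must have $j\neq 0$ and the quantity is $jV_n^\infty(b,\varepsilon)$ with $|V_n^\infty|\geqslant\tfrac14$ for $\varepsilon$ small (again \eqref{uniform estimate r1}), so $|j V_n^\infty|\geqslant\tfrac14|j|\geqslant\tfrac14=\tfrac14\langle 0\rangle\geqslant\tfrac{\rho_0}{2}\langle l\rangle$ provided $\rho_0\leqslant\tfrac12$, which we may assume.

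\textbf{Steps (iii) and (iv).} These are identical in spirit, using $\mu_j^{\infty,n}(b,\varepsilon)=\Omega_j(b)+j\,r^{1,n}(b,\varepsilon)+r_j^{\infty,n}(b,\varepsilon)$ from \eqref{asy-z1}. For (iii): $\omega(b,\varepsilon)\cdot l+\mu_j^{\infty,n}(b,\varepsilon)=\big(-\omega_{\textnormal{Eq}}(b)\cdot l+\Omega_j(b)\big)+\big(\overline{r}_\varepsilon(b)\cdot l+j r^{1,n}(b,\varepsilon)+r_j^{\infty,n}(b,\varepsilon)\big)$; if $l\neq 0$ apply Lemma \ref{lemma transversalityb}-(iii) (again the $+$ sign, after relabeling $j\mapsto -j$ if needed, using $\Omega_{-j}=-\Omega_j$ and $\mu_{-j}^{\infty,n}=-\mu_j^{\infty,n}$), and bound the perturbation by $C\langle l\rangle\varepsilon^{1-a(1+q\overline a)}$ using $|j|\leqslant C_0\langle l\rangle$, \eqref{uniform estimate r1} and \eqref{uniform estimate rjinfty} (note $|r_j^{\infty,n}|\leqslant C\varepsilon^{1-a}$ uniformly since $|j|\geqslant 1$); if $l=0$ then $j\in\mathbb{S}_0^c$ forces $j\neq 0$ and $|\mu_j^{\infty,n}|\geqslant\tfrac{b_0^2}{2}|j|-C\varepsilon^{1-a}|j|\geqslant\tfrac{b_0^2}{4}$ by Lemma \ref{properties omegajb}-(ii), which exceeds $\tfrac{\rho_0}{2}$ for $\rho_0$ small. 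For (iv): $\omega(b,\varepsilon)\cdot l+\mu_j^{\infty,n}-\mu_{j_0}^{\infty,n}=\big(-\omega_{\textnormal{Eq}}(b)\cdot l+\Omega_j(b)-\Omega_{j_0}(b)\big)+\big(\overline{r}_\varepsilon\cdot l+(j-j_0)r^{1,n}+r_j^{\infty,n}-r_{j_0}^{\infty,n}\big)$; when $(l,j)\neq(0,j_0)$ apply Lemma \ref{lemma transversalityb}-(iv), and bound the perturbation using $|j-j_0|\leqslant C_0\langle l\rangle$ together with $|r_j^{\infty,n}-r_{j_0}^{\infty,n}|\leqslant C\varepsilon^{1-a}(|j|^{-1}+|j_0|^{-1})\leqslant 2C\varepsilon^{1-a}\leqslant C'\langle l\rangle\varepsilon^{1-a}$; the case $j=j_0$ (so $l\neq 0$) reduces to Step (ii) with $j=0$ (i.e. $\mathcal{R}_{l,j_0,j_0}=\mathcal{R}^{(0)}_{l,0}$), and the case $l=0$ with $j\neq j_0$ uses $|\Omega_j(b)-\Omega_{j_0}(b)|\geqslant\tfrac{b_0^2}{6}|j-j_0|$ from Lemma \ref{properties omegajb}-(iii) minus $C\varepsilon^{1-a}|j-j_0|$, which is $\geqslant\tfrac{b_0^2}{12}|j-j_0|\geqslant\tfrac{b_0^2}{12}\geqslant\tfrac{\rho_0}{2}$ for $\rho_0$ small.

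\textbf{Main obstacle.} The routine part is the triangle-inequality bookkeeping; the one point requiring genuine care is ensuring the $\max_{k\leqslant q_0}$ structure is preserved — that is, the perturbative estimate must hold for the \emph{same} $b$ and \emph{same} $k$ that realize the equilibrium bound, and it must be \emph{uniform in $n$}. This uniformity is exactly what \eqref{uniform estimate r1} and \eqref{uniform estimate rjinfty} provide, via Proposition \ref{Nash-Moser}-$(\mathcal{P}1)_n$'s uniform-in-$n$ control of $W_n$; without that uniform bound the argument would collapse, so the true content of the lemma is the reduction to those two uniform estimates, which is why the statement is placed after the Nash-Moser proposition. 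The degenerate cases $l=0$ must also be handled separately since Lemma \ref{lemma transversalityb} only gives information for $l\neq 0$ in parts (i)--(iv), but these are easily dispatched by the lower bounds on $V_n^\infty$, $\mu_j^{\infty,n}$ and $\mu_j^{\infty,n}-\mu_{j_0}^{\infty,n}$ coming from Lemma \ref{properties omegajb}.
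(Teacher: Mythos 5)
Your proposal is correct and follows essentially the same route as the paper's proof: a triangle-inequality comparison against the equilibrium transversality bounds of Lemma \ref{lemma transversalityb}, with the perturbations controlled uniformly in $n$ by \eqref{alpha infty-1}, \eqref{uniform estimate r1} and \eqref{uniform estimate rjinfty}, under the parameter condition $1-a(1+q+q\overline{a})>0$. The only cosmetic differences are that you treat the $l=0$ cases separately (the paper lets Lemma \ref{lemma transversalityb} absorb them, after shrinking $\rho_0$ if necessary), and that your unweighted bound on $\partial_b^k\overline{\mathrm{r}}_{\varepsilon}$ should carry the extra factor $\gamma^{-k}$ coming from the weighted norm, i.e. exponent $1-a(1+k+q\overline{a})$, which remains positive under the same constraints, so the conclusion is unaffected.
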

\begin{proof}
	\textbf{(i)} From the triangle and Cauchy-Schwarz  inequalities together with  \eqref{estimate repsilon1}, \eqref{choice of gamma and N0 in the Nash-Moser} and Lemma \ref{lemma transversalityb}-(i), we deduce
	\begin{align*}
		\displaystyle\max_{k\in\llbracket 0,q_{0}\rrbracket}|\partial_{b}^{k}\left({\omega}(b,\varepsilon)\cdot l\right)| & \geqslant \displaystyle\max_{k\in\llbracket 0,q_{0}\rrbracket}|\partial_{b}^{k}\left({\omega}_{\textnormal{Eq}}(b)\cdot l\right)|-\max_{k\in\llbracket 0,q\rrbracket}|\partial_{b}^{k}\left(\overline{\mathrm{r}}_{\varepsilon}(b)\cdot l\right)|\\
		& \geqslant\displaystyle\rho_{0}\langle l\rangle-C\varepsilon\gamma^{-1-q}N_{0}^{q\overline{a}}\langle l\rangle\\
		& \geqslant \displaystyle\rho_{0}\langle l\rangle-C\varepsilon^{1-a(1+q+q\overline{a})}\langle l\rangle\\
		& \geqslant \displaystyle\tfrac{\rho_{0}\langle l\rangle}{2}
	\end{align*}
	provided that $\varepsilon$ is small enough and 
	\begin{equation}\label{condition param Russ}
		1-a(1+q+q\overline{a})>0.
	\end{equation}
	Notice that the condition \eqref{condition param Russ} is automatically satisfied by \eqref{choice of gamma and N0 in the Nash-Moser} and \eqref{param NM}.\\
	\textbf{(ii)} As before, using the triangle and Cauchy-Schwarz inequalities combined with  \eqref{estimate repsilon1}, \eqref{uniform estimate r1}, Lemma \ref{lemma transversalityb}-(ii) and the fact that $|j|\leqslant C_{0}\langle l\rangle$, we get
	\begin{align*}
		\displaystyle\max_{k\in\llbracket 0,q_{0}\rrbracket}|\partial_{b}^{k}\left({\omega}(b,\varepsilon)\cdot l+jV_{n}^{\infty}(b,\varepsilon)\right)| & \geqslant \displaystyle\max_{k\in\llbracket 0,q_{0}\rrbracket}\big|\partial_{b}^{k}\big({\omega}_{\textnormal{Eq}}(b)\cdot l+\tfrac{j}{2}\big)\big|-\max_{k\in\llbracket 0,q\rrbracket}|\partial_{b}^{k}\left(\overline{\mathrm{r}}_{\varepsilon}(b)\cdot l+jr^{1,n}(b,\varepsilon)\right)|\\
		& \geqslant \displaystyle\rho_{0}\langle l\rangle-C\varepsilon^{1-a(1+q+q\overline{a})}\langle l\rangle-C\varepsilon^{1-aq}|j|\\
		& \geqslant \displaystyle\tfrac{\rho_{0}\langle l\rangle}{2}
	\end{align*}
	for $\varepsilon$ small enough and with the condition \eqref{condition param Russ}.\\
	\textbf{(iii)} As before, using triangle  and Cauchy-Schwarz inequalities combined with \eqref{estimate repsilon1}, \eqref{uniform estimate r1}, \eqref{uniform estimate rjinfty}, Lemma \ref{lemma transversalityb}-(iii) and the fact that  $|j|\leqslant C_{0}\langle l\rangle$, we get
	\begin{align*}
		\displaystyle\max_{k\in\llbracket 0,q_{0}\rrbracket}\big|\partial_{b}^{k}\big({\omega}(b,\varepsilon)\cdot l+\mu_{j}^{\infty,n}(b,\varepsilon)\big)\big| & \geqslant  \displaystyle\max_{k\in\llbracket 0,q_{0}\rrbracket}|\partial_{b}^{k}\left({\omega}_{\textnormal{Eq}}(b)\cdot l+\Omega_{j}(b)\right)|\\
		& \displaystyle\quad-\max_{k\in\llbracket 0,q\rrbracket}\big|\partial_{b}^{k}\big(\overline{\mathrm{r}}_{\varepsilon}(b)\cdot l+jr^{1,n}(b,\varepsilon)+r_{j}^{\infty,n}(b,\varepsilon))\big)\big|\\
		& \geqslant  \displaystyle\rho_{0}\langle l\rangle-C\varepsilon^{1-a(1+q+q\overline{a})}\langle l\rangle-C\varepsilon^{1-a(1+q)}|j|\\
		& \geqslant  \displaystyle\tfrac{\rho_{0}\langle l\rangle}{2}
	\end{align*}
	for $\varepsilon$ small enough with the condition \eqref{condition param Russ}.\\
	\textbf{(iv)} Arguing as in the preceding point, using  \eqref{uniform estimate r1}, \eqref{uniform estimate rjinfty}, Lemma \ref{lemma transversalityb}-(iv) and the fact that $0<|j-j_{0}|\leqslant C_{0}\langle l\rangle$ (notice that the case $j=j_0$ is trivial), we have 
	\begin{align*}
		\max_{k\in\llbracket 0,q_{0}\rrbracket}\big|\partial_{b}^{k}\big({\omega}(b,\varepsilon)\cdot l&+\mu_{j}^{\infty,n}(b,\varepsilon)-\mu_{j_{0}}^{\infty,n}(b,\varepsilon)\big)\big|
		\geqslant\displaystyle\max_{k\in\llbracket 0,q_{0}\rrbracket}\big|\partial_{b}^{k}\big({\omega}_{\textnormal{Eq}}(b)\cdot l+\Omega_{j}(b)-\Omega_{j_{0}}(b)\big)\big|\displaystyle\\
		&-\max_{k\in\llbracket 0,q\rrbracket}\big|\partial_{b}^{k}\big(\overline{\mathrm{r}}_{\varepsilon}(b)\cdot l+(j-j_{0})r^{1,n}(b,\varepsilon)+r_{j}^{\infty,n}(b,\varepsilon)-r_{j_{0}}^{\infty,n}(b,\varepsilon)\big)\big|\\
		&\geqslant\displaystyle\rho_{0}\langle l\rangle-C\varepsilon^{1-a(1+q+q\overline{a})}\langle l\rangle-C\varepsilon^{1-a(1+q)}|j-j_{0}|\\
		&\geqslant \displaystyle\tfrac{\rho_{0}\langle l\rangle}{2}
	\end{align*}
	for $\varepsilon$ small enough. This ends the proof of Lemma \ref{lemma Russeman condition for the perturbed frequencies}.
\end{proof}

\textbf{Acknowledgements :}
The work of Z. Hassainia is supported by Tamkeen under the NYU Abu Dhabi Research Institute grant of the center SITE.\\
The authors would like to thank Taoufik Hmidi for introducing the problem and for his precious advices when writing this document.

\small

\end{document}